\newcolumntype{M}[1]{>{\centering\arraybackslash}m{#1}}
\renewcommand{\epsilon}{\varepsilon}
\renewcommand{\phi}{\varphi}
\renewcommand{\kappa}{\varkappa}
\newcommand\R{\mathbb{R}}
\newcommand\Q{\mathbb{Q}}
\newcommand\C{\mathbb{C}}
\renewcommand\S{\mathbb{S}}
\newcommand\N{\mathbb{N}}
\newcommand\Z{\mathbb{Z}}
\newcommand\T{\mathbb{T}}
\newcommand{\inv}[1]{\frac{1}{#1}}
\renewcommand{\hat}{\widehat}
\renewcommand{\tilde}{\widetilde}
\renewcommand{\bar}{\overline}
\renewcommand{\leq}{\leqslant}
\renewcommand{\geq}{\geqslant}
\renewcommand{\Re}{\operatorname{Re}}
\renewcommand{\Im}{\operatorname{Im}}
\newcommand{\norme}[1]{\left\Vert #1\right\Vert}
\newcommand{\prodscalbis}[1]{\left\langle #1\right\rangle}
\newcommand{\prodscal}[2]{\left\langle #1\,\middle|\,#2\right\rangle}
\newcommand{\ensemble}[1]{\left\lbrace #1\right\rbrace}
\newcommand{\valabs}[1]{\left| #1\right|}
\newcommand{\indicatrice}{\mathds{1}}
\newcommand{\privede}[1]{\setminus\ensemble{#1}}
\DeclareMathOperator{\Id}{Id}
\DeclareMathOperator{\Ima}{Im}
\DeclareMathOperator{\diag}{diag}
\DeclareMathOperator{\dist}{dist}
\DeclareMathOperator{\Vect}{Vect}
\DeclareMathOperator{\com}{com}
\newcommand\F{\mathcal{F}}
\newcommand\E{\mathbf{E}}
\renewcommand{\Q}{\mathbf{Q}}
\renewcommand{\P}{\mathcal{P}}
\newcommand{\V}{\mathcal{V}}
\newcommand{\G}{\mathcal{G}}
\renewcommand{\H}{\mathcal{H}}
\renewcommand{\N}{\mathcal{N}}
\newcommand\B{\mathcal{B}}
\newcommand{\freq}{\mathbf{n}\cdot\boldsymbol{\zeta}}
\newcommand{\K}{\mathcal{K}}
\DeclareMathOperator{\osc}{osc}
\DeclareMathOperator{\ev}{ev}
\DeclareMathOperator{\app}{app}
\DeclareMathOperator{\sign}{sign}
\DeclareMathOperator{\inc}{in}
\DeclareMathOperator{\out}{out}
\DeclareMathOperator{\spectre}{sp}
\DeclareMathOperator{\res}{res}
\definecolor{altblue}{RGB}{0, 0, 180}
\definecolor{altred}{RGB}{200, 0, 0}
\definecolor{altorange}{RGB}{243, 136, 0}
\definecolor{altgreen}{RGB}{0,180, 0}
\newtheorem{theorem}{Theorem}[section]
\newtheorem{lemma}[theorem]{Lemma}
\newtheorem{proposition}[theorem]{Proposition}
\newtheorem{definition}[theorem]{Definition}
\newtheorem{assumption}{Assumption}
\theoremstyle{remark}
\newtheorem{remark}[theorem]{Remark}
\newtheorem{example}[theorem]{Example}
\numberwithin{equation}{section}
\title[Multiphase geometric optics for quasilinear boundary value problems]{Weakly nonlinear multiphase geometric optics for hyperbolic quasilinear boundary value problems: construction of a leading profile}
\author{Corentin Kilque}
\address{Institut de Mathématiques de Toulouse ; UMR5219 \\ Université de Toulouse ; CNRS \\
	UPS, F-31062 Toulouse Cedex 9, France}
\email{corentin.kilque@math.univ-toulouse.fr}
\begin{document}

\maketitle

\begin{abstract}
	We investigate in this paper the existence of the leading profile of a WKB expansion for quasilinear initial boundary value problems with a highly oscillating forcing boundary term. The framework is weakly nonlinear, as the boundary term is of order $ O(\epsilon) $ where the frequencies are of order $ O(1/\epsilon) $. We consider here multiple phases on the boundary, generating a countable infinite number of phases inside the domain, and we therefore use an almost periodic functional framework. The major difficulties of this work are the lack of symmetry in the leading profile equation and the occurrence of infinitely many resonances (opposite to the simple phase case studied earlier) The leading profile is constructed as the solution of a quasilinear problem, which is solved using a priori estimates without loss of derivatives. The assumptions of this work are illustrated with the example of isentropic Euler equations in space dimension two.
\end{abstract}

\tableofcontents

\section{Introduction}
We consider in this paper hyperbolic quasilinear initial boundary value problems with a highly oscillating forcing boundary term. We are interested in constructing, in the high frequency asymptotic, an approximate solution to this problem in the form of a WKB expansion. This is the type of question studied in hyperbolic geometric optics. The general idea is to consider an hyperbolic system, of which the source term, the initial term or the boundary term (in the case of boundary value problems) is highly oscillatory, namely with frequencies of order $ 1/\epsilon $ and to look for an approximate solution to the system in the form of an asymptotic expansion. First the equation satisfied by the terms of the asymptotic expansion needs to be formally derived, and then to be solved in a suitable functional space. Once this formal series has been constructed, one may prove that the truncated sums actually approaches the exact solution in the high frequencies asymptotic. The present paper addresses the first part of this framework, and more precisely we prove existence and uniqueness for the leading profile of the asymptotic expansion.

The study of hyperbolic geometric optics goes back to \cite{Lax1957Asymptotic} for the study of the linear Cauchy problem. When the system is nonlinear, the multiplicity of phases in the source term, initial term or boundary term is important since nonlinear interactions between phases may occur. In the case of only one phase, the construction of an asymptotic expansion was first performed by \cite{ChoquetBruhat1964Ondes}. For the justification of this asymptotic expansion we can refer to \cite{JolyRauch1992Justification} in the semi-linear case and \cite{Gues1993Developpement} in the quasi-linear case. The first study of the multiphase case for the Cauchy problem goes back to \cite{HunterMajdaRosales1986Resonantly}. The question has then been largely resolved by J.L. Joly, G. Métivier and J. Rauch, see in particular \cite{JolyMetivierRauch1993Generic}, \cite{JolyMetivierRauch1994Coherent} and \cite{JolyMetivierRauch1995Coherent}. The natural question is to obtain results for boundary value problems, similar to the ones for the Cauchy problems. In \cite{Chikhi1991Reflexion}, the author deals with a semi-linear boundary value problem for a system of two equations, the general case of multiple equations being treated for example in \cite{Williams1996Boundary} and \cite{Williams2000Boundary}. The quasi-linear case but with only one phase on the boundary is treated notably in  \cite{Williams2002Singular}, \cite{CoulombelGuesWilliams2011Resonant}, \cite{CoulombelWilliams2013Reflecting} (the latter taking interest into the justification, which is not addressed in this paper) and \cite{Hernandez2015Resonant}. This work is an extension to the multiphase case: we deal with the same quasi-linear boundary value problem, but with multiple phases on the boundary.

Because of the multiple frequencies on the boundary, the nonlinearity of the problem generates a countable infinite number of phases inside the domain, forcing us to consider an almost-periodic framework, the group of frequencies being, in general, not finitely generated. This almost-periodic functional framework has been previously used to construct approximate solutions to  systems with multiple phases, for semi-linear systems in the context of Wiener algebras by \cite{JolyMetivierRauch1994Coherent} for the Cauchy problem and \cite{Williams1996Boundary} for the boundary value problem. For quasi-linear systems, Bohr-Besicovich spaces are generally used, notably by
\cite{JolyMetivierRauch1995Coherent} for the Cauchy problem. In this work we attempt to achieve the next step, namely to obtain a similar result as the one of \cite{JolyMetivierRauch1995Coherent}, for quasi-linear boundary value problems. We adapt the functional framework of \cite{JolyMetivierRauch1995Coherent} to the context of boundary value problems, by considering functions that are quasi-periodic with respect to the tangential fast variables and almost-periodic with respect to the normal fast variable. Concerning the regularity, we choose a Sobolev control for the (slow and fast) tangential variables, and a uniform control for the normal variables.
The leading profile of the WKB expansion is then obtained as the solution of a quasilinear problem which takes into account the potentially infinite number of resonances between the phases. We solve this quasilinear problem in a classical way by proving estimates without loss of regularity. The example of gas dynamics is used all along the paper to illustrate the general assumptions that will be made during the analysis. The main difference between this paper and \cite{JolyMetivierRauch1995Coherent} is the absence of symmetry in the problem. Indeed, starting with an evolution problem in time, we modify it to obtain a propagation problem in the normal variable $ x_d $, with respect to which the system is not hyperbolic. In \cite{JolyMetivierRauch1995Coherent}, these symmetries are used for the a priori estimates to handle the resonance terms that appear in the equations. Even though it is relatively easy in our problem to create  symmetries for the self-interaction terms, it is more delicate for the resonance terms, which, unlike the case of \cite{CoulombelGuesWilliams2011Resonant}, are in infinite number. The last assumption of the paper is made to deal with this issue, and in essence controls the lack of symmetry of resonance terms. The notions associated to it appear in \cite[Chapter 11]{Rauch2012Hyperbolic}.

The proof of existence of a leading profile is divided in three parts: formal derivation of the equation satisfied by the leading profile, reduction and decoupling of these equations, and finally energy estimates on these equations.
Formal derivation of the WKB cascade is quite classical in geometric optics, and consists on formally replacing the series in the exact system (system \eqref{eq systeme 1} in the following). As usual, this cascade is decoupled using projectors and an operator on the space of profiles. The said operator remains formal in this paper, and the projectors require a small divisors assumption to be rigorously defined. The second part of the proof takes interest into decoupling and reducing the system to a system for the oscillating resonant modes, a system for each oscillating non-resonant mode, and a system for the evanescent part. In order to do that, extending the system into modes, we begin by showing that the mean value satisfies a decoupled system with zero source term and boundary term, and is therefore zero. Next we prove that the outgoing modes are also zero, deriving energy estimates for them. For this purpose we use a suitable scalar product on the space of profiles, that requires a compact support in the normal direction. Therefore a finite speed propagation is proven beforehand. Once there are only incoming and evanescent modes, it is easy to determine a boundary condition for each mode from the original boundary condition, and therefore decouple the system. The equation satisfied by the evanescent part gives a formula for it using the double trace on the boundary, so its construction is quite straightforward. However we need to check that the constructed solution is actually in the space of evanescent profiles. To construct the oscillating parts we show a priori estimates without loss of regularity for the linearized oscillating systems, which allow us to prove the well-posedness of these linearized systems, and then by an iterative scheme the existence of solutions to the original systems. To derive the energy estimates we use an alternative scalar product that takes advantage of propagation in the normal direction. Several terms need to be handled. The transport and Burgers ones are quite classical to treat, and the assumption on the set of resonances is used to address the resonant ones. Since it is quite usual, we do not give the details of the construction of the solution to the linearized systems and the iterative schemes.
Reassembling the constructed profiles we finally get the leading profile solution to the initial system. 

The paper is organized as follows.  After the first section devoted to this introduction, the second one introduces the problem and states the usual assumptions on the system. First the problem studied in this work is precisely described, and the example of Euler equations that will be used throughout the analysis is introduced, and then characteristic frequencies and the strict hyperbolicity assumption are looked at. Finally interest is made on properties and assumption about the boundary condition. Assumptions of this section ensure that the initial boundary problem is well posed locally in time for the exact solution. However, due to the high frequencies in the forcing term, we do not know if the lifespan of the exact solution is uniform with respect to the small wavelength.  Third section is devoted to the functional framework of the paper. After a motivation of this framework with a formal study of the frequencies created inside the domain and some assumptions on it, we describe the spaces of profiles which will be used, and introduce scalar products on these functional spaces. After this rather long introduction of the problem and assumptions, the ansatz of the expansion and the main result are stated in section 4. The proof is then divided in two sections. The fifth one is a formal derivation of the equations satisfied by the leading profile. First the cascade of equations for the profiles is obtained by a formal WKB study. It gives rise to a certain fast problem, that is resolved in a second part, which allows to finally write the decoupled equations for the leading profile. The last section of this paper is the core of the proof. First some coefficients associated to resonances are introduced and the last assumption of this work is made about these coefficients to deal with the lack of symmetry in resonance terms. We proceed by making rigorous the results of the fifth section analysis which will be used after. Next part is devoted to reducing and decoupling the equations into an equation for the evanescent part, and equations for the resonant oscillating part and each non-resonant oscillating part. A lot of the techniques used in this part are used in the next two ones, that achieves the main step of the proof, namely proving energy estimates on the linearized equations for the oscillating resonant and non-resonant parts. These estimates are used in the following part to construct an oscillating solution, using an iterative scheme. It is also proven that the constructed evanescent part belongs to the space of evanescent profiles. Finally a conclusion and some perspectives are drawn. In the appendix is presented three technical proofs that have been postponed, and are about notions of the second section.

\bigskip

In all the paper the letter $ C $ denotes a positive constant that may vary during the analysis, possibly without any mention being made, and for every matrix $ M $, the notation $ \,^tM $ refers to its real transpose.

\section{Notations and assumptions}

\subsection{Presentation of the problem}

Given a time $ T>0 $ and an integer $ d\geq 2 $, let $ \Omega_T $ be the domain $\Omega_T:=(-\infty,T]\times \R^{d-1}\times \R_+$ and $\omega_T:=(-\infty,T]\times \R^{d-1}$ its boundary.
We denote as $ t\in(-\infty,T] $ the time variable, $ x=(y,x_d)\in\R^{d-1}\times\R_+ $ the space variable, with $ y\in\R^{d-1} $ the tangential variable and $ x_d\in\R_+ $ the normal variable, and at last $ z=(t,x)=(t,y,x_d) $. We also denote by $ z'=(t,y)\in\omega_T $ the variable of the boundary $ \ensemble{x_d=0} $. For $ i=1,\dots,d $, we denote by $ \partial_i $ the operator of partial derivative with respect to $ x_i $. Finally we denote as $ \alpha\in\R^{d+1} $ and $ \zeta\in\R^d $ the dual variables of $ z\in\Omega_T $ and $ z'\in\omega_T $.  We consider the following problem
\begin{equation}\label{eq systeme 1}
\left\lbrace \begin{array}{lr}
L(u^{\epsilon},\partial_z)\,u^{\epsilon}:=\partial_tu^{\epsilon}+\displaystyle\sum_{i=1}^dA_i(u^{\epsilon})\,\partial_iu^{\epsilon}=0&\qquad \mbox{in } \Omega_T, \\[5pt]
B\,u^{\epsilon}_{|x_d=0}=\epsilon\, g^{\epsilon}&\qquad \mbox{on } \omega_T,  \\[10pt]
u^{\epsilon}_{|t\leq 0}=0,&
\end{array}
\right.
\end{equation}
where the unknown $u^{\epsilon}$ is a function from $\Omega_T$ to an open set $\mathcal{O}$ of $\R^N$ containing zero, with $ N\geq 1 $, the matrices $A_j$ are regular functions of $\mathcal{O}$ with values in $\mathcal{M}_N(\R)$ and the matrix $B$ belongs to $\mathcal{M}_{M\times N}(\R)$ and is of maximal rank. The integer $M$ is made precise in Assumption \ref{hypothese UKL} below. To simplify the notations and clarify the proofs we consider here linear boundary conditions, but it would be possible to deal with non-linear ones. Furthermore we assume the boundary to be noncharacteristic, that is the following assumption is made.

\begin{assumption}[Noncharacteristic boundary]\label{hypothese bord non caract}
	For all $ u $ in $ \mathcal{O} $, the matrix $ A_d(u) $ is invertible.
\end{assumption}

The dependence on $ \epsilon>0 $ of system \eqref{eq systeme 1} comes from the source term $ \epsilon\,g^{\epsilon} $ on the boundary $ \omega_T $, where the quasi-periodic function $ g^{\epsilon} $ is defined, for $ z' $ in $ \omega_T $, as
\begin{equation}\label{eq def g epsilon 1}
g^{\epsilon}(z')=G\left( z',\frac{z'\cdot\zeta_1}{\epsilon},\dots,\frac{z'\cdot\zeta_m}{\epsilon}\right),
\end{equation}
where $ G $ is a function of the Sobolev space $ H^{\infty}(\R^d\times\T^m) $, with $ m\geq2 $, that vanishes for negative times $ t $ and of zero mean with respect to $ \theta $ in $ \T^m $, and where $ \zeta_1,\dots,\zeta_{m} $ are frequencies of $ \R^{d}\privede{0} $. Here the notation $ \T $ stands for the torus $ \R/2\pi\Z $. We denote by $ \boldsymbol{\zeta} $ the $ m $-tuple $ \boldsymbol{\zeta}:=(\zeta_1,\dots,\zeta_m) $. The function $ G $ being periodic and of zero mean with respect to $\theta$, we may write
\begin{equation}\label{eq def G_n}
G(z',\theta)=\sum_{\mathbf{n}\in\Z^m\privede{0}}G_{\mathbf{n}}(z')\,e^{i\mathbf{n}\cdot\theta},
\end{equation}
where $ G_{\mathbf{n}} $ is in $ H^{\infty}(\R^d) $ and is zero for negative times $ t $, for all $ \mathbf{n} $ in $ \Z^m\privede{0} $. The framework of weakly non-linear geometric optics is chosen here, namely we expect the leading profile in the asymptotic expansion to be of order $ \epsilon $, which explains the $ \epsilon $ factor in front of $ g^{\epsilon} $ in the boundary condition. Note that without loss of generality, we can assume that $ \zeta_1,\dots,\zeta_m $ are linearly independent over $ \mathbb{Q} $.

Condition $u^{\epsilon}_{|t\leq 0}=0$ in \eqref{eq systeme 1} expresses the nullity of the initial conditions. The time of existence $ T>0 $ is not fixed at first and is likely to become sufficiently small to ensure existence of a leading profile.
 
The study of \cite{Williams2002Singular}, \cite{CoulombelGuesWilliams2011Resonant} and \cite{Hernandez2015Resonant} is here extended to several phases on the boundary. No assumption on the group of boundary frequencies generated by the frequencies $ (\zeta_1,\dots,\allowbreak\zeta_m) $ is made, apart from it being finitely generated. In particular it may not be discrete.

We want to approximate the exact solution  to \eqref{eq systeme 1}, in the limit where $ \epsilon $ goes to $ 0 $, by an approximate solution that behaves as $\epsilon$ in range, and $ 1/\epsilon $ in frequency. This is the weakly nonlinear geometric optics framework, see \cite{Rauch2012Hyperbolic} and \cite{Metivier2009Optics}. Recall that in this paper we do not prove stability, i.e. that the approximate solution converges in some sense to the exact one, since we do not know if the latter exists on a time interval independent of $ \epsilon $. To obtain this kind of result, we first have to make several suitable assumptions about the original problem. The rest of this section is devoted to these assumptions, and focuses on the characteristic frequencies associated with the system. Let us first detail the example that inspires the general framework developed in this paper.

\begin{example}\label{exemple Euler 1}
	The isentropic compressible Euler equations in two dimensions provide a system of the form of \eqref{eq systeme 1}. Under regularity assumptions on the solution, the associated boundary value problem reads
	\begin{equation}\label{eq systeme Euler}
	\left\lbrace \begin{array}{lr}
	\partial_tV^{\epsilon}+A_1(V^{\epsilon})\,\partial_1V^{\epsilon}+A_2(V^{\epsilon})\,\partial_2V^{\epsilon}=0&\qquad \mbox{in } \Omega_T, \\[5pt]
	B\,V^{\epsilon}_{|x_d=0}=\epsilon\, g^{\epsilon}&\qquad \mbox{on } \omega_T,  \\[10pt]
	V^{\epsilon}_{|t\leq 0}=0,&
	\end{array}
	\right.
	\end{equation}
	with $ V^{\epsilon}=(v^{\epsilon},\mathbf{u}^{\epsilon})\in\R^3 $, where $ v^{\epsilon}\in\R^*_+ $ represents the fluid volume, and $ \mathbf{u}^{\epsilon}\in\R^2 $ its velocity, and where the functions $ A_1 $ and $ A_2 $ are defined on $ \R^*_+\times\R^2 $ as
	\begin{equation}\label{eq Euler def A1 et A2}
	A_1(V):=\left(\begin{array}{ccc}
	\mathbf{u}_1 & -v & 0 \\[5pt]
	-c(v)^2/v & \mathbf{u}_1 & 0 \\[5pt]
	0 & 0 &\mathbf{u}_1 
	\end{array}\right),\qquad
	A_2(V):=\left(\begin{array}{ccc}
	\mathbf{u}_2 & 0 &-v  \\[5pt]
	0& \mathbf{u}_2 & 0 \\[5pt]
	-c(v)^2/v  & 0 &\mathbf{u}_2
	\end{array}\right),
	\end{equation}
	with $ c(v)>0 $ representing the sound velocity in the fluid, which depends on its volume $ v $. The noncharacteristic boundary Assumption \ref{hypothese bord non caract} for system \eqref{eq systeme Euler} is now discussed. In this article, we consider geometric optics expansions for system \eqref{eq systeme 1} constructed as perturbations around the equilibrium 0, performing a change of variables if necessary. For the Euler system the natural coefficients $ A_1,A_2 $ are rather used, and a perturbation around the equilibrium $ V_0=(v_0,0,u_0) $ is considered, where $ v_0>0 $ is a fixed volume, and $ (0,u_0) $ is an incoming \emph{subsonic} velocity, that is such that $ 0<u_0<c_0 $, where we denote $ c_0:=c(v_0) $. 
	
	The Assumption \ref{hypothese bord non caract} concerns in this case the invertibility of the matrix $ A_2(V) $ for $ V=(v,u_1,u_2)\in\R_+^*\times\R^2 $ in the neighborhood $ V_0 $. The determinant of the matrix $ A_2(V) $ is given by  $ \det A_2(V)=u_2\,(u_2^2-c(v)^2) $, which is nonzero if the velocity $ u_2 $ satisfies $ 0<u_2<c(v) $. The equilibrium $ V_0 $ verifying this condition, every  small enough neighborhood $ \mathcal{O} $ of $ V_0 $ suits to satisfy  Assumption \ref{hypothese bord non caract}.
\end{example}

The rest of the section is dedicated to the characteristic frequencies related to the problem and the associated assumptions.

\subsection{Strict hyperbolicity}

The following definition introduces the notion of characteristic frequency.

\begin{definition}
	For $\alpha=(\tau,\eta,\xi)\in\R\times\R^{d-1}\times\R$, the symbol $L(0,\alpha)$ associated with $L(0,\partial_z)$ is defined as
	\begin{equation*}
	L(0,\alpha):=\tau I+\sum_{i=1}^{d-1}\eta_iA_i(0)+\xi A_d(0).
	\end{equation*}
	Then we define its characteristic polynomial as $ p(\tau,\eta,\xi):=\det L\big(0,(\tau,\eta,\xi)\big) $. We say that $\alpha\in\R^{1+d}$ is a \emph{characteristic frequency} if it is a root of the polynomial $p$, and we denote by $ \mathcal{C} $ the set of characteristic frequencies.
\end{definition}

The following assumption, called \emph{strict hyperbolicity} (see  \cite[Definition 1.2]{BenzoniSerre2007Multi}), is made. Assumptions \ref{hypothese bord non caract} of non-characteristic boundary and of hyperbolicity (whether strict or with constant multiplicity) are very usual, see e.g. \cite{Williams1996Boundary,CoulombelGuesWilliams2011Resonant,JolyMetivierRauch1995Coherent}, and related to the structure of the problem. Assumption of hyperbolicity of constant multiplicity, which is more general than Assumption \ref{hypothese stricte hyp} of strict hyperbolicity, is sometimes preferred like in \cite{CoulombelGuesWilliams2011Resonant,JolyMetivierRauch1995Coherent}. We chose here to work with the latter for technical reasons.

\begin{assumption}[Strict hyperbolicity]\label{hypothese stricte hyp}
	There exist real functions $\tau_1<\cdots<\tau_N$, analytic with respect to $ (\eta,\xi) $ in $\R^d\setminus\{0\}$, such that for all
	$(\eta,\xi)\in\R^d\setminus\ensemble{0}$ and for all 
	$\tau\in\R$, the following factorisation is verified
	\[p(\tau,\eta,\xi)=\det\Big(\tau I+\sum_{i=1}^{d-1}\eta_iA_i(0)+\xi A_d(0)\Big)=\prod_{k=1}^N\big(\tau-\tau_k(\eta,\xi)\big),\]
	where the eigenvalues $-\tau_k(\eta,\xi)$ of the matrix $A(\eta,\xi):=\sum_{i=1}^{d-1}\eta_iA_i(0)+\xi A_d(0)$ are therefore simple. Consequently, for all $(\eta,\xi)\in\R^d \setminus\ensemble{0}$, 
	 the following decompositions of $\C^N$ into dimension 1 eigenspaces hold
	\begin{align}\label{eq decomp C^n ker L(alpha)}
	\C^N&=\ker L\big(0,\tau_1(\eta,\xi),\eta,\xi\big)\oplus\cdots\oplus\ker L\big(0,\tau_N(\eta,\xi),\eta,\xi\big),\\\label{eq decomp C^n ker L(alpha) tilde}
	\C^N&=A_d(0)^{-1}\,\ker L\big(0,\tau_1(\eta,\xi),\eta,\xi\big)\oplus\cdots\oplus A_d(0)^{-1}\,\ker L\big(0,\tau_N(\eta,\xi),\eta,\xi\big).
	\end{align}
	For $k=1,\dots,N$ and for $(\eta,\xi)$ in $\R^d\setminus\{0\}$,
	we define the projectors $\pi_k(\eta,\xi)$ and $ \tilde{\pi}_{k}(\eta,\xi) $, respectively associated with decompositions \eqref{eq decomp C^n ker L(alpha)} and \eqref{eq decomp C^n ker L(alpha) tilde}. 
	
	For $ k=1,\dots,N $ and $ (\eta,\xi)\in\R^d\privede{0} $, we also denote as $ E_k(\eta,\xi) $ a unitary eigenvector generating the eigenspace $ \ker L\big(0,\tau_k(\eta,\xi),\eta,\xi\big) $, so that
	\begin{equation}\label{eq base C^N adaptee espace propres stricte hyp}
	E_1(\eta,\xi),\dots,E_N(\eta,\xi)
	\end{equation}
	is a real normal basis of $ \C^N $ adapted to decomposition \eqref{eq decomp C^n ker L(alpha)}. Observe that the family
	\begin{equation}\label{eq base C^N tilde adaptee espace propres stricte hyp}
	A_d(0)^{-1}E_1(\eta,\xi),\dots,A_d(0)^{-1}E_N(\eta,\xi)
	\end{equation}
	is therefore a real normal basis of $ \C^N $ adapted to decomposition \eqref{eq decomp C^n ker L(alpha) tilde}.
\end{assumption}

\begin{remark} 
	\begin{enumerate}[label=\roman*), leftmargin=0.7cm]
		\item  We will be led further on to consider the modified operator
		\begin{equation*}
			\tilde{L}(0,\partial_z):=A_d(0)^{-1}\,L(0,\partial_z). 
		\end{equation*}
		This justifies the introduction of the modified symbol $ \tilde{L}(0,\alpha):=A_d(0)^{-1}\,L(0,\alpha) $, and thus of the projector $ \tilde{\pi}_{k}(\eta,\xi) $, the symbols $ L(0,\alpha) $ and $ \tilde{L}(0,\alpha) $ having different ranges.
		\item Since the matrix $A(\eta,\xi)=\sum_{i=1}^{d-1}\eta_iA_i(0)+\xi A_d(0)$ is real for $(\eta,\xi)\in\R^d$ and the eigenvalues $\tau_1,\dots,\tau_N$ are real,  decompositions \eqref{eq decomp C^n ker L(alpha)} and \eqref{eq decomp C^n ker L(alpha) tilde} also hold in $\R^N$, but we are interested in the ones of $\C^N$ since some functions that will be studied are complex valued.
		\item If $ \alpha=(\tau,\eta,\xi) $ is a characteristic frequency, then by definition and according to Assumption \ref*{hypothese stricte hyp}, the triplet $ (\tau,\eta,\xi) $ satisfies
		\begin{equation*}
		\prod_{k=1}^N\big(\tau-\tau_{k}(\eta,\xi)\big)=0.
		\end{equation*}
		There exists therefore an integer $ k $ between 1 and $ N $ such that $ \tau=\tau_k(\eta,\xi) $. In other words, the characteristic manifold $ \mathcal{C} $ is the union of the $ N $ hypersurfaces given by $ \ensemble{\tau=\tau_k(\eta,\xi)} $, $ k=1,\dots,N $.
	\end{enumerate} 
\end{remark}

\begin{remark}
	One can verify that in Assumption \ref{hypothese stricte hyp}, the functions $ \tau_k $ for $ k=1,\dots,N $ are positively homogeneous of degree 1 in $ \R^d\privede{0} $. The projectors $ \pi_{k} $ and $ \tilde{\pi}_k $ for $ k=1,\dots,N $ are therefore positively homogeneous of degree 0 in $ \R^d\privede{0} $.
\end{remark}

\begin{example}
	Returning to Example \ref{exemple Euler 1}, for system \eqref{eq systeme Euler} linearized around $ V_0=(v_0,0,u_0) $, the characteristic polynomial $ p $ reads
	\begin{equation*}
	p(\tau,\eta,\xi)=\det\left(\begin{array}{ccc}
	\tau+\xi\,u_0 & -v_0\,\eta & -v_0\,\xi \\[5pt]
	-c_0^2\,\eta/v_0 & \tau+ \xi\, u_0 & 0 \\[5pt]
	-c_0^2\,\xi/v_0 & 0 & \tau+\xi\,u_0
	\end{array}\right)=(\tau+\xi\,u_0)\,\big((\tau+\xi\,u_0)^2-c_0^2\,(\eta^2+\xi^2)\big).
	\end{equation*}
	Thus the eigenvalues of the matrix $ A(\eta,\xi)=\eta\,A_1(V_0)+\xi\,A_2(V_0) $ are the additive inverse of the roots with respect to $ \tau $ of the polynomial $ p $, given by
	\begin{equation}\label{eq Euler def tau 1 2 3}
	\tau_1(\eta,\xi):=-u_0\,\xi-c_0\,\sqrt{\eta^2+\xi^2},\quad \tau_2(\eta,\xi):=-u_0\,\xi,\quad \tau_3(\eta,\xi):=-u_0\,\xi+c_0\,\sqrt{\eta^2+\xi^2}.
	\end{equation}
	The functions $ \tau_1,\tau_2 $ and $ \tau_3 $ are analytic and distinct in $ \R^2\privede{0} $. System \eqref{eq systeme Euler} is therefore strictly hyperbolic, which means that it satisfies Assumption \ref{hypothese stricte hyp} of strict hyperbolicity.
	We have represented in Figure \ref{figure frequences caract Euler} the characteristic frequencies $ \alpha=(\tau,\eta,\xi)\in\R^3 $ for system \eqref{eq systeme Euler}.
\end{example}

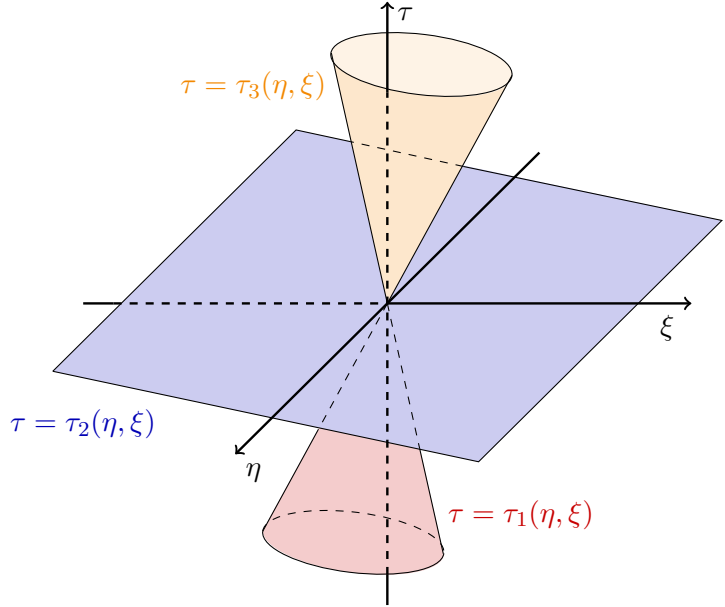
\begin{figure}[!ht]
	\centering
	\begin{tikzpicture}[scale=0.8]
	\fill[altred!20, rotate=-8] (0,0) -- (-3/2,-4) arc(-180:0:1.5 and 0.5) -- (3/2,-4); 
	\fill[altblue!20] (-5.5,-1.125) --  (1.5,-2.625) -- (5.5,1.375) -- (-1.5,2.875);
	\fill[altorange!20, rotate=-8] (0,0) -- (-3/2,4) arc(-180:0:1.5 and 0.5) -- (3/2,4); 
	\fill[altorange!10, rotate around={-7:(0.56,3.96)}] (0.56,3.96) ellipse(1.5 and 0.5);
	\draw[line width = 0.9pt] (-5,0) -- (-4.5,0);
	\draw[line width = 0.9pt, dashed] (-4.5,0) -- (0,0);
	\draw[line width = 0.9pt,->] (0,0) -- (5,0);
	\draw[line width = 0.9pt,] (0,-5) -- (0,-4.5);
	\draw[line width = 0.9pt, dashed] (0,-4.5) -- (0,3.5);
	\draw[line width = 0.9pt,->] (0,3.5) -- (0,5);
	\draw[line width = 0.9pt,->] (2.5,2.5) -- (-2.5,-2.5);
	\draw (-2.2,-2.8) node{$ \eta $};
	\draw (4.6,-0.4) node{$ \xi $};
	\draw (0.3,4.8) node{$ \tau $};
	\draw[altred] (2.2,-3.5) node{$ \tau=\tau_1(\eta,\xi) $};
	\draw[altblue] (-5,-2) node{$ \tau=\tau_2(\eta,\xi) $};
	\draw[altorange] (-2.2,3.6) node{$ \tau=\tau_3(\eta,\xi) $};
	\draw[] (0,0) -- (2.05,3.78);
	\draw[] (0,0) -- (-0.92,4.15);
	\draw[, dashed] (0,0) -- (-1.14,-2.1);
	\draw[, dashed] (0,0) -- (0.53,-2.4);
	\draw[] (-1.14,-2.1) -- (-2.05,-3.78);
	\draw[] (0.53,-2.4) -- (0.92,-4.15);
	\draw[rotate around={-7:(-0.56,-3.96)}, ] (-2.06,-3.96) arc(-180:0:1.5 and 0.5);
	\draw[rotate around={-7:(-0.56,-3.96)}, , dashed] (-2.06,-3.96) arc(180:0:1.5 and 0.5);
	\draw[rotate around={-7:(0.56,3.96)}, ] (0.56,3.96) ellipse(1.5 and 0.5);
	\draw[] (-5.5,-1.125) --  (1.5,-2.625) -- (5.5,1.375) -- (1.25,2.27);
	\draw[] (-0.6,2.68) -- (-1.5,2.875 )--(-5.5,-1.125) ;
	\draw[dashed] (1.25,2.27) -- (-0.6,2.68);
	\end{tikzpicture}
	\caption{Characteristic frequencies for the isentropic compressible Euler system \eqref{eq systeme Euler}}
	\label{figure frequences caract Euler}
\end{figure}

We now define projectors derived from the $ \C^N $ decomposition \eqref{eq decomp C^n ker L(alpha)}, that we extend to noncharacteristic frequencies. We also determine some equalities between the kernel and range of the projectors $ \pi_{\alpha} $ and $ \tilde{\pi}_{\alpha} $ and of the matrices $ L(0,\alpha) $ and $ \tilde{L}(0,\alpha) $. The proof is based on the one of \cite[Lemma 3.2]{CoulombelGues2010Geometric}.

\begin{definition}\label{def pi alpha}
	Let $\alpha=(\tau,\eta,\xi)\in\R^{1+d} \privede{0}$  be a characteristic frequency and $ k $ the integer between $ 1 $ and $ N $ such that $\tau=\tau_k(\eta,\xi)$. We denote by $\pi_{\alpha}:=\pi_k(\eta,\xi)$ (resp. $ \tilde{\pi}_{\alpha}:=\tilde{\pi}_k(\eta,\xi) $) the projection from $\C^N$ onto the eigenspace $\ker L\big(0,\tau_k(\eta,\xi),\eta,\xi\big)$ (resp. the subspace $A_d(0)^{-1}\,\ker L\big(0,\tau_k(\eta,\xi),\eta,\xi\big)$) according to decomposition \eqref{eq decomp C^n ker L(alpha)} (resp. \eqref{eq decomp C^n ker L(alpha) tilde}). If the frequency $\alpha\in\R^{1+d}\setminus\ensemble{0}$ is not characteristic, we denote $ \pi_{\alpha}=\tilde{\pi}_{\alpha}:=0 $ and if $ \alpha=0 $ we denote $ \pi_0:=\tilde{\pi}_0:=I $. For all $ \alpha $ in $ \R^{d+1} $, we can verify that $ \pi_{\alpha} $ satisfies
	\begin{equation}\label{eq freq ker L ima pi}
	\ker L(0,\alpha) = \ker \tilde{L}(0,\alpha) = \Ima \pi_{\alpha},
	\end{equation}
	and
	\begin{equation}\label{eq freq ima L ker pi}
	\Ima L(0,\alpha)=\ker \pi_{\alpha},
	\end{equation}
	and that the projector $ \tilde{\pi}_{\alpha} $ satisfies
	\begin{equation}\label{eq freq ima L tilde ker pi tilde}
	 \Ima \tilde{L}(0,\alpha)=\ker \tilde{\pi}_{\alpha},
	\end{equation}
	 recalling that $ \tilde{L}(0,\alpha) $ refers to the modified symbol $ \tilde{L}(0,\alpha):=A_d(0)^{-1}\,L(0,\alpha) $.
	 
	For all $ \alpha\in\R^{1+d}\setminus\ensemble{0} $, we denote by $ Q_{\alpha} $ the partial inverse of the matrix $ L(0,\alpha) $, namely the unique matrix $ Q_{\alpha} $ such that $ Q_{\alpha}\,  L(0,\alpha) =L(0,\alpha)\, Q_{\alpha}=I-\pi_{\alpha} $. If $ \alpha=0 $, we define $ Q_{\alpha}:=I $. 
\end{definition}

\begin{proof}
	Consider $ \alpha=(\tau,\eta,\xi) $ in $ \R^{d+1} $. Equation \eqref{eq freq ker L ima pi} is satisfied by definition of $ \pi_{\alpha} $, and equality of the kernels $ \ker L(0,\alpha) $  and $ \ker \tilde{L}(0,\alpha) $, the matrix $ A_d(0) $ being invertible. Regarding equation \eqref{eq freq ima L ker pi}, we first note that by the rank-nullity theorem and by definition of  $ \pi_{\alpha} $, the subspaces $ \Ima L(0,\alpha) $ and $ \ker \pi_{\alpha} $ have the same dimension. We denote by $ k_0 $ the integer between 1 and $ N $ such that $ \tau=\tau_{k_0}(\eta,\xi) $. We consider then an element $ L(0,\alpha)\,X $ of $ \Ima L(0,\alpha) $, with $ X $ in $ \C^N $ that we decompose according to \eqref{eq decomp C^n ker L(alpha)}, as
	$
	X=\sum_{k= 1}^N\pi_{k}(\eta,\xi)\,X.
	$
	For $ k=1,\dots,N $, the projector $ \pi_k(\eta,\xi) $ admitting the eigenspace of the matrix $ A(\eta,\xi) $ associated with the eigenvalue $ -\tau_k(\eta,\xi) $ as range, we have
	\begin{align*}
	L(0,\alpha)\,X&=\sum_{k= 1}^N\Big(\tau_{k_0}(\eta,\xi) \,I+A(\eta,\xi)\Big)\pi_{k}(\eta,\xi)\,X\\
	&=\sum_{k\neq k_0}\big(\tau_{k_0}(\eta,\xi)-\tau_{k}(\eta,\xi)\big)\,\pi_{k}(\eta,\xi)\,X.
	\end{align*}
	Equation \eqref{eq freq ker L ima pi} being satisfied, we deduce that $ L(0,\alpha)\,X $ belongs to
	\begin{equation*}
	\bigoplus_{k\neq k_0}\ker L\big(0,(\tau_k(\eta,\xi),\eta,\xi)\big),
	\end{equation*}
	which, by definition of the projectors $ \pi_k $, is equal to the kernel of $ \pi_{k_0}(\eta,\xi)=\pi_{\alpha} $. With the equality of dimensions, equation \eqref{eq freq ima L ker pi} is therefore verified. The proof of equation \eqref{eq freq ima L tilde ker pi tilde} is similar: we consider $ X $ in $ \C^N $ that we decompose as
	$
	X=\sum_{k= 1}^N\pi_{k}(\eta,\xi)\,X,
	$
	and then we write
	\begin{align*}
	\tilde{L}(0,\alpha)\,X&=A_d(0)^{-1}\sum_{k= 1}^N\Big(\tau_{k_0}(\eta,\xi) \,I+A(\eta,\xi)\Big)\pi_{k}(\eta,\xi)\,X\\
	&=A_d(0)^{-1}\sum_{k\neq k_0}\big(\tau_{k_0}(\eta,\xi)-\tau_{k}(\eta,\xi)\big)\,\pi_{k}(\eta,\xi)\,X,
	\end{align*}
	so that $ \tilde{L}(0,\alpha)\,X $ belongs to
	\begin{equation*}
	\bigoplus_{k\neq k_0}A_d(0)^{-1}\ker L\big(0,(\tau_k(\eta,\xi),\eta,\xi)\big)=\ker \tilde{\pi}_{\alpha}.
	\end{equation*}
	Once again by equality of dimensions it leads to equation \eqref{eq freq ima L tilde ker pi tilde}.
	\end{proof}

\begin{remark}\label{remarque proj pi et Q bornes}
	\begin{enumerate}[label=\roman*), leftmargin=0.7cm]
	\item For every $ k=1,\dots, N $, the projectors $\pi_k(\eta,\xi)$ and $ \tilde{\pi}_k(\eta,\xi) $ are positively homogeneous of degree 0 in $(\eta,\xi)\in\R^{d}\setminus\ensemble{0}$. Furthermore, by strict hyperbolicity, the basis $ E_1(\eta,\xi),\dots,E_N(\eta,\xi) $ and $ A_d(0)^{-1}E_1(\eta,\xi), \dots,A_d(0)^{-1}E_N(\eta,\xi) $ are analytic with respect to $(\eta,\xi)\in\R^{d}\setminus\ensemble{0}$, and the maps $(\eta,\xi)\mapsto\pi_k(\eta,\xi)$ and $(\eta,\xi)\mapsto\tilde{\pi}_k(\eta,\xi)$ are therefore analytic in $ \R^d\setminus\ensemble{0} $. Thus, by compactness of the sphere $ \S^{d-1} $, for all $ k=1,\dots,N $, the projectors $\pi_k(\eta,\xi)$ and $\tilde{\pi}_k(\eta,\xi)$ are uniformly bounded with respect to $ (\eta,\xi)\in\R^d\setminus\ensemble{0} $. The projectors $\pi_{\alpha}$ and $ \tilde{\pi}_{\alpha} $ are therefore bounded with respect to $\alpha$ in $\R^{1+d}$. 
	\item Unlike the projectors $ \pi_{k} $ and $ \tilde{\pi}_k $, $ k=1,\dots,N $, the projectors $ \pi_{\alpha} $ and $ \tilde{\pi}_{\alpha} $ are homogeneous of degree 0 with respect to $ \alpha $ in $ \R^{d+1} $, and not only \emph{positively} homogeneous. Indeed, the claim is obvious if $ \alpha $ is zero or noncharacteristic, and if $ \alpha $ is a nonzero characteristic frequency, and $ \lambda $ a nonzero real number, then, since $ \ker L(0,\lambda\,\alpha)=\ker L(0,\alpha) $ and $ \Ima L(0,\lambda\,\alpha)=\Ima L(0,\alpha) $, we have $ \pi_{\lambda\,\alpha}=\pi_{\alpha} $. The proof is the same for $ \tilde{\pi}_{\alpha} $.
	\end{enumerate}
\end{remark}

\subsection{The uniform Kreiss-Lopatinskii condition and some preliminary results}

We define the following space of frequencies
\begin{align*}
\Xi&:=\{\zeta=(\sigma=\tau-i\gamma,\eta)\in(\C\times\R^{d-1})\backslash\{0\} \mid \gamma\geq 0\},\\
\Sigma&:=\ensemble{\zeta\in\Xi\mid \tau^2+\gamma^2+|\eta|^2=1},\\
\Xi_0&:=\{\zeta\in\Xi \mid \gamma=0\},\\
\Sigma_0&:=\Xi_0\cap\Sigma.
\end{align*}
We also define the matrix that we get when applying the Laplace-Fourier transform to the operator $L(0,\partial_z)$. For all $\zeta=(\sigma,\eta)\in\Xi$, let
\[\mathcal{A}(\zeta):=-i\,A_d(0)^{-1}\Big(\sigma I+\sum_{i=1}^{d-1}\eta_j\, A_j(0)\Big).\]
The noncharacteristic boundary Assumption \ref{hypothese bord non caract} is used here to define the matrix $ \mathcal{A}(\zeta) $. We note that if $ \zeta=(\tau,\eta)\in\Xi_0 $, and if $ i\xi $ is an imaginary eigenvalue of $ \mathcal{A}(\zeta) $, then the frequency $ (\tau,\eta,\xi) $ is a real characteristic frequency, and vice versa.

Hersh lemma \cite[Lemma 1]{Hersh1963Mixed} ensures that for $ \zeta $ in $ \Xi\backslash\Xi_0 $, the matrix $\mathcal{A}(\zeta)$ has no eigenvalue of zero real part, and that the stable subspace associated with the eigenvalues of negative real part, denoted by $E_-(\zeta)$, is of constant dimension, denoted $p$. Furthermore, the integer $ p $ is  obtained as the number of positive eigenvalues of the matrix $ A_d(0) $.
We denote by $E_+(\zeta)$ the unstable subspace $\mathcal{A}(\zeta)$ associated with eigenvalues of positive real part, that is of dimension $N-p$.

In \cite{Kreiss1970Initial} (see also \cite[Theorem 3.5]{ChazarainPiriou1982Introduction} and \cite[Lemma 4.5]{BenzoniSerre2007Multi}) it is shown that the stable and unstable subspaces $E_{\pm}$ extend continuously to the whole space $\Xi$ in the strictly hyperbolic case (Assumption \ref{hypothese stricte hyp}).
We still denote by $ E_{\pm} $ the extensions to $ \Xi $. The main assumption of this work may now be stated, which, along with Assumptions \ref{hypothese bord non caract} and \ref{hypothese stricte hyp}, ensures that system \eqref{eq systeme 1} is well posed locally in time. Indeed the three assumptions \ref{hypothese bord non caract}, \ref{hypothese stricte hyp} and \ref{hypothese UKL} are stable under small perturbations around the equilibrium. Just like Assumptions \ref{hypothese bord non caract} and \ref{hypothese stricte hyp}, the following assumption is very structural to the problem.

\begin{assumption}[Uniform Kreiss-Lopatinskii condition]\label{hypothese UKL}
	For all $\zeta\in\Xi$, we have
	\[\ker B\cap E_-(\zeta)=\{0\}.\]
	In particular, it forces the rank of the matrix $B$ to be equal to the dimension of $E_-(\zeta)$, namely $M=p$.
\end{assumption}

\begin{remark}\label{remarque B inverse bornee}
	Historically, the first given definition of the uniform Kreiss-Lopatinskii condition did not involve the extension of $ E_- $ to $ \Xi_0 $. The original definition states that, for all $ \zeta\in\Xi\setminus\Xi_0 $, 
	\begin{equation*}
		\ker B\cap E_-(\zeta)=\ensemble{0},
	\end{equation*}
	and that the linear map $ \left(B_{|E_-(\zeta)}\right)^{-1} $ is uniformly bounded with respect to $ \zeta\in\Xi\setminus\Xi_0$, see for instance \cite{Sarason1965Hyperbolic}.
	Indeed, the space $ E_-(\zeta) $ being homogeneous of degree zero and continuous with respect to $ \zeta\in\Xi $, and by compactness of the unitary sphere $ \Sigma $, we note that Assumption \ref{hypothese UKL} implies that the linear map $ \left(B_{|E_-(\zeta)}\right)^{-1} $ is uniformly bounded with respect to $ \zeta\in\Xi $.
\end{remark}

It has already been discussed that for $ \zeta\in\Xi\setminus\Xi_0 $, the matrix $ \mathcal{A}(\zeta) $ has no imaginary eigenvalue. We now commit to describe more precisely the matrix $\mathcal{A}(\zeta)$ for $ \zeta $ in $ \Xi_0 $ as well as the continuous extension to $ \Xi_0 $ of the spaces $E_{\pm}(\zeta)$. The following result, proved by Kreiss \cite{Kreiss1970Initial} for the strictly hyperbolic case that is of interest here, Métivier \cite{Metivier2000Block} for the constantly hyperbolic case, and extended by Métivier and Zumbrun \cite{MetivierZumbrun2005Hyperbolic} to an even more general framework, gives a very useful decomposition of the matrix $\mathcal{A}(\zeta)$ when $\zeta$ belongs to $\Xi_0$.

\begin{proposition}[Block structure]\label{prop struct bloc}
	When Assumption \ref{hypothese stricte hyp} is satisfied, for all $\underline{\zeta}\in\Xi$, there exist a neighborhood $\mathcal{V}$ of $\underline{\zeta}$ in $\Xi$, an integer $L\geq 1$, a partition $N=\rho_1+\dots+\rho_L$ and an invertible matrix $T$ analytic in $\mathcal{V}$ such that for all $\zeta\in\mathcal{V}$, we have
	\[T(\zeta)\,\mathcal{A}(\zeta)\,T(\zeta)^{-1}=\diag\big(\mathcal{A}_1(\zeta),\dots,\mathcal{A}_L(\zeta)\big),\]
	where for all $j$ the matrix $\mathcal{A}_j(\zeta)$ is of size $\rho_j$ and satisfies one of the following properties:
	\begin{enumerate}[label=\roman*),itemsep=0pt]
		\item the real part of the matrix $\mathcal{A}_j(\zeta)$, defined by $(\mathcal{A}_j(\zeta)+\mathcal{A}_j(\zeta)^*)/2$, is positive-definite,
		\item the real part of the matrix $\mathcal{A}_j(\zeta)$ is negative-definite,
		\item $\rho_j=1$, $\mathcal{A}_j(\zeta)$ is imaginary when $\gamma$ is zero and $\partial_{\gamma}\mathcal{A}_j(\underline{\zeta})\in\R^*$,
		\item $\rho_j>1$, the coefficients of $\mathcal{A}_j(\zeta)$ are imaginary when $\gamma$ is zero, there exists $\xi_j\in \R$ such that 
		\[\mathcal{A}_j(\underline{\zeta})=\left(\begin{array}{ccc}
		i\,\xi_j & i & 0 \\
		& \ddots & i \\
		0 & & i\,\xi_j
		\end{array}
		\right),\]
		and the bottom left coefficient of  $\partial_{\gamma}\mathcal{A}_j(\underline{\zeta})$ is real and non zero.
	\end{enumerate}
\end{proposition}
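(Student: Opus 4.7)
The plan is to follow the classical strategy of Kreiss~\cite{Kreiss1970Initial} and Métivier~\cite{Metivier2000Block}: analytically block-diagonalize $\mathcal{A}(\zeta)$ near $\underline{\zeta}$ using the holomorphic functional calculus, and then adjust each block to match one of the four canonical forms. First, I would partition the spectrum of $\mathcal{A}(\underline{\zeta})$ as a disjoint union $\Lambda_+ \sqcup \Lambda_- \sqcup \Lambda_0$, with $\Lambda_{\pm}$ collecting the eigenvalues of positive and negative real part and $\Lambda_0 = \{i\xi_1,\dots,i\xi_r\}$ collecting the imaginary ones (empty if $\underline{\zeta} \in \Xi \setminus \Xi_0$, by Hersh's lemma). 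Around small disjoint circles $\Gamma_{\pm}, \Gamma_{0,\ell}$ encircling these groups, the Dunford projectors
\[
P(\zeta) = \frac{1}{2\pi i}\oint_{\Gamma}\bigl(z I - \mathcal{A}(\zeta)\bigr)^{-1}\,dz
\]
are analytic on a neighborhood $\mathcal{V}$ of $\underline{\zeta}$, and analytic bases of their ranges (obtained by Kato's perturbation-theoretic construction) yield the matrix $T(\zeta)$ that block-diagonalizes $\mathcal{A}(\zeta)$ throughout $\mathcal{V}$.

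For the two blocks attached to $\Lambda_{\pm}$ I would invoke Lyapunov's theorem to produce constant Hermitian positive-definite matrices $H_{\pm}$ satisfying $H_+\,\mathcal{A}_+(\underline{\zeta}) + \mathcal{A}_+(\underline{\zeta})^*\,H_+ > 0$, and analogously $H_-\,\mathcal{A}_-(\underline{\zeta}) + \mathcal{A}_-(\underline{\zeta})^*\,H_- < 0$. Conjugating by $H_{\pm}^{1/2}$ then gives blocks with, respectively, positive- and negative-definite Hermitian parts at $\underline{\zeta}$; by continuity this persists on a small neighborhood, yielding the forms (i) and (ii).

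The delicate blocks will be the ones attached to the imaginary eigenvalues $i\xi_\ell$, and this is where strict hyperbolicity plays a central role. Each such $i\xi_\ell$ corresponds to a real characteristic frequency with $\underline{\tau} = \tau_{k(\ell)}(\underline{\eta}, \xi_\ell)$ for some index $k(\ell)$. Using the relation $\det(\mu I - \mathcal{A}(\underline{\tau},\underline{\eta})) \propto p(\underline{\tau}, \underline{\eta}, -i\mu)$ together with the factorization of $p$ provided by Assumption~\ref{hypothese stricte hyp}, the algebraic multiplicity of $i\xi_\ell$ equals the order of vanishing of $\xi \mapsto \underline{\tau} - \tau_{k(\ell)}(\underline{\eta}, \xi)$ at $\xi_\ell$. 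If $\partial_\xi \tau_{k(\ell)}(\underline{\eta}, \xi_\ell) \neq 0$ the block is $1\times 1$ and we obtain case~(iii); otherwise $\xi_\ell$ is a critical point of $\tau_{k(\ell)}(\underline{\eta}, \cdot)$ and one gets a genuine Jordan block (glancing case~(iv)). A careful choice of basis of (generalized) eigenvectors of $\mathcal{A}(\underline{\zeta})$ normalizes the block to the announced form at $\zeta = \underline{\zeta}$.

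The hard part will be verifying the transversality conditions on $\partial_\gamma \mathcal{A}_j(\underline{\zeta})$. In case~(iii), implicit differentiation of the scalar relation $\underline{\tau} - i\gamma = \tau_{k(\ell)}(\underline{\eta}, -i\mu(\zeta))$ that defines the perturbed eigenvalue immediately gives
\[
\partial_\gamma \mu_{|\gamma=0} = -\frac{1}{\partial_\xi \tau_{k(\ell)}(\underline{\eta},\xi_\ell)} \in \R^\ast,
\]
as required. In case~(iv), the perturbed eigenvalues admit a Puiseux expansion $\mu_j(\gamma) = i\xi_\ell + c\,\gamma^{1/\rho_j} + \cdots$, and a direct algebraic computation (in the spirit of~\cite{Metivier2000Block}) identifies the leading coefficient $c$ with the bottom-left entry of $\partial_\gamma \mathcal{A}_j(\underline{\zeta})$; that entry is real and nonzero precisely because the first nonvanishing higher-order $\xi$-derivative of $\tau_{k(\ell)}$ at $\xi_\ell$ is real and nonzero, itself a consequence of the analyticity of $\tau_{k(\ell)}$ guaranteed by strict hyperbolicity.
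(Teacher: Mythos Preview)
The paper does not actually prove this proposition: it is stated as a known result, attributed to Kreiss~\cite{Kreiss1970Initial} in the strictly hyperbolic case, to M\'etivier~\cite{Metivier2000Block} in the constantly hyperbolic case, and to M\'etivier--Zumbrun~\cite{MetivierZumbrun2005Hyperbolic} more generally. Your proposal is a reasonable high-level outline of the classical Kreiss--M\'etivier argument (Dunford projectors for the analytic block-diagonalization, Lyapunov for the elliptic blocks, and the analysis of Jordan structure plus Puiseux expansions at glancing eigenvalues), and is consistent with how those references proceed; but since the paper itself offers no proof to compare against, there is nothing to match or contrast here beyond noting that your sketch follows the cited literature.
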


This result, commonly referred to as "block structure" \cite[Section 5.1.2]{BenzoniSerre2007Multi}, is fundamental for the proof of Proposition \ref{prop proj bornes} below. In the aim of describing the subspaces $E_{\pm}(\zeta)$ for $\zeta\in\Xi_0$, the vector fields associated with each real characteristic phase are now defined.

\begin{definition}\label{def sortant rentrant alpha X alpha}
	Let $\alpha=(\tau,\eta,\xi)\in\R^{d+1}\backslash\ensemble{0}$ be a characteristic frequency, and $ k $ the integer between 1 and $ N $ such that $\tau=\tau_k(\eta,\xi)$. The group velocity $ \mathbf{v}_{\alpha} $ associated with $ \alpha $ is defined as
	\begin{equation*}
	\mathbf{v}_{\alpha}:=\nabla_{\eta,\xi}\,\tau_k(\eta,\xi).
	\end{equation*}
	We shall say that $\alpha$ is glancing (resp. incoming, outgoing) if $\partial_{\xi}\tau_k(\eta,\xi)$ is zero (resp. negative, positive).  
	Then the vector field $X_{\alpha}$ associated with $\alpha$ is defined as
	\begin{equation}\label{eq champ vecteur X_alpha}
	X_{\alpha}:=\partial_t-\mathbf{v}_{\alpha}\cdot\nabla_{x}=\partial_t-\nabla_{\eta}\tau_k(\eta,\xi)\cdot\nabla_{y}-\partial_{\xi}\tau_k(\eta,\xi)\,\partial_{x_d}.
	\end{equation}
	The vector field $ X_{\alpha} $ is represented in Figure  \ref{figure champ de vecteur r,s,g} in the glancing, incoming and outgoing case.
\end{definition}

\begin{figure}
	\centering
	\begin{tikzpicture}[scale=0.8]
	\fill[black!15] (0.5,-0.5) -- (0.5,5.5) -- (-2.5,2.5) -- (-2.5,-3.5)--cycle;
	\draw[line width = 1pt, dashed] (-1,0) -- (0,0);
	\draw[line width = 1pt,->] (0,0) -- (5,0);
	\draw[line width = 1pt,->] (0,-1) -- (0,5);
	\draw[line width = 1pt,->] (0.5,0.5) -- (-2.5,-2.5);
	\draw (5,-0.4) node{$ x_d $};
	\draw (0.3,4.8) node{$ y $};
	\draw (-2.1,-2.5) node{$ t $};
	\draw (-3.2,2.7) node{$ x_d=0 $};
	\draw[altred,->,line width = 1pt] (-0.3,2) -- (-0.8,-1.2); 
	\draw[altred] (-0.9,0.5) node{$ g $};
	\draw[altred,->,line width = 1pt] (3,-1.8) -- (0.5,-2); 
	\draw[altred] (1.8,-1.6) node{$ o $};
	\draw[altred,->,line width = 1pt] (1,3.5) -- (2.5,2.8); 
	\draw[altred] (1.7,3.5) node{$ i $};
	\end{tikzpicture}
	\caption{Incoming ($ i $), outgoing ($ o $) and glancing ($ g $) vector field.}
	\label{figure champ de vecteur r,s,g}
\end{figure}
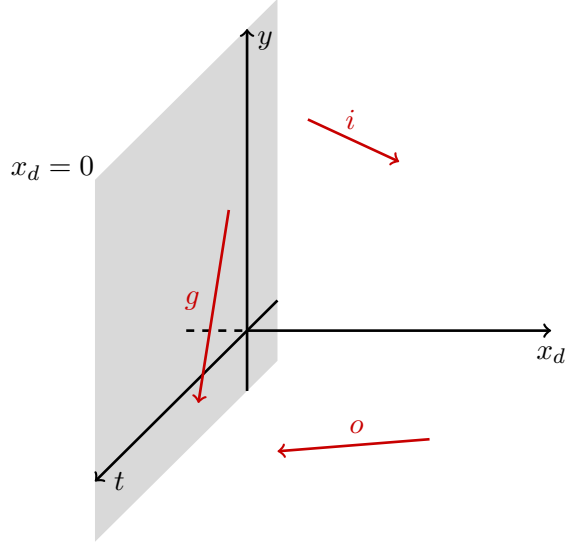

\begin{example}\label{exemple Euler UKL}
	We start by giving an example of a boundary condition for Example \ref{exemple Euler 1} satisfying the uniform Kreiss-Lopatinskii condition. For this purpose we look for a matrix $ B $ in $ \mathcal{M}_{2,3}(\R) $ of maximal rank, that generates strictly dissipative boundary conditions (see \cite[Definition 9.2]{BenzoniSerre2007Multi}), namely such that its kernel, which is of dimension 1, is generated by a nonzero vector $ E $ satisfying
	\begin{equation*}
		\,^tE\, S(V)\,A_2(V)\,E<0
	\end{equation*}
	for all $ V $ in the neighborhood of $ V_0 $, where the matrix $ S(V) $ refers to a Friedrichs symmetrizer of the system. Such strictly dissipative boundary conditions satisfy in particular the uniform Kreiss-Lopatinskii condition, see \cite[Proposition 4.4]{BenzoniSerre2007Multi}. In our example, the following symmetrizer may be considered
	\begin{equation*}
		S(V)=\diag\big(c(v)^2,v^2,v^2\big).
	\end{equation*}
	Recall that, in the notations of the example, a symmetrizer $ S(V) $ is a positive definite matrix such that the matrices $ S(V)\,A_1(V)  $ and $ S(V)\,A_2(V) $ are symmetric for all $ V $ in a neighborhood of $ V_0 $. It is then determined that a suitable vector $ E $ is given by $ E=(v_0,0,u_0) $, since in that case we have
	\begin{equation*}
		\,^tE\, S(V_0)\,A_2(V_0)\,E=u_0\,v_0^2\,(u_0^2-c_0^2)
	\end{equation*}
	the right-hand side quantity being negative by assumption on $ V_0 $, so it stays negative in a neighborhood of $ V_0 $. Thus a matrix $ B $ of maximal rank whose kernel is generated by $ E $ is for example given by
	\begin{equation*}
		B:=\left(\begin{array}{ccc}
			0 & v_0 & 0 \\
			-u_0 & 0 & v_0
		\end{array}\right),
	\end{equation*}
	which gives an example of a boundary condition satisfying the uniform Kreiss-Lopatinskii condition for Example \ref{exemple Euler 1} of compressible isentropic Euler equations in dimension 2.
	
	Interest is now made on the eigenvalues of the matrix $ \mathcal{A}(\tau,\eta) $ for the system of Example \ref{exemple Euler 1}. Their expressions, for $ (\tau,\eta)\in\R^2\privede{0} $, depend on the sign of $ \tau^2-\eta^2\,(c_0^2-u_0^2) $, as represented in Figure \ref{figure exemple Euler 1}.
	
	If $ |\tau|>\sqrt{c_0^2-u_0^2}\,|\eta| $, i.e. if $ \zeta=(\tau,\eta) $ is in the so-called \emph{hyperbolic} region $ \mathcal{H} $ (\cite[Definition 2.1]{Benoit2014Geometric}), then the matrix $ \mathcal{A}(\zeta) $ admits three simple imaginary eigenvalues given by
	\begin{subequations}\label{eq ex Euler v.p. hyp}
	\begin{align}
	i\,\xi_1(\tau,\eta)&:=i\,\frac{\tau\, u_0+\sign(\tau)\,c_0\,\sqrt{\tau^2-\eta^2\,(c_0^2-u_0^2)}}{c_0^2-u_0^2},\\
	i\,\xi_2(\tau,\eta)&:=i\,\frac{\tau \,u_0-\sign(\tau)\,c_0\,\sqrt{\tau^2-\eta^2\,(c_0^2-u_0^2)}}{c_0^2-u_0^2},\\\label{eq freq def xi 3}
	i\,\xi_3(\tau,\eta)&:=i\,\frac{-\tau}{u_0},
	\end{align}
	\end{subequations}
	where $ \sign(x):=x/|x| $ for $ x\neq0 $.
	The number $ \xi_1(\tau,\eta) $ being real, the frequency $ \alpha_1(\tau,\eta):=\big(\tau,\eta,\xi_1(\tau,\eta)\big) $ is a real characteristic frequency. It is then determined that we have $ \tau=\tau_3\big(\eta,\xi_1(\tau,\eta)\big) $ if $ \tau>0 $ and $ \tau=\tau_1\big(\eta,\xi_1(\tau,\eta)\big) $ if $ \tau<0 $. A calculation gives, if $ \tau>0 $,
	\begin{align*}
	&\partial_{\xi}\tau_3\big(\eta,\xi_1(\tau,\eta)\big)=\sqrt{\dfrac{\tau^2-\eta^2(c_0^2-u_0^2)}{\eta^2+\xi_1^2(\tau,\eta)}},
	\intertext{and, if $ \tau<0 $,}
	 &\partial_{\xi}\tau_1\big(\eta,\xi_1(\tau,\eta)\big)=\sqrt{\dfrac{\tau^2-\eta^2(c_0^2-u_0^2)}{\eta^2+\xi_1^2(\tau,\eta)}}.
	\end{align*}
	Thus the frequency $ \alpha_1(\tau,\eta)=\big(\tau,\eta,\xi_1(\tau,\eta)\big) $ is always outgoing. Likewise, it is determined that the real characteristic frequency $ \alpha_2(\tau,\eta):=\big(\tau,\eta,\xi_2(\tau,\eta)\big) $ is always incoming, and the frequency $ \alpha_3(\tau,\eta):=\big(\tau,\eta,\xi_3(\tau,\eta)\big) $ is incoming as well.
	
	If $ \zeta $  is located in the so-called \emph{glancing} region $ \mathcal{G} $, i.e. if $ |\tau|=\sqrt{c_0^2-u_0^2}\,|\eta|  $, then the matrix $ \mathcal{A}(\zeta) $ admits one imaginary simple eigenvalue $ i\,\xi_3(\zeta) $ which is still given by formula \eqref{eq freq def xi 3}, and a double imaginary eigenvalue given by
	\begin{equation*}
	i\,\xi_1(\zeta)=i\,\xi_2(\zeta)=i\,\frac{\tau\,u_0}{c_0^2-u_0^2}.
	\end{equation*}
	In this case we still have $ \tau=\tau_3\big(\eta,\xi_1(\tau,\eta)\big) $ if $ \tau>0 $ and $ \tau=\tau_1\big(\eta,\xi_1(\tau,\eta)\big) $ if $ \tau<0 $, and regarding the characteristic frequency $ \alpha_3(\tau,\eta)=\big(\tau,\eta,\xi_3(\tau,\eta)\big) $, we still have $ \tau=\tau_2\big(\eta,\xi_3(\tau,\eta)\big) $. Thus it is determined that
	\begin{equation*}
	\partial_{\xi}\tau_1\big(\eta,\xi_1(\tau,\eta)\big)=\partial_{\xi}\tau_3\big(\eta,\xi_1(\tau,\eta)\big)=0,
	\end{equation*}
	and therefore, regardless of the sign of $ \tau $, the frequency $ \alpha_1(\tau,\eta) $ is glancing. As for it, the frequency $ \alpha_3(\tau,\eta) $ is always incoming.
	
	Finally if $ |\tau|<\sqrt{c_0^2-u_0^2}\,|\eta| $ and so if $ \zeta $ is in the so-called \emph{mixed} region $ \mathcal{EH} $, then the matrix $ \mathcal{A}(\zeta) $ has one simple imaginary eigenvalue $ i\,\xi_3(\zeta) $ given by formula \eqref{eq freq def xi 3}, and two simple eigenvalues of nonzero real part (symmetric with respect to the imaginary axis), that are still denoted by $ i\,\xi_1 $ et $ i\,\xi_2 $ and which are given by
	\begin{align*}
	i\,\xi_1(\tau,\eta)&:=i\,\frac{\tau\, u_0+i\,c_0\,\sign(\tau)\,\sqrt{\eta^2\,(c_0^2-u_0^2)-\tau^2}}{c_0^2-u_0^2},\\
	i\,\xi_2(\tau,\eta)&:=i\,\frac{\tau \,u_0-i\,c_0\,\sign(\tau)\,\sqrt{\eta^2\,(c_0^2-u_0^2)-\tau^2}}{c_0^2-u_0^2}.
	\end{align*}
	The real characteristic frequency $ \alpha_3(\tau,\eta)=\big(\tau,\eta,\xi_3(\tau,\eta)\big) $ is once again incoming.
\end{example}

\begin{figure}
	\centering
	\begin{tikzpicture}[scale=0.6]
	\fill[altred!10] (-4.5,3) -- (0,0) -- (4.5,3) --cycle;
	\fill[altred!10] (-4.5,-3) -- (0,0) -- (4.5,-3) --cycle;
	\fill[altblue!10] (-4.5,-3) -- (0,0) -- (-4.5,3) --cycle;
	\fill[altblue!10] (4.5,-3) -- (0,0) -- (4.5,3) --cycle;
	\draw[line width = 0.8pt, altred] (-5.25,3.5) -- (5.25,-3.5);
	\draw[line width = 0.8pt, altred] (-5.25,-3.5) -- (5.25,3.5);
	\draw[line width = 1pt,->] (-5.5,0) -- (5.5,0);
	\draw[line width = 1pt,->] (0,-4) -- (0,4.5);
	\draw (5,-0.4) node{$ \eta $};
	\draw (0.4,4.2) node{$ \tau $};
	\draw[altred] (5.25,4) node{$ \tau=\sqrt{c_0^2-u_0^2}\,\eta $};
	\draw[altred] (-5.25,4) node{$ \tau=-\sqrt{c_0^2-u_0^2}\,\eta $};
	\draw[altred] (5,-2.8) node{$ \mathcal{G} $};
	\draw[altred] (0.8,2) node{$ \mathcal{H} $};
	\draw[altblue] (-3,1) node{$ \mathcal{EH} $};
	\end{tikzpicture}
	\caption{Areas of $ \Xi_0 $ for the isentropic compressible Euler equations}
	\label{figure exemple Euler 1}
\end{figure}
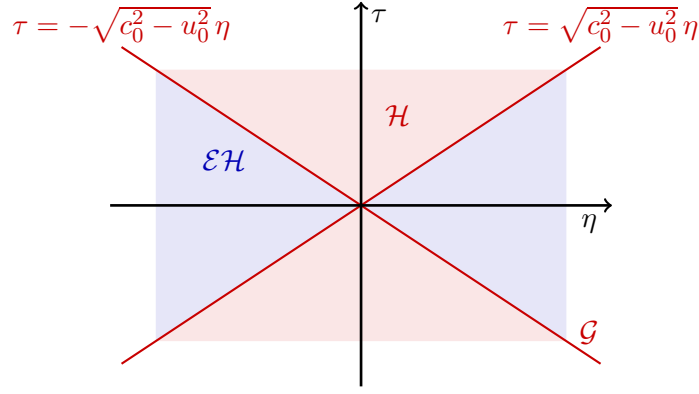

\bigskip

In the following, in order for the matrix factor of the partial derivative with respect to the normal variable $ x_d $ in the equations to be the identity matrix,  the modified operator $ \tilde{L}(u,\partial_z):=A_d(u)^{-1}\,L(u,\partial_z) $ shall be considered. For $ j=1,\dots,d-1 $, we denote $ \tilde{A}_j:=A_d^{-1}A_j $ and $ \tilde{A}_0:=A_d^{-1} $. 
The following lemma, which is a result of \cite{Lax1957Asymptotic} adapted by \cite[Lemma 2.11]{CoulombelGuesWilliams2011Resonant}, shows that, under suitable assumptions, the operator $ \tilde{\pi}_{\alpha}\,\tilde{L}(0,\partial_z)\,\pi_{\alpha} $ is given by a constant coefficient scalar transport operator, and therefore that the operator $ \tilde{\pi}_{\alpha}\,\tilde{L}(0,\partial_z) $ acts on polarized profiles (i.e. profiles $ U $ such that $ \pi_{\alpha}\,U=U $) as a much simpler operator.

\begin{lemma}[{\cite[Lax Lemma]{CoulombelGuesWilliams2011Resonant}}]\label{lemme Lax}
Let $\alpha=(\tau,\eta,\xi)\in\R^{1+d}\privede{0}$ be a real \emph{non glancing} characteristic frequency and $ k $ the integer between $ 1 $ and $ N $ such that $ \tau=\tau_k(\eta,\xi) $. Then we have
	\begin{equation*}
	\tilde{\pi}_{\alpha}\,\tilde{L}(0,\partial_z)\,\pi_{\alpha}=\frac{-1}{\partial_{\xi}\tau_k(\eta,\xi)}\,X_{\alpha}\,\tilde{\pi}_{\alpha}\,\pi_{\alpha},
	\end{equation*}
	where $X_{\alpha}$ is the vector field associated with $\alpha$ defined by \eqref{eq champ vecteur X_alpha}. Then we denote
	\begin{equation*}
	\tilde{X}_{\alpha}:=\frac{-1}{\partial_{\xi}\tau_k(\eta,\xi)}\,X_{\alpha}=\partial_{x_d}-\inv{\partial_{\xi}\tau_k(\eta,\xi)}\,\partial_t+\inv{\partial_{\xi}\tau_k(\eta,\xi)}\,\nabla_{\eta}\tau_k(\eta,\xi)\cdot\nabla_{y}.
	\end{equation*}
\end{lemma}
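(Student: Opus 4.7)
The plan is to start from the identity
\[
\tilde L\bigl(0,\tau_k(\eta,\xi),\eta,\xi\bigr)\,\pi_k(\eta,\xi)=0,
\]
which holds on $\R^d\setminus\ensemble{0}$ because $\Ima\pi_k(\eta,\xi)=\ker L\bigl(0,\tau_k(\eta,\xi),\eta,\xi\bigr)=\ker\tilde L\bigl(0,\tau_k(\eta,\xi),\eta,\xi\bigr)$ by \eqref{eq freq ker L ima pi}. Both $\tau_k$ and $\pi_k$ are smooth at $(\eta,\xi)$ by Assumption \ref{hypothese stricte hyp}, so I can differentiate this identity freely in $\eta_j$ (for $j=1,\dots,d-1$) and in $\xi$. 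Since $\partial_\tau\tilde L(0,\cdot)=\tilde A_0(0)$, $\partial_{\eta_j}\tilde L(0,\cdot)=\tilde A_j(0)$ and $\partial_\xi\tilde L(0,\cdot)=I$, this yields
\[
\partial_{\eta_j}\tau_k\,\tilde A_0\,\pi_k+\tilde A_j\,\pi_k+\tilde L(0,\tau_k,\eta,\xi)\,\partial_{\eta_j}\pi_k=0,\qquad \partial_{\xi}\tau_k\,\tilde A_0\,\pi_k+\pi_k+\tilde L(0,\tau_k,\eta,\xi)\,\partial_{\xi}\pi_k=0.
\]

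Next I apply $\tilde\pi_k$ on the left. By \eqref{eq freq ima L tilde ker pi tilde} we have $\Ima\tilde L(0,\tau_k,\eta,\xi)=\ker\tilde\pi_k$, so the terms with $\partial\pi_k$ vanish, and I obtain
\[
\tilde\pi_k\,\tilde A_j\,\pi_k=-\partial_{\eta_j}\tau_k\,\tilde\pi_k\,\tilde A_0\,\pi_k,\qquad \tilde\pi_k\,\tilde A_0\,\pi_k=-\frac{1}{\partial_{\xi}\tau_k(\eta,\xi)}\,\tilde\pi_k\,\pi_k,
\]
which is where the non-glancing hypothesis $\partial_\xi\tau_k(\eta,\xi)\neq 0$ is used. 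Combining the two gives $\tilde\pi_k\,\tilde A_j\,\pi_k=\bigl(\partial_{\eta_j}\tau_k/\partial_\xi\tau_k\bigr)\,\tilde\pi_k\,\pi_k$.

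Now I plug these into the decomposition $\tilde L(0,\partial_z)=\partial_{x_d}+\tilde A_0(0)\,\partial_t+\sum_{j=1}^{d-1}\tilde A_j(0)\,\partial_j$. Since $\tilde\pi_k$ and $\pi_k$ are constant matrices (the frequency $\alpha$ is fixed), they commute with every partial derivative, so
\[
\tilde\pi_k\,\tilde L(0,\partial_z)\,\pi_k=\tilde\pi_k\,\pi_k\,\partial_{x_d}-\frac{1}{\partial_\xi\tau_k}\,\tilde\pi_k\,\pi_k\,\partial_t+\sum_{j=1}^{d-1}\frac{\partial_{\eta_j}\tau_k}{\partial_\xi\tau_k}\,\tilde\pi_k\,\pi_k\,\partial_j.
\]
Factoring out $-1/\partial_\xi\tau_k$ from the parenthesis yields precisely $-\,X_\alpha/\partial_\xi\tau_k\cdot\tilde\pi_k\,\pi_k$, which is the claimed formula (the placement of $\tilde\pi_\alpha\pi_\alpha$ before or after $X_\alpha$ is immaterial by commutativity).

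There is no real obstacle here: the proof is pure symbol calculus, the only subtle point being the correct use of \eqref{eq freq ima L tilde ker pi tilde} (the image/kernel identity for the \emph{modified} symbol $\tilde L$ and the \emph{modified} projector $\tilde\pi_\alpha$, not $L$ and $\pi_\alpha$) to kill the $\tilde L\,\partial\pi_k$ remainder after differentiation. That is precisely why the statement is framed with $\tilde\pi_\alpha$ on the left and $\pi_\alpha$ on the right, and why \eqref{eq freq ima L tilde ker pi tilde} was established as a separate identity in Definition \ref{def pi alpha}.
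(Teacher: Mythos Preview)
Your proof is correct and follows essentially the same approach as the paper: differentiate the identity $\tilde L(0,\tau_k(\eta,\xi),\eta,\xi)\,\pi_k(\eta,\xi)=0$ in $\xi$ and in each $\eta_j$, then multiply on the left by $\tilde\pi_k$ and use \eqref{eq freq ima L tilde ker pi tilde} to kill the remainder terms, obtaining the formulas for $\tilde\pi_k\tilde A_0\pi_k$ and $\tilde\pi_k\tilde A_j\pi_k$. The only difference is presentational: you write out explicitly the final assembly of these matrix identities into the operator identity for $\tilde L(0,\partial_z)$, while the paper leaves this last substitution implicit.
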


For the sake of completeness, the proof of  \cite{CoulombelGuesWilliams2011Resonant} is recalled here.

\begin{proof}
	According to identity \eqref{eq freq ker L ima pi}, we have
	\begin{equation}\label{eq freq Lax L pi = 0}
	\tilde{L}\big(0,(\tau_k(\eta,\xi),\eta,\xi)\big)\,\pi_k(\eta,\xi)=\Big(\tau_k(\eta,\xi)\,\tilde{A}_0(0)+\sum_{i=1}^{d-1}\eta_i\,\tilde{A}_i(0)+\xi\,I\Big)\,\pi_k(\eta,\xi)=0.
	\end{equation}
	The Dunford formula and the implicit function theorem ensure that in the strictly hyperbolic case, the projectors $ \pi_k $ as well as the real functions $ \tau_k $ are differentiable with respect to $ (\eta,\xi) $ in $ \R^d\privede{0} $ (they even depend analytically of $ (\eta,\xi) $). Thus identity \eqref{eq freq Lax L pi = 0} is differentiated with respect to $ \xi $ in a neighborhood of a frequency $ (\eta,\xi) $ in $ \R^d\privede{0} $ to obtain
	\begin{equation*}
	\Big(\partial_{\xi}\tau_k(\eta,\xi)\,\tilde{A}_0(0)+I\Big)\,\pi_k(\eta,\xi)+\Big(\tau_k(\eta,\xi)\,\tilde{A}_0(0)+\sum_{i=1}^{d-1}\eta_i\,\tilde{A}_i(0)+\xi\,I\Big)\,\partial_{\xi}\pi_k(\eta,\xi)=0,
	\end{equation*}
	and therefore, multiplying by $ \tilde{\pi}_k(\eta,\xi) $ on the left, according to identity \eqref{eq freq ima L tilde ker pi tilde}, we get
	\begin{equation}\label{eq freq Lax pi A d pi}
	\tilde{\pi}_k(\eta,\xi)\,\tilde{A}_0(0)\,\pi_k(\eta,\xi)=-\inv{\partial_{\xi}\tau_k(\eta,\xi)}\tilde{\pi}_k(\eta,\xi)\,\pi_k(\eta,\xi).
	\end{equation}
	Likewise, for $ i=1,\dots,d-1 $, equality \eqref{eq freq Lax L pi = 0} is differentiated with respect to $ \eta_i $ and next multiplied by $ \tilde{\pi}_k(\eta,\xi) $ to obtain
	\begin{equation*}
	\tilde{\pi}_k(\eta,\xi)\,\Big(\partial_{\eta_j}\tau_k(\eta,\xi)\,\tilde{A}_0(0)+\tilde{A}_i(0)\Big)\,\pi_k(\eta,\xi)=0.
	\end{equation*}
	With \eqref{eq freq Lax pi A d pi}, we thus get
	\begin{equation*}
	\tilde{\pi}_k(\eta,\xi)\,\tilde{A}_i(0)\,\pi_k(\eta,\xi)=\frac{\partial_{\eta_i}\tau_k(\eta,\xi)}{\partial_{\xi}\tau_k(\eta,\xi)}\tilde{\pi}_k(\eta,\xi)\,\pi_k(\eta,\xi),
	\end{equation*}
	which concludes the proof of the lemma.
\end{proof}

The following results use the classical Lax Lemma, whose proof is similar to the one of Lemma \ref{lemme Lax}. The result is recalled here.

\begin{lemma}[{\cite{Lax1957Asymptotic}}]\label{lemme Lax classique}
	Let $\alpha=(\tau,\eta,\xi)\in\R^{1+d}\privede{0}$ be a real characteristic frequency and $ k $ the integer between $ 1 $ and $ N $ such that $ \tau=\tau_k(\eta,\xi) $. Then we have
	\begin{equation*}
		\pi_{\alpha}\,L(0,\partial_z)\,\pi_{\alpha}=X_{\alpha}\,\pi_{\alpha},
	\end{equation*}
	where $X_{\alpha}$ is the vector field associated with $\alpha$ defined by \eqref{eq champ vecteur X_alpha}.
\end{lemma}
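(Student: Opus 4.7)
The plan is to mirror the proof of Lemma \ref{lemme Lax} that was just given, but using the original symbol $L(0,\alpha)$ and the projector $\pi_\alpha$ (rather than $\tilde{L}$ and $\tilde{\pi}_\alpha$). The key additional ingredient compared to Lemma \ref{lemme Lax} is that we can exploit both relations \eqref{eq freq ker L ima pi} and \eqref{eq freq ima L ker pi} simultaneously, yielding
\begin{equation*}
L(0,\alpha)\,\pi_\alpha = 0 \quad\text{and}\quad \pi_\alpha\,L(0,\alpha)=0.
\end{equation*}

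First I would start from the defining identity
\begin{equation*}
L\bigl(0,(\tau_k(\eta,\xi),\eta,\xi)\bigr)\,\pi_k(\eta,\xi)=\Bigl(\tau_k(\eta,\xi)\,I+\sum_{i=1}^{d-1}\eta_i\,A_i(0)+\xi\,A_d(0)\Bigr)\,\pi_k(\eta,\xi)=0,
\end{equation*}
which holds on a neighborhood of $(\eta,\xi)$ in $\R^d\privede{0}$ by \eqref{eq freq ker L ima pi} and the strict hyperbolicity assumption (which ensures analyticity of $\tau_k$ and $\pi_k$). I would then differentiate this identity with respect to $\xi$ and to each $\eta_i$, $i=1,\dots,d-1$, yielding
\begin{equation*}
\bigl(\partial_\xi\tau_k\,I+A_d(0)\bigr)\pi_k+L(0,\alpha)\,\partial_\xi\pi_k=0,\qquad \bigl(\partial_{\eta_i}\tau_k\,I+A_i(0)\bigr)\pi_k+L(0,\alpha)\,\partial_{\eta_i}\pi_k=0.
\end{equation*}

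Next I would multiply each of these equalities on the left by $\pi_k(\eta,\xi)=\pi_\alpha$. The crucial simplification is that by \eqref{eq freq ima L ker pi}, $\pi_\alpha\,L(0,\alpha)=0$, so the second term in each relation disappears. Using also $\pi_\alpha^2=\pi_\alpha$, this yields the scalar identities
\begin{equation*}
\pi_\alpha\,A_d(0)\,\pi_\alpha=-\partial_\xi\tau_k(\eta,\xi)\,\pi_\alpha,\qquad \pi_\alpha\,A_i(0)\,\pi_\alpha=-\partial_{\eta_i}\tau_k(\eta,\xi)\,\pi_\alpha.
\end{equation*}
Since $\pi_\alpha$ does not depend on $z$, and since $\pi_\alpha\,I\,\pi_\alpha=\pi_\alpha$ trivially handles the $\partial_t$ term, assembling these identities gives
\begin{equation*}
\pi_\alpha\,L(0,\partial_z)\,\pi_\alpha=\Bigl(\partial_t-\nabla_\eta\tau_k(\eta,\xi)\cdot\nabla_y-\partial_\xi\tau_k(\eta,\xi)\,\partial_{x_d}\Bigr)\pi_\alpha=X_\alpha\,\pi_\alpha,
\end{equation*}
which is the claimed identity.

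There is no real obstacle: the argument is strictly parallel to the one already presented, and in fact slightly simpler, since here no division by $\partial_\xi\tau_k$ is required, so the non-glancing hypothesis that was needed for Lemma \ref{lemme Lax} can be dropped. The only subtlety worth emphasizing is that one uses \eqref{eq freq ima L ker pi} (relating $\Ima L(0,\alpha)$ and $\ker\pi_\alpha$) instead of \eqref{eq freq ima L tilde ker pi tilde}, which is precisely what avoids needing to invert $\partial_\xi\tau_k(\eta,\xi)$.
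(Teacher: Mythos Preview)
Your proof is correct and follows exactly the approach the paper indicates (the paper simply states that the proof is similar to that of Lemma \ref{lemme Lax}). You correctly identify that the key simplification is using \eqref{eq freq ima L ker pi} in place of \eqref{eq freq ima L tilde ker pi tilde}, which also explains why the non-glancing hypothesis is not needed here.
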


The first lemma below, quite standard, states that the group velocities $ \mathbf{v}_{\alpha} $ are bounded. The result presented here is not optimal, considering the constant $ C $ can be taken equal to 1, but it is sufficient for our analysis, and its proof is simpler.

\begin{lemma}\label{lemme vitesse de groupe bornees} There exists a positive constant $ C $ such that, for $ k=1\dots,N $ and $ (\eta,\xi) $ in $ \R^d\privede{0} $, we have
	\begin{equation*}
	\left|\nabla_{\eta,\xi}\tau_k(\eta,\xi)\right|\leq C \sup_{(\eta,\xi)\in\S^{d-1}}\rho\big(A(\eta,\xi)\big),
	\end{equation*}
	where we recall that $ A(\eta,\xi) $ has been defined for $ (\eta,\xi) $ in $ \R^d\privede{0} $ in Assumption \ref{hypothese stricte hyp}, and where $ \rho\big(A(\eta,\xi)\big) $ refers to the spectral radius of the matrix $ A(\eta,\xi) $.
	Then we denote by $ \mathcal{V}^* $ the finite quantity
	\begin{equation*}
	\mathcal{V}^*:= C\, \sup_{(\eta,\xi)\in\S^{d-1}}\rho\big(A(\eta,\xi)\big),
	\end{equation*} 
	which bounds the group velocities.
\end{lemma}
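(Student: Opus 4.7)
The plan is to reduce the estimate to a bound on the unit sphere by exploiting positive homogeneity, and then to close that bound using analyticity together with the compactness of $\S^{d-1}$. Concretely, I would proceed in three short steps.

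\textbf{Step 1 (reduction to the sphere).} As noted in the remark following Assumption \ref{hypothese stricte hyp}, each $\tau_k$ is positively homogeneous of degree $1$ on $\R^d\privede{0}$, so its gradient $\nabla_{\eta,\xi}\tau_k$ is positively homogeneous of degree $0$. Therefore
\begin{equation*}
\bigl|\nabla_{\eta,\xi}\tau_k(\eta,\xi)\bigr|=\Bigl|\nabla_{\eta,\xi}\tau_k\bigl((\eta,\xi)/|(\eta,\xi)|\bigr)\Bigr|
\end{equation*}
for every $(\eta,\xi)\in\R^d\privede{0}$, which reduces the matter to controlling $|\nabla\tau_k|$ on $\S^{d-1}$.

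\textbf{Step 2 (analyticity plus compactness).} By strict hyperbolicity, the real function $\tau_k$ is analytic on $\R^d\privede{0}$, hence $\nabla\tau_k$ is continuous there. Since $\S^{d-1}$ is compact, $\sup_{\S^{d-1}}|\nabla\tau_k|$ is a finite non-negative number for each $k$, and the maximum
\begin{equation*}
C_0:=\max_{1\leq k\leq N}\sup_{(\eta,\xi)\in\S^{d-1}}\bigl|\nabla_{\eta,\xi}\tau_k(\eta,\xi)\bigr|
\end{equation*}
is finite since $N$ is finite. Combined with Step 1, this yields $|\nabla_{\eta,\xi}\tau_k(\eta,\xi)|\leq C_0$ for all $k$ and all $(\eta,\xi)\in\R^d\privede{0}$.

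\textbf{Step 3 (rewriting in the stated form).} It remains to rewrite the right-hand side. Since the eigenvalues of $A(\eta,\xi)$ are exactly the $-\tau_k(\eta,\xi)$, one has $\sup_{\S^{d-1}}\rho\bigl(A(\eta,\xi)\bigr)=\sup_{\S^{d-1}}\max_k|\tau_k(\eta,\xi)|$. This quantity is strictly positive: otherwise all $\tau_k$ would vanish on the sphere, hence everywhere by homogeneity, contradicting the fact that the eigenvalues of $A$ are distinct and nontrivial (the degenerate case where $A\equiv 0$ makes the statement vacuous). Setting $C:=C_0\,/\,\sup_{\S^{d-1}}\rho\bigl(A(\eta,\xi)\bigr)$ then gives the announced inequality.

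There is no genuine obstacle in this argument: the entire content is positive homogeneity $+$ analyticity $+$ compactness, with a small verification that the denominator in Step 3 does not vanish. As the author remarks, the optimal constant is in fact $C=1$, which would follow from a first-order perturbation computation for the eigenvalue $\tau_k$ along with a min-max characterisation; the crude compactness argument above is chosen precisely because it avoids this refinement and is already sufficient for what follows.
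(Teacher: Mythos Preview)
Your proof is correct, but it follows a genuinely different route from the paper's.

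The paper's argument uses the classical Lax Lemma \ref{lemme Lax classique}, which gives the identity $\pi_k(\eta,\xi)\,A(\eta',\xi')\,\pi_k(\eta,\xi)=-d\tau_k(\eta,\xi)\cdot(\eta',\xi')\,\pi_k(\eta,\xi)$. Taking the supremum over $(\eta',\xi')\in\S^{d-1}$ yields $|\nabla\tau_k(\eta,\xi)|\leq\|\pi_k(\eta,\xi)\|\sup_{\S^{d-1}}\|A\|$, and one concludes by bounding $\|\pi_k\|$ uniformly (Remark \ref{remarque proj pi et Q bornes}) and using that $A$ is diagonalisable with well-conditioned eigenbasis to replace $\|A\|$ by $C\rho(A)$. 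Your argument bypasses the Lax Lemma entirely: you use degree-zero homogeneity of $\nabla\tau_k$ to reduce to the sphere, invoke analyticity and compactness to get a finite bound $C_0$, and then simply rescale by the strictly positive number $\sup_{\S^{d-1}}\rho(A)$ to match the stated form. Your approach is shorter and more elementary; the paper's approach has the (unexploited) merit of tying $\nabla\tau_k$ explicitly to $A$ via a spectral identity, which is the natural starting point for the sharper constant $C=1$ that the author alludes to. One small remark on your Step~3: the cleanest way to see that $\sup_{\S^{d-1}}\rho(A)>0$ is to evaluate at $(\eta,\xi)=(0,\dots,0,1)$, where $A=A_d(0)$ is invertible by Assumption \ref{hypothese bord non caract}.
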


\begin{proof}
	First note that the quantity $ \mathcal{V}^* $ is actually finite. Indeed, according to Assumption \ref{hypothese stricte hyp}, we have
	\begin{equation*}
	\rho\big(A(\eta',\xi')\big)=\max_{k=1,\dots,N}\valabs{\tau_k(\eta',\xi')},
	\end{equation*}
	and the real functions $ \tau_1,\dots,\tau_N $ are analytic in $ \R^d\privede{0} $, and thus bounded on $ \S^{d-1} $.
	
	Now let $ (\eta,\xi) $ be in $ \R^d\privede{0} $ and $ k $ between 1 and $ N $. According to Lax Lemma \ref{lemme Lax classique}, we have, for $ (\eta',\xi') $ in $ \S^{d-1} $,
	\begin{equation*}
	\pi_{k}(\eta,\xi)\,A(\eta',\xi')\,\pi_{k}(\eta,\xi)=-d\tau_k(\eta,\xi)\cdot(\eta',\xi')\,\pi_{k}(\eta,\xi).
	\end{equation*}
	But since the following equality holds,
	\begin{equation*}
	\valabs{\nabla_{\eta,\xi}\tau_k(\eta,\xi)}=\sup_{(\eta',\xi')\in\S^{d-1}}\valabs{d\tau_k(\eta,\xi)\cdot(\eta',\xi')},
	\end{equation*}
	we obtain
	\begin{equation}\label{eq freq vg bornee 1}
	\valabs{\nabla_{\eta,\xi}\tau_k(\eta,\xi)}\leq \norme{\pi_{k}(\eta,\xi)}\sup_{(\eta',\xi')\in\S^{d-1}}\norme{A(\eta',\xi')}.
	\end{equation}
	On an other hand, because of Remark \ref{remarque proj pi et Q bornes}, there exists a positive constant $ C $ such that
	\begin{equation}\label{eq freq vg bornee 2}
	\norme{\pi_{k}(\eta,\xi)}\leq C,\quad k=1,\dots,N,
	\end{equation}
	uniformly with respect to $ (\eta,\xi) $ in $ \R^d\privede{0} $.
	Finally, Assumption \ref{hypothese stricte hyp} claims that the matrix $ A(\eta',\xi') $ is diagonalizable and well-conditioned, so there exists a positive constant $ C $ such that
	\begin{equation}\label{eq freq vg bornee 3}
	\sup_{(\eta',\xi')\in\S^{d-1}}\norme{A(\eta',\xi')}\leq C\sup_{(\eta',\xi')\in\S^{d-1}}\rho\big(A(\eta',\xi')\big).
	\end{equation}
	Equations \eqref{eq freq vg bornee 1}, \eqref{eq freq vg bornee 2}, and \eqref{eq freq vg bornee 3} then lead to the result.
\end{proof}

The second result quantitatively links the vector $ \tilde{\pi}_{\alpha}\,E_k(\eta,\xi) $ to the vector $ E_k(\eta,\xi) $, which will be useful in the following to get a control from below of the first vector. It is mentioned, for its second part, in  \cite{CoulombelGues2010Geometric}.

\begin{lemma}\label{lemme pi tilde E vitesse de groupe}
	Let $\alpha=(\tau,\eta,\xi)\in\R^{1+d}\privede{0}$ be a real characteristic frequency and $ k $ the integer between $ 1 $ and $ N $ such that $ \tau=\tau_k(\eta,\xi) $. Then we have
	\begin{equation*}
		\tilde{\pi}_{\alpha}\,E_k(\eta,\xi)=-\partial_{\xi}\tau_k(\eta,\xi)\, A_d(0)^{-1}E_k(\eta,\xi).
	\end{equation*}
In particular, if the frequency $ \alpha $ is not glancing, the projector $ \tilde{\pi}_{\alpha} $ induces an isomorphism from $ \Ima \pi_{\alpha} $ to $ \Ima \tilde{\pi}_{\alpha} $.
\end{lemma}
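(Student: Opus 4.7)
My plan is to reuse the differentiation trick from the proof of the (modified) Lax Lemma and then exploit the definition of $\tilde{\pi}_\alpha$ to simplify the right hand side.

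First I would start from the identity $\tilde{L}\big(0,(\tau_k(\eta,\xi),\eta,\xi)\big)\,\pi_k(\eta,\xi)=0$ (see \eqref{eq freq Lax L pi = 0}) and differentiate it with respect to $\xi$ to get
\[
\Big(\partial_{\xi}\tau_k(\eta,\xi)\,\tilde{A}_0(0)+I\Big)\,\pi_k(\eta,\xi)+\tilde{L}(0,\alpha)\,\partial_{\xi}\pi_k(\eta,\xi)=0.
\]
Multiplying on the left by $\tilde{\pi}_k(\eta,\xi)=\tilde{\pi}_\alpha$ and using the identity $\tilde{\pi}_\alpha\,\tilde{L}(0,\alpha)=0$ (a direct consequence of \eqref{eq freq ima L tilde ker pi tilde}), the second term vanishes, so that
\[
\tilde{\pi}_\alpha\,\pi_\alpha=-\partial_{\xi}\tau_k(\eta,\xi)\,\tilde{\pi}_\alpha\,A_d(0)^{-1}\,\pi_\alpha.
\]
Note that contrary to the proof of Lemma \ref{lemme Lax}, no division by $\partial_\xi\tau_k$ is performed here, so the identity is valid also in the glancing case.

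Next I would evaluate this identity at $E_k(\eta,\xi)$. Since $E_k(\eta,\xi)$ spans $\Ima \pi_\alpha$, one has $\pi_\alpha\,E_k(\eta,\xi)=E_k(\eta,\xi)$, which yields
\[
\tilde{\pi}_\alpha\,E_k(\eta,\xi)=-\partial_{\xi}\tau_k(\eta,\xi)\,\tilde{\pi}_\alpha\,A_d(0)^{-1}E_k(\eta,\xi).
\]
The key observation is then that by the very definition of $\tilde{\pi}_\alpha$, its range is $A_d(0)^{-1}\,\ker L\big(0,(\tau_k(\eta,\xi),\eta,\xi)\big)=\Vect\big(A_d(0)^{-1}E_k(\eta,\xi)\big)$. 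Hence $A_d(0)^{-1}E_k(\eta,\xi)\in\Ima \tilde{\pi}_\alpha$, so that $\tilde{\pi}_\alpha\,A_d(0)^{-1}E_k(\eta,\xi)=A_d(0)^{-1}E_k(\eta,\xi)$. Plugging this back gives the stated formula.

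For the second part, I would argue as follows: by strict hyperbolicity (Assumption \ref{hypothese stricte hyp}), both $\Ima \pi_\alpha$ and $\Ima \tilde{\pi}_\alpha$ are one-dimensional, spanned respectively by $E_k(\eta,\xi)$ and $A_d(0)^{-1}E_k(\eta,\xi)$. When $\alpha$ is non glancing, $\partial_\xi\tau_k(\eta,\xi)\neq 0$, so the formula just proved shows that $\tilde{\pi}_\alpha$ sends the generator of $\Ima\pi_\alpha$ to a nonzero multiple of the generator of $\Ima\tilde{\pi}_\alpha$; it is therefore an isomorphism between these two lines. I do not expect any real obstacle here; the only subtlety is to check that the simplification $\tilde{\pi}_\alpha\,A_d(0)^{-1}E_k=A_d(0)^{-1}E_k$ really follows from the definition of $\tilde{\pi}_\alpha$ and not from Lax's identity, which is what allows the first formula to extend beyond the non glancing case.
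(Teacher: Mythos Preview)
Your proof is correct, but it follows a different path from the paper's.

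The paper argues by first decomposing $E_k(\eta,\xi)$ in the basis $A_d(0)^{-1}E_1,\dots,A_d(0)^{-1}E_N$ adapted to \eqref{eq decomp C^n ker L(alpha) tilde}, so that $\tilde{\pi}_\alpha E_k=\lambda_k\,A_d(0)^{-1}E_k$ for some scalar $\lambda_k$; it then identifies $\lambda_k$ by multiplying the decomposition by $A_d(0)$, projecting with $\pi_\alpha$, and invoking the classical Lax Lemma~\ref{lemme Lax classique} in the form $\pi_\alpha A_d(0)\pi_\alpha=-\partial_\xi\tau_k\,\pi_\alpha$. Your route instead reruns the differentiation step of Lemma~\ref{lemme Lax} (without dividing by $\partial_\xi\tau_k$) to obtain $\tilde{\pi}_\alpha\pi_\alpha=-\partial_\xi\tau_k\,\tilde{\pi}_\alpha A_d(0)^{-1}\pi_\alpha$, and then closes with the direct observation $A_d(0)^{-1}E_k\in\Ima\tilde{\pi}_\alpha$. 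Your approach is slightly more self-contained (it does not call Lemma~\ref{lemme Lax classique} as a black box) and makes the validity in the glancing case transparent; the paper's approach has the advantage of isolating the scalar $\lambda_k$ cleanly via the basis decomposition. For the isomorphism statement, the paper checks $\ker\tilde{\pi}_\alpha\cap\Ima\pi_\alpha=\{0\}$ and appeals to equality of dimensions, whereas you argue directly that the generator of the line $\Ima\pi_\alpha$ is sent to a nonzero multiple of the generator of $\Ima\tilde{\pi}_\alpha$; both are equally valid in this rank-one situation.
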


\begin{proof}
	First the vector $ E_k(\eta,\xi) $ is decomposed in basis \eqref{eq base C^N tilde adaptee espace propres stricte hyp} adapted to decomposition \eqref{eq decomp C^n ker L(alpha) tilde}:
	\begin{equation}\label{eq freq Lax 3 1}
		E_k(\eta,\xi)=\sum_{j=1}^N\lambda_j\,A_d(0)^{-1}E_j(\eta,\xi),
	\end{equation}
so that we have $ \tilde{\pi}_{\alpha}\,E_k(\eta,\xi)=\lambda_k\,A_d(0)^{-1}E_k(\eta,\xi) $. Thus the aim is to determine the coefficient $ \lambda_k $. Given that $ \pi_{\alpha}\,E_k(\eta,\xi)=E_k(\eta,\xi) $, and according to decomposition \eqref{eq freq Lax 3 1}, we have
\begin{equation*}
	A_d(0)\,\pi_{\alpha}\,E_k(\eta,\xi)=\sum_{j=1}^N\lambda_j\,E_j(\eta,\xi),
\end{equation*}
thus
\begin{equation*}
	\pi_{\alpha}\,A_d(0)\,\pi_{\alpha}\,E_k(\eta,\xi)=\lambda_k\,E_k(\eta,\xi).
\end{equation*}
And we conclude using Lax Lemma \ref{lemme Lax classique} which claims that $ \pi_{\alpha}\,A_d(0)\,\pi_{\alpha}=-\partial_{\xi}\tau_k(\eta,\xi)\,\pi_{\alpha} $.

To show that the projector $ \tilde{\pi}_{\alpha} $ induces an isomorphism from $ \Ima \pi_{\alpha} $ to $ \Ima \tilde{\pi}_{\alpha} $, the two spaces $ \Ima \pi_{\alpha} $ and $ \Ima \tilde{\pi}_{\alpha} $ having the same dimension, it is sufficient to prove that the intersection
\begin{equation*}
	\ker \tilde{\pi}_{\alpha}\cap\Ima \pi_{\alpha}
\end{equation*}
is trivial. So we consider a vector $ X $ of $ \C^N $ belonging to this intersection. Because $ X $ belongs to $ \Ima \pi_{\alpha} $, by definition of the vector $ E_k(\eta,\xi) $, it writes
\begin{equation*}
	X=\lambda\, E_k(\eta,\xi),
\end{equation*}
where $ k $ is the integer between $ 1 $ and $ N $ such that $ \alpha=\big(\tau_k(\eta,\xi),\eta,\xi\big) $ and $ \lambda\in\R $. According to the previous result, we have
\begin{equation*}
	\tilde{\pi}_{\alpha}\,X=-\partial_{\xi}\tau_k(\eta,\xi)\, A_d(0)^{-1}X.
\end{equation*}
But we also have $ \tilde{\pi}_{\alpha}\,X=0 $ by assumption and $ \partial_{\xi}\tau_k(\eta,\xi)\neq 0 $, the frequency $ \alpha $ being non-glancing. We therefore obtain $ X=0 $, which is the sought result.
\end{proof}

We are now in position to describe the decomposition of the stable subspace $ E_-(\zeta) $ for $ \zeta\in\Xi_0 $, which uses the strict hyperbolicity Assumption \ref{hypothese stricte hyp}.

\begin{proposition}[{\cite{Williams1996Boundary}, Proposition 3.4}]\label{prop decomp E_-}
	Consider $ \zeta=(\tau,\eta)\in\Xi_0 $. We denote by $ i\,\xi_j(\zeta) $ for $ j=1,\dots,\mathcal{M}(\zeta) $ the distinct complex eigenvalues of the matrix $ \mathcal{A}(\zeta) $, and if $ \xi_j(\zeta) $ is real, we shall denote by $ \alpha_j(\zeta):=(\tau,\eta,\xi_j(\tau,\eta)) $ the associated real characteristic frequency. If $ \xi_j(\zeta) $ is real, we also denote by $ k_j $ the integer between $ 1 $ and $ N $ such that $\tau=\tau_{k_j}(\eta,\xi_j(\zeta))$. Then the set $\ensemble{1,2,\dots,\mathcal{M}(\zeta)}$ decomposes as the disjoint union
	\begin{equation}\label{eq union disjointe M(zeta)}
	\ensemble{1,2,\dots,\mathcal{M}(\zeta)}=\mathcal{G}(\zeta)\cup\mathcal{I}(\zeta)\cup\P(\zeta)\cup\mathcal{O}(\zeta)\cup\mathcal{N}(\zeta),
	\end{equation}
	where the sets $\mathcal{G}(\zeta)$, $\mathcal{I}(\zeta)$, $\P(\zeta)$, $\mathcal{O}(\zeta)$ and $\mathcal{N}(\zeta)$ correspond to indexes $j$ such that respectively $\alpha_j(\zeta)$ is glancing, $\alpha_j(\zeta)$ is incoming, $\Im(\xi_j(\zeta))$ is positive, $\alpha_j(\zeta)$ is outgoing and $\Im(\xi_j(\zeta))$ is negative.
	
	Then the following decomposition of $E_-(\zeta)$ holds
	\begin{equation}\label{eq decomp E_-(zeta)}
	E_-(\zeta)=\bigoplus_{j\in\mathcal{G}(\zeta)}E^j_-(\zeta)\oplus \bigoplus_{j\in\mathcal{I}(\zeta)}E^j_-(\zeta) \oplus \bigoplus_{j\in\P(\zeta)}E^j_-(\zeta),
	\end{equation}
	where for each index $j$, the subspace $E_-^j(\zeta)$ is precisely described as follows.
	\begin{enumerate}[label=\roman*)]
		\item If $j\in \P(\zeta)$, the space $E^j_-(\zeta)$ is the generalized eigenspace $\mathcal{A}(\zeta)$ associated with the eigenvalue $i\,\xi_j(\zeta)$.
		\item If $j\in\mathcal{R}(\zeta)$, we have $E^j_-(\zeta)=\ker L\big(0,\alpha_j(\zeta)\big)$, 
		which is of dimension 1.
		\item If $j\in\mathcal{G}(\zeta)$, we denote by $n_j$ the algebraic multiplicity of the imaginary eigenvalue $i\xi_j(\zeta)$. For small positive $ \gamma $, the multiple eigenvalue $i\,\xi_j(\tau,\eta)$ splits into $n_j$ simple eigenvalues, denoted by $i\,\xi_j^k(\tau-i\gamma,\eta)$, $k=1,\dots,n_j$, all of nonzero real part. We denote by $\mu_j$ the number (independent of $\gamma>0$) of the eigenvalues $i\,\xi_j^k(\tau-i\gamma,\eta)$ of negative real part. Then $E_-^j(\zeta)$ is of dimension $ \mu_j $ and is generated by the vectors $w$ satisfying $[\mathcal{A}(\zeta)-i\xi_j(\zeta)]^{\mu_j}w=0$. Furthermore, if $n_j$ is even, $\mu_j=n_j/2$ and if $n_j$ is odd, $\mu_j$ is equal to $(n_j-1)/2$ or $(n_j+1)/2$.
	\end{enumerate}
	
	Likewise, the unstable subspace $E_+(\zeta)$ decomposes as
	\begin{equation}\label{eq decomp E_+(zeta)}
	E_+(\zeta)=\bigoplus_{j\in\mathcal{G}(\zeta)}E^j_+(\zeta)\oplus \bigoplus_{j\in\mathcal{O}(\zeta)}E^j_+(\zeta) \oplus \bigoplus_{j\in \mathcal{N}(\zeta)}E^j_+(\zeta),
	\end{equation}
	with similar description of the subspaces $E_+^j(\zeta)$. In particular, if the set $\mathcal{G}(\zeta)$ is empty, then
	\[\C^N=E_-(\zeta)\oplus E_+(\zeta).\]
\end{proposition}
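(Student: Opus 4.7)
My plan is to fix $\underline{\zeta}\in\Xi_0$ and apply the block structure theorem (Proposition \ref{prop struct bloc}) to reduce $\mathcal{A}(\zeta)$, in a neighborhood of $\underline{\zeta}$, to a direct sum of blocks of one of the four types. I would then identify, for each block, how it contributes to the extension $E_-(\underline{\zeta})$ as $\gamma\to 0^+$, and match the contribution with one of the index sets $\mathcal{G},\mathcal{I},\mathcal{P},\mathcal{O},\mathcal{N}$. Since $E_-(\zeta)$ is the sum of generalized eigenspaces of $\mathcal{A}(\zeta)$ for eigenvalues of negative real part, and since $\mathrm{Re}(i\xi)<0$ is equivalent to $\mathrm{Im}\,\xi>0$, this matching is essentially a bookkeeping exercise once each block is understood.

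Types (i) and (ii) are immediate: a type-(i) block has eigenvalues with uniformly positive real part throughout the neighborhood, so its eigenvalue at $\underline{\zeta}$ has the form $i\xi_j(\underline{\zeta})$ with $\mathrm{Im}\,\xi_j<0$, i.e.\ $j\in\mathcal{N}(\underline{\zeta})$, and the entire block contributes to $E_+$; symmetrically type (ii) yields indices in $\mathcal{P}(\underline{\zeta})$ contributing to $E_-$ as a full generalized eigenspace. For type (iii), $\mathcal{A}_j$ is scalar with a simple imaginary value at $\underline{\zeta}$ and nonzero derivative in $\gamma$; differentiating the identity $L(0,\tau_k(\eta,\xi),\eta,\xi)\pi_k=0$ with respect to $\xi$ and comparing with the perturbation $\sigma=\tau-i\gamma$ shows that the sign of $\partial_\gamma\mathcal{A}_j(\underline{\zeta})$ equals the sign of $-\partial_\xi\tau_{k_j}$ (up to a positive factor coming from Lax's Lemma \ref{lemme Lax}). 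Hence incoming frequencies move the eigenvalue into $\{\mathrm{Re}<0\}$, landing in $\mathcal{I}(\underline{\zeta})$ with one-dimensional contribution $\ker L(0,\alpha_j(\underline{\zeta}))$ to $E_-$, and outgoing ones fall into $\mathcal{O}(\underline{\zeta})$.

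The main work is type (iv), the glancing case. The block is an $n_j\times n_j$ Jordan cell at $i\xi_j$ with perturbation matrix whose bottom-left entry is real and nonzero. A Puiseux expansion of the characteristic polynomial after the substitution $\lambda=i\xi_j+\mu$ gives the leading balance $\mu^{n_j}\sim c\,\gamma$ with $c\in\mathbb{R}^*$, so the $n_j$ eigenvalues are asymptotically the $n_j$-th roots of $c\,\gamma$; counting how many fall in $\{\mathrm{Re}<0\}$ gives exactly $n_j/2$ if $n_j$ is even, and $(n_j\pm 1)/2$ if $n_j$ is odd depending on the sign of $c$. This defines $\mu_j$ and the dimension of $E_-^j(\underline{\zeta})$. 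For the explicit description $E_-^j(\underline{\zeta})=\ker[\mathcal{A}(\underline{\zeta})-i\xi_j(\underline{\zeta})]^{\mu_j}$, one checks that for $\gamma>0$ the span of the eigenvectors attached to the stable branches is precisely the image under $T(\zeta)^{-1}$ of the first $\mu_j$ coordinate vectors of the Jordan basis plus $O(\gamma^{1/n_j})$ corrections; the limit as $\gamma\to 0^+$ is then the claimed generalized eigenspace, and the continuity of $E_-$ ensures this is the correct extension.

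Summing the contributions of all blocks then yields \eqref{eq decomp E_-(zeta)}, while the unstable case \eqref{eq decomp E_+(zeta)} follows by the symmetric argument. If $\mathcal{G}(\underline{\zeta})=\emptyset$ no type-(iv) block is present, every eigenvalue of $\mathcal{A}(\underline{\zeta})$ is either off the imaginary axis or simple on it, and the two direct sums exhaust $\mathbb{C}^N$. The hard point is clearly the glancing analysis: performing the Puiseux expansion carefully enough to control both the count $\mu_j$ and the limiting invariant subspace, since small errors in the splitting argument could break the claimed direct-sum decomposition.
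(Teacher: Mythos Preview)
Your approach is the standard one and is essentially what the paper relies on, but note that the paper does \emph{not} prove Proposition~\ref{prop decomp E_-}: it is quoted from \cite{Williams1996Boundary}. The block-structure-plus-Puiseux strategy you outline is precisely the route taken there, and pieces of it (the type-(iv) analysis for $n_j=2$) are reproduced in the paper's Appendix~\ref{appendix preuve} when proving Proposition~\ref{prop proj bornes}. So there is nothing to compare against beyond confirming that your sketch matches the cited argument.

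Two small corrections. First, in the type-(iii) case you have a sign slip: one computes $\partial_\gamma\mathcal{A}_j(\underline{\zeta})=1/\partial_\xi\tau_{k_j}$, not its negative (differentiate $\tau-i\gamma=\tau_{k_j}(\eta,\xi_j)$ in $\gamma$ to get $\partial_\gamma\xi_j=-i/\partial_\xi\tau_{k_j}$, hence $\partial_\gamma(i\xi_j)=1/\partial_\xi\tau_{k_j}$). Your conclusion that incoming modes land in $E_-$ is nonetheless correct. Second, in the type-(iv) Puiseux balance the leading coefficient is $a\,i^{\,n_j-1}$ with $a\in\R^*$ the bottom-left derivative entry, which is real only when $n_j$ is odd; for even $n_j$ it is purely imaginary. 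This does not affect the root count (for even $n_j$ the roots come in pairs $\mu,-\mu$, forcing exactly $n_j/2$ in each half-plane; for odd $n_j$ the real constant gives $(n_j\pm1)/2$ depending on its sign), but your statement ``$c\in\R^*$'' should be adjusted. The identification of the limiting subspace with $\ker[\mathcal{A}(\underline{\zeta})-i\underline{\xi}_j]^{\mu_j}$ is the genuinely delicate step; the appendix carries it out explicitly for $n_j=2$ via the vectors $E_j^0,E_j^1$, and the general case is in \cite{Williams1996Boundary}.
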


\begin{remark}\label{remarque homogeneite xi, alpha etc}
	The notation $ \xi_j(\zeta) $ should not be taken for a function $ \xi_j $ depending on $ \zeta\in\Xi_0 $. Indeed for example the set $ \mathcal{M}(\zeta) $ depends on $ \zeta $. However, note that the matrix  $ \mathcal{A}(\zeta) $ is homogeneous of degree 1 with respect to $ \zeta $ in $ \Xi_0 $. Thus the number $ \mathcal{M}(\zeta) $ as well as the cardinality of the sets  $\mathcal{G}(\zeta)$, $\mathcal{I}(\zeta)\cup\mathcal{O}(\zeta)$ and $\P(\zeta)\cup\mathcal{N}(\zeta)$ depend only on the direction of $ \zeta $ in $ \Xi_0 $. We therefore assume that, $ \zeta\in\Xi_0 $ being fixed, for $ \lambda\in\R^* $, the indexes $ 1,\dots,\mathcal{M}(\lambda\,\zeta) $ are arranged in a way that, for $ j=1,\dots,\mathcal{M}(\zeta) $, we have
	\begin{equation*}
	\xi_j(\lambda\,\zeta)=\lambda\,\xi_j(\zeta) ,\quad \mbox{so that}\quad \alpha_j(\lambda\,\zeta)=\lambda\,\alpha_j(\zeta).
	\end{equation*}
	With this ordering, we note that if for $ \zeta\in\Xi_0 $, the frequency $ \alpha_{j}(\zeta) $ is glancing, incoming or outgoing (resp. $ \xi_j(\zeta) $ is of nonzero imaginary part), i.e. if $ j\in\mathcal{G}(\zeta)\cup\mathcal{O}(\zeta)\cup\mathcal{I}(\zeta) $ (resp. $ j\in\mathcal{P}(\zeta)\cup\mathcal{N}(\zeta) $), then for $ \lambda\in\R^* $, the frequency $ \alpha_{j}(\lambda\,\zeta)=\lambda\,\alpha_{j}(\zeta) $ is still glancing, incoming or outgoing (resp. $ \xi_j(\lambda\,\zeta) $ is still of nonzero imaginary part), that is to say $ j\in\mathcal{G}(\lambda\,\zeta)\cup\mathcal{O}(\lambda\,\zeta)\cup\mathcal{I}(\lambda\,\zeta) $ (resp. $ j\in\mathcal{P}(\lambda\,\zeta)\cup\mathcal{N}(\lambda\,\zeta) $). More precisely, if $ j\in\mathcal{P}(\zeta) $ (resp. $ \mathcal{N}(\zeta) $), then $ j\in\mathcal{N}(-\zeta) $ (resp. $ \mathcal{P}(-\zeta) $).
\end{remark}

\begin{definition}\label{def glancing}
	Consider $ \zeta\in\Xi_0 $. We say that $ \zeta $ is a glancing point and we denote $ \zeta\in\mathcal{G} $ if, with notations of Proposition \ref{prop decomp E_-}, there exists an index $ j $ between $ 1 $ and $ \mathcal{M}(\zeta) $ such that $ j\in\mathcal{G}(\zeta) $, in other words, if $ \zeta $ is such that there exists a real nonzero number $ \xi $ such that the frequency $ (\zeta,\xi) $ is characteristic and glancing.
\end{definition}

An assumption is now made, that helps to prove that the projectors associated with decomposition \eqref{eq decomp E_-(zeta)} are bounded uniformly with respect to $ \zeta $ in $ \Xi_0 $. This assumption has already been made in \cite{Sarason1965Hyperbolic,Williams1996Boundary}, and seems essential, see \cite{Williams2000Boundary}.

\begin{assumption}\label{hypothese mult vp}
	For all $k=1,\dots,N$, and for all $(\underline{\eta},\underline{\xi})\in\R^d\setminus\ensemble{0}$, we have
	\begin{equation*}
	\frac{\partial\tau_k}{\partial \xi}(\underline{\eta},\underline{\xi})=0\quad\Rightarrow\quad \frac{\partial^2\tau_k}{\partial \xi^2}(\underline{\eta},\underline{\xi})\neq 0.
	\end{equation*} 
\end{assumption}

\begin{remark}
	We will see during the proof of Proposition \ref{prop proj bornes} in appendix \ref{appendix preuve} that assumption \ref{hypothese mult vp} implies that for all $ \zeta $ in $\Xi_0$, for all index $j$ in $\mathcal{G}(\zeta)$, we have $n_j=2$, using the notations of Proposition \ref{prop decomp E_-}. We deduce that $\mu_j=1$ and that the component $E_-^j(\zeta)$ of the stable subspace $ E_-(\zeta) $ is of dimension 1 and given by $ \ker L\big(0,\alpha_{j}(\zeta)\big) $.
\end{remark}

\begin{definition}\label{def projecteurs Pi}
	For $\zeta\in\Xi_0$ and, using the notations of Proposition \ref{prop decomp E_-}, for an index $j$ in $\mathcal{G}(\zeta)\cup\mathcal{I}(\zeta)$, we denote by $\Pi^j_-(\zeta)$ the projection from $E_-(\zeta)$ on the component $E^j_-(\zeta)$ according to decomposition \eqref{eq decomp E_-(zeta)}. 

	We also denote by $ \Pi^{e}_-(\zeta) $ the projection from $E_-(\zeta)$ on the elliptic stable component $E_-^e(\zeta):=\oplus_{j\in\P(\zeta)}E^j_-(\zeta) $ according to decomposition \eqref{eq decomp E_-(zeta)}. 
	
	Finally, if $\zeta$ is not glancing, that is if the set $\mathcal{G}(\zeta)$ is empty, then according to Proposition \ref{prop decomp E_-} we have the following decomposition of $\C^N$
	\begin{equation}\label{eq decomp C^N E + E -}
	\C^N=
	\bigoplus_{j\in\mathcal{O}(\zeta)}E^j_+(\zeta) \oplus \bigoplus_{j\in \mathcal{N}(\zeta)}E^j_+(\zeta)\oplus
	 \bigoplus_{j\in\mathcal{I}(\zeta)}E^j_-(\zeta) \oplus \bigoplus_{j\in\P(\zeta)}E^j_-(\zeta).
	\end{equation}
	In that case we denote by $ \Pi^{e}_{\C^N}(\zeta) $ the projection from $ \C^N $ on the stable elliptic component $ E^e_-(\zeta) $ according to this decomposition, and by $ \Pi^{e,+}_{\C^N}(\zeta) $ the projection from $ \C^N $ on the unstable elliptic component $ E^e_+(\zeta):=\oplus_{j\in\mathcal{N}(\zeta)}E^j_+(\zeta) $ according to the same decomposition.
\end{definition}

The following proposition will be a key result in our analysis. It uses in a crucial way  Assumption \ref{hypothese mult vp}, as well as the strict hyperbolicity Assumption \ref{hypothese stricte hyp}. 

\begin{proposition}[{\cite{Williams1996Boundary}}]\label{prop proj bornes}
	Under Assumptions \ref{hypothese stricte hyp} and \ref{hypothese mult vp}, for $\zeta\in\Xi_0$, the projectors $\Pi^j_-(\zeta)$ for $j$ in $\mathcal{G}(\zeta)\cup\mathcal{I}(\zeta)$, and the projectors $\Pi^{e}_-(\zeta)$ are uniformly bounded with respect to $\zeta$ in $ \Xi_0 $. 
\end{proposition}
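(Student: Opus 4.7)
First, since $ \mathcal{A}(\zeta) $ is positively homogeneous of degree $ 1 $ in $ \zeta\in\Xi_0 $, its eigenspaces are positively homogeneous of degree $ 0 $, and so is the extended stable subspace $ E_-(\zeta) $ together with its components $ E^j_-(\zeta) $ and $ E^e_-(\zeta) $. The projectors $ \Pi^j_-(\zeta) $ and $ \Pi^e_-(\zeta) $ are therefore positively homogeneous of degree $ 0 $ as well, and it suffices to prove the uniform bound on the compact sphere $ \Sigma_0 $. The plan is then to work locally via the block structure of Proposition \ref{prop struct bloc} and to extract a finite sub-cover.

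Fix $ \underline{\zeta}\in\Sigma_0 $, and let $ \V $, $ T(\zeta) $ and the partition $ N=\rho_1+\cdots+\rho_L $ be given by Proposition \ref{prop struct bloc}, so that $ T(\zeta)\,\mathcal{A}(\zeta)\,T(\zeta)^{-1} $ is block diagonal with blocks $ \mathcal{A}_l(\zeta) $ of the four listed types. In the adapted basis, the extended stable subspace $ E_-(\zeta) $ splits as the direct sum of the stable subspaces of the individual blocks, and this sum is \emph{orthogonal} in the adapted basis since the blocks occupy pairwise disjoint coordinate subspaces of $ \C^N $. I would next check that this block splitting refines the decomposition \eqref{eq decomp E_-(zeta)}: type (ii) blocks contribute entirely to the elliptic component $ E^e_-(\zeta) $; type (iii) blocks are scalar with a single simple imaginary eigenvalue that corresponds to a non-glancing real characteristic frequency, contributing to a single $ E^j_-(\zeta) $ with $ j\in\mathcal{I}(\zeta) $ (or to $ E_+(\zeta) $ in the outgoing case, according to the sign of $ \partial_\gamma\mathcal{A}_l(\underline{\zeta}) $); and type (iv) blocks account for the glancing indices.

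The main obstacle is the analysis of the type (iv) blocks, and it is here that Assumption \ref{hypothese mult vp} enters in an essential way. Writing $ p(\tau,\eta,\xi)=\prod_k(\tau-\tau_k(\eta,\xi)) $, the algebraic multiplicity $ n_j $ of the imaginary eigenvalue $ i\,\xi_j(\underline{\zeta}) $ of $ \mathcal{A}(\underline{\zeta}) $ equals the order of vanishing of $ \xi\mapsto p(\underline{\tau},\underline{\eta},\xi) $ at $ \xi_j(\underline{\zeta}) $. By strict hyperbolicity only one factor vanishes at that point, by the glancing condition $ \partial_\xi\tau_k $ vanishes there, and by Assumption \ref{hypothese mult vp} $ \partial_\xi^2\tau_k $ does not, so the order of vanishing is exactly $ 2 $. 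Thus $ \rho_l=n_j=2 $ and $ \mu_j=1 $. Using the explicit normal form $ \mathcal{A}_l(\underline{\zeta})=i\,\xi_j\,I+\bigl(\begin{smallmatrix} 0 & i\\ 0 & 0\end{smallmatrix}\bigr) $ and the non-vanishing of the bottom-left entry of $ \partial_\gamma\mathcal{A}_l(\underline{\zeta}) $, a direct perturbation computation shows that the two eigenvalues of $ \mathcal{A}_l(\zeta) $ split at order $ \sqrt{\gamma} $ with opposite signs of real parts, and that the $ 1 $-dimensional stable eigenspace extends continuously to $ \gamma=0 $; its limit is $ \ker\bigl(\mathcal{A}_l(\underline{\zeta})-i\,\xi_j\,I\bigr) $, which corresponds under $ T(\underline{\zeta})^{-1} $ to $ \ker L\big(0,\alpha_j(\underline{\zeta})\big)=E^j_-(\underline{\zeta}) $.

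Once these identifications are in place, each projector $ \Pi^j_-(\zeta) $ and $ \Pi^e_-(\zeta) $ reads in the adapted basis as an orthogonal projection onto a sum of coordinate blocks, and therefore has norm $ 1 $. Returning to the original basis multiplies the norm by at most $ \|T(\zeta)\|\,\|T(\zeta)^{-1}\| $, which is uniformly bounded on a compact sub-neighborhood of $ \underline{\zeta} $ in $ \Xi_0 $ by analyticity of $ T $. Compactness of $ \Sigma_0 $ then yields a finite sub-cover and a uniform bound on $ \Sigma_0 $, which homogeneity extends to all of $ \Xi_0 $.
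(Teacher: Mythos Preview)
Your approach is essentially correct and is simpler than the paper's for the bare boundedness statement. The key observation --- that the block subspaces $V_l$ from Proposition~\ref{prop struct bloc} are orthogonal coordinate subspaces in the adapted basis, that $E_-(\zeta)=\bigoplus_l \big(E_-(\zeta)\cap V_l\big)$, and that each $\Pi_-^j(\zeta)$ and $\Pi_-^e(\zeta)$ projects onto a sum of these pieces along the remaining ones --- does yield norm $\le 1$ in the adapted basis and hence $\le\|T(\zeta)\|\,\|T(\zeta)^{-1}\|$ in the original one.

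There is, however, one point you have glossed over. Your identification ``type (iv) blocks account for the glancing indices'' is correct only at the base point $\underline{\zeta}$. For $\zeta$ nearby in $\Sigma_0$ the two eigenvalues of a type (iv) block either (a) remain equal and purely imaginary (glancing, $j\in\mathcal{G}(\zeta)$), (b) split into two distinct purely imaginary eigenvalues, one incoming and one outgoing, so that the one-dimensional stable piece lands in some $E_-^j(\zeta)$ with $j\in\mathcal{I}(\zeta)$, or (c) acquire nonzero real parts of opposite signs, so that the stable piece becomes elliptic and contributes to $E_-^e(\zeta)$. The paper tracks this trichotomy through the sign of a discriminant $\Delta_j(\zeta)$. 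You need this observation to justify that, for \emph{every} $\zeta\in\V\cap\Sigma_0$, the range of each projector of interest is a union of the block pieces $E_-(\zeta)\cap V_l$; without it your compatibility claim is established only at $\underline{\zeta}$. Once stated, your orthogonality argument goes through uniformly in $\zeta$ regardless of which of (a)--(c) occurs, so the gap is expositional rather than fatal. A related minor point: for a type (iv) block the range $E_-^j(\zeta)$ is a one-dimensional subspace of the two-dimensional coordinate block $V_l$, not a full coordinate block; what makes the projection orthogonal is that its \emph{domain} is $E_-(\zeta)$, inside which the contributions from distinct blocks are mutually orthogonal.

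By contrast, the paper's proof is more laborious: it constructs a \emph{continuous} basis of $E_-(\zeta)$ near $\underline{\zeta}$, exhibiting explicitly the stable eigenvector of each type (iv) block as a function of $\zeta$, and then writes the sought projectors as finite, $\zeta$-dependent combinations of continuous block projectors $\tilde\Pi_-^l(\zeta)$. This buys more than boundedness: the explicit eigenvalue and eigenvector formulas, and in particular the discriminant $\Delta_j(\zeta)$ and its relation to the distance from the glancing set, are reused verbatim in the proofs of Lemma~\ref{lemme minoration vitesse de groupe} and Proposition~\ref{prop controle exp t A Pi }. Your argument, while cleaner for this proposition in isolation, would not supply those quantitative ingredients.
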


The proof of this result, omitted in \cite{Williams1996Boundary} and which requires some work, is postponed until Appendix \ref{appendix preuve}.

Thanks to Assumption \ref{hypothese mult vp} we are also able to prove the following result, which continues Lemma \ref{lemme pi tilde E vitesse de groupe}, and establishes a control from below over the normal component of the group velocity, and therefore over the vector $ \tilde{\pi}_{\alpha}\,E_k(\eta,\xi) $ for all $ \alpha=\big(\tau_k(\eta,\xi),\eta,\xi\big) $, involving the distance from $ \big(\tau_k(\eta,\xi),\eta\big) $ to the glancing set $ \mathcal{G} $. Its proof uses notations and results from the one of Proposition \ref{prop proj bornes}, and is therefore also skipped until Appendix \ref{appendix preuve}.

\begin{lemma}\label{lemme minoration vitesse de groupe}
There exists a positive constant $ C>0 $ such that, if the real frequency $ \alpha=(\tau,\eta,\xi) $ in $ \R^{1+d}\privede{0} $ is characteristic, and if $ k $ between 1 and $ N $ is such that $ \tau=\tau_k(\eta,\xi) $, then we have
\begin{equation*}
	\left|\partial_{\xi}\tau_k(\eta,\xi)\right|\geq C\frac{\dist\big((\tau,\eta),\mathcal{G}\big)^{1/2}}{|(\tau,\eta)|^{1/2}}.
\end{equation*}  
Using Lemma \ref{lemme pi tilde E vitesse de groupe}, we therefore obtain the following estimate
\begin{equation}\label{eq freq minoration pi tilde E alpha}
	\left|\tilde{\pi}_{\alpha}\,E_k(\eta,\xi)\right|\geq C\frac{\dist\big((\tau,\eta),\mathcal{G}\big)^{1/2}}{|(\tau,\eta)|^{1/2}}.
\end{equation}  
\end{lemma}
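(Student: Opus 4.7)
The plan is to reduce the estimate to the unit sphere by homogeneity and then establish a Morse-type expansion near the set where $\partial_{\xi}\tau_k$ vanishes, leaning crucially on Assumption \ref{hypothese mult vp}. First I would observe that both sides are invariant under positive dilations $(\eta,\xi)\mapsto(\lambda\eta,\lambda\xi)$: $\partial_{\xi}\tau_k$ is homogeneous of degree $0$, the glancing set $\mathcal{G}$ is a cone, and $(\tau_k(\eta,\xi),\eta)$ scales by $\lambda$, so the right-hand side is also homogeneous of degree $0$. The non-characteristic boundary Assumption \ref{hypothese bord non caract} moreover forces $\tau_k(0,\pm 1)\neq 0$ (otherwise $A_d(0)$ would be singular), so $|(\tau_k(\eta,\xi),\eta)|$ stays bounded above and away from zero on $\S^{d-1}$. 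It thus suffices to prove, for each $k=1,\dots,N$ and for $(\eta,\xi)\in\S^{d-1}$, a uniform estimate
\begin{equation*}
|\partial_{\xi}\tau_k(\eta,\xi)|\geq C'\,\dist\big((\tau_k(\eta,\xi),\eta),\mathcal{G}\big)^{1/2}.
\end{equation*}

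Fix $k$ and let $Z_k:=\ensemble{(\eta,\xi)\in\S^{d-1}\mid\partial_{\xi}\tau_k(\eta,\xi)=0}$, a compact subset of the sphere. Outside any chosen open neighborhood of $Z_k$, $|\partial_{\xi}\tau_k|$ is bounded below by a positive constant by continuity and compactness of the sphere, and the estimate holds trivially there since $\dist$ is bounded on $\S^{d-1}$. The heart of the argument is therefore local, around each $(\eta_0,\xi_0)\in Z_k$: Assumption \ref{hypothese mult vp} guarantees $\partial_{\xi}^2\tau_k(\eta_0,\xi_0)\neq 0$, and since $\tau_k$ is analytic on $\R^d\privede{0}$ the implicit function theorem furnishes a neighborhood $V$ of $\eta_0$ and an analytic function $\phi\colon V\to\R$ with $\phi(\eta_0)=\xi_0$ such that the zero set of $\partial_{\xi}\tau_k$ near $(\eta_0,\xi_0)$ coincides with the graph of $\phi$.

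For $(\eta,\xi)$ in this neighborhood, set $\xi^{\star}:=\phi(\eta)$. The frequency $(\tau_k(\eta,\xi^{\star}),\eta,\xi^{\star})$ is then characteristic and glancing, so $(\tau_k(\eta,\xi^{\star}),\eta)\in\mathcal{G}$, and a second-order Taylor expansion in the $\xi$ variable around $\xi^{\star}$---using $\partial_{\xi}\tau_k(\eta,\xi^{\star})=0$ together with a uniform non-vanishing of $\partial_{\xi}^2\tau_k$ on a small enough neighborhood---produces constants $c,C>0$ such that
\begin{equation*}
|\tau_k(\eta,\xi)-\tau_k(\eta,\xi^{\star})|\leq C\,|\xi-\xi^{\star}|^2,\qquad |\partial_{\xi}\tau_k(\eta,\xi)|\geq c\,|\xi-\xi^{\star}|.
\end{equation*}
Combining these two bounds yields $|\partial_{\xi}\tau_k(\eta,\xi)|^2\geq (c^2/C)\,|\tau_k(\eta,\xi)-\tau_k(\eta,\xi^{\star})|\geq (c^2/C)\,\dist((\tau_k(\eta,\xi),\eta),\mathcal{G})$, which is the desired local estimate after taking square roots.

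Finally, by compactness of $Z_k$ a finite subcover of such neighborhoods produces a uniform constant on a whole open neighborhood of $Z_k$, and combining with the trivial estimate away from $Z_k$ and taking a minimum over $k=1,\dots,N$ yields the constant $C$ announced in the lemma. The estimate \eqref{eq freq minoration pi tilde E alpha} on $|\tilde{\pi}_{\alpha}E_k(\eta,\xi)|$ then follows at once from Lemma \ref{lemme pi tilde E vitesse de groupe}, since $|A_d(0)^{-1}E_k(\eta,\xi)|\geq\norme{A_d(0)}^{-1}$ provides a uniform positive lower bound. The main technical obstacle is securing the uniformity of the Morse-type local expansion, which is precisely the role of the transversality condition encoded by Assumption \ref{hypothese mult vp}.
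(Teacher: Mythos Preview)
Your argument is correct. The strategy---reduce by homogeneity, estimate trivially away from the critical set, and use a Morse-type Taylor expansion near it via Assumption \ref{hypothese mult vp}---is the same as the paper's, but the implementation differs in a useful way.

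The paper works on the sphere $\Sigma_0$ in the $\zeta=(\tau,\eta)$ variables and reuses the Weierstrass preparation and discriminant analysis already developed in the proof of Proposition \ref{prop proj bornes}: it differentiates the factored relation \eqref{eq bornitude intermediaire 4} to obtain $\partial_{\xi}\tau_{k_j}=\pm\sqrt{\Delta_j(\zeta)}\,e(\zeta,\xi_j^{\pm}(\zeta))$, and then invokes the parametrization $\Delta_j(\zeta)=(\tau-\tau_j^g(\eta))\,e(\zeta)$ of the zero set of $\Delta_j$ to get the distance bound. You instead work on the sphere in the $(\eta,\xi)$ variables and derive the quadratic expansion directly from the implicit function theorem and a second-order Taylor bound on $\tau_k$ in $\xi$. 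Your route is more self-contained (it does not depend on the block-structure machinery of the appendix), while the paper's route amortizes the cost of that machinery by recycling it. Both rely on exactly the same nondegeneracy input, $\partial_{\xi}^2\tau_k\neq 0$ at glancing points, to guarantee the uniform constants in the local expansion.
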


\section{Functional framework}

\subsection{Set of frequencies inside the domain}

To define the functional framework that will be used, we need first to determine a priori which frequencies may appear in the solution to \eqref{eq systeme 1}. For a detailed discussion of this analysis, reference is made to {\cite[Chapters 9 and 10]{Rauch2012Hyperbolic}} and \cite{MajdaArtola1988Mixed}. The presence on the boundary of the frequencies $ \zeta_1,\dots,\zeta_m $ creates, by nonlinear interaction, the following group of frequencies on the boundary
\begin{equation}\label{eq def frequences au bord}
\F_b:=\zeta_1\,\Z+\cdots+\zeta_m\,\Z\subset\R^d.
\end{equation}
The assumption is now made that this group does not contain any glancing point, which have been introduced in Definition \ref{def glancing}. This assumption is often made, and allows to avoid complications created by the glancing modes, see e.g. \cite{CoulombelGues2010Geometric,CoulombelGuesWilliams2011Resonant}.

\begin{assumption}\label{hypothese pas de glancing}
	We have
	\begin{equation*}
	\big(\F_b\privede{0}\big)\cap \mathcal{G}=\emptyset.
	\end{equation*}
	In other words, with the notations of Proposition \ref{prop decomp E_-}, for all $ \zeta\in\F_b\privede{0} $, the set $ \mathcal{G}(\zeta) $ of indexes $ j $ between $ 1 $ and $ \mathcal{M}(\zeta) $ such that the characteristic frequency $ \big(\zeta,\xi_j(\zeta)\big) $ is glancing, is an empty set.
\end{assumption}

However, attention must be paid on the fact that despite Assumption \ref{hypothese pas de glancing}, the set $ \F_b\privede{0} $ may contain frequencies arbitrary close to the set of glancing frequencies $ \mathcal{G} $, namely frequencies $ \zeta $ admitting a lifting inside the domain $ \alpha=\big(\tau_k(\eta,\xi),\eta,\xi\big) $ of which the normal component of the group velocity given by $ -\partial_{\xi}\tau_k(\eta,\xi) $ is arbitrary close to zero. This phenomenon is well illustrated in Example \ref{exemple Euler frequences au bord} of compressible isentropic Euler equations below.
In the following, we will need a control on the projectors $ \Pi^{e}_{\C^N}(\zeta) $ for $ \zeta $ in the group $ \F_b\privede{0} $, defined for $ \zeta $ non glancing. Indeed the norm of this projector increases when $ \zeta $ gets close to the glancing set $ \G $. This is why a small divisor assumption is now made, that gives a control over the distance between $ \zeta $ in $ \F_b $ and $ \G $ for large $ \zeta $, notably leading to Proposition \ref{prop controle exp t A Pi } below.

\begin{assumption}\label{hypothese petits diviseurs 1}
	There exists a real number $ a_1 $ and a positive constant $ c $ such that for all $ \zeta $ in $ \F_b\privede{0} $, we have
	\begin{equation*}
	\dist \big(\zeta,\G\big)\geq c\,|\zeta|^{-a_1}.
	\end{equation*}
\end{assumption}

Note that Assumption \ref{hypothese pas de glancing} is a consequence of Assumption \ref{hypothese petits diviseurs 1}, so could be omitted. However we have chosen to keep both assumptions because they play two different roles in the proofs. Small divisors Assumption \ref{hypothese petits diviseurs 1} is quite unusual, and plays a technical role in the proofs.

The operator $ L(0,\partial_{z}) $ being hyperbolic, the frequencies on the boundary $ \zeta\in\F_b\privede{0} $ are then lifted inside the domain into frequencies $ (\zeta,\xi) $. We will see that the polarization conditions for the leading profile cancel the modes associated with noncharacteristic frequencies. Therefore, since we are interested in bounded solutions, at this point only the incoming and evanescent characteristic frequencies lifted from frequencies on the boundary are created. Assumption \ref{hypothese pas de glancing} is used here to exclude the possibility of creating glancing frequencies $ \big(\zeta,\xi_{j}(\zeta)\big) $, that is with $ j\in\mathcal{G}(\zeta) $. Thus, at this stage, the set of frequencies $ \ensemble{0}\cup\F^{\inc}\cup\F^{\ev} $ has been obtained for the leading profile, where the sets $ \F^{\inc} $ and $ \F^{\ev} $ are given by
\begin{equation}\label{eq freq def F inc ev}
\mathcal{F}^{\inc}:=\Big\{\big(\zeta,\xi_j(\zeta)\big),\ \zeta\in \mathcal{F}_b\setminus\ensemble{0},\  j\in\mathcal{I}(\zeta)\Big\},\quad 
\F^{\ev}:=\Big\{\big(\zeta,\xi_j(\zeta)\big),\ \zeta\in \mathcal{F}_b\setminus\ensemble{0},\  j\in\P(\zeta)\Big\}.
\end{equation}
Apart from exceptional cases, the set $ \F^{\inc} $ is not finitely generated, which imposes an almost-periodic framework for the normal fast variable.

Interest is now made on resonances that may occur inside the domain. By nonlinear interaction, two frequencies $ \alpha_{j_p}(\zeta_p)=\big(\zeta_p,\xi_{j_p}(\zeta_p)\big) $ and $ \alpha_{j_q}(\zeta_q)=\big(\zeta_q,\xi_{j_q}(\zeta_q)\big) $ of $ \F^{\inc} $ may resonate to create a characteristic frequency $\alpha_{j_r}(\zeta_r)=\big(\zeta_r,\xi_{j_r}(\zeta_r)\big)  $ in the following way:
\begin{equation*}
n_p\,\alpha_{j_p}(\zeta_p)+n_q\,\alpha_{j_q}(\zeta_q)=n_r\,\alpha_{j_r}(\zeta_r),\quad n_p,n_q,n_r\in\Z\privede{0}.
\end{equation*}
If the index $ j_r $ belongs to the set $ \mathcal{O}(\zeta_r) $, that is to say if $ \alpha_{j_r}(\zeta_r) $ is an outgoing real characteristic frequency, a new frequency inside the domain is thus created, which does not already belong to the initial set $ \F^{\inc} $ defined above. The simplifying assumption that it does not occur is made, so there is no outgoing characteristic frequency created through a resonant triplet. More precisely, we assume that the outgoing and the incoming frequencies do not resonate one with the other. The set of outgoing frequencies $ \F^{\out} $ is defined as
\begin{equation}\label{eq freq def F out}
\mathcal{F}^{\out}:=\Big\{\big(\zeta,\xi_j(\zeta)\big),\ \zeta\in \mathcal{F}_b\setminus\ensemble{0},\  j\in\mathcal{O}(\zeta)\Big\}.
\end{equation}

\begin{assumption}\label{hypothese pas de sortant}
	\begin{enumerate}[label=\roman*), leftmargin=0.7cm]
		\item There does not exist a couple $ (\alpha_p,\alpha_q) $ of incoming characteristic frequencies $ \F^{\inc} $ and a couple of integers $ (n_p,n_q) $ such that the frequency
		\begin{equation*}
		n_p\,\alpha_p+n_q\,\alpha_q
		\end{equation*}
		is real, characteristic and outgoing.
		\item There does not exist a couple $ (\alpha_p,\alpha_q) $ of outgoing characteristic frequencies $ \F^{\out} $ and a couple of integers $ (n_p,n_q) $ such that the frequency
		\begin{equation*}
			n_p\,\alpha_p+n_q\,\alpha_q
		\end{equation*}
		is real, characteristic and incoming.
	\end{enumerate}
\end{assumption}

This is a strong assumption, that, up to our knowledge, cannot be found in the literature. It implies that there is no outgoing mode in the leading profile of the expansion. For example in \cite{CoulombelGuesWilliams2011Resonant}, there is no such assumption, but since there is only one phase on the boundary, there are a finite number of resonances, conversely to our case. When there are outgoing modes, they are coupled with incoming ones through the trace on the boundary. This is an issue since it seems that the suitable functional framework for outgoing modes, tailored for evolution in time, is different from the one for incoming modes, adapted to propagation in normal direction. It also complicates the iterative process used to construct a solution. We have chosen here to focus on the construction of a functional framework for incoming modes that allows to solve the problem.

Note that if three real characteristic frequencies  $ \alpha_p,\alpha_q,\alpha_r $ resonate as
\begin{equation*}
n_p\,\alpha_p+n_q\,\alpha_q=n_r\,\alpha_r,\quad n_p,n_q,n_r\in\Z,
\end{equation*}
then according to the previous assumption, the frequencies $ \alpha_p,\alpha_q,\alpha_r $ are either all incoming or all outgoing. On an other hand, despite Assumption \ref{hypothese pas de sortant}, there may exist a countable infinite number of resonances between incoming frequencies, as it is the case in Example \ref{exemple Euler frequences au bord} of compressible isentropic Euler equations in dimension 2.

At this stage, for a new frequency to be created from $ \F^{\inc} $, there must exist a resonance between two frequencies of $ \F^{\inc} $, that creates a real characteristic frequency which does not already belong to $ \F^{\inc} $. The frequencies in $ \F^{\inc} $ are incoming, and according to Assumption \ref{hypothese pas de sortant} above, a resonance between two incoming frequencies may only produce an incoming frequency, which already belongs to $ \F^{\inc} $. There is therefore no new frequency created, and the final set of frequencies inside the domain created by nonlinear interaction on the boundary and lifting inside the domain is given by
\begin{equation}
\mathcal{F}:=\ensemble{0}\cup\Big\{\alpha_j(\zeta),\ \zeta\in\mathcal{F}_b\setminus\ensemble{0},\  j\in\mathcal{I}(\zeta)\cup\P(\zeta)\Big\}.
\end{equation}

We expect for the leading profile of the solution to \eqref{eq systeme 1} to feature all frequencies in  $ \mathcal{F} $ created by lifting. It leads to consider, to maintain generality, all frequencies in $\mathcal{F}$. Yet it seems unlikely that the group generated by $ \mathcal{F} $ may be finitely generated, which a priori excludes an asymptotic expansion of the solution $ u^{\epsilon} $ in the form of quasi-periodic functions. Following \cite{JolyMetivierRauch1995Coherent} and \cite{CoulombelGuesWilliams2011Resonant} after,  a quasi-periodic framework is nevertheless considered for the tangential fast variables (the group of frequencies on the boundary being finitely generated), but an almost-periodic framework for the normal fast variable is considered. The next subsection is devoted to that question and describes the functional framework used in this analysis. This part is ended by verifying the different assumptions and assertions made in this subsection for Example \ref{exemple Euler 1} of compressible isentropic Euler equations in dimension 2.

\begin{example}\label{exemple Euler frequences au bord}
	The notations of Example \ref{exemple Euler 1} and those after are used. Assumptions \ref{hypothese pas de glancing},\ref{hypothese petits diviseurs 1} and \ref{hypothese pas de sortant} concern the group of frequencies on the boundary $ \F_b $, thus adequate frequencies on the boundary must be considered for Example \ref{exemple Euler 1}. To simplify the calculations, we take two frequencies $  \zeta^{1} $ and $ \zeta^{\delta} $ given by $ \zeta^1:=(c_0\,\eta_0,\eta_0) $ and $ \zeta^{\delta}:=( c_0\,\delta\,\eta_0,\eta_0) $, with $ \eta_0>0 $ and $ \delta $ an irrational number strictly larger than 1, so that $ \zeta^1 $ and $ \zeta^{\delta} $ are both in the hyperbolic region $ \mathcal{H} $. Recall that $ c_0=c(v_0)>0 $ refers to the sound velocity and that the equilibrium $ V_0 = (v_0,0,u_0) $ satisfies $ 0<u_0<c_0 $. The boundary frequencies lattice $ \F_b $ is therefore given in this example by
	\begin{equation*}
	\F_b=\ensemble{\big(c_0\,\eta_0\,(p+\delta\,q),\eta_0\,(p+q)\big)\mid p,q\in\Z}\subset\R^2.
	\end{equation*}
	We denote by $ \zeta_{p,q}:=(\tau_{p,q},\eta_{p,q}):=\big(c_0\,\eta_0\,(p+\delta q),\eta_0\,(p+q)\big) $ the frequency of $ \F_b $ given by $ p\,\zeta^1+q\,\zeta^{\delta} $, for $ p,q $ in $ \Z $.
	
	A nonzero frequency $ \zeta_{p,q} $ is glancing if and only if $ |\tau_{p,q}|=\sqrt{c_0^2-u_0^2}\,|\eta_{p,q}| $, that is to say if and only if \footnote{If $ q=0 $, then according to the relation $ |\tau_{p,q}|=\sqrt{c_0^2-u_0^2}\,|\eta_{p,q}| $, we have $ p^2=(1-u_0^2/c_0^2)\,p^2 $ so $ p $ is also zero, i.e. the frequency $ \zeta_{p,q} $ is zero, which is excluded by assumption.}
	\begin{equation}\label{eq ex relation glancing}
	\frac{p}{q}\in\ensemble{\frac{\sqrt{1-M^2}-\delta}{1-\sqrt{1-M^2}},\frac{-\sqrt{1-M^2}-\delta}{1+\sqrt{1-M^2}}}.
	\end{equation}
	We have denoted by $ M $ the Mach number given by $ M:=u_0/c_0 $, belonging to $ (0,1) $. If the two real numbers
	\begin{equation}\label{eq ex def reels irrationnels 1- M 2}
		K_-:=\frac{\sqrt{1-M^2}-\delta}{1-\sqrt{1-M^2}}\quad \text{et}\quad K_+:=\frac{-\sqrt{1-M^2}-\delta}{1+\sqrt{1-M^2}}
	\end{equation}
	are irrational, then there does not exist a relation of the form \eqref{eq ex relation glancing}, and there is therefore no glancing frequency in the group $ \F_b\privede{0} $. Thus we make the assumption that $ K_- $ and $ K_+ $ are irrational, so that the assumption \ref{hypothese pas de glancing} is verified. We may for example take $ M=\sqrt{3}/2 $ and $ \delta>1 $ irrational. We summarize now the different areas where the frequencies $ \zeta_{p,q} $ may be, depending on $ p $ and $ q $. First note that we have $ 0>K_+>K_->-\delta $. Recall that the frequency $ \zeta_{p,q} $ is in the hyperbolic region (resp. mixed region) if and only if $ |\tau_{p,q}|>\sqrt{c_0^2-u_0^2}\,|\eta_{p,q}| $ (resp. $ |\tau_{p,q}|<\sqrt{c_0^2-u_0^2}\,|\eta_{p,q}| $). We thus infer the classification given in Figure \ref{figure tableau}. The calculation steps are not detailed, but one case is treated in more details below.
	
	\begin{figure}
		\centering
		\begin{tabular}{|c || M{4.5cm} | M{4.5cm} | M{4.5cm} |}
			\hline & $ p/q>K_+ $ & $ K_+>p/q > -\delta  $ & $ p/q < -\delta  $  \\\hline\hline
		$ q > 0 $ & $ \zeta_{p,q} $ is in the hyperbolic region $ \mathcal{H} $ with $ \tau_{p,q}>0 $ & $ \zeta_{p,q} $ is in the mixed region $ \mathcal{EH} $ & $ \zeta_{p,q} $ is in the hyperbolic region $ \mathcal{H} $ with $ \tau_{p,q}<0 $\\
		\hline
		$ q < 0 $ & $ \zeta_{p,q} $ is in the hyperbolic region $ \mathcal{H} $ with $ \tau_{p,q}<0 $ & $ \zeta_{p,q} $ is in the mixed region $ \mathcal{EH} $ & $ \zeta_{p,q} $ is in the hyperbolic region $ \mathcal{H} $ with $ \tau_{p,q}>0 $\\ \hline
	\end{tabular}
\caption{Position of $ \zeta_{p,q} $ depending on $ p $ and $ q $.}
\label{figure tableau}
	\end{figure}

\bigskip 
	
	The remark concerning the group velocities that follows Assumption \ref{hypothese pas de glancing} is now illustrated. For that purpose a sequence of frequencies on the boundary that draw near the glancing set is considered, see Figure \ref{figure exemple Euler 2}. The real number $ K_+ $ given \eqref{eq ex def reels irrationnels 1- M 2} being irrational, there exist two sequences $ (p_k)_{k} $ and $ (q_k)_{k} $ of integers such that for $ k\geq 0 $, $ p_k $ and $ q_k $ are coprime, $ q_k>0 $, and such that
	\begin{equation*}
	\frac{p_k}{q_k}\xrightarrow[k\rightarrow+\infty]{>}\frac{-\sqrt{1-M^2}-\delta}{1+\sqrt{1-M^2}}=K_+.
	\end{equation*}
	Note that since $ K_+>K_->-\delta $, for all $ k\geq 0 $, we have $ p_k/q_k>K_->-\delta $, so on one hand we have $ p_k+\delta q_k>0 $, and on the other hand, independently of the sign of $ p_k+q_k $, we have $ p_k+\delta q_k>\sqrt{1-M^2}\,|p_k+q_k| $. The frequencies $ \zeta_{p_k,q_k} $ are therefore in the hyperbolic region $ \mathcal{H} $ with $ \tau>0 $, and draw near the glancing region $ \mathcal{G} $, see Figure \ref{figure exemple Euler 2}. Since $ \tau_{p_k,q_k}>0 $, according to Example \ref{exemple Euler UKL}, the last component of the group velocity associated with the frequency on the inside $ \alpha_1(\tau_{p_k,q_k},\eta_{p_k,q_k}) $ is given by
	\begin{align}
	\partial_{\xi}\tau_3\big(&\eta_{p_k,q_k},\xi_1(\tau_{p_k,q_k},\eta_{p_k,q_k})\big)\label{eq ex vitesse de groupe normale}
	\leq\sqrt{\frac{\tau_{p_k,q_k}^2-\eta_{p_k,q_k}^2\,(c_0^2-u_0^2)}{\eta_{p_k,q_k}^2}}\\
	&= c_0\,\sqrt{q_k\left(1+\sqrt{1-M^2}\right)\frac{p_k+\delta q_k-(p_k+q_k)(1-M^2)} {(p_k+q_k)^2}}\sqrt{\frac{p_k}{q_k}-\frac{-\sqrt{1-M^2}-\delta}{1+\sqrt{1-M^2}}}\label{eq ex quant maj vitesse de groupe}.
	\end{align}
	Note that the quantity under the first square root sign of \eqref{eq ex quant maj vitesse de groupe} is non negative, since for all $ k\geq 0 $, $ q_k>0 $ and $ p_k/q_k>K_- $, so that  $ p_k+\delta q_k-(p_k+q_k)(1-M^2)>0 $. This quantity being bounded, the quantity \eqref{eq ex quant maj vitesse de groupe} converges towards zero by construction of the integers $ (p_k,q_k) $.
	We see that the normal  group velocity \eqref{eq ex vitesse de groupe normale} of the hyperbolic frequencies $ \zeta_{p_k,q_k} $ converges towards zero as $ k $ goes to infinity. It shows that the normal group velocity of the hyperbolic frequencies, although nonzero, may be arbitrary close to zero.
	
	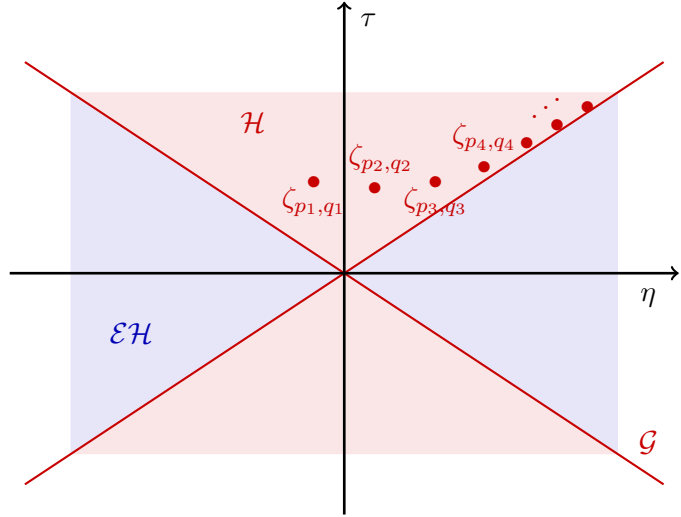
\begin{figure}[!hb]
		\centering
		\begin{tikzpicture}[scale=0.8]
		\fill[altred!10] (-4.5,3) -- (0,0) -- (4.5,3) --cycle;
		\fill[altred!10] (-4.5,-3) -- (0,0) -- (4.5,-3) --cycle;
		\fill[altblue!10] (-4.5,-3) -- (0,0) -- (-4.5,3) --cycle;
		\fill[altblue!10] (4.5,-3) -- (0,0) -- (4.5,3) --cycle;
		\draw[line width = 0.8pt, altred] (-5.25,3.5) -- (5.25,-3.5);
		\draw[line width = 0.8pt, altred] (-5.25,-3.5) -- (5.25,3.5);
		\draw[line width = 1pt,->] (-5.5,0) -- (5.5,0);
		\draw[line width = 1pt,->] (0,-4) -- (0,4.5);
		\draw (5,-0.4) node{$ \eta $};
		\draw (0.4,4.2) node{$ \tau $};
		\draw[altred] (5,-2.8) node{$ \mathcal{G} $};
		\draw[altred] (-1.5,2.5) node{$ \mathcal{H} $};
		\draw[altblue] (-3.5,-1) node{$ \mathcal{EH} $};
		\draw[altred] (-0.5,1.5) node{$ \bullet $};
		\draw[altred] (0.5,1.4) node{$ \bullet $};
		\draw[altred] (1.5,1.5) node{$ \bullet $};
		\draw[altred] (2.3,1.75) node{$ \bullet $};
		\draw[altred] (3,2.15) node{$ \bullet $};
		\draw[altred] (3.5,2.45) node{$ \bullet $};
		\draw[altred] (4,2.75) node{$ \bullet $};
		\draw[altred] (-0.5,1.6) node[below]{$ \zeta_{p_1,q_1} $};
		\draw[altred] (0.6,1.5) node[above]{$  \zeta_{p_2,q_2}  $};
		\draw[altred] (1.5,1.6) node[below]{$ \zeta_{p_3,q_3} $};
		\draw[altred] (2.3,1.85) node[above]{$ \zeta_{p_4,q_4} $};
		\draw[altred] (3.3,2.35) node[above]{$ \iddots $};
		\end{tikzpicture}
		\caption{Sequence of frequencies that draw near the glancing region, of which the normal group velocity goes to zero.}
		\label{figure exemple Euler 2}
	\end{figure}
	
	Interest is now made on Assumption \ref{hypothese pas de sortant} and into the  resonances between real characteristic frequencies. We recall the notations of Example \ref{exemple Euler UKL}, and we first determine that, in the hyperbolic region, the eigenvalues $ i\,\xi_1(\zeta) $, $ i\,\xi_2(\zeta) $ and $ i\,\xi_3(\zeta) $, defined by \eqref{eq ex Euler v.p. hyp}, are given, for $ \zeta=\zeta_{p,q} $ in $ \F_b\privede{0} $, by
	\begin{align*}
		i\,\xi_1(\zeta_{p,q})&=i\,\eta_0\,\frac{M\,(p+\delta q)+\sign(p+\delta q)\,\sqrt{(p+q)^2\,M^2+2pq(\delta-1)+q^2(\delta^2-1)}}{1-M^2},\\[5pt]
		i\,\xi_2(\zeta_{p,q})&=i\,\eta_0\,\frac{M\,(p+\delta q)-\sign(p+\delta q)\,\sqrt{(p+q)^2\,M^2+2pq(\delta-1)+q^2(\delta^2-1)}}{1-M^2},\\[5pt]
		i\,\xi_3(\zeta_{p,q})&=-i\,\eta_0\,\frac{p+\delta q}{M}.
	\end{align*}
	The case of the glancing region is excluded by assumption, and the one of the mixed region is included in the following, considering there is in this case only one imaginary eigenvalue, which is the linear  eigenvalue $ i\,\xi_3(\zeta) $.
	We first observe that, the eigenvalue $ i\,\xi_3(\zeta_{p,q}) $ begin linear, it generates resonances of the form
	\begin{equation*}
	\alpha_3(\zeta_{p,q})+\alpha_3(\zeta_{r,s})=\alpha_3(\zeta_{p+r,q+s}), \quad \forall p,q,r,s\in\Z.
	\end{equation*} 
	The frequency $ \alpha_3(\zeta_{p,q}) $ being always incoming, there are therefore already an infinite number of resonances between incoming frequencies. From now on the notation $ \alpha_{1,2} $ refers to one of the characteristic frequency $ \alpha_1 $ or $ \alpha_2 $. Since by linearity of $ \alpha_3 $, the resonance between two frequencies $ \alpha_3 $ and a frequency $ \alpha_{1,2} $ is impossible, the two following cases of resonance are still to be investigated:
	\begin{equation*}
	\alpha_{1,2}(\zeta_{p,q})+\alpha_{1,2}(\zeta_{r,s})=\alpha_3(\zeta_{p+r,q+s})\quad \text{et} \quad \alpha_{1,2}(\zeta_{p,q})+\alpha_{1,2}(\zeta_{r,s})=\alpha_{1,2}(\zeta_{p+r,q+s}).
	\end{equation*} 
	In the first case, it is equivalent to the relation
	\begin{multline}\label{eq ex equation resonance 1}
	\big[(p+q)^2-(r+s)^2\big]\,M^8+2\big[(p+q)^2-(r+s)^2\big]\big(2(rs-pq)(\delta-1)+(s^2-q^2)(\delta^2-1)\big)\,M^6\\
	+C_4(p,q,r,s,\delta)\,M^4
	+C_2(p,q,r,s,\delta)\,M^2+(p+r+\delta q+\delta s)^2=0
	\end{multline}
	where coefficients $ C_4(p,q,r,s,\delta) $  and $ C_2(p,q,r,s,\delta) $ are polynomial in their variables. Two cases may now occur, depending on whether the coefficient in front of $ M^8 $ in equation \eqref{eq ex equation resonance 1} is zero or not.
	\begin{enumerate}[label=\roman*), leftmargin=0.7cm]
		\item Either we have $ (p+q)^2-(r+s)^2\neq 0 $, in which case equation \eqref{eq ex equation resonance 1} is a polynomial equation of degree 4 in $ \mathbb{Q}[\delta] $ satisfied by $ M^2 $.
		\item Or we have $ (p+q)^2=(r+s)^2 $, and in this case \eqref{eq ex equation resonance 1} is a polynomial equation of degree at most 2 satisfied by $ M^2 $ in $ \mathbb{Q}[\delta] $. If once again the coefficients $ C_4(p,q,r,s,\delta) $  and $ C_2(p,q,r,s,\delta) $ in front of $ M^4 $ and $ M^2 $ are zero, then we get $ p+r+\delta q+\delta s=0 $. Therefore we have $ r=-p $ and $ s=-q $, that is to say $ \zeta_{r,s}=-\zeta_{p,q} $, so the studied resonance is actually self-interaction of $ \zeta_{p,q} $ with itself to generate the zero frequency. Thus, if $ (p+q)^2=(r+s)^2 $, the only cases of a real resonance are those where $ M^2 $ is a root of a polynomial of degree 1 or 2 in $ \mathbb{Q}[\delta] $.
	\end{enumerate}
	It has therefore been determined that for a resonance of the first type to occur (not of self-interaction type), then $ M^2 $ needs to be a root of a polynomial of degree at most 4 in $ \mathbb{Q}[\delta] $.

	For the second type of resonance, such a relation is verified if and if only if the following relation holds
	\begin{equation}\label{eq ex equation resonance 2}
	(ps-qr)^2(1-M^2)(\delta-1)^2=0,
	\end{equation}
	that is to say, since we have $ 0<M<1 $ and $ \delta>1 $, if and only if the two frequencies $ \zeta_{p,q} $ and $ \zeta_{r,s} $ are collinear. Then one may write $ \zeta_{p,q}=\lambda\,\zeta_{t,w} $ and $ \zeta_{r,s}=\mu\,\zeta_{t,w} $, with $ \zeta_{t,w} $ a hyperbolic frequency, and $ \lambda,\mu $ in $ \Z^* $. Next verify that, since we have $ \sign(\lambda\,(t+\delta w))=\sign(\lambda)\,\sign(t+\delta w) $, the two following relations hold $ \xi_1(\lambda\,\zeta_{t,w})=\lambda\,\xi_1(\zeta_{t,w})  $ and $ \xi_2(\lambda\,\zeta_{t,w})=\lambda\,\xi_2(\zeta_{t,w})  $. The same holds for the two frequencies $ \mu\,\zeta_{t,w} $ and $ (\lambda+\mu)\,\zeta_{t,w} $. The only two resonances that may occur are therefore
	\begin{equation*}
	\lambda\,\alpha_{1}(\zeta_{t,w})+\mu\,\alpha_{1}(\zeta_{t,w})=(\lambda+\mu)\,\alpha_1(\zeta_{t,w})\quad \text{et} \quad \lambda\,\alpha_{2}(\zeta_{t,w})+\mu\,\alpha_{2}(\zeta_{t,w})=(\lambda+\mu)\,\alpha_{2}(\zeta_{t,w}),
	\end{equation*} 
	which both are actually self-interaction of frequencies $ \alpha_{1}(\zeta_{t,w}) $ and $ \alpha_{2}(\zeta_{t,w}) $ with themselves: the evolution of the harmonics $ \lambda $ and $ \mu $ are coupled with the one of $ \lambda+\mu $. In particular, the three frequencies implied in this resonance are either all incoming or all outgoing. Thus, if $ M^2 $ is not a root of a polynomial of degree at most $ 4 $ in $ \mathbb{Q}[\delta] $, Assumption \ref{hypothese pas de sortant} is verified for the compressible isentropic Euler equations in dimension 2, with the group of frequencies on the boundary that has been considered.
	
	We finally dig into the small divisors Assumption \ref{hypothese petits diviseurs 1}. One can check that\footnote{Using the fact that the glancing set is constituted here of two lines, an elementary geometrical argument allows to reduce to the distance with respect to $ \eta $ only, where the constant $ C $ is given by 
		\begin{equation*}
			C:=\frac{\eta_0\,\sin \arctan (\sqrt{c_0^2-u_0^2})}{\sqrt{1-M^2}}.
	\end{equation*}}, depending on the sign of $ p,q $, the distance between $ \zeta_{p,q} $ and the glancing set $ \mathcal{G} $ is given by
	\begin{align*}
	\dist(\zeta_{p,q},\mathcal{G})&=C\left|(p+\delta q)\pm\sqrt{1-M^2}(p+q)\right|=Cq\big(1\pm\sqrt{1-M^2}\big)\left|\frac{p}{q}-K_{\pm}\right|.
	\end{align*}
	If $ p $ and $ q $ are not of the same size scale, then the same holds for $ p+\delta q $ and $ p+q $, so the previous distance can be lower bounded by a positive constant. We thus may in the following assume that 
	\begin{equation}\label{eq ex equivalence p and q}
		C_1|p|\leq |q| \leq C_2 |p|. 
	\end{equation}
	According to Roth theorem, see \cite[Theorem 2A]{Schmidt1991Diophantine}, if the real numbers $ K_+ $ and $ K_- $ given by \eqref{eq ex def reels irrationnels 1- M 2} are algebraic numbers (and irrational, which has been previously assumed), then they satisfy
	\begin{equation*}
	\left|K_{\pm}-\frac{p}{q}\right|\geq C\,|q|^{-(2+\epsilon)},
	\end{equation*}
	for all $ q\in\mathbb{N}^* $, $ p\in\Z $ and $ \epsilon>0 $. 
	So for all $ \zeta_{p,q} $ in $ \F_b\privede{0} $, we get, using \eqref{eq ex equivalence p and q},
	\begin{equation*}
	 \dist(\zeta_{p,q},\mathcal{G})\geq C|q|^{-3/2}\geq C|\zeta_{p,q}|^{-3/2},
	\end{equation*}
	and Assumption \ref{hypothese petits diviseurs 1} is therefore verified.
	
	In conclusion, for the compressible isentropic Euler equations in dimension 2 to satisfies Assumptions \ref{hypothese pas de glancing}, \ref{hypothese petits diviseurs 1} and \ref{hypothese pas de sortant}, it is therefore sufficient that the Mach number $ M $ and the parameter $ \delta>1 $ are such that $ K_+ $ and $ K_- $ are irrational algebraic numbers and that $ M^2 $ is not a polynomial solution of degree at most 4 in $ \mathbb{Q}[\delta] $. The set of solutions to such equations being countable, one may convince himself that the set of real numbers $ M $ satisfying these properties is not empty.
	One may for example choose $ M:=\sqrt{3}/2 $, which gives $ K_-=1-2\delta $ and $ K_+=-(1+2\delta)/3 $, and also choose $ \delta=\sqrt[7]{2} >1 $. In this way the real numbers $ K_+ $ and $ K_- $ are actually irrational algebraic numbers. On an other hand, a relation of the form \eqref{eq ex equation resonance 1} cannot be satisfied, except for trivial cases on $ p,q,r,s $, because one can check that it forms an algebraic equation of degree 4 in $ \delta $, which is not an algebraic number of degree 4.
\end{example}

\subsection{Spaces of profiles} 

According to the analysis of which frequencies may appear in the solution to \eqref{eq systeme 1}, we must define for the leading profile a functional framework that allows to consider superposition of waves of the form
\begin{equation*}\label{eq espaces forme onde}
a(z)\,e^{i\,\alpha\cdot z/\epsilon},
\end{equation*}
for $ \alpha $ a characteristic frequency in $ \F $.
Since we choose a quasi-periodic framework for the fast tangential variables, we write such a wave as
\begin{equation*}
a(z)\,e^{i\,n_1\,\zeta_1\cdot z'/\epsilon}\cdots e^{i\,n_m\,\zeta_m\cdot z'/\epsilon}\,e^{i\,\xi\,x_d/\epsilon},
\end{equation*}
where $ \alpha=(\zeta,\xi) $ with $ \zeta=n_1\,\zeta_1+\cdots+n_m\,\zeta_m\in\F_b $ and $ \xi\in\C $.
Next we denote by $ \theta=(\theta_1,\dots,\theta_m)\in\T^m $ the fast tangential variables which substitute to $ (z'\cdot\zeta_1/\epsilon,\dots,z'\cdot\zeta_m/\epsilon) $ and $ \psi_d\in\R_+ $ the fast normal variable substituting to $ x_d/\epsilon $.

For each integer $ s\geq 0 $ and for $ T>0 $, we denote by $H^s_+(\omega_T\times\T^m)$ the space of functions of $ (t,y,\theta)\in\omega_T\times\T^m $, zero for negative times $t$, of which all derivatives of order less or equal to $ s $ belong to $L^2(\omega_T\times\T^m)$. 

Now we describe the general space that will contain the oscillating and evanescent profiles spaces. We choose only a uniform control with respect to the fast and slow normal variables since it would be difficult to control derivatives of the leading profile with respect to these two variables.

\begin{definition}\label{def espace E}
	For an integer $ s\geq 0 $ and for $ T>0 $, we define the space $ \mathcal{E}_{s,T} $ as the set of functions $ U $ of $ (z',x_d,\theta,\psi_d)\in \omega_T\times\R_+\times\T^m\times\R_+ $, bounded continuous with respect to $ (x_d,\psi_d) $ in $ \R_+\times\R_+ $ with values in $ H^s_+(\omega_T\times\T^m) $, equipped with the obvious norm
	\begin{equation*}
	\norme{U}_{\mathcal{E}_{s,T}}:=\sup_{x_d>0,\psi_d>0}\norme{U(\,.\,,x_d,\,.\,,\psi_d)}_{H^s_+(\omega_T\times\T^m)}.
	\end{equation*}
\end{definition}

We may now introduce the space of oscillating profiles, corresponding to real frequencies $ \alpha_j(\zeta) $. Following \cite{JolyMetivierRauch1995Coherent}, we choose a quasi-periodic framework with respect to $ \theta $ and an almost-periodic one with respect to $ \psi_d $, namely we consider the closure of the space of trigonometric polynomials with respect to $ \psi_d $ in the space $ \mathcal{E}_{s,T} $ of quasi-periodic functions with respect to $ \theta $. See \cite[Chapters 3, 4]{Corduneanu2009Almost} for more details about almost-periodic functions with values in a Banach space.

\begin{definition}\label{def profils osc}
	We call a trigonometric polynomial with respect to $ \psi_d $ every function $ U $ of $ \mathcal{E}_{s,T} $ that writes as a finite sum in real numbers $ \xi $,
	\begin{equation*}
	U(z,\theta,\psi_d)=\sum_{\xi}U_{\xi}(z,\theta)\,e^{i\,\psi_d\,\xi},
	\end{equation*}
	with $ U_{\xi} $ in $ \mathcal{C}_b(\R^+_{x_d},H^s_+(\omega_T\times\T^m)) $ for all $ \xi $. 
	
	The space of oscillating profiles $ \P^{\osc}_{s,T} $ is then defined as the closure in $ \mathcal{E}_{s,T} $ of the set of trigonometric polynomials with respect to $ \psi_d $. This space is equipped with the norm of $ \mathcal{E}_{s,T} $.
\end{definition}

Concerning evanescent profiles, corresponding to frequencies $ \alpha_j(\zeta) $ with $ j\in\mathcal{P}(\zeta) $, we consider quasi-periodic functions with respect to $ \theta $.
The factors $ e^{i\,\xi\,\psi_d} $ with $ \Im \xi > 0 $ are expressed through a convergence to zero as $ \psi_d $ goes to infinity.

\begin{definition}\label{def profils ev}
	For $ s\geq 0 $ and $ T>0 $, the space $ \P^{\ev}_{s,T} $ of evanescent profiles is defined as the set of functions $ U $ of $ \mathcal{E}_{s,T} $, converging to zero in $ H^s(\omega_T\times\T^m) $ as $ \psi_d $ goes to infinity (for every fixed $ x_d\geq 0 $).
The space $ \P_{s,T}^{\ev} $ is equipped with the norm of $ \mathcal{E}_{s,T} $.
\end{definition}

We may now describe the space of profiles, constructed as the sum of an oscillating and an evanescent part.

\begin{definition}
	For $ T>0 $ and $ s\geq 0 $, we define the space of profiles  $ \P_{s,T} $ of regularity of order $ s $ as
	\begin{equation*}
	\P_{s,T}:=\P^{\osc}_{s,T}\oplus\P_{s,T}^{\ev},
	\end{equation*}
	equipped with the obvious norm.
	If $ U $ belongs to $ \P_{s,T} $, we denote by $ U^{\osc}\in\P_{s,T}^{\osc} $ and $ U^{\ev}\in\P_{s,T}^{\ev} $ the profiles such that $ U=U^{\osc}+U^{\ev} $.
\end{definition}

The proof of the fact that the spaces $ \P^{\osc}_{s,T} $ and $ \P^{\osc}_{s,T} $ are indeed in a direct sum is presented later, after the introduction of a scalar product used in the proof.

One can find in \cite{JolyMetivierRauch1995Coherent} a partial proof of the following result, that we recall here for the sake of clarity.

\begin{lemma}[{\cite[Lemma 6.1.2]{JolyMetivierRauch1995Coherent}}]\label{lemme propr algebre}
	For all $ T>0 $ and for $ s>(d+m)/2 $, the spaces $ \mathcal{E}_{s,T} $, $ \P^{\osc}_{s,T} $, $ \P^{\ev}_{s,T} $ and $ \P_{s,T} $ are all normed algebras. Furthermore, if $ U $ and $ V $ decomposes in $ \P_{s,T}=\P_{s,T}^{\osc}\oplus\P_{s,T}^{\ev} $ as $ U=U^{\osc}+U^{\ev} $ and $ V=V^{\osc}+V^{\ev} $, then the oscillating part of the profile $ UV $ is given by $ U^{\osc}V^{\osc} $ and and its evanescent part by $ U^{\osc}V^{\ev}+U^{\ev}V^{\osc}+U^{\ev}V^{\ev} $.
	
	Moreover, for $ T>0 $ and $ s\geq 0 $, the spaces $ \mathcal{E}_{s,T} $ and $ \P^{\osc}_{s,T} $ are Banach spaces.
\end{lemma}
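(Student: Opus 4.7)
The plan is to establish the algebra structure of the base space $H^s_+(\omega_T\times\T^m)$ first, and then transfer it to each profile space in turn. For $s>(d+m)/2$, the Sobolev embedding $H^s\hookrightarrow L^{\infty}$ on the $(d+m)$-dimensional manifold $\omega_T\times\T^m$ combined with a Moser-type tame estimate yields an inequality of the form $\norme{fg}_{H^s_+(\omega_T\times\T^m)}\leq C\,\norme{f}_{H^s_+(\omega_T\times\T^m)}\,\norme{g}_{H^s_+(\omega_T\times\T^m)}$; vanishing for negative times is trivially preserved under multiplication. Once this is granted, the algebra structure of $\mathcal{E}_{s,T}=\mathcal{C}_b(\R_+\times\R_+;H^s_+(\omega_T\times\T^m))$ follows by passing to the supremum in $(x_d,\psi_d)$, and $\mathcal{E}_{s,T}$ is Banach as a space of bounded continuous functions valued in the Banach space $H^s_+(\omega_T\times\T^m)$.

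For $\P^{\osc}_{s,T}$, I would first check that the product of two trigonometric polynomials in $\psi_d$,
\begin{equation*}
U=\sum_{\xi}U_{\xi}(z,\theta)\,e^{i\xi\psi_d},\qquad V=\sum_{\eta}V_{\eta}(z,\theta)\,e^{i\eta\psi_d},
\end{equation*}
is again such a polynomial, with coefficients $\sum_{\xi+\eta=\zeta}U_{\xi}V_{\eta}$ in $\mathcal{C}_b(\R_+;H^s_+(\omega_T\times\T^m))$ thanks to the algebra estimate of the first step. Since multiplication is continuous on $\mathcal{E}_{s,T}$ by that same estimate, and trigonometric polynomials are dense in $\P^{\osc}_{s,T}$ by definition, the product of two elements of $\P^{\osc}_{s,T}$ is a limit of trigonometric polynomials, hence still in $\P^{\osc}_{s,T}$. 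The space $\P^{\osc}_{s,T}$ is Banach because it is by construction a closed subspace of the Banach space $\mathcal{E}_{s,T}$.

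For the algebra structure of $\P^{\ev}_{s,T}$, given $U,V\in\P^{\ev}_{s,T}$, the base estimate gives, at each fixed $(x_d,\psi_d)$,
\begin{equation*}
\norme{U(\cdot,x_d,\cdot,\psi_d)\,V(\cdot,x_d,\cdot,\psi_d)}_{H^s_+(\omega_T\times\T^m)}\leq C\,\norme{U}_{\mathcal{E}_{s,T}}\,\norme{V(\cdot,x_d,\cdot,\psi_d)}_{H^s_+(\omega_T\times\T^m)},
\end{equation*}
and the right-hand side tends to $0$ as $\psi_d\to\infty$ since $V\in\P^{\ev}_{s,T}$. Joint continuity of $UV$ in $(x_d,\psi_d)$ is inherited from that of $U$ and $V$ and from the continuity of multiplication in $H^s_+(\omega_T\times\T^m)$, so $UV\in\P^{\ev}_{s,T}$.

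Finally, writing $U=U^{\osc}+U^{\ev}$ and $V=V^{\osc}+V^{\ev}$ in $\P_{s,T}$, expansion gives
\begin{equation*}
UV=U^{\osc}V^{\osc}+\bigl(U^{\osc}V^{\ev}+U^{\ev}V^{\osc}+U^{\ev}V^{\ev}\bigr),
\end{equation*}
where the first summand lies in $\P^{\osc}_{s,T}$ by the second paragraph, while each of the three cross-terms contains at least one evanescent factor and therefore belongs to $\P^{\ev}_{s,T}$ by the estimate of the third paragraph. This simultaneously shows that $\P_{s,T}$ is a normed algebra and identifies the announced oscillating and evanescent parts of $UV$. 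The only subtle point is really the first step — tame multiplication in $H^s_+(\omega_T\times\T^m)$ — because the factor $\T^m$ contributes to the effective dimension and must be accounted for in the threshold $s>(d+m)/2$; everything after that is bookkeeping at the level of the definitions of $\mathcal{E}_{s,T}$, $\P^{\osc}_{s,T}$ and $\P^{\ev}_{s,T}$.
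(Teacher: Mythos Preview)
Your proof is correct and follows essentially the same approach as the paper's: both reduce everything to the algebra property of $H^s_+(\omega_T\times\T^m)$ for $s>(d+m)/2$, then check that trigonometric polynomials are stable under multiplication for $\P^{\osc}_{s,T}$, that decay at infinity is preserved for $\P^{\ev}_{s,T}$, and that cross-terms $\P^{\osc}_{s,T}\cdot\P^{\ev}_{s,T}$ land in $\P^{\ev}_{s,T}$. The paper's version is simply terser, declaring these steps ``immediate'' and the completeness ``obvious'', while you spell out the density-plus-continuity argument and the closed-subspace reasoning explicitly.
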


\begin{proof}
	The algebra properties for $ \mathcal{E}_{s,T} $ and $ \P^{\ev}_{s,T} $ arise immediately from the one of $ H^s_+(\omega_T\times\T^m) $. The same holds for $ \P^{\osc}_{s,T} $ since the set of trigonometric polynomials is stable under multiplication. Finally, this algebra property for $ H^s_+(\omega_T\times\T^m) $ shows that if $ U $ belongs to $ \P^{\osc}_{s,T} $ and $ V $ to $ \P^{\ev}_{s,T} $, then the product $ UV $ belongs to $ \P^{\ev}_{s,T} $, so the space $ \P_{s,T} $ is also an algebra.
	
	As for them, the completeness properties are obvious.
\end{proof}

\subsection{Scalar products on the space of oscillating profiles}

We now define three scalar products that will be useful in the following, notably to obtain a priori estimates. This part is adapted from \cite{JolyMetivierRauch1995Coherent} to the framework of boundary value problems. We shall use a scalar product with the time variable $ t $ fixed (as in \cite{JolyMetivierRauch1995Coherent}, which is a priori adapted to the Cauchy problem) as well as a scalar product with the space variable $ x_d $ fixed, more adapted to the initial boundary problem.

For $ U,V $ two functions of $ \omega_T\times\R_+\times\T^m\times\R_+ $, we denote, when the formulas are licit, for $ x_d>0 $,
\begin{align}\label{eq def prod scal rentrant}
\prodscal{U}{V}_{\inc}(x_d)&:=\lim_{R\rightarrow+\infty}\inv{R}\int_{0}^{R}\prodscal{U}{V}_{L^2(\omega_T\times\T^m)}(x_d,\psi_d)\,d\psi_d,
\intertext{for $ 0<t<T $,}
\label{eq def prod scal sortant}
\prodscal{U}{V}_{\out}(t)
&:=
\lim_{R\rightarrow+\infty}\inv{R} \int_{0}^{R}  \prodscal{U}{V}_{L^2(\R^{d-1}\times\R_+\times\T^m)} (t,\psi_d)\,d\psi_d.
\intertext{and, if
 $ \K $ is a domain of $ \Omega_T $ bounded in the $ x_d $ direction,}
\label{eq def prod scal ppgat}
\prodscal{U}{V}_{\K}&:=\lim_{R\rightarrow+\infty}\inv{R}\int_{0}^{R}\prodscal{U}{V}_{L^2(\K\times\T^m)}(\psi_d)\,d\psi_d.
\end{align}
The first scalar product is suited to the study of incoming modes when the second one is for the outgoing modes, and the last one will be used to prove the finite speed propagation of the leading profile oscillating part.

If  $ U $ and $ V $ are trigonometric polynomials of $ \P^{\osc}_{s,T} $ of the form\footnote{The sums in $ \xi $ are necessarily countable.}
\begin{equation*}
U(z,\theta,\psi_d)=\sum_{\mathbf{n}\in\Z^m}\sum_{\xi\in\R}U_{\mathbf{n},\xi}(z)\,e^{i\,\mathbf{n}\cdot\theta}\,e^{i\,\xi\,\psi_d},\quad V(z,\theta,\psi_d)=\sum_{\mathbf{n}\in\Z^m}\sum_{\xi\in\R}V_{\mathbf{n},\xi}(z)\,e^{i\,\mathbf{n}\cdot\theta}\,e^{i\,\xi\,\psi_d},
\end{equation*}
then \cite[Theorem 3.4, Remark 4.17]{Corduneanu2009Almost} ensure that the scalar products $ \prodscal{U}{V}_{\inc}(x_d) $ and $ \prodscal{U}{V}_{\K} $ are well-defined and satisfies, for $ x_d\geq 0 $,
\begin{align}\label{eq prod scal pol trigo rentrants}
\prodscal{U}{V}_{\inc}(x_d)&=(2\pi)^m\sum_{\mathbf{n}\in\Z^m}\sum_{\xi\in\R}\prodscal{U_{\mathbf{n},\xi}}{V_{\mathbf{n},\xi}}_{L^2(\omega_T)}(x_d),
\intertext{and}\label{eq prod scal pol trigo ppgat}
\prodscal{U}{V}_{\K}&=(2\pi)^m\sum_{\mathbf{n}\in\Z^m}\sum_{\xi\in\R}\prodscal{U_{\mathbf{n},\xi}}{V_{\mathbf{n},\xi}}_{L^2(\K)}.
\end{align}
Indeed, for each function $ U $ of $ \P^{\osc}_{s,T} $, its trace with respect to $ \psi_d $ belongs to $ L^2(\K\times\T^m) $ 	for all $ \psi_d\geq  0 $, and its trace with respect to $ x_d,\psi_d $ belongs to $ L^2(\omega_T\times\T^m) $ for all $ x_d,\psi_d\geq 0 $.
If $ U $ and $ V $ are moreover of compact support with respect to $ x_d $ and if $ s\geq 1 $, then the traces of $ U $ and $ V $ with respect to $ t,\psi_d $ belong to $ L^2(\R^{d-1}\times\R_+\times\T^m) $, and the same results from \cite{Corduneanu2009Almost} ensure that the scalar product $ \prodscal{U}{V}_{\out}(t) $ is well-defined and satisfy, for $ t>0 $,
\begin{equation}
	\label{eq prod scal pol trigo sortants}
	\prodscal{U}{V}_{\out}(t)=(2\pi)^m\sum_{\mathbf{n}\in\Z^m}\sum_{\xi\in\R}\prodscal{U_{\mathbf{n},\xi}}{V_{\mathbf{n},\xi}}_{L^2(\R^{d-1}\times\R_+)}(t).
\end{equation}

In short, scalar products \eqref{eq def prod scal rentrant} and \eqref{eq def prod scal ppgat} (resp. \eqref{eq def prod scal sortant}) are well-defined on the space of profiles $ \P_{s,T}^{\osc} $, $ s\geq 0 $ (resp. for profiles of $ \P_{s,T}^{\osc} $ with compact support with respect to $ x_d $ with $ s\geq 1 $), and  formulas \eqref{eq prod scal pol trigo rentrants}, \eqref{eq prod scal pol trigo ppgat} and  \eqref{eq prod scal pol trigo sortants} are satisfied in this case.

The incoming scalar product \eqref{eq def prod scal rentrant} is used to prove the following result.

\begin{lemma}
	For all $ T>0 $, the spaces $ \P^{\osc}_{0,T} $ and $ \P^{\ev}_{0,T} $ are in direct sum.
\end{lemma}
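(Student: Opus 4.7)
The plan is to use the incoming scalar product $\prodscal{\cdot}{\cdot}_{\inc}(x_d)$ introduced just above as a positive definite Hermitian form on $\P^{\osc}_{0,T}$ which vanishes identically on $\P^{\ev}_{0,T}$, so that it cannot simultaneously detect a nonzero element of the first space while ignoring it as an element of the second.

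I would start by taking $U$ in the intersection $\P^{\osc}_{0,T} \cap \P^{\ev}_{0,T}$ and fixing $x_d \geq 0$. Since $U \in \P^{\ev}_{0,T}$, the norm $\|U(\cdot,x_d,\cdot,\psi_d)\|_{L^2(\omega_T \times \T^m)}$ tends to zero as $\psi_d \to +\infty$, while remaining bounded by $\|U\|_{\mathcal{E}_{0,T}}$ because $U \in \mathcal{E}_{0,T}$. A bounded function tending to zero at infinity has vanishing Cesàro mean, so
\[
\prodscal{U}{U}_{\inc}(x_d) = \lim_{R \to +\infty} \frac{1}{R}\int_0^R \|U(\cdot,x_d,\cdot,\psi_d)\|^2_{L^2(\omega_T \times \T^m)}\,d\psi_d = 0 \qquad \text{for every } x_d \geq 0.
\]

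It remains to deduce that this forces $U = 0$. Since $U \in \P^{\osc}_{0,T}$, the partial map $\psi_d \mapsto U(\cdot,x_d,\cdot,\psi_d) \in L^2(\omega_T \times \T^m)$ is, by definition of the space, the uniform limit on $\R_+$ of trigonometric polynomials with coefficients in the Hilbert space $L^2(\omega_T \times \T^m)$, hence is uniformly almost-periodic in the Banach-valued sense of \cite[Chapters 3, 4]{Corduneanu2009Almost}. Applying the Parseval–Besicovich identity in that setting identifies $\prodscal{U}{U}_{\inc}(x_d)$ with the $\ell^2$-sum of the squared Fourier–Bohr coefficients of this map; their vanishing, combined with the Bochner uniqueness theorem for uniformly almost-periodic functions, yields $U(\cdot,x_d,\cdot,\psi_d) \equiv 0$, and letting $x_d$ vary gives $U = 0$.

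The main technical point I anticipate is ensuring that $\prodscal{U}{U}_{\inc}(x_d)$ is well-defined for a general $U \in \P^{\osc}_{0,T}$ (not only for trigonometric polynomials) and still coincides with the Cesàro mean written above. This should follow by density: the Cauchy–Schwarz bound $|\prodscal{U_n}{V_n}_{\inc}(x_d)| \leq \|U_n\|_{\mathcal{E}_{0,T}}\,\|V_n\|_{\mathcal{E}_{0,T}}$ for trigonometric polynomials, a direct consequence of the Parseval formula \eqref{eq prod scal pol trigo rentrants}, allows the scalar product to extend continuously to the closure $\P^{\osc}_{0,T}$, and the uniform convergence in $(x_d, \psi_d)$ of approximating trigonometric polynomials ensures that the extended value agrees with the Bohr mean.
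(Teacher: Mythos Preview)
Your proof is correct and follows essentially the same route as the paper: use the evanescent decay to make the Bohr mean $\prodscal{U}{U}_{\inc}(x_d)$ vanish, then invoke the Parseval identity \eqref{eq prod scal pol trigo rentrants} for the oscillating profile to conclude that all Fourier--Bohr coefficients of $U$ vanish, hence $U=0$. The paper has already recorded, just before the lemma, that the scalar product \eqref{eq def prod scal rentrant} and the Parseval formula extend from trigonometric polynomials to all of $\P^{\osc}_{0,T}$, so your closing paragraph addressing that point is superfluous but not incorrect.
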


\begin{proof}
	Consider a profile $ U $ in $ \P^{\osc}_{0,T} \cap \P^{\ev}_{0,T} $ that writes, because it is an oscillating profile,
	\begin{equation*}
		U(z,\theta,\psi_d)=\sum_{\mathbf{n}\in\Z^m}\sum_{\xi\in\R}U_{\mathbf{n},\xi}(z)\,e^{i\,\mathbf{n}\cdot\theta}\,e^{i\,\xi\,\psi_d},
	\end{equation*}
	the sum in $ \xi $ being countable.
	The profile $ U $ also being evanescent, for all $ x_d\geq 0 $, the function $ \psi_d\mapsto U(.,x_d,.,\psi_d) $ converges to zero in $ L^2(\omega_T\times\T^m) $ as $ \psi_d $ goes to infinity. Thus, for all $ x_d\geq 0 $, we have
	\begin{equation*}
		\prodscal{U}{U}_{\inc}(x_d)=0,
	\end{equation*}
	since the mean value (in terms of \eqref{eq def prod scal rentrant}) of a continuous function on $ \R_+ $ converging to zero at infinity is zero.
	But, since $ U $ is an oscillating profile, we have
	\begin{equation*}
		\prodscal{U}{U}_{\inc}(x_d)=(2\pi)^m\sum_{\mathbf{n}\in\Z^m}\sum_{\xi\in\R}\norme{U_{\mathbf{n},\xi}}^2_{L^2(\omega_T)}(x_d),
	\end{equation*}
so, for all $ \mathbf{n},\xi $, the function $ U_{\mathbf{n},\xi}(x_d) $ is zero in $ L^2(\omega_T\times\T^m) $, and the profile $ U $ is therefore zero as well.
\end{proof}

\section{Ansatz and main result}

We seek to construct an approximate solution to \eqref{eq systeme 1} under the form of a formal series $ u^{\epsilon,\app}(z,z'\cdot\zeta_1/\epsilon,\dots,z'\cdot\zeta_m/\epsilon,x_d/\epsilon) $, where $ u^{\epsilon,\app} $ is given by
\begin{equation}\label{eq def serie formelle}
u^{\epsilon,\app}(z,\theta,\psi_d):=\sum_{k\geq 1}\epsilon^k\,U_k(z,\theta,\psi_d),
\end{equation}
with at least $ U_1 $ in $ \P_{s,T} $ for some $ s\geq 0 $. As for them, correctors $ (U_k)_{k\geq 2} $ a priori exhibit frequencies that may not be characteristic. The convergence of the associated series then relies on a small divisor assumption which is different from the previously made small divisor assumption \ref{hypothese petits diviseurs 1}. Thus we only consider correctors as formal trigonometric series involving all frequencies in the group $ \prodscalbis{\F} $ generated by the set $ \F $.  

We are now in place to state the main result of this work. It is placed under Assumptions
\ref{hypothese bord non caract} to \ref{hypothese pas de sortant}, as well as Assumption \ref{hypothese type resonance} that will be made further on. We denote by $ h $ an integer larger or equal to $ (3+a_1)/2 $ where $ a_1 $ is the real number introduced in the small divisors Assumption \ref{hypothese petits diviseurs 1}. Then we denote $ s_0:=h+(d+m)/2 $.

\begin{theorem}\label{thm existence profils}
	Let  $ s $ be an integer such that $ s\geq s_0 $. Under previously listed assumptions, there exists a time $ T>0 $ such that system \eqref{eq obtention U 1} described below and that governs the evolution of the leading profile in the asymptotic expansion \eqref{eq def serie formelle} admits a unique solution $ U_1 $ in $ \P_{s,T} $.
\end{theorem}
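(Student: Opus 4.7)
The plan is to follow the three-stage strategy announced in the introduction: formal derivation of the WKB cascade, reduction and decoupling of the leading profile equation, and finally a Picard iteration based on linear energy estimates without loss of derivatives.

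First, I would substitute the ansatz \eqref{eq def serie formelle} into \eqref{eq systeme 1} and identify powers of $ \epsilon $. Writing the Fourier expansion in $ \theta $ and the almost-periodic expansion in $ \psi_d $, the two leading orders yield a polarization condition $ \pi_{\alpha}U_{1,\alpha}=U_{1,\alpha} $ for each frequency $ \alpha $ appearing in $ U_1 $, which by \eqref{eq freq ker L ima pi} forces only characteristic frequencies, hence only those of $ \F $, to survive. Applying $ \tilde{\pi}_{\alpha} $ to the second equation of the cascade (the solvability condition for $ U_2 $) and using Lax Lemma \ref{lemme Lax} to collapse $ \tilde{\pi}_{\alpha}\,\tilde{L}(0,\partial_z)\,\pi_{\alpha} $ into the scalar transport $ \tilde{X}_{\alpha}\,\tilde{\pi}_{\alpha}\,\pi_{\alpha} $, one obtains a quasilinear system for the oscillating amplitudes with Burgers-type self-interaction and a countable family of resonant triple products. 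The boundary trace $ B\,U_1|_{x_d=\psi_d=0}=G $ couples these amplitudes through Assumption \ref{hypothese UKL}.

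Second, I would decouple this system. The mean-value component satisfies a homogeneous problem with zero source and zero boundary trace, hence vanishes. Outgoing oscillating modes can be ruled out by combining Assumption \ref{hypothese pas de sortant}, which prevents resonances from producing outgoing frequencies, with an energy estimate based on the outgoing scalar product \eqref{eq def prod scal sortant}; this estimate first requires a finite-speed-of-propagation result so that the profiles may be truncated to compact support in $ x_d $. What remains are the incoming oscillating part and the evanescent part. The evanescent part is obtained explicitly by integrating the polarization ODE in $ \psi_d $ from its boundary trace via the stable resolvent of $ \mathcal{A}(\zeta) $ on $ E_-^e(\zeta) $; the small divisors Assumption \ref{hypothese petits diviseurs 1} together with Proposition \ref{prop proj bornes} guarantees that the resulting profile belongs to $ \P^{\ev}_{s,T} $, with the threshold $ h\geq (3+a_1)/2 $, hence $ s\geq s_0 $, compensating the polynomial blow-up of the relevant projectors near the glancing set.

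Third, and this is the main obstacle, I would prove energy estimates without loss of derivatives for the incoming oscillating system linearized around an arbitrary background profile in $ \P_{s,T} $. Using the incoming scalar product $ \prodscal{\cdot}{\cdot}_{\inc} $ from \eqref{eq def prod scal rentrant}, the $ \partial_{x_d} $ contribution and the Burgers-type self-interactions symmetrize naturally thanks to Lemma \ref{lemme Lax} and identity \eqref{eq freq ima L tilde ker pi tilde}. The genuinely delicate point is the resonant trilinear terms: since we propagate in $ x_d $ rather than in $ t $, the symmetry exploited in \cite{JolyMetivierRauch1995Coherent} is broken, and, unlike the single-phase case of \cite{CoulombelGuesWilliams2011Resonant}, the resonances are infinitely many. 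Assumption \ref{hypothese type resonance}, stated at the beginning of Section 6, is tailored precisely to provide a uniform bound on the resonance coefficients that allows these contributions to be absorbed through a Gronwall argument. The condition $ s\geq s_0=h+(d+m)/2 $ ensures that $ \P_{s,T} $ is a normed algebra (Lemma \ref{lemme propr algebre}) and that the tangential derivatives of order up to $ s $ of the nonlinear terms remain under control. Non-resonant modes uncouple into scalar Burgers-type equations and are handled analogously. Coupling these estimates with Assumption \ref{hypothese UKL} yields well-posedness of the linearized system, and a Picard iteration on a time $ T>0 $ chosen sufficiently small produces a solution to the full quasilinear problem; uniqueness follows by applying the same linear estimate to the difference of two solutions.
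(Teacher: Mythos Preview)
Your proposal is correct and follows essentially the same three-stage strategy as the paper: formal WKB cascade giving the projectors $\E,\tilde{\E^i}$; decoupling via mean-value elimination, finite-speed propagation, and outgoing-mode elimination into the evanescent piece, the resonant incoming system, and the non-resonant scalar Burgers equations; then linear energy estimates with the incoming scalar product and a Picard iteration. One minor correction: the threshold $s\geq s_0=h+(d+m)/2$ is not what controls the evanescent part (that part uses the uniform bound \eqref{eq controle exp t A Pi - - t positif} of Proposition~\ref{prop controle exp t A Pi } and requires no extra regularity), but is instead dictated by the non-resonant Burgers self-interactions, where the coefficient obeys only the polynomial bound $|\Gamma((\mathbf{n}_0,\xi_0),(\mathbf{n}_0,\xi_0))|\leq C|\mathbf{n}_0|^h$ of \eqref{eq est gamma n n} and one needs $s>h+(d+m)/2$ to absorb this factor via the Sobolev embedding in the estimate \eqref{eq estim prod AI singletons}.
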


We recall that we have considered a forcing term $ G $ in $ H^{\infty}(\R^d\times\T^m)  $, zero for negative times $ t $ and of zero mean with respect to $ \theta $ in $ \T^m $, but the infinite regularity assumption is made only for simplicity, and the estimates, and thus the existence time $ T $, only depend on the $ H^s(\R^d\times\T^m) $ norm of $ G $. More precisely, the existence time $ T $ depends on the operator $ L(0,\partial_z) $, the frequencies on the boundary $ \zeta_1,\dots,\zeta_{m} $, the order of regularity $ s $, and the $ H^s(\omega_T\times\T^m) $ norm of the forcing term $ G $. 

The formal WKB study shows that the function
\begin{equation*}
	z\mapsto \epsilon\, U_1(z,z'\cdot\zeta_1/\epsilon,\dots,z'\cdot\zeta_m/\epsilon,x_d/\epsilon)
\end{equation*}
is formally an approximate solution to system \eqref{eq systeme 1}.

Last two sections of the article are devoted to the proof of Theorem \ref{thm existence profils}. In section \ref{section 5} we start by formally deriving the cascade of equations that must be verified by the amplitudes $ (U_k)_{k\geq 1} $. By resolving, first formally and then rigorously for a part of it (in section \ref{section 6}), a fast problem, this cascade is triangularized and a system of equations for the leading profile is extracted from it. Next, in section \ref{section 6}, after a discussion about the different types of resonances that appear, the system is reduced to two decoupled systems for the oscillating and evanescent parts, the one on the oscillating part involving only the incoming phases. The oscillating system is even decoupled in a system for all resonant modes, and a system for each non-resonant mode. Then a priori estimates are proved for the linearized system for the oscillating parts, leading to the construction of solutions to these linearized systems. Such estimates prove in particular that the solution to \eqref{eq obtention U 1} is unique. An iterative scheme is then used to construct solutions to the nonlinear systems, and the evanescent part is finally determined. 

Unlike in \cite[Part 6]{JolyMetivierRauch1995Coherent}, from which the following is mainly inspired, there is no symmetry in the system, since it is not hyperbolic as a propagation system in the variable $ x_d $. This lack of symmetry is a genuine obstacle to deal with the resonance terms (that are in infinite number) in the a priori estimates. Assumption \ref{hypothese type resonance}, concerning all resonances with the possible exception of a finite number of them, allows to work around the problem and to obtain estimates for the associated terms. Assumption \ref{hypothese type resonance} will be carefully verified for the Euler system.

\section{Formal WKB study}\label{section 5}

\subsection{Cascade of equations for the profiles}

We seek to formally determine the equations the sequence of profiles $(U_n)_{n\geq 1}$ must satisfy for the formal series $ z\mapsto u^{\epsilon,\app}(z,z'\cdot\zeta_1/\epsilon,\dots,z'\cdot\zeta_m/\epsilon,x_d/\epsilon) $ given by \eqref{eq def serie formelle} to be solution to system \eqref{eq systeme 1}.  In the following we wish for the coefficient (a priori dependent on $ u^{\epsilon,\app} $) in factor of the partial derivative with respect to $ x_d $ to be the identity matrix, so that its differential is zero. The analogous property for the partial derivative in time is crucial in \cite{JolyMetivierRauch1995Coherent} from which we mainly draw our analysis. This choice is justified here by the particular role of the variable $ x_d $ in a priori estimates for the principal profile. This is why we are interested from now on in the following equivalent system
\begin{equation}\label{eq systeme 1 tilde}
\left\lbrace \begin{array}{lr}
\tilde{L}(u^{\epsilon,\app},\partial_z)\,u^{\epsilon,\app}
=0&\qquad \mbox{in } \Omega_T, \\[5pt]
B\,u^{\epsilon,\app}_{|x_d=0}=\epsilon\, g^{\epsilon}&\qquad \mbox{on } \omega_T,  \\[10pt]
u^{\epsilon,\app}_{|t\leq 0}=0&
\end{array}
\right.
\end{equation}
where we have denoted
\begin{equation*}
\tilde{L}(u,\partial_z):=A_d(u)^{-1}\,L(u,\partial_z)=\tilde{A}_0(u)\,\partial_t+\displaystyle\sum_{i=1}^{d-1}\tilde{A}_i(u)\,\partial_i+\partial_d,
\end{equation*}
with $ \tilde{A}_i(u):=A_d(u)^{-1}\,A_i(u) $ for $ i=0,\dots,d-1 $, and $ A_0(u):=I $. In the following we may use the notation $ \partial_0:=\partial_t $.

\subsubsection{WKB Cascade inside the domain}
We are now able to write the equations verified by the profiles $ U_k $, $ k\geq 1 $, by formally replacing $ u^{\epsilon,\app} $ by its formal expansion
\begin{equation*}
	u^{\epsilon,\app}=\sum_{k\geq 1}\epsilon^k\,U_k
\end{equation*}
in system \eqref{eq systeme 1 tilde}.
First, we note that the following Taylor expansion is verified, for $i=0,\dots,d-1$:
\begin{equation*}
\tilde{A}_i(u^{\epsilon,\app})=\tilde{A}_i(0)+\epsilon\,d\tilde{A}_i(0)\cdot U_1+\sum_{k\geq 2}\epsilon^k\Big[d\tilde{A}_i(0)\cdot U_k+G^i_{k-1}\Big],
\end{equation*}
where, for $k\geq 2 $, $G^i_{k-1}$ only depends on $U_1,\dots,U_{k-1}$.
The operator $\tilde{L}\big(u^{\epsilon,\app},\partial_z\big)$ thus writes
\begin{equation}\label{eq obtention devt L tilde}
\tilde{L}\big(u^{\epsilon,\app},\partial_z\big)=\tilde{L}(0,\partial_z)+\sum_{k\geq 1}\epsilon^k\tilde{L}_k(U_k,\partial_{z'}),
\end{equation}
with
\begin{equation*}
\tilde{L}_1\big(U_1,\partial_{z'}\big):=\sum_{i=0}^{d-1}d\tilde{A}_i(0)\cdot U_1\, \partial_i,\qquad 
\tilde{L}_k\big(U_k,\partial_{z'}\big):=\sum_{i=0}^{d-1}\Big(d\tilde{A}_i(0)\cdot U_k+G^i_{k-1}\Big)\,\partial_i,\quad \forall k\geq 2.
\end{equation*}
For $ k\geq 2 $, despite the fact that only the $ U_k $ dependency is indicated in the operators $ \tilde{L}_k(U_k,\partial_{z'}) $, these ones also depend on the profiles $ U_1,\dots,U_{k-1} $, via the functions $ G^i_{k-1} $. The operator $ \tilde{L}_1(U_1,\partial_{z'}) $ depends however only on $ U_1 $.

We see here the benefit of considering the modified operator $ \tilde{L}(u,\partial_z) $: there is an  $ x_d $ derivative only in the leading operator $ \tilde{L}(0,\partial_z) $, and not in the other operators $ \tilde{L}_k(U_k,\partial_{z'}) $.
Furthermore, we verify that 
\begin{align}\label{eq obtention devt L u}
\tilde{L}(u^{\epsilon,\app}&,\partial_z)\Big[u^{\epsilon,\app}(z,z'\cdot\zeta_1/\epsilon,\dots,z'\cdot\zeta_m/\epsilon,x_d/\epsilon )\Big]\\\nonumber
=&\Big[\tilde{L}(u^{\epsilon,\app},\partial_z)\,u^{\epsilon}+\inv{\epsilon}\sum_{j=1}^m\tilde{L}(u^{\epsilon,\app},\zeta_j)\,\partial_{\theta_j}u^{\epsilon,\app}+\inv{\epsilon}\partial_{\psi_d}u^{\epsilon,\app}\Big]\\
\nonumber&(z,z'\cdot\zeta_1/\epsilon,\dots,z'\cdot\zeta_m/\epsilon,x_d/\epsilon ).
\end{align}
where, for $ j=1,\dots,m $, the symbol $ \tilde{L}(u^{\epsilon,\app},\zeta_j) $ is defined by $ \sum_{i=0}^{d-1}\zeta_j^i\,\tilde{A}_i(u^{\epsilon,\app}) $ with $ \zeta_j=(\zeta_j^0,\dots,\zeta_j^{d-1}) $. Expansion \eqref{eq obtention devt L tilde} of the operator $ \tilde{L}(u^{\epsilon,\app},\partial_z) $ leads to the analogous expansions of the operators $ \tilde{L}(u^{\epsilon,\app},\zeta_j) $ for $ j=1,\dots,m $:
\begin{equation}\label{eq obtention devt L tilde zeta}
\tilde{L}\big(u^{\epsilon,\app},\zeta_j\big)=\tilde{L}(0,\zeta_j)+\sum_{k\geq 1}\epsilon^k\tilde{L}_k(U_k,\zeta_j)
\end{equation}
where
\begin{equation*}
\tilde{L}_1\big(U_1,\zeta_j\big):=\sum_{i=0}^{d-1}\zeta_j^i\,d\tilde{A}_i(0)\cdot U_1,\qquad 
\tilde{L}_k\big(U_k,\zeta_j\big):=\sum_{i=0}^{d-1}\zeta_j^i\Big(d\tilde{A}_i(0)\cdot U_k+G^i_{k-1}\Big),\quad \forall k\geq 2.
\end{equation*}
Thus, according to expansions \eqref{eq obtention devt L tilde}, \eqref{eq obtention devt L u} and \eqref{eq obtention devt L tilde zeta}, the following asymptotic expansion holds
\begin{align*}
\tilde{L}(u^{\epsilon,\app},\partial_z)&\Big[u^{\epsilon,\app}(z,z'\cdot\zeta_1/\epsilon,\dots,z'\cdot\zeta_m/\epsilon,x_d/\epsilon )\Big]\\
&=\inv{\epsilon}\Big\lbrace\sum_{j=1}^m\tilde{L}(0,\zeta_j)\,\partial_{\theta_j}u^{\epsilon,\app}+\partial_{\psi_d}u^{\epsilon,\app}\Big\rbrace+\tilde{L}(0,\partial_z)\,u^{\epsilon}+\sum_{j=1}^m\tilde{L}_1(U_1,\zeta_j)\,\partial_{\theta_j}u^{\epsilon,\app}\\
\nonumber&\quad+\sum_{k\geq 1}\epsilon^{k}\Big\{\tilde{L}_k(U_k,\partial_{z'})\,u^{\epsilon,\app}+\sum_{j=1}^m\tilde{L}_{k+1}(U_{k+1},\zeta_j)\,\partial_{\theta_j}u^{\epsilon,\app}\Big\},
\end{align*}
where the right hand side is evaluated in $ (z,z'\cdot\zeta_1/\epsilon,\dots,z'\cdot\zeta_m/\epsilon,x_d/\epsilon ) $.
The operator $ L(u^{\epsilon,\app},\partial_z) $ applied to $ u^{\epsilon,\app}(z,z'\cdot\zeta_1/\epsilon,\dots,z'\cdot\zeta_m/\epsilon,x_d/\epsilon )$ is therefore given by the formal series
\begin{multline}\label{eq obtention serie W k}
\tilde{L}(u^{\epsilon,\app},\partial_z)\Big[u^{\epsilon,\app}(z,z'\cdot\zeta_1/\epsilon,\dots,z'\cdot\zeta_m/\epsilon,x_d/\epsilon )\Big]
=\sum_{k\geq 0}\epsilon^k\,W_k(z,z'\cdot\zeta_1/\epsilon,\dots,z'\cdot\zeta_m/\epsilon,x_d/\epsilon ),
\end{multline}
where, if the variables $ \theta $ and $ \psi_d $ are substituted to $ (z'\cdot\zeta_1/\epsilon,\dots,z'\cdot\zeta_m/\epsilon) $ and $ x_d/\epsilon  $, the amplitudes $ (W_k)_{k\geq 0} $ of the formal series \eqref{eq obtention serie W k} are given by
\begin{subequations}\label{eq obtention def W}
\begin{equation}\label{eq obtention def W 0}
W_0:=\Big\{\sum_{j=1}^m\tilde{L}(0,\zeta_j)\,\partial_{\theta_j}+\partial_{\psi_d}\Big\}U_1,
\end{equation}
\begin{equation}\label{eq obtention def W 1}
W_1:=\Big\{\sum_{j=1}^m\tilde{L}(0,\zeta_j)\,\partial_{\theta_j}+\partial_{\psi_d}\Big\}U_2+\Big\{\tilde{L}(0,\partial_z)+\sum_{j=1}^m\tilde{L}_1(U_1,\zeta_j)\,\partial_{\theta_j}\Big\}U_1,
\end{equation}
and for $k\geq 2$,
\begin{multline}\label{eq obtention def W k}
W_k:=\Big\{\sum_{j=1}^m\tilde{L}(0,\zeta_j)\,\partial_{\theta_j}+\partial_{\psi_d}\Big\}U_{k+1}
+\Big\{\tilde{L}(0,\partial_z)+\sum_{j=1}^m\tilde{L}_1(U_1,\zeta_j)\,\partial_{\theta_j}\Big\}U_k\\
+\sum_{l=1}^{k-1}\Big\{\tilde{L}_{k-l}(U_{k-l},\partial_{z'})+\sum_{j=1}^m\tilde{L}_{k-l+1}(U_{k-l+1},\zeta_j)\,\partial_{\theta_j}\Big\}U_l.
\end{multline}
\end{subequations}
Formulas \eqref{eq obtention def W 0} and \eqref{eq obtention def W 1} correspond to the analogous ones in \cite[(1.33), (1.46)]{CoulombelGuesWilliams2011Resonant} in the case $ m=1 $.

Thus, for the formal series \eqref{eq def serie formelle} to be solution to \eqref{eq systeme 1}, the formal series \eqref{eq obtention serie W k} must be zero, or equivalently
\begin{equation*} 
W_k=0,\qquad \forall k\geq 0.
\end{equation*}

We note that each equation $ W_k=0 $ involves the fast operator
\begin{equation*}
\mathcal{L}(\partial_{\theta},\partial_{\psi_d}):=\sum_{j=1}^m\tilde{L}(0,\zeta_j)\,\partial_{\theta_j}+\partial_{\psi_d},
\end{equation*}
which is linear and has constant coefficients, as customary in weakly nonlinear geometric optics, see for example \cite[Section 9.4]{Rauch2012Hyperbolic}.
The subject of the following part is to study this operator in order to rewrite equations \eqref{eq obtention def W} in an equivalent manner. Before that the WKB cascades on the boundary and at initial time are determined.

\subsubsection{WKB cascade on the boundary}

Since we want the formal series \eqref{eq def serie formelle} to satisfy the boundary condition
\begin{equation*}
B\,u^{\epsilon,\app}_{|x_d=0}=\epsilon\,g^{\epsilon},
\end{equation*}
the profiles $ (U_k)_{k\geq 1} $ must verify, using variables $ (z',\theta) $, the following boundary conditions
\begin{align*}
(B\,U_1)_{|x_d=0,\psi_d=0}&=G\\
(B\,U_k)_{|x_d=0,\psi_d=0}&=0,\quad k\geq 2.
\end{align*}

\subsubsection{Initial conditions}

In a similar manner, the profiles $ (U_k)_{k\geq 1} $ must satisfy
the following initial conditions
\begin{equation*}
(U_k)_{|t\leq 0}=0,\quad k\geq 1.
\end{equation*}

\subsection{Resolution of the fast problem $ \mathcal{L}(\partial_{\theta},\partial_{\psi_d})\,U=H $}
In this part we seek to resolve in the formal trigonometric series framework the equation
\begin{equation*}
 \mathcal{L}(\partial_{\theta},\partial_{\psi_d})\,U=H,
\end{equation*}
and more precisely, to formally determine the kernel and range of the operator $ \mathcal{L}(\partial_{\theta},\partial_{\psi_d}) $. We follow, in a formal manner, the analysis of {\cite[Part 3]{Lescarret2007Wave}}. Thus we consider $ U $ writing 
\begin{equation*}
U(z,\theta,\psi_d)=\sum_{\mathbf{n}\in\Z^m}\sum_{\xi\in\R}U^{\osc}_{\mathbf{n},\xi}(z)\,e^{i\,\mathbf{n}\cdot\theta}\,e^{i\,\xi\,\psi_d}+\sum_{\mathbf{n}\in\Z^m}U^{\ev}_{\mathbf{n}}(z,\psi_d)\,e^{i\,\mathbf{n}\cdot\theta},
\end{equation*}
and $ H $ writing
\begin{equation*}
H(z,\theta,\psi_d)=\sum_{\mathbf{n}\in\Z^m}\sum_{\xi\in\R}H^{\osc}_{\mathbf{n},\xi}(z)\,e^{i\,\mathbf{n}\cdot\theta}\,e^{i\,\xi\,\psi_d}+\sum_{\mathbf{n}\in\Z^m}H^{\ev}_{\mathbf{n}}(z,\psi_d)\,e^{i\,\mathbf{n}\cdot\theta},
\end{equation*}
where, for all $ \mathbf{n} $ in $ \Z^m $, the sum in $ \xi $ is countable.
Then, by definition of the fast operator $ \mathcal{L}(\partial_{\theta}, \partial_{\psi_d}) $, we get
\begin{multline*}
\mathcal{L}(\partial_{\theta},\partial_{\psi_d})\,U(z,\theta,\psi_d) =\sum_{\mathbf{n}\in\Z^m} \sum_{\xi\in\R} i\,\tilde{L}\big(0,(\mathbf{n}\cdot\boldsymbol{\zeta},\xi)\big) \,U^{\osc}_{\mathbf{n},\xi}(z) \,e^{i\,\mathbf{n}\cdot\theta}\,e^{i\,\xi\,\psi_d}\\
+\sum_{\mathbf{n}\in\Z^m}\big\{i\,\tilde{L}\big(0,(\mathbf{n}\cdot\boldsymbol{\zeta},0)\big)+\partial_{\psi_d}\big\} U^{\ev}_{\mathbf{n}}(z,\psi_d)\,e^{i\,\mathbf{n}\cdot\theta},
\end{multline*}
where we recall that $ \boldsymbol{\zeta} $ refers to the $ m $-tuple of elements of $ \R^d $ given by $ \boldsymbol{\zeta}=(\zeta_1,\dots,\zeta_m) $. Therefore, the profile $ U $ is a solution to $ \mathcal{L}(\partial_{\theta},\partial_{\psi_d})\,U=H $ if and only if, for all $ \mathbf{n} $ in $ \Z^m $ and for all $ \xi $ in $ \R $, we have
\begin{subequations}
\begin{align}
	i\,\tilde{L}\big(0,(\mathbf{n}\cdot\boldsymbol{\zeta},\xi)\big) \,U^{\osc}_{\mathbf{n},\xi}(z)&=H^{\osc}_{\mathbf{n},\xi}(z),\label{eq BKW form osc n zeta}
	\intertext{and}
	\Big(i\,\tilde{L}\big(0,(\mathbf{n}\cdot\boldsymbol{\zeta},0)\big)+\partial_{\psi_d}\Big)\, U^{\ev}_{\mathbf{n}}(z,\psi_d)&=H^{\ev}_{\mathbf{n}}(z,\psi_d).\label{eq BKW form ev n}
\end{align}
\end{subequations}
For $ \mathbf{n} $ in $ \Z^m $ and for $ \xi $ in $ \R $, equation \eqref{eq BKW form osc n zeta} admits a solution if and only if $ H^{\osc}_{\mathbf{n},\xi} $ belongs to the range of the matrix $ \tilde{L}(0,(\mathbf{n}\cdot\boldsymbol{\zeta},\xi)) $, that is to say, according to Definition \ref{def pi alpha}, the kernel $ \ker \tilde{\pi}_{(\mathbf{n}\cdot\boldsymbol{\zeta},\xi)} $. According to Definition \ref{def pi alpha} of the partial inverse $ Q_{\alpha} $, every solution is therefore of the form
\begin{equation*}
	U^{\osc}_{\mathbf{n},\xi}=X_{\mathbf{n},\xi}-i\,Q_{(\mathbf{n}\cdot\boldsymbol{\zeta},\xi)}\,A_d(0)\,H^{\osc}_{\mathbf{n},\xi},
\end{equation*}
with $ X_{\mathbf{n},\xi} $ an element of $ \Ima \pi_{(\mathbf{n}\cdot\boldsymbol{\zeta},\xi)} $, and thus satisfies
\begin{equation*}
	U^{\osc}_{\mathbf{n},\xi}=\pi_{(\mathbf{n}\cdot\boldsymbol{\zeta},\xi)}\,U^{\osc}_{\mathbf{n},\xi}-i\,Q_{(\mathbf{n}\cdot\boldsymbol{\zeta},\xi)}\,A_d(0)\,H^{\osc}_{\mathbf{n},\xi}.
\end{equation*}
As for it, the differential equation \eqref{eq BKW form ev n} admits a formal solution for every $ \mathbf{n} $ in $ \Z^m $. 
For $ \mathbf{n}=0 $, the solution is formally given by
\begin{equation*}
U^{\ev}_{\mathbf{0}}(z,\psi_d)=-\int_{\psi_d}^{+\infty}H^{\ev}_{\mathbf{0}}(z,s)\,ds,
\end{equation*}
and, for $ \mathbf{n} $ in $ \Z^m\privede{0} $, according to Duhamel's principle, by
\begin{align}\label{eq BKW form U n ev equa diff}
U^{\ev}_{\mathbf{n}}(z,\psi_d)=&e^{\psi_d\mathcal{A}(\mathbf{n}\cdot\boldsymbol{\zeta})}\,\Pi^{e}_{\C^N}(\mathbf{n}\cdot\boldsymbol{\zeta})\,U^{\ev}_{\mathbf{n}}(z,0)+\int_0^{\psi_d}e^{(\psi_d-s)\mathcal{A}(\mathbf{n}\cdot\boldsymbol{\zeta})}\,\Pi^{e}_{\C^N}(\mathbf{n}\cdot\boldsymbol{\zeta})\,H^{\ev}_{\mathbf{n}}(z,s)\,ds\\\nonumber
&-\int_{\psi_d}^{+\infty}e^{(\psi_d-s)\mathcal{A}(\mathbf{n}\cdot\boldsymbol{\zeta})}\,\big(I-\Pi^{e}_{\C^N}(\mathbf{n}\cdot\boldsymbol{\zeta})\big) \,H^{\ev}_{\mathbf{n}}(z,s)\,ds,
\end{align}
noting that $ i\,\tilde{L}\big(0,(\mathbf{n}\cdot\boldsymbol{\zeta},0)\big)=-\mathcal{A}(\mathbf{n}\cdot\boldsymbol{\zeta}) $. Indeed, according to Assumption \ref{hypothese pas de glancing}, the frequency $ \mathbf{n}\cdot\boldsymbol{\zeta} $ is not glancing and the projector $ \Pi^{e}_{\C^N}(\mathbf{n}\cdot\boldsymbol{\zeta}) $ is thus well defined. The Duhamel's principle then applies separately to $ \Pi^{e}_{\C^N}(\mathbf{n}\cdot\boldsymbol{\zeta})\,U^{\ev}_{\C^N} $ and $ (I-\Pi^{e}_{\C^N}(\mathbf{n}\cdot\boldsymbol{\zeta}))\,U^{\ev}_{\C^N} $.
The first term of the right-hand side of \eqref{eq BKW form U n ev equa diff} is therefore well-defined, and the integral of the second one converges since, according to Proposition \ref{prop controle exp t A Pi } proved in appendix, the matrix $ e^{t\mathcal{A}(\mathbf{n}\cdot\boldsymbol{\zeta})}\,\Pi^{e}_{\C^N}(\mathbf{n}\cdot\boldsymbol{\zeta}) $ is bounded by a decaying exponential for $ t\geq 0 $. However we do not know if the integral of the third term converges. Indeed, $ H^{\ev}_{\mathbf{n}} $ is only converging to zero at infinity, and the matrix $ e^{t\mathcal{A}(\mathbf{n}\cdot\boldsymbol{\zeta})}\,(I-\Pi^{e}_{\C^N}(\mathbf{n}\cdot\boldsymbol{\zeta})) $ is simply bounded for $ t\leq 0 $, according to Proposition \ref{prop controle exp t A Pi }. The issue is essentially the same for the integral defining $ U^{\ev}_{\mathbf{0}} $. The following result is deduced from this analysis.

\begin{lemma}[{\cite[Theorem 2.14]{Lescarret2007Wave}}]\label{lemme Lescaret}
	The equation $ \mathcal{L}(\partial_{\theta},\partial_{\psi_d})\,U=H $ admits a solution in the framework of formal trigonometric series if and only if $ \tilde{\E^i}\,H=0 $, and every solution is of the form
	\begin{equation*}
	U=\E\, U+\Q\,H,
	\end{equation*}
	where projectors $ \E $ and $ \tilde{\E^i} $ and operator $ \Q $ are formally defined further on.
	In particular we have
	\begin{equation*}
	\ker \mathcal{L}(\partial_{\theta},\partial_{\psi_d})=\Im \E \quad \mbox{and}\quad \Im \mathcal{L}(\partial_{\theta},\partial_{\psi_d})=\ker \tilde{\E^i}.
	\end{equation*}
	
	If $ U $ is given by
	\begin{equation*}
	U(z,\theta,\psi_d)=\sum_{\mathbf{n}\in\Z^m}\sum_{\xi\in\R}U^{\osc}_{\mathbf{n},\xi}(z)\,e^{i\,\mathbf{n}\cdot\theta}\,e^{i\,\xi\,\psi_d}+\sum_{\mathbf{n}\in\Z^m}U^{\ev}_{\mathbf{n}}(z,\psi_d)\,e^{i\,\mathbf{n}\cdot\theta},
	\end{equation*}
	then $ \tilde{\E^i}\,U $ is defined as
	\begin{equation}\label{eq def E i}
	\tilde{\E^i}\,U(z,\theta,\psi_d):=\sum_{\mathbf{n}\in\Z^m}\sum_{\xi\in\R} \tilde{\pi}_{(\mathbf{n}\cdot\boldsymbol{\zeta},\xi)}\, U^{\osc}_{\mathbf{n},\xi}(z)\,e^{i\,\mathbf{n}\cdot\theta}\,e^{i\,\xi\,\psi_d},
	\end{equation}
	$ \E\,U $ as
	\begin{align}\label{eq def E}
	\E\,U(z,\theta,\psi_d):=&\sum_{\mathbf{n}\in\Z^m}\sum_{\xi\in\R} \pi_{(\mathbf{n}\cdot\boldsymbol{\zeta},\xi)}\,\, U^{\osc}_{\mathbf{n},\xi}(z)\,e^{i\,\mathbf{n}\cdot\theta}\,e^{i\,\xi\,\psi_d} \\\nonumber
	 &+\sum_{\mathbf{n}\in\Z^m\privede{0}}e^{\psi_d\mathcal{A}(\mathbf{n}\cdot\boldsymbol{\zeta})}\,\Pi^{e}_{\C^N}(\mathbf{n}\cdot\boldsymbol{\zeta})\,U^{\ev}_{\mathbf{n}}(z,0)\,e^{i\,\mathbf{n}\cdot\theta},
	\end{align}
	and $ \Q\,U $ as
	\begin{align*}
	\Q\,U(z,\theta,\psi_d):=&-\sum_{\mathbf{n}\in\Z^m}\sum_{\xi\in\R} i\,Q_{(\mathbf{n}\cdot\boldsymbol{\zeta},\xi)}\,A_d(0)\,U^{\osc}_{\mathbf{n},\xi}(z)\,e^{i\,\mathbf{n}\cdot\theta}\,e^{i\,\xi\,\psi_d}-\int_{\psi_d}^{+\infty}U^{\ev}_{\mathbf{0}}(z,s)\,ds\\
	&+\sum_{\mathbf{n}\in\Z^m\privede{0}} \left(\int_0^{\psi_d}e^{(\psi_d-s)\mathcal{A}(\mathbf{n}\cdot\boldsymbol{\zeta})}\,\Pi^{e}_{\C^N}(\mathbf{n}\cdot\boldsymbol{\zeta})\,U^{\ev}_{\mathbf{n}}(z,s)\,ds\right.\\&\left. 
	\quad-\int_{\psi_d}^{+\infty}e^{(\psi_d-s)\mathcal{A}(\mathbf{n}\cdot\boldsymbol{\zeta})}\,\big(I-\Pi^{e}_{\C^N}(\mathbf{n}\cdot\boldsymbol{\zeta})\big) \,U^{\ev}_{\mathbf{n}}(z,s)\,ds\right)e^{i\,\mathbf{n}\cdot\theta}.
	\end{align*}
\end{lemma}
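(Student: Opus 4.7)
The plan is to reduce the equation to a family of scalar subproblems indexed by the Fourier modes, solve each one explicitly, and then repackage the answers into the operators $\E$ and $\Q$. I would start from the Fourier decompositions of $U$ and $H$ written just before the lemma. Because $\mathcal{L}(\partial_\theta,\partial_{\psi_d})$ acts diagonally in $\mathbf{n}$ and $\xi$ and preserves the oscillating/evanescent splitting, the single equation $\mathcal{L}U=H$ is equivalent to the two independent families \eqref{eq BKW form osc n zeta} and \eqref{eq BKW form ev n}.

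For the algebraic equations \eqref{eq BKW form osc n zeta}, I would rely on identity \eqref{eq freq ima L tilde ker pi tilde}: for any $\alpha$, $\Ima \tilde{L}(0,\alpha)=\ker \tilde{\pi}_\alpha$. Taking $\alpha=(\mathbf{n}\cdot\boldsymbol{\zeta},\xi)$, solvability of \eqref{eq BKW form osc n zeta} is equivalent to $\tilde{\pi}_{(\mathbf{n}\cdot\boldsymbol{\zeta},\xi)}H^{\osc}_{\mathbf{n},\xi}=0$ for every $(\mathbf{n},\xi)$; summed over all Fourier modes this is precisely $\tilde{\E^i}H=0$. Under that condition, rewriting \eqref{eq BKW form osc n zeta} as $iL(0,(\mathbf{n}\cdot\boldsymbol{\zeta},\xi))U^{\osc}_{\mathbf{n},\xi}=A_d(0)H^{\osc}_{\mathbf{n},\xi}$ and applying $-iQ_{(\mathbf{n}\cdot\boldsymbol{\zeta},\xi)}$ on the left, together with the identity $Q_\alpha L(0,\alpha)=I-\pi_\alpha$ from Definition \ref{def pi alpha}, yields $U^{\osc}_{\mathbf{n},\xi}=\pi_{(\mathbf{n}\cdot\boldsymbol{\zeta},\xi)}U^{\osc}_{\mathbf{n},\xi}-iQ_{(\mathbf{n}\cdot\boldsymbol{\zeta},\xi)}A_d(0)H^{\osc}_{\mathbf{n},\xi}$, in which the first term is an arbitrary polarized mode (the free parameter) and the second is a canonical particular solution.

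For the ODE family \eqref{eq BKW form ev n}, the case $\mathbf{n}=0$ is the elementary $\partial_{\psi_d}U^{\ev}_0=H^{\ev}_0$, and among formal antiderivatives the only one tending to zero at infinity is $U^{\ev}_0(z,\psi_d)=-\int_{\psi_d}^{+\infty}H^{\ev}_0(z,s)\,ds$. For $\mathbf{n}\neq 0$, Assumption \ref{hypothese pas de glancing} ensures that $\mathcal{A}(\mathbf{n}\cdot\boldsymbol{\zeta})$ has no purely imaginary eigenvalue, so the stable elliptic projector $\Pi^{e}_{\C^N}(\mathbf{n}\cdot\boldsymbol{\zeta})$ is well defined. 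I would then split \eqref{eq BKW form ev n} along the two ranges of $\Pi^{e}_{\C^N}(\mathbf{n}\cdot\boldsymbol{\zeta})$ and $I-\Pi^{e}_{\C^N}(\mathbf{n}\cdot\boldsymbol{\zeta})$ and apply Duhamel's formula: forward in $\psi_d$ on the stable piece (whose exponential decay, guaranteed by Proposition \ref{prop controle exp t A Pi }, allows retaining the free parameter $\Pi^{e}_{\C^N}(\mathbf{n}\cdot\boldsymbol{\zeta})U^{\ev}_{\mathbf{n}}(z,0)$), and backward from $+\infty$ on the complementary piece (which removes that free parameter in exchange for a formally convergent integral representation). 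This produces exactly formula \eqref{eq BKW form U n ev equa diff}.

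Reassembling is then mechanical: the free data (polarized oscillating modes and the stable-elliptic traces of the evanescent modes at $\psi_d=0$) add up to $\E U$, while the particular solutions and Duhamel integrals add up to $\Q H$, giving $U=\E U+\Q H$. Setting $H=0$ reads off $\ker \mathcal{L}=\Ima \E$, and the solvability analysis in the oscillating part gives $\Ima \mathcal{L}=\ker \tilde{\E^i}$. The main technical subtlety I expect is the careful forward/backward splitting of Duhamel for $\mathbf{n}\neq 0$: one must allocate the free parameter to the stable elliptic component so that the kernel contribution decays on $[0,\infty)$, while the backward integral against $I-\Pi^{e}_{\C^N}(\mathbf{n}\cdot\boldsymbol{\zeta})$ is only formally defined — which is exactly why the statement is phrased in the framework of formal trigonometric series, convergence of the full series not being claimed at this stage.
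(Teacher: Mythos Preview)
Your approach is essentially identical to the paper's, which carries out the same mode-by-mode analysis in the discussion immediately preceding the lemma and then packages the result as the statement. One minor correction: Assumption \ref{hypothese pas de glancing} does not ensure that $\mathcal{A}(\mathbf{n}\cdot\boldsymbol{\zeta})$ has no purely imaginary eigenvalues (incoming and outgoing modes do give such eigenvalues); it only rules out glancing blocks, which is precisely what is needed for the projector $\Pi^{e}_{\C^N}(\mathbf{n}\cdot\boldsymbol{\zeta})$ to be well defined and for the forward/backward Duhamel splitting to make sense.
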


Note that, for now, the operator $ \Q $ and the projectors $ \E $ and $ \tilde{\E^i} $ are only formally defined. The projectors $ \E $ and $ \tilde{\E^i} $, that are the only one involved in the leading profile equations, can be defined in the space $ \P_{s,T} $, and this result will constitute a part of the following section. However, the operator $ \Q $ cannot be rigorously defined in the functional framework used here (the issues are the absence of some small divisor control - in the same manner as in \cite{JolyMetivierRauch1995Coherent}, as well as a lack of exponential decay for the evanescent profiles).

\subsection{System of equations satisfied by the leading profile}

According to expressions \eqref{eq obtention def W 0} and \eqref{eq obtention def W 1} of the amplitudes $ W_0 $ and $ W_1 $, and using the previous Lemma \ref{lemme Lescaret}, we get the following system of equations for the leading profile $ U_1 $, simply denoted from now on by $ U $:
\begin{subequations}\label{eq obtention U 1}
	\begin{align}
	\E\, U&=U \label{eq obtention E U}  \\
	\tilde{\E^i}\Big[\tilde{L}(0,\partial_z)\,U+\sum_{j=1}^m\tilde{L}_1(U,\zeta_j)\,\partial_{\theta_j}U\Big]&=0 \label{eq obtention E i U}\\
	B\,U_{|x_d=0,\psi_d=0}&=G \label{eq obtention cond bord U} \\[5pt]
	U_{|t\leq 0}&=0. \label{eq obtention cond initiale U}
	\end{align}
\end{subequations}

We note that the leading profile $ U_1 $ is polarized, in the sense that it satisfies  equation \eqref{eq obtention E U}, so according to Formula \eqref{eq def E} defining projector $ \E $, only the characteristic frequencies occur in its Fourier expansion. We shall see in the next section that the oscillating part $ U^{\osc} $ of profile $ U $ satisfies the problem
\begin{subequations}\label{eq obtention U 1 osc}
	\begin{align}
	\E\, U^{\osc}&=U^{\osc}  \\
	\tilde{\E^i}\Big[\tilde{L}(0,\partial_z)\,U^{\osc}+\sum_{j=1}^m\tilde{L}_1(U^{\osc},\zeta_j)\,\partial_{\theta_j}U^{\osc}\Big]&=0 \\
	B\,\big(U^{\osc}+U^{\ev}\big)_{|x_d=0,\psi_d=0}&=G \label{eq obtention U 1 1 osc cond bord}\\[5pt]
	U^{\osc}_{|t\leq 0}&=0. 
	\end{align}
\end{subequations}
The question is to know whether or not boundary condition \eqref{eq obtention U 1 1 osc cond bord} determines on its own the trace $ U^{\osc}_{|x_d=0,\psi_d=0} $. As already explained in \cite{CoulombelGuesWilliams2011Resonant} and \cite{CoulombelWilliams2017Mach}, the answer depends on the existence of a resonance between two incoming frequencies that generates an outgoing frequency. Such a resonance pattern is excluded by Assumption \ref{hypothese pas de sortant}. In this case the boundary condition \eqref{eq obtention U 1 1 osc cond bord} also determines the trace $ U^{\ev}_{|x_d=0,\psi_d=0} $, which, according to the polarization type condition, immediately leads to the construction of the evanescent part of $ U $.

\section{Construction of the leading profile}\label{section 6}

Now that the system that must be verified by the leading profile $ U $ has been formally determined, we are in position to construct a solution to it. We begin in a first part by a discussion about the different types of resonances that may appear in the system. In particular, the technical assumption is made that the resonances for which the lack of symmetry is not controlled are in finite number. This assumption is made to deal with the lack of symmetry in the resonances terms compared to the case of \cite{JolyMetivierRauch1995Coherent}. In a second part, the projectors appearing in system \eqref{eq obtention U 1} as well as some results of section \ref{section 5} are made rigorous, using the small divisors Assumption \ref{hypothese petits diviseurs 1}. Then we proceed to the three main steps of the proof, namely the decoupling of system \eqref{eq obtention U 1}, derivation of energy estimates for the linearized systems and construction of the solution. First step is achieved in section \ref{subsection reducing} and consists in reducing the system \eqref{eq obtention U 1} to a system for the evanescent part, a system for the oscillating incoming resonant part and a system for each oscillating incoming non-resonant part. Writing equation \eqref{eq obtention E i U} in extension with modes, we isolate the system satisfied by the mean value to show that it is zero. Then it is proven, using an energy estimate on the oscillating outgoing part, that every outgoing mode is zero. To obtain this estimate, we use a scalar product defined only for profiles with bounded $ x_d $-support. Therefore we need to show beforehand that a solution to \eqref{eq obtention U 1} propagates in the normal direction with finite velocity. The proof of this result is postponed after the introduction of the techniques used in it, in section \ref{subsection estimate resonant part}. Finally, using the fact that incoming modes are zero, the boundary condition \eqref{eq obtention cond bord U} can be decoupled for each evanescent and incoming oscillating mode, which will conclude the decoupling of the system. The derivation of a priori estimates without loss of derivatives is performed in sections \ref{subsection estimate resonant part} and \ref{subsection est Burgers}, one for the resonant part and one for each non-resonant part, namely Burgers type equations. Each non-resonant mode must be treated separately to avoid a factor unbounded with respect to the frequency, but part \ref{subsection est Burgers} presents no additional difficulty since it reuses techniques displayed in the previous one. The derivation of a priori estimates for the linearized oscillating resonant system is presented in section \ref{subsection estimate resonant part}, beginning by the $ L^2 $ estimate. It is obtained taking the incoming modes suited scalar product between the linearized propagation equation and a modified profile. Four terms need to be addressed, the transport and Burgers type (corresponding to self-interaction) ones are treated classically (with an integration by part and using symmetry in the self-interaction terms), while the two resonant ones are handled using the technical assumption on resonance terms. More precisely this assumption asserts that these resonance terms (except for a finite number of them, that are treated separately) are such that the lack of symmetry in it is controlled in a way that the techniques used for the Burgers type terms can be adapted. The same method for energy estimates is used with a different scalar product to prove the finite speed propagation in this section. Then the estimates for derivatives are obtained classically using commutator estimates. Section \ref{subsection construction solution} is devoted to construction of the solution. The oscillating part is constructed using an usual procedure which is not detailed, and consists in proving existence of a linearized solution with a finite difference scheme using the a priori estimates previously derived, and then existence of the sought solution using an iterative scheme. Uniqueness is deduced from the a priori estimates. As for the evanescent part, its expression is prescribed by a polarization type condition, and we prove that the constructed profile belongs to the space of evanescent profiles. It will achieve the proof of Theorem \ref{thm existence profils}, and finally section \ref{subsection conclusion} draws a conclusion and some perspectives.

\subsection{Resonance coefficient and additional assumption}\label{notations 2}

The sets defined below permit to gather the characteristic frequencies according to collinearity.

\begin{definition}\label{def base Z m}
	We consider the subset of $ \Z^m\privede{0} $, denoted by $ \mathcal{B}_{\Z^m} $, constituted of all $ m $-tuples of coprime integers of which the first  nonzero term is positive:
	\begin{equation*}
	\mathcal{B}_{\Z^m}:=\ensemble{(n_1,\dots,n_m)\in\Z^m\privede{0}\,\middle|\,\begin{aligned}
		&n_1\wedge\cdots\wedge n_m=1,\\
		&\exists k\in\ensemble{0,\dots,m-1},\, n_1,\dots,n_k=0,n_{k+1}>0
		\end{aligned}}.
	\end{equation*}
	One can verify that for all $ \mathbf{n} $ of $ \Z^m\privede{0} $, there exists a unique element $ \mathbf{n}_0 $ of $ \mathcal{B}_{\Z^M} $ and a unique nonzero integer $ \lambda $ such that $ \mathbf{n}=\lambda\,\mathbf{n}_0 $.
\end{definition} 

Then we introduce the following notation for real characteristic frequencies lifted from frequencies on the boundary.

\begin{definition}\label{def C(n)}
	For $ \mathbf{n} $ in $ \Z^m\privede{0} $, we denote by $ \mathcal{C}(\mathbf{n}) $ the finite set of real numbers $ \xi $ such that the frequency $ (\mathbf{n}\cdot\boldsymbol{\zeta},\xi) $ is real and characteristic, namely
	\begin{equation*}
	\mathcal{C}(\mathbf{n}):=\ensemble{\xi\in\R \mid (\mathbf{n}\cdot\boldsymbol{\zeta},\xi)\in\mathcal{C}}.
	\end{equation*}
	We also denote by $ \mathcal{C}_{\inc}(\mathbf{n}) $  (resp. $ \mathcal{C}_{\out}(\mathbf{n}) $) the set of real numbers $ \xi $ such that the frequency $ (\mathbf{n}\cdot\boldsymbol{\zeta},\xi) $ is real, characteristic and incoming (resp. outgoing), namely
	\begin{equation*}
	\mathcal{C}_{\inc}(\mathbf{n})=\ensemble{\xi_j(\freq) \mid j\in\mathcal{I}(\mathbf{n}\cdot\boldsymbol{\zeta})},\quad \mathcal{C}_{\out}(\mathbf{n})=\ensemble{\xi_j(\freq) \mid j\in\mathcal{O}(\mathbf{n}\cdot\boldsymbol{\zeta})},
	\end{equation*}
	with notations of Proposition \ref{prop decomp E_-}.
\end{definition}

We recall that according to Assumption \ref{hypothese pas de glancing}, there is no glancing frequency in $ \F $, so the disjoint union
\begin{equation*}
\mathcal{C}(\mathbf{n})=\mathcal{C}_{\inc}(\mathbf{n})\cup \mathcal{C}_{\out}(\mathbf{n})
\end{equation*}
is satisfied for all $ \mathbf{n} $ in $ \Z^m\privede{0} $. All \emph{real} characteristic frequencies have been considered here, but there may also exist non-real characteristic frequencies lifted from $ \mathbf{n}\cdot\boldsymbol{\zeta} $.

\begin{remark}
	One can check that, according to Remark \ref{remarque homogeneite xi, alpha etc}, the sets $ \mathcal{C} $, $ \mathcal{C}_{\inc} $ and $ \mathcal{C}_{\out} $ are homogeneous of degree 1. Thus, if $ \mathbf{n} $ belongs to $ \Z^m\privede{0} $ and $ \xi $ to $ \mathcal{C}(\mathbf{n}) $, and if $ \mathbf{n}_0 $ in $ \B_{\Z^m} $ and $ \lambda $ in $ \Z^* $ are such that $ \mathbf{n}=\lambda\,\mathbf{n}_0 $, then there exists $ \xi_0 $ in $ \mathcal{C}(\mathbf{n}_0) $ such that $ \xi=\lambda\,\xi_0 $.
\end{remark}

We now introduce some notations for the resonances.

\begin{definition}\label{def E}
	For $ \mathbf{n} $ in $ \Z^m\privede{0} $ and $ \xi $ in $ \mathcal{C}(\mathbf{n}) $, we denote by $ E(\mathbf{n},\xi) $ the vector of the basis $ E_1,\dots,E_N $ of $ \C^N $ given by \eqref{eq base C^N adaptee espace propres stricte hyp} that generates the line $ \ker L\big(0,(\mathbf{n}\cdot\boldsymbol{\zeta},\xi)\big) $.
\end{definition}

\begin{remark}
	Note that for $ \mathbf{n} $ in $ \Z^m\privede{0} $, $ \xi $ in $ \mathcal{C}(\mathbf{n}) $ and $ \lambda $ in $ \Z^* $, since the linear subspaces $ \ker L\big(0,(\mathbf{n}\cdot\boldsymbol{\zeta},\xi)\big) $ and $ \ker L\big(0,(\lambda\mathbf{n}\cdot\boldsymbol{\zeta},\lambda\xi)\big) $ are equal, we infer $ E(\mathbf{n},\xi)=E(\lambda\mathbf{n},\lambda\xi) $, so the vector $ E(\mathbf{n},\xi) $ is homogeneous of degree 0. 
\end{remark}

The following definition is based on \cite[Chapter 11]{Rauch2012Hyperbolic}.

\begin{definition}\label{def coefficient gamma}
	Let $ \mathbf{n}_p $, $ \mathbf{n}_q $ be two elements of $ \Z^m\privede{0} $, and let $ (\xi_p,\xi_q) $ in $ \mathcal{C}(\mathbf{n}_p)\times\mathcal{C}(\mathbf{n}_q) $ be such that the frequency
	\begin{equation*}
	(\mathbf{n}_p\cdot\boldsymbol{\zeta},\xi_p)+(\mathbf{n}_q\cdot\boldsymbol{\zeta},\xi_q)=:(\mathbf{n}_r\cdot\boldsymbol{\zeta},\xi_r)
	\end{equation*}
	is real and characteristic (i.e. such that there is a resonance). Then the resonance coefficient $ \Gamma\big((\mathbf{n}_p,\xi_p),\allowbreak(\mathbf{n}_q,\xi_q)\big) $ is defined by the equation
	\begin{equation*}
	\tilde{\pi}_{(\mathbf{n}_r\cdot\boldsymbol{\zeta},\xi_r)}\,\tilde{L}_1\big(E(\mathbf{n}_p,\xi_p),\mathbf{n}_q\cdot\boldsymbol{\zeta}\big)\,E(\mathbf{n}_q,\xi_q)
	=\Gamma\big((\mathbf{n}_p,\xi_p),(\mathbf{n}_q,\xi_q)\big)\,\tilde{\pi}_{(\mathbf{n}_r\cdot\boldsymbol{\zeta},\xi_r)}\,E(\mathbf{n}_r,\xi_r).
	\end{equation*}
	This coefficient exists by definition of the projectors $ \tilde{\pi}_k $, for $ k=1,\dots,N $ and according to Lemma \ref{lemme pi tilde E vitesse de groupe}. 
\end{definition}

\begin{remark}
	\begin{enumerate}[label=\roman*), leftmargin=0.7cm]
	\item 
Since all quantities involved in the definition of $ \Gamma $ are homogeneous of degree 0 or 1, the coefficient $ \Gamma $ is homogeneous of degree 1, i.e. for all $ \mathbf{n}_p $, $ \mathbf{n}_q $ in $ \Z^m\privede{0} $, $ (\xi_p,\xi_q) $ in $ \mathcal{C}(\mathbf{n}_p)\times\mathcal{C}(\mathbf{n}_q) $ such that the frequency $ (\mathbf{n}_p\cdot\boldsymbol{\zeta},\xi_p)+(\mathbf{n}_q\cdot\boldsymbol{\zeta},\xi_q) $ is real characteristic (that is such that there is a resonance) and for all $ \lambda $ in $ \Z^* $, we have
	\begin{align}\label{eq propr homogeneite Gamma}
	 \Gamma\big((\lambda\mathbf{n}_p,\lambda\xi_p),(\lambda\mathbf{n}_q,\lambda\xi_q)\big)&=\lambda\,\Gamma\big((\mathbf{n}_p,\xi_p),(\mathbf{n}_q,\xi_q)\big).
	\end{align}
By definition and for the same reason, we also have, for $ \mathbf{n}_0 $ in $ \Z^m\privede{0} $ and $ \xi_0 $ in $ \mathcal{C}(\mathbf{n}_0) $, and for $ \lambda_1,\lambda_2 $ in $ \Z^* $,
\begin{equation}\label{eq propr gamma AI}
	\Gamma\big(\lambda_1(\mathbf{n}_0,\xi_0),\lambda_2(\mathbf{n}_0,\xi_0)\big)=\lambda_2\,\Gamma\big((\mathbf{n}_0,\xi_0),(\mathbf{n}_0,\xi_0)\big).
\end{equation}
	\item Since according to Remark \ref{remarque proj pi et Q bornes} the projectors $ \tilde{\pi}_{\alpha} $ are bounded and the vectors $ E(\mathbf{n},\xi) $ are of norm 1, for all $ \mathbf{n} $ in $ \Z^m\privede{0} $ and all $ \xi $ in $ \mathcal{C}(\mathbf{n}) $, we have
	\begin{equation*}
	\left|\Gamma\big((\mathbf{n},\xi),(\mathbf{n},\xi)\big)\right|\leq C\frac{|\mathbf{n}|}{\left|\tilde{\pi}_{(\mathbf{n}\cdot\boldsymbol{\zeta},\xi)}\,E(\mathbf{n},\xi)\right|}.
	\end{equation*}
	Therefore, according to the lower bound \eqref{eq freq minoration pi tilde E alpha} of Lemma \ref{lemme minoration vitesse de groupe} and the small divisors Assumption \ref{hypothese petits diviseurs 1}, for all $ \mathbf{n} $ in $ \Z^m\privede{0} $ and all $ \xi $ in $ \mathcal{C}(\mathbf{n}) $, we have
	\begin{equation}\label{eq est gamma n n}
		\left|\Gamma\big((\mathbf{n},\xi),(\mathbf{n},\xi)\big)\right|\leq C|\mathbf{n}|^h,
	\end{equation}
	where $ h $ is an integer larger than  $ (3+a_1)/2 $ with notation of Assumption \ref{hypothese petits diviseurs 1}.
\item The quantity $ \tilde{\pi}_{(\mathbf{n}_r\cdot\boldsymbol{\zeta},\xi_r)}\,\tilde{L}_1\big(E(\mathbf{n}_p,\xi_p),\mathbf{n}_q\cdot\boldsymbol{\zeta}\big)\,E(\mathbf{n}_q,\xi_q) $ being homogeneous of degree 1 with respect to $ \mathbf{n}_q $, the resonance coefficient $ \Gamma\big((\mathbf{n}_p,\xi_p),(\mathbf{n}_q,\xi_q)\big) $ formally corresponds to a partial derivative with respect to the fast tangential variables, applied to the profile associated with the frequency $ (\mathbf{n}_q\cdot\zeta,\xi_q) $.
	\end{enumerate}
\end{remark}

The coefficients $ \Gamma $ defined above shall appear in the computations to obtain a priori estimates for system \eqref{eq obtention U 1}. In particular, when these coefficients present some symmetry property, the associated resonance is easy to control in the a priori estimates. Thus we discriminate the resonances satisfying this symmetry property from the others.

\begin{definition}\label{def ens resonances} Fix a constant $ C_0>0 $.
	Let $ \mathbf{n}_r $ be in $ \B_{\Z^m} $, and $ \xi _r$ in $ \mathcal{C}(\mathbf{n}_r) $. We consider the set of 7-tuples $ (\lambda_p,\lambda_q,\lambda_r,\mathbf{n}_p,\mathbf{n}_q,\xi_p,\xi_q) $ with $ \lambda_p,\lambda_q,\lambda_r $ in $ \Z^* $, $ \mathbf{n}_p,\mathbf{n}_q $ in $ \B_{\Z^m} $, $ (\xi_p,\xi_q) $ in $ \mathcal{C}(\mathbf{n}_p)\times\mathcal{C}(\mathbf{n}_q) $, $ (\mathbf{n}_p\cdot\boldsymbol{\zeta},\xi_p) $ and $ (\mathbf{n}_q\cdot\boldsymbol{\zeta},\xi_q) $ non collinear and $ \lambda_p,\lambda_q,\lambda_r $ coprime numbers, that resonate to give the resonance $ \lambda_r\,(\mathbf{n}_r\cdot\boldsymbol{\zeta},\xi) $ in the following way
	\begin{equation*}
	\lambda_p\,(\mathbf{n}_p\cdot\boldsymbol{\zeta},\xi_p)+ \lambda_q\,(\mathbf{n}_q\cdot\boldsymbol{\zeta},\xi_q)= \lambda_r\,(\mathbf{n}_r\cdot\boldsymbol{\zeta},\xi_r).
	\end{equation*}
	This set is written as the disjoint union
	\begin{equation*}
	\mathcal{R}_1(\mathbf{n}_r,\xi_r)\sqcup 	
	\mathcal{R}_2(\mathbf{n}_r,\xi_r),
	\end{equation*}
	where the sets
	$	\mathcal{R}_1(\mathbf{n}_r,\xi_r) $ and  $ 	
	\mathcal{R}_2(\mathbf{n}_r,\xi_r) $ are defined as follows.
	\begin{enumerate}[label=\roman*)]
		\item The set $ \mathcal{R}_1(\mathbf{n}_r,\xi_r) $ is constituted of 7-tuples $ (\lambda_p,\lambda_q,\lambda_r,\mathbf{n}_p,\mathbf{n}_q,\xi_p,\xi_q) $ satisfying 
		\begin{equation}\label{eq propr res type 1}
		\left|\Gamma\big((\lambda_p\,\mathbf{n}_p,\lambda_p\,\xi_p),(\lambda_q\,\mathbf{n}_q,\lambda_q\,\xi_q)\big)+\Gamma\big((\lambda_p\,\mathbf{n}_p,\lambda_p\,\xi_p),(-\lambda_r\,\mathbf{n}_r,-\lambda_r\,\xi_r)\big)\right|
		\leq C_0 \big|(\lambda_p\,\mathbf{n}_p,\lambda_p\,\xi_p)\big|
		\end{equation}
		where $ C_0>0 $ is the constant which have been fixed in the beginning and which does not depend on $ \lambda_p$, $\lambda_q$, $\lambda_r$, $\mathbf{n}_p$, $\mathbf{n}_q$, $\mathbf{n}_r$, $\xi_p$, $\xi_q$, $\xi_r $. These resonances are said to be of type 1.
		\item The 7-tuples $ (\lambda_p,\lambda_q,\lambda_r,\mathbf{n}_p,\mathbf{n}_q,\xi_p,\xi_q) $ which do not satisfy the previous property constitute the set of type 2 resonances, denoted by $ \mathcal{R}_2(\mathbf{n}_r,\xi_r) $.
	\end{enumerate}
\end{definition}

\begin{remark}\label{remarque type resonance}
	\begin{enumerate}[label=\roman*), leftmargin=0.7cm]
		\item Note that the sets
		$ \mathcal{R}_1(\mathbf{n},\xi) $ and  $ 	
		\mathcal{R}_2(\mathbf{n},\xi) $ depend on the constant $ C_0>0 $ fixed at the beginning, although this dependence is not indicated.
		\item According to Assumption \ref{hypothese pas de sortant}, note that if the frequency $ (\freq,\xi) $ is outgoing (resp. incoming), then both sets
		$	\mathcal{R}_1(\mathbf{n},\xi) $ and  $ 	
		\mathcal{R}_2(\mathbf{n},\xi) $ are constituted only of 7-tuples corresponding to outgoing (resp. incoming) frequencies.
		\item Note that since the coefficients $ \Gamma $ are not symmetrical, the type of a resonance
		\begin{equation*}
			\lambda_p\,(\mathbf{n}_p\cdot\boldsymbol{\zeta},\xi_p)+ \lambda_q\,(\mathbf{n}_q\cdot\boldsymbol{\zeta},\xi_q)= \lambda_r\,(\mathbf{n}_r\cdot\boldsymbol{\zeta},\xi_r),
		\end{equation*}
		depends on the way it is written. However, since condition \eqref{eq propr res type 1} is symmetrical in $ (q,r) $, the resonance \begin{equation*}
			\lambda_p\,(\mathbf{n}_p\cdot\boldsymbol{\zeta},\xi_p)+ \lambda_q\,(\mathbf{n}_q\cdot\boldsymbol{\zeta},\xi_q)= \lambda_r\,(\mathbf{n}_r\cdot\boldsymbol{\zeta},\xi_r),
		\end{equation*}
	is of type 1 if and only if the resonance  \begin{equation*}
		\lambda_p\,(\mathbf{n}_p\cdot\boldsymbol{\zeta},\xi_p)-\lambda_r\,(\mathbf{n}_r\cdot\boldsymbol{\zeta},\xi_r) = -\lambda_q\,(\mathbf{n}_q\cdot\boldsymbol{\zeta},\xi_q),
	\end{equation*}
is of type 1.
		\item Also note that if a resonance of the form
		\begin{equation*}
		\lambda_p\,(\mathbf{n}_p\cdot\boldsymbol{\zeta},\xi_p)+ \lambda_q\,(\mathbf{n}_q\cdot\boldsymbol{\zeta},\xi_q)= \lambda_r\,(\mathbf{n}_r\cdot\boldsymbol{\zeta},\xi_r),
		\end{equation*}
		holds, then for $ k $ in $ \Z^* $, the following resonance relation is also satisfied
		\begin{equation*}
		k\,\lambda_p\,(\mathbf{n}_p\cdot\boldsymbol{\zeta},\xi_p)+ k\,\lambda_q\,(\mathbf{n}_q\cdot\boldsymbol{\zeta},\xi_q)= k\,\lambda_r\,(\mathbf{n}_r\cdot\boldsymbol{\zeta},\xi_r).
		\end{equation*}
		This explains the choice made in Definition \ref{def ens resonances} to consider only 3-tuples $ (\lambda_p,\lambda_q,\lambda_r) $ of coprime integers and $ m $-tuples $ \mathbf{n}_p $, $ \mathbf{n}_q $ and $ \mathbf{n}_r $ of $ \B_{\Z^m} $.
	\end{enumerate}
\end{remark}

The previous definition leads to the last assumption of this work. It is made for technical reasons, but, up to our knowledge, it is not a necessary assumption.

\begin{assumption}\label{hypothese type resonance}
There exists a constant $ C_0>0 $ such that the sets $	\mathcal{R}_1(\mathbf{n},\xi) $ and  $ 	
\mathcal{R}_2(\mathbf{n},\xi) $, for $ \mathbf{n} $ in $ \Z^m\privede{0} $ and $ \xi $ in $ \mathcal{C}(\mathbf{n}) $, defined in Definition \ref{def ens resonances} satisfy the two following properties.
\begin{enumerate}[label=\alph*), leftmargin=0.7cm]
	\item The sets of incoming resonances of type 2 and outgoing resonances of types 1 and 2
	\begin{equation*}
		\bigcup_{\substack{\mathbf{n}\in\B_{\Z^m}\\\xi\in\mathcal{C}_{\inc}(\mathbf{n})}}\mathcal{R}_2(\mathbf{n},\xi),\quad 
		\bigcup_{\substack{\mathbf{n}\in\B_{\Z^m}\\\xi\in\mathcal{C}_{\out}(\mathbf{n})}}\big(\mathcal{R}_1(\mathbf{n},\xi)\cup\mathcal{R}_2(\mathbf{n},\xi)\big),
	\end{equation*}
	are finite sets.
	\item For all incoming frequency $ (\mathbf{n},\xi) $ of $ \B_{\Z^m}\times\mathcal{C}_{\inc}(\mathbf{n}) $ such that the set $ \mathcal{R}_1(\mathbf{n},\xi) $ is nonempty, the following lower bound holds
	\begin{equation}\label{eq propr minoration pi tilde E R1}
		\left|\tilde{\pi}_{(\mathbf{n}\cdot\boldsymbol{\zeta},\xi)}E(\mathbf{n},\xi)\right|\geq \inv{C_0}.
	\end{equation} 
\end{enumerate}
\end{assumption}

\begin{remark}
	\begin{enumerate}[label=\roman*), leftmargin=0.7cm]
		\item The self-interaction between two collinear frequencies always constitute a resonance, but these terms should not be an issue in the analysis, since they induce terms of Burgers type, which are commonly treated in the estimates.  However the resonances of type 2 are difficult to control, that is why a finiteness assumption is made on this set, whereas property \eqref{eq propr res type 1} satisfied by resonances of type 1 allows to treat an infinite number of them (to the prize of a uniform control). Such an infinity of resonances appears irremediably in Example \ref{exemple Euler 1} of compressible isentropic Euler equations in dimension 2. It constitute one of the main additional difficulty addressed here in comparison to the monophase case of \cite{CoulombelGuesWilliams2011Resonant}.
		\item  We already know that for $ (\mathbf{n},\xi) $ in $ \B_{\Z^m}\times\mathcal{C}(\mathbf{n}) $, the vector $ \tilde{\pi}_{(\mathbf{n}\cdot\boldsymbol{\zeta},\xi)}E(\mathbf{n},\xi) $ is bounded, according to Remark \ref{remarque proj pi et Q bornes}, and that it is allowed to go to zero but in a controlled way, according to estimate \eqref{eq freq minoration pi tilde E alpha} and Assumption \ref{hypothese petits diviseurs 1}. In the case of an infinite number of resonances, namely for incoming resonances of type 1, we also need to make sure that these vectors do not go to zero, for a technical reason explained below. This is why we assume the uniform lower bound \eqref{eq propr minoration pi tilde E R1} of Assumption \ref{hypothese type resonance}. This assumption excludes the possibility of the existence of a sequence of frequencies $ (\mathbf{n},\xi) $ such that $ \mathcal{R}_1(\mathbf{n},\xi) $ is nonempty, converging to the glancing set $ \mathcal{G} $.
	\end{enumerate}
\end{remark}

According to Assumption \ref{hypothese pas de sortant}, it has already been established that the sets of incoming and outgoing frequencies $ \F^{\inc} $ and $ \F^{\out} $ defined in \eqref{eq freq def F inc ev} and \eqref{eq freq def F out} do not resonate with each other. This decomposition of the frequencies set in sets that do not resonate with each other is now to be refined, which will allow to decouple the studied system according to these sets.

\begin{definition}\label{def ensembles F 1 2 in out}
	In this definition we confuse the frequency $ (\mathbf{n}\cdot\boldsymbol{\zeta},\xi) $ with the couple $ (\mathbf{n},\xi) $. Let $ C_0>0 $ be the constant fixed in Assumption \ref{hypothese type resonance}.  We denote by $ \F^{\out}_{\res} $ the set of outgoing frequencies $ (\mathbf{n},\xi) $ of $ \B_{\Z^m}\times\mathcal{C}_{\out}(\mathbf{n}) $ involved in resonances of type 1 or 2, namely such that $ \mathcal{R}_1(\mathbf{n},\xi)\cup\mathcal{R}_2(\mathbf{n},\xi) $ is nonempty. Then the following disjoint union holds
	\begin{equation}\label{eq propr def F out 1 2}
		\ensemble{(\mathbf{n},\xi) \in\B_{\Z^m}\times\mathcal{C}_{\out}(\mathbf{n}) }=\F^{\out}_{\res}\sqcup\bigsqcup_{(\mathbf{n},\xi) \in(\B_{\Z^m}\times\mathcal{C}_{\out}(\mathbf{n}))\setminus\F^{\out}_{\res}}\ensemble{(\mathbf{n},\xi) },
	\end{equation}
	where the set involved in the disjoint union do not resonate with each other. The set $ \F^{\inc}_{\res} $ is defined in a similar way for incoming frequencies, so that the following decomposition holds
	\begin{equation}\label{eq propr def F in 1 2}
		\ensemble{(\mathbf{n},\xi) \in\B_{\Z^m}\times\mathcal{C}_{\inc}(\mathbf{n}) }=\F^{\inc}_{\res}\sqcup\bigsqcup_{(\mathbf{n},\xi) \in(\B_{\Z^m}\times\mathcal{C}_{\inc}(\mathbf{n}))\setminus\F^{\inc}_{\res}}\ensemble{(\mathbf{n},\xi) },
	\end{equation}
	where the set involved in the disjoint union do not resonate with each other. 	
\end{definition}

\begin{remark}
	In Assumption \ref{hypothese type resonance}, the bound \eqref{eq propr minoration pi tilde E R1} a priori applies to couples $ (\mathbf{n},\xi) $ such that the set $ \mathcal{R}_1(\mathbf{n},\xi) $ is nonempty. But since according to Assumption \ref{hypothese type resonance} there is only a finite number of type 2 resonances, we can assume without loss of generality that this bound also applies to couples $ (\mathbf{n},\xi) $ such that $ \mathcal{R}_1(\mathbf{n},\xi) $ is empty but $ \mathcal{R}_2(\mathbf{n},\xi) $ is not, namely to all elements of $ \F^{\inc}_{\res} $. Therefore for all $ (\mathbf{n},\xi) $ in $ \F^{\inc}_{\res} $, the following bound holds
	\begin{equation}\label{eq est gamma n n resonance}
		\left|\Gamma\big((\mathbf{n},\xi),(\mathbf{n},\xi)\big)\right|\leq C|\mathbf{n}|.
	\end{equation}
Note that the previous estimate differs from \eqref{eq est gamma n n} by a linear control and not an algebraic control of degree $ h $.
\end{remark}

Finally the projectors analogous to $ \E $ and $ \tilde{\E^i} $, selecting only the incoming resonant frequencies, are defined, and we verify after that Assumption \ref{hypothese type resonance} for the Euler equations example considered in this paper.

\begin{definition}\label{def proj E res}
	For all formal trigonometric series $ U $ writing
	\begin{equation*}
		U(z,\theta,\psi_d)=\sum_{\mathbf{n}\in\Z^m}\sum_{\xi\in\R}U^{\osc}_{\mathbf{n},\xi}(z)\,e^{i\,\mathbf{n}\cdot\theta}\,e^{i\,\xi\,\psi_d}+\sum_{\mathbf{n}\in\Z^m}U^{\ev}_{\mathbf{n}}(z,\psi_d)\,e^{i\,\mathbf{n}\cdot\theta},
	\end{equation*}
	the series $ \tilde{\E^i}^{\inc}_{\res}\,U $ is defined as
	\begin{equation}\label{eq def E i res in}
		\tilde{\E^i}^{\inc}_{\res}\,U(z,\theta,\psi_d):=\sum_{(\mathbf{n}_0,\xi_0)\in\F^{\inc}_{\res}}\sum_{\lambda\in\Z^*} \tilde{\pi}_{(\lambda\mathbf{n}_0\cdot\boldsymbol{\zeta},\lambda\xi_0)}\, U^{\osc}_{\lambda\mathbf{n}_0,\lambda\xi_0}(z)\,e^{i\,\lambda\mathbf{n}_0\cdot\theta}\,e^{i\,\lambda\xi_0\,\psi_d},
	\end{equation}
	and $ \E^{\inc}_{\res}\,U $ as
	\begin{align}\label{eq def E res in}
		\E^{\inc}_{\res}\,U(z,\theta,\psi_d):=\sum_{(\mathbf{n}_0,\xi_0)\in\F^{\inc}_{\res}}\sum_{\lambda\in\Z^*} \pi_{(\lambda\mathbf{n}_0\cdot\boldsymbol{\zeta},\lambda\xi_0)}\, U^{\osc}_{\lambda\mathbf{n}_0,\lambda\xi_0}(z)\,e^{i\,\lambda\mathbf{n}_0\cdot\theta}\,e^{i\,\lambda\xi_0\,\psi_d}.
	\end{align}
\end{definition}

\begin{example}
	We return to Example \ref{exemple Euler 1} of compressible isentropic Euler equations in dimension 2, for which we check the last assumption of this work, namely Assumption \ref{hypothese type resonance} about the control of resonances. Recall the notations and results of Example \ref{exemple Euler 1} and those after, and notably the analysis of the resonances made in Example \ref{exemple Euler frequences au bord}, and consider a Mach number $ M $ satisfying the previously made assumptions. Is has been shown that if $ M^2 $ is not an algebraic number of degree less than 4 in $ \mathbb{Q}[\delta] $, then the only resonances (except for the self-interactions) occurring are those involving the linear frequency $ \alpha_3(\zeta) $, which are in infinite number, even with collinearity, and between incoming frequencies. It will be shown that there exists a constant $ C_0>0 $ such that these resonances satisfy property \eqref{eq propr res type 1} and such that all frequencies $ \alpha_{3}(\zeta_{p,q}) $ satisfy the lower bound \eqref{eq propr minoration pi tilde E R1}, proving that system \eqref{eq systeme Euler} verifies Assumption \ref{hypothese type resonance}.
	First we look for the coefficients $ \Gamma $ for this type of resonances.
	Consider $ (p,q) $ and $ (r,s) $ in $ \Z^2\privede{0} $. The aim is to determine the coefficient 
	\begin{equation*}
	\Gamma\big((p,q),\xi_3(\zeta_{p,q}),(r,s),\xi_3(\zeta_{r,s})\big)
	\end{equation*}
	relative to the resonance
	\begin{equation*}
	\alpha_3(\zeta_{p,q})+\alpha_3(\zeta_{r,s})=\alpha_3(\zeta_{p+r,q+s}),
	\end{equation*}
	denoted more briefly by $ \Gamma\big((p,q),(r,s)\big) $. Since, for $ \zeta $ in $ \R^2\privede{0} $, the real characteristic frequency $ \alpha_3(\zeta)=(\tau,\eta,\xi_3(\zeta)) $ satisfies $ \tau=\tau_2\big(\eta,\xi_3(\zeta)\big) $, we obtain
	\begin{align*}
	&E\big((p,q),\xi_3(\zeta_{p,q})\big)=E_2\big(\eta_{p,q},\xi_3(\zeta_{p,q})\big),\quad E\big((r,s),\xi_3(\zeta_{r,s})\big)=E_2\big(\eta_{r,s},\xi_3(\zeta_{r,s})\big),\\[5pt] &E\big((p+r,q+s),\xi_3(\zeta_{p+r,q+s})\big)=E_2\big(\eta_{p+r,q+s},\xi_3(\zeta_{p+r,q+s})\big),
	\end{align*}
	where $ E_2(\eta,\xi) $ is the vector of basis \eqref{eq base C^N adaptee espace propres stricte hyp} of $ \C^N $ given in this example by
	\begin{equation*}
	E_2(\eta,\xi):=\inv{\sqrt{\eta^2+\xi^2}}\left(\begin{array}{c}
	0 \\ \xi \\ -\eta 
	\end{array}\right),\quad (\eta,\xi)\in\R^2\privede{0}.
	\end{equation*}
	Thus the associated vector of basis \eqref{eq base C^N tilde adaptee espace propres stricte hyp} is given by
	\begin{equation*} A_2(V_0)^{-1}\,E_2(\eta,\xi)=\inv{(u_0^2-c_0^2)\sqrt{\eta^2+\xi^2}}\left(\begin{array}{c}
	-\eta\,v_0 \\ \xi\,\frac{u_0^2-c_0^2}{u_0} \\ -\eta\,u_0 
	\end{array}\right),\quad (\eta,\xi)\in\R^2\privede{0}.
	\end{equation*}
	Also computing the vectors $ E_1(\eta,\xi) $ and $ E_3(\eta,\xi) $ for $ (\eta,\xi)\in\R^2\privede{0} $, it is determined that the projector $ \tilde{\pi}_{((p+r,q+s)\cdot\boldsymbol{\zeta},\xi_3(\zeta_{p+r,q+s}))} $ occurring in the coefficient $ \Gamma\big((p,q),(r,s)\big) $ is given in this example by $ \tilde{\pi}_2(\eta_{p+r,q+s},\xi_3(\zeta_{p+r,q+s})) $ where, for $ (\eta,\xi) $ in $ \R^2\privede{0} $,
	\begin{equation*}
	\tilde{\pi}_2(\eta,\xi)=\inv{u_0(u_0^2-c_0^2)(\eta^2+\xi^2)}\left(\begin{array}{ccc}
	-u_0\,c_0^2\,\eta^2 & -u_0^2\,v_0\,\eta\,\xi & u_0^2\, v_0\,\eta^2 \\[5pt]
	\eta\,\xi\,c_0^2\,(u_0^2-c_0^2)/v_0 & \xi^2\,u_0\,(u_0^2-c_0^2) & -\eta\,\xi\,u_0\,(u_0^2-c_0^2) \\[5pt]
	-\eta^2\,c_0^2\,u_0^2/v_0 & -\eta\,\xi\,u_0^3 & u_0^3\,\eta^2 
	\end{array}\right).
	\end{equation*}
	Thus the vector $ \tilde{\pi}_2(\eta,\xi)\,E_2(\eta,\xi) $ is given by $ u_0\,A_2(V_0)^{-1}\,E_2(\eta,\xi) $, for $ (\eta,\xi) $ in $ \R^2\privede{0} $. It ensures in particular that Assumption \ref{hypothese type resonance} is verified, since the following uniform lower bound holds
	\begin{align*}
		\left|\tilde{\pi}_{\alpha_{3}(\zeta_{p,q})}E\big((p,q),\xi_3(\zeta_{p,q})\big)\right|&=\left|\tilde{\pi}_2\big(\eta_{p,q},\xi_3(\zeta_{p,q})\big)\,E_2\big(\eta_{p,q},\xi_3(\zeta_{p,q})\big)\right|\\
		&=\left|u_0\,A_2(V_0)^{-1}\,E_2\big(\eta_{p,q},\xi_3(\zeta_{p,q})\big)\right|\geq C\left| E_2\big((p,q),\xi_3(\zeta_{p,q})\big)\right|=C.
	\end{align*}
	Returning to the determination of coefficients $ \Gamma\big((p,q),(r,s)\big) $, by computing differentials $ d\tilde{A}_i(V_0) $, we finally get
	\begin{multline*}
	\tilde{L}_1\big(E_2\big(\eta_{p,q},\xi_3(\zeta_{p,q})\big),\zeta_{r,s}\big)\,E_2\big(\eta_{r,s},\xi_3(\zeta_{r,s})\big)\\
	=\frac{\tau_{p,q}\,\eta_{r,s}-\tau_{r,s}\,\eta_{p,q}}{\sqrt{\eta_{p,q}^2+\tau_{p,q}^2/u_0^2}\sqrt{\eta_{r,s}^2+\tau_{r,s}^2/u_0^2}}\left(\begin{array}{c}
	v_0\,\eta_{r,s}/[u_0\,(u_0^2-c_0^2)]\\[5pt]
	\tau_{r,s}/u_0^3 \\[5pt]
	\eta_{r,s}/(u_0^2-c_0^2)
	\end{array}\right).
	\end{multline*}
	The formula $ \xi_3(\zeta)=-\tau/u_0 $, for $ \zeta=(\tau,\eta) $ in $ \R^2\privede{0} $ has been used here. Then we have
	\begin{multline*}
	\tilde{\pi}_2(\eta_{p+r,q+s},\xi_3(\zeta_{p+r,q+s}))\,\tilde{L}_1\big(E_2\big(\eta_{p,q},\xi_3(\zeta_{p,q})\big),\zeta_{r,s}\big)\,E_2\big(\eta_{r,s},\xi_3(\zeta_{r,s})\big)\\
	=\frac{(\tau_{p,q}\,\eta_{r,s}-\tau_{r,s}\,\eta_{p,q})(\tau_{p+r,q+s}\,\tau_{r,s}+u^2\,\eta_{p+r,q+s}\,\eta_{r,s})}{u_0^2\,(u_0^2-c_0^2)\sqrt{\eta_{p,q}^2+\tau_{p,q}^2/u_0^2}\sqrt{\eta_{r,s}^2+\tau_{r,s}^2/u_0^2}\big(\eta_{p+r,q+s}^2+\tau_{p+r,q+s}^2/u_0^2\big)}\left(\begin{array}{c}
	\eta_{p+r,q+s}\,v_0/u_0\\[5pt]
	\tau_{p+r,q+s}\,(u_0^2-c_0^2)/u_0^3 \\[5pt]
	\eta_{p+r,q+s}
	\end{array}\right).
	\end{multline*}
	We deduce from the relation $ \tilde{\pi}_2(\eta,\xi)\,E_2(\eta,\xi) = u_0\,A_2(V_0)^{-1}\,E_2(\eta,\xi) $ the following formula for the pursued coefficient $ \Gamma\big((p,q),(r,s)\big) $:
	\begin{equation}\label{eq ex Euler formule gamma}
	\Gamma\big((p,q),(r,s)\big)\\
	=-\frac{(\tau_{p,q}\,\eta_{r,s}-\tau_{r,s}\,\eta_{p,q})(\tau_{p+r,q+s}\,\tau_{r,s}+u_0^2\,\eta_{p+r,q+s}\,\eta_{r,s})}{u_0^4\,\sqrt{\eta_{p,q}^2+\tau_{p,q}^2/u_0^2}\sqrt{\eta_{r,s}^2+\tau_{r,s}^2/u_0^2}\sqrt{\eta_{p+r,q+s}^2+\tau_{p+r,q+s}^2/u_0^2}}.
	\end{equation}
	
	We now check estimate \eqref{eq propr res type 1} with these coefficients. Let $ (p,q) $, $ (r,s) $ and $ (t,w) $ be  in $ \Z^2\privede{0} $ such that $ (p,q)+ (r,s) + (t,w)=(0,0) $. One can verify, using the formulas $ \tau_{p,q}=c_0\,(p+\delta q)\,\eta_0 $, $ \eta_{p,q}=(p+q)\,\eta_0 $, $ p+r=-t $ and $ q+s=-w $, that we get
	\begin{multline*}
		\Gamma\big((p,q),(r,s)\big)\\
		=\frac{(1-\delta)(ps-qr)\big((t+\delta w)\,(r+\delta s)+M^2\,(t+w)\,(r+s)\big)}{\sqrt{(p+q)^2+(p+\delta q)^2/M^2}\sqrt{(r+s)^2+(r+\delta s)^2/M^2}\sqrt{(t+w)^2+(t+\delta w)^2/M^2}}\frac{\eta_0\,c_0^3}{u_0^4}.
	\end{multline*}
	Since $ ps-qr=-(pw-qt) $, we finally have
	\begin{equation*}
	\Gamma\big((p,q),(r,s)\big)+\Gamma\big((p,q),(t,w)\big)=0,
	\end{equation*}
	so estimate \eqref{eq propr res type 1} is in particular trivially satisfied and therefore so is Assumption \ref{hypothese type resonance}.
	Note that in this example, the set $ \F^{\out}_{\res} $ is empty and the set $ \F^{\inc}_{\res} $ is given by
	\begin{equation*}
		\F^{\inc}_{\res}=\ensemble{\alpha_{3}(\zeta)\,\middle|\,\zeta\in\F_b}.
	\end{equation*}
	It concludes the analysis in this paper of the example of compressible isentropic Euler equations in dimension 2, which, with the chosen parameters, satisfies all assumptions of this work.
\end{example}

\begin{remark}
Note that the coefficient $ \Gamma $ determined above corresponds to the one in \cite[(11.5.9)]{Rauch2012Hyperbolic}.
\end{remark}

In the following subsection, some rigorous results on projectors $ \E $ and $ \tilde{\E^i} $ will be proved, using the small divisors Assumption \ref{hypothese petits diviseurs 1}.

\subsection{Rigorous definition of projectors $ \E $, $ \tilde{\E^i} $, $ \E^{\inc}_{\res} $ and $ \tilde{\E^i}^{\inc}_{\res} $}

This part follows \cite[Section 6.2]{JolyMetivierRauch1995Coherent}. Before considering the projectors $ \E $, $ \tilde{\E^i} $, $ \E^{\inc}_{\res} $ and $ \tilde{\E^i}^{\inc}_{\res} $, we state the following controls over spectral projectors, that will be used to rigorously define the projectors $ \E $ and $ \E^{\inc}_{\res} $. The proof of these controls uses notations and results from the one of Proposition \ref{prop proj bornes}, and is therefore postponed after it, in Appendix \ref{appendix preuve}.

\begin{proposition}\label{prop controle exp t A Pi }
	Under Assumption \ref{hypothese petits diviseurs 1}, there exists a constant $ c_1>0 $ and a real number $ b_1 $ such that, for all $ \zeta $ in $ \F_b\privede{0} $, the following estimates hold
	\begin{subequations}
	\begin{align}
	&\label{eq controle exp t A Pi - - t positif} \left|e^{t\mathcal{A}(\zeta)} \, \Pi^{e}_-(\zeta)\right|\leq c_1\,e^{-c_1\,t\,|\zeta|^{-b_1}}\leq c_1, &\forall t\geq 0,\\[5pt]
	&\label{eq controle exp t A Pi C^N - t positif} \left|e^{t\mathcal{A}(\zeta)}\,\Pi^{e}_{\C^N}(\zeta)\right|\leq c_1\,|\zeta|^{b_1}\,e^{-c_1\,t\,|\zeta|^{-b_1}}, &\forall t\geq 0,\\[5pt]
	&\label{eq controle exp t A Pi C^N - t negatif} \left|e^{t\mathcal{A}(\zeta)}\big(I-\Pi^{e}_{\C^N}(\zeta)\big)\right| \leq c_1\,|\zeta|^{b_1}, &\forall t\leq 0.
	\end{align}
	\end{subequations}
\end{proposition}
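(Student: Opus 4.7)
The plan is to combine the block structure of Proposition \ref{prop struct bloc} with the small divisors Assumption \ref{hypothese petits diviseurs 1} to quantify how close the non-imaginary eigenvalues of $\mathcal{A}(\zeta)$ come to the imaginary axis when $\zeta\in\F_b\privede{0}$ approaches the glancing set $\mathcal{G}$. Since $\mathcal{A}$ is homogeneous of degree $1$ in $\zeta$ and the projectors $\Pi^{e}_{-}$, $\Pi^{e}_{\C^N}$ are homogeneous of degree $0$, I first normalize to the unit sphere by setting $\zeta=r\,\zeta'$ with $r=|\zeta|$ and $\zeta'\in\Sigma_0$, using $\dist(\zeta,\mathcal{G})=r\,\dist(\zeta',\mathcal{G})$ by conicity of $\mathcal{G}$. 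On any region of $\Sigma_0$ with $\dist(\zeta',\mathcal{G})\geq\delta_0>0$, a compactness argument combined with Proposition \ref{prop proj bornes} already gives uniform positive lower bounds on the absolute real parts of the elliptic eigenvalues of $\mathcal{A}(\zeta')$ as well as uniform bounds on the associated spectral projectors.

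The core work is then local, near a glancing point $\underline{\zeta'}\in\mathcal{G}\cap\Sigma_0$, via the block decomposition of Proposition \ref{prop struct bloc}. As used in the proof of Proposition \ref{prop proj bornes}, Assumption \ref{hypothese mult vp} forces each type iv) block to be of size $n_j=2$, so a direct perturbative computation on a $2\times2$ block of the form $\mathcal{A}_j(\underline{\zeta'})=\bigl(\begin{smallmatrix} i\xi_j & i\\ 0 & i\xi_j\end{smallmatrix}\bigr)$ applies: its discriminant $\Delta_j(\zeta')$ is smooth, vanishes to first order on the local glancing surface, and therefore satisfies $|\Delta_j(\zeta')|\asymp \dist(\zeta',\mathcal{G})$ in a neighborhood of $\underline{\zeta'}$. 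In the mixed region the two eigenvalues of $\mathcal{A}_j(\zeta')$ are complex conjugates with $|\mathrm{Re}|\gtrsim\sqrt{\dist(\zeta',\mathcal{G})}$, and the two eigenvectors become collinear at $\underline{\zeta'}$, so the spectral projectors of $\C^N$ onto each eigenline have norm $\lesssim 1/\sqrt{\dist(\zeta',\mathcal{G})}$. A finite cover of $\Sigma_0$ makes all these constants uniform in $\zeta'$.

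Rescaling back and invoking Assumption \ref{hypothese petits diviseurs 1} in the form $\dist(\zeta,\mathcal{G})\geq c|\zeta|^{-a_1}$ gives real parts of the elliptic eigenvalues of $\mathcal{A}(\zeta)$ bounded below by $c\sqrt{|\zeta|\cdot\dist(\zeta,\mathcal{G})}\gtrsim|\zeta|^{(1-a_1)/2}$ and $\|\Pi^{e}_{\C^N}(\zeta)\|\lesssim\sqrt{|\zeta|/\dist(\zeta,\mathcal{G})}\lesssim|\zeta|^{(1+a_1)/2}$. Choosing any $b_1\geq(1+a_1)/2$ and splitting $e^{t\mathcal{A}(\zeta)}$ along the generalized eigenspaces of $\mathcal{A}(\zeta)$ yields the three claimed bounds: on the elliptic stable part one has $\|e^{t\mathcal{A}(\zeta)}\|\lesssim e^{-ct|\zeta|^{-b_1}}$ for $t\geq 0$, on the elliptic unstable part one has symmetric decay for $t\leq 0$, and on the hyperbolic part $e^{t\mathcal{A}(\zeta)}$ is an oscillation of unit norm. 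The polynomial prefactor $|\zeta|^{b_1}$ enters \eqref{eq controle exp t A Pi C^N - t positif} and \eqref{eq controle exp t A Pi C^N - t negatif} from the norm of $\Pi^{e}_{\C^N}$, while it is absent from \eqref{eq controle exp t A Pi - - t positif} because $\Pi^{e}_{-}$ restricted to $E_{-}(\zeta)$ is uniformly bounded by Proposition \ref{prop proj bornes}, so only the exponential decay of $e^{t\mathcal{A}(\zeta)}$ on $E^{e}_{-}(\zeta)$ enters that estimate. The main technical obstacle is the uniform quantitative perturbation analysis of the $2\times 2$ glancing block and of its spectral projectors, which follows the same implicit-function/perturbation lines as the proof of Proposition \ref{prop proj bornes} and is then globalized by a compactness and partition-of-unity argument over $\mathcal{G}\cap\Sigma_0$.
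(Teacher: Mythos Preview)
Your proposal is correct and follows essentially the same route as the paper's proof: both reduce to $\Sigma_0$ by homogeneity and compactness, analyze the $2\times2$ glancing blocks from the block structure (with $|\Delta_j(\zeta')|\asymp\dist(\zeta',\mathcal{G})$ giving the $\sqrt{\dist}$ lower bound on real parts and the $1/\sqrt{\dist}$ control on the $\C^N$-projectors), and then rescale using Assumption~\ref{hypothese petits diviseurs 1}. The paper carries out the projector estimate by explicitly computing $\det P(\zeta)=-(\xi_-(\zeta)-\xi_+(\zeta))$ for the change-of-basis matrix diagonalizing the glancing block, which is exactly the quantitative form of your ``eigenvectors become collinear'' observation.
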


We are now in position to rigorously define projectors $ \E $ and $ \tilde{\E^i} $. The result concerning the oscillating part comes from \cite[Proposition 6.2.1]{JolyMetivierRauch1995Coherent} and \cite[Proposition 2.2]{CoulombelGuesWilliams2011Resonant}, but the proof is recalled here. The result concerning the evanescent part is simpler, and reduces to prove that some series converges.

\begin{lemma}[{\cite[Proposition 6.2.1]{JolyMetivierRauch1995Coherent}}]
	For all $ T>0 $ and $ s\geq 0 $, the projectors $ \tilde{\E^i} $ and $ \E $ defined by \eqref{eq def E i} and \eqref{eq def E} on the space of trigonometric polynomials each admit a unique extension from the space $ \P^{\osc}_{s,T} $ to itself. Moreover, for $ T_0>0 $, their norm is uniformly bounded with respect to $ T $ in $ ]0,T_0] $.
	
	On an other hand, for $ T>0 $ and $ s\geq 0 $, the projector $ \E $ is well-defined from the space $ \P^{\ev}_{s+\lceil b_1 \rceil,T} $ to the space $ \P^{\ev}_{s,T} $. Furthermore it is uniformly bounded with respect to $ s $ and $ T $. Recall that $ b_1 $ refers to the real number of Proposition \ref{prop controle exp t A Pi }. 
	
	Finally, for $ T>0 $ and $ s\geq 0 $, the projectors $ \tilde{\E^i}^{\inc}_{\res} $  and $ \E^{\inc}_{\res} $ defined as \eqref{eq def E i res in} and \eqref{eq def E res in} on the space of trigonometric polynomials each admit a unique extension from the space $ \P^{\osc}_{s,T} $ to itself. Moreover, for $ T_0>0 $, their norm is uniformly bounded with respect to $ T $ in $ ]0,T_0] $.
\end{lemma}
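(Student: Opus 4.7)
The plan is to mimic \cite[Proposition 6.2.1]{JolyMetivierRauch1995Coherent}: prove the relevant continuity estimate on the dense subspace of trigonometric polynomials in $\P^{\osc}_{s,T}$, then extend by continuity; for the evanescent case, verify directly the convergence of the defining series in $\mathcal{E}_{s,T}$.

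For the oscillating projectors $\tilde{\E^i}$ and $\E$, the essential observation is that for every $\mathbf{n}\in\Z^m$, strict hyperbolicity (Assumption \ref{hypothese stricte hyp}) forces the set $\mathcal{C}(\mathbf{n})$ of real characteristic lifts of $\mathbf{n}\cdot\boldsymbol{\zeta}$ to be finite, of cardinality at most $N$. Given a trigonometric polynomial $U=\sum_{\mathbf{n},\xi}U_{\mathbf{n},\xi}(z)\,e^{i\mathbf{n}\cdot\theta+i\xi\psi_d}$ with finitely many nonzero terms, the action of $\tilde{\E^i}$ and of the oscillating part of $\E$ on each tangential Fourier mode amounts to applying a sum of at most $N$ bounded projections. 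The coefficients $U_{\mathbf{n},\xi}$ can be extracted from $V_{\mathbf{n}}(z,\psi_d):=\sum_\xi U_{\mathbf{n},\xi}(z)\,e^{i\xi\psi_d}$ via the almost-periodic mean-value formula \cite[Theorem 3.4]{Corduneanu2009Almost}, which yields the Bessel--Parseval inequality $\sum_\xi \|U_{\mathbf{n},\xi}\|_{H^s(\omega_T)}^2 \leq \sup_{\psi_d}\|V_{\mathbf{n}}(\cdot,\psi_d)\|_{H^s(\omega_T)}^2$. Combined with the uniform bound on $\pi_\alpha$ and $\tilde{\pi}_\alpha$ from Remark \ref{remarque proj pi et Q bornes}, this would deliver $\|\tilde{\E^i}\,U\|_{\mathcal{E}_{s,T}}+\|\E\,U\|_{\mathcal{E}_{s,T}} \leq C\,\|U\|_{\mathcal{E}_{s,T}}$, with $C$ depending only on the universal projector bound and on $N$, hence uniform for $T\in(0,T_0]$. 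Density of trigonometric polynomials in $\P^{\osc}_{s,T}$ then furnishes the unique bounded extensions.

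For the evanescent projector $\E$ acting on $U^{\ev}\in\P^{\ev}_{s+\lceil b_1\rceil,T}$, the defining formula reads
\[
\E\,U^{\ev}(z,\theta,\psi_d)=\sum_{\mathbf{n}\in\Z^m\setminus\{0\}}e^{\psi_d\,\mathcal{A}(\mathbf{n}\cdot\boldsymbol{\zeta})}\,\Pi^{e}_{\C^N}(\mathbf{n}\cdot\boldsymbol{\zeta})\,U^{\ev}_{\mathbf{n}}(z,0)\,e^{i\mathbf{n}\cdot\theta}.
\]
The crux is estimate \eqref{eq controle exp t A Pi C^N - t positif} of Proposition \ref{prop controle exp t A Pi },
\[
\bigl\|e^{\psi_d\,\mathcal{A}(\mathbf{n}\cdot\boldsymbol{\zeta})}\,\Pi^{e}_{\C^N}(\mathbf{n}\cdot\boldsymbol{\zeta})\bigr\|\leq c_1\,|\mathbf{n}\cdot\boldsymbol{\zeta}|^{b_1}\,e^{-c_1\,\psi_d\,|\mathbf{n}\cdot\boldsymbol{\zeta}|^{-b_1}},\qquad \psi_d\geq 0,
\]
whose proof crucially invokes the small divisors Assumption \ref{hypothese petits diviseurs 1}. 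Since $|\mathbf{n}\cdot\boldsymbol{\zeta}|$ is bounded by a multiple of $|\mathbf{n}|$, the polynomial growth $|\mathbf{n}|^{b_1}$ is absorbed into the $H^s$-weight $(1+|\mathbf{n}|^2)^{s/2}$ at the cost of exactly $\lceil b_1\rceil$ tangential derivatives. Combining this with the mode-by-mode trace bound $(1+|\mathbf{n}|^2)^{s/2}\|U^{\ev}_{\mathbf{n}}(\cdot,0)\|_{H^s(\omega_T)}\leq \|U^{\ev}\|_{\mathcal{E}_{s,T}}$ produces absolute convergence of the series and the estimate $\|\E\,U^{\ev}\|_{\mathcal{E}_{s,T}}\leq C\,\|U^{\ev}\|_{\mathcal{E}_{s+\lceil b_1\rceil,T}}$, uniform in $s$ and $T$. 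To check that $\E\,U^{\ev}$ actually lies in $\P^{\ev}_{s,T}$ rather than merely in $\mathcal{E}_{s,T}$, the exponential factor $e^{-c_1\,\psi_d\,|\mathbf{n}|^{-b_1}}$ combined with dominated convergence in the series yields $\|\E\,U^{\ev}(\cdot,\psi_d)\|_{H^s(\omega_T\times\T^m)}\to 0$ as $\psi_d\to\infty$.

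For the resonant projectors $\tilde{\E^i}^{\inc}_{\res}$ and $\E^{\inc}_{\res}$, whose formulas \eqref{eq def E i res in}--\eqref{eq def E res in} differ from those of $\tilde{\E^i}$ and $\E$ only by restricting the summation to the subset of incoming resonant frequencies $\F^{\inc}_{\res}$ and their $\Z^*$-multiples, the argument developed for the oscillating part carries over without modification and gives analogous uniform bounds. The most delicate point in the whole proof is the evanescent estimate, because the blow-up $|\mathbf{n}|^{b_1}$ of $\Pi^{e}_{\C^N}$ near the glancing set can only be tamed by invoking Proposition \ref{prop controle exp t A Pi }, itself depending on the small divisors Assumption \ref{hypothese petits diviseurs 1}; this is precisely the reason for the asymmetric regularity requirement on $\E$ in the $\P^{\ev}$ case.
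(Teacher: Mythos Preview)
Your proposal is correct and follows the same strategy as the paper: for the oscillating part, the uniform bound on $\pi_\alpha,\tilde\pi_\alpha$ together with $\#\mathcal{C}(\mathbf n)\leq N$ and the almost-periodic mean-value formula yields a continuity estimate on trigonometric polynomials, which then extends by density; for the evanescent part, estimate \eqref{eq controle exp t A Pi C^N - t positif} of Proposition~\ref{prop controle exp t A Pi } absorbs the growth $|\mathbf n|^{b_1}$ into $\lceil b_1\rceil$ extra derivatives, and the exponential decay gives the $\psi_d\to\infty$ limit. The resonant case is identical, as you say.

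One point deserves care. Your Bessel--Parseval inequality $\sum_\xi\|U_{\mathbf n,\xi}\|^2\leq\sup_{\psi_d}\|V_{\mathbf n}\|^2$, applied mode by mode and then summed over $\mathbf n$, produces $\sum_{\mathbf n}\sup_{\psi_d}\|V_{\mathbf n}\|^2$ on the right, which does not in general control $\sup_{\psi_d}\sum_{\mathbf n}\|V_{\mathbf n}\|^2=\sup_{\psi_d}\|U\|^2_{L^2(\omega_T\times\T^m)}$. The paper handles this by working pointwise in $z$, keeping the mean-value $\lim_R\tfrac{1}{R}\int_0^R$ rather than bounding by the supremum, and then invoking Fatou's lemma to push $\sum_{\mathbf n}$ and $\int_{z'}$ inside the $\liminf$ before taking the supremum in $\psi_d$. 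Equivalently, use the Parseval \emph{equality} $\sum_\xi\|U_{\mathbf n,\xi}\|^2=M_{\psi_d}(\|V_{\mathbf n}\|^2)$, sum over $\mathbf n$ (legitimate for trigonometric polynomials), and only then bound the mean by the supremum. Your dominated-convergence argument for the evanescent $\psi_d\to\infty$ limit is exactly the paper's $\epsilon$-splitting, repackaged.
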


\begin{proof}
	First the oscillating case is investigated. We consider $ U^{\osc} $ a trigonometric polynomial writing
	\begin{equation*}
	U^{\osc}(z,\theta,\psi_d)=\sum_{\mathbf{n}\in\Z^m}\sum_{\xi\in\R}U^{\osc}_{\mathbf{n},\xi}(z)\,e^{i\,\mathbf{n}\cdot\theta}\,e^{i\,\xi\,\psi_d},
	\end{equation*}
	where each sum in $ \xi $ is finite, and we denote, for $ \mathbf{n}\in\Z^m $,
	\begin{equation}\label{eq proj E forme U n}
	U^{\osc}_{\mathbf{n}}(z,\psi_d):=\sum_{\xi}U^{\osc}_{\mathbf{n},\xi}(z)\,e^{i\,\xi\,\psi_d}.
	\end{equation}
	According to formula \eqref{eq def E} for the projector $ \E $ and since the projectors $ \pi_{\alpha} $ are zero for every noncharacteristic frequency $ \alpha $, we obtain, for $ \mathbf{n}\in\Z^m $,
	\begin{equation*}
	\E\,U^{\osc}_{\mathbf{n}}(z,\psi_d)=\sum_{\xi\in\mathcal{C}(\mathbf{n})}\pi_{(\mathbf{n}\cdot\boldsymbol{\zeta},\xi)}\,U^{\osc}_{\mathbf{n},\xi}(z)\,e^{i\,\xi\,\psi_d},
	\end{equation*} 
	so that, according to Remark \ref{remarque proj pi et Q bornes} ensuring that the projectors $ \pi_{\alpha} $ are uniformly bounded,
	\begin{equation*}
	\left|\E\,U^{\osc}_{\mathbf{n}}(z,\psi_d)\right|^2\leq C\,N\sum_{\xi\in\mathcal{C}(\mathbf{n})}\left|U^{\osc}_{\mathbf{n},\xi}(z)\right|^2,
	\end{equation*}
	using the fact that the cardinality of $ \mathcal{C}(\mathbf{n}) $ is at most $ N $.
	On an other hand, according to \eqref{eq proj E forme U n}, for $ \mathbf{n} $ in $ \Z^m $ and $ \xi $ in $ \R $, we have
	\begin{equation*}
	U^{\osc}_{\mathbf{n},\xi}(z)=\lim_{R\rightarrow +\infty}\inv{R}\int_0^R U^{\osc}_{\mathbf{n}}(z,\psi_d)\,e^{-i\,\xi\,\psi_d}\,d\psi_d,
	\end{equation*}
	so that, using Cauchy-Schwarz inequality, 
	\begin{equation*}
	\left|\E\,U^{\osc}_{\mathbf{n}}(z,\psi_d)\right|^2\leq C\,N^2\,\lim_{R\rightarrow +\infty}\inv{R}\int_0^R\left| U^{\osc}_{\mathbf{n}}(z,\psi_d)\right|^2\,d\psi_d.
	\end{equation*}
	Then Fatou's lemma is applied to the sum with respect to $ \mathbf{n} $ in $ \Z^m $ and the integration with respect to $ z' $ in $ \omega_T $ to get
	\begin{align*}
	\sum_{\mathbf{n}\in\Z^m}\norme{\E\,U^{\osc}_{\mathbf{n}}(.,x_d,\psi_d)}_{L^2(\omega_T)}^2&\leq C\,N^2\,\liminf_{R\rightarrow +\infty}\inv{R}\int_0^R \sum_{\mathbf{n}\in\Z^m}\norme{U^{\osc}_{\mathbf{n}}(.,x_d,\psi_d)}_{L^2(\omega_T)}^2\,d\psi_d,
	\intertext{thus}
	\norme{\E\,U(.,x_d,.,\psi_d)}^2_{L^2(\omega_T\times\T^m)}&\leq C\,N^2\, \sup_{\psi_d>0} \norme{U^{\osc}(.,x_d,.,\psi_d)}_{L^2(\omega_T\times\T^m)}^2,
	\end{align*}
	that is to say
	\begin{equation*}
	\norme{\E\,U}_{\mathcal{E}_{0,T}}\leq \sqrt{C}\,N\norme{U}_{\mathcal{E}_{0,T}}.
	\end{equation*}
	The projector $ \E $ thus admits a uniformly bounded unique extension from $ \P^{\osc}_{0,T} $ to $ \P^{\osc}_{0,T} $. The result for the space $ \P^{\osc}_{s,T} $ for $ s\geq 1 $ is obtained by observing that the projector $ \E $ commutes with the partial derivatives with respect to $ z' $ and $ \theta $. The same argument applies to the projector $ \tilde{\E^i} $, which concludes the proof relative to the oscillating part.
	
	Concerning the evanescent part of $ \E $, it must be proved that if $ U^{\ev} $ writing
	\begin{equation*}
	U^{\ev}(z,\theta,\psi_d)=\sum_{\mathbf{n}\in\Z^m}U^{\ev}_{\mathbf{n}}(z,\psi_d)\,e^{i\,\mathbf{n}\cdot\theta}
	\end{equation*}
	belongs to $ \P^{\ev}_{s+\lceil b_1 \rceil,T} $, then $ \E\,U^{\ev} $ is in $ \P_{s,T}^{\ev} $. By definition of  the norm of $ \mathcal{E}_{s,T} $ and according to the Parseval's identity, we obtain
	\begin{align*}
	\norme{\E\,U^{\ev}}_{\mathcal{E}_{s,T}}^2&=\sup_{x_d>0,\psi_d>0}\sum_{\mathbf{n}\in\Z^m\privede{0}}\sum_{l=0}^s(1+|\mathbf{n}|^2)^{s-l}\norme{e^{\psi_d\mathcal{A}(\mathbf{n}\cdot\boldsymbol{\zeta})}\,\Pi^{e}_{\C^N}(\mathbf{n}\cdot\boldsymbol{\zeta})\,U^{\ev}_{\mathbf{n}}(.,0)}^2_{H^l_+(\omega_T)}\\
	&\leq \sup_{x_d>0,\psi_d>0}\sum_{\mathbf{n}\in\Z^m\privede{0}}\sum_{l=0}^s(1+|\mathbf{n}|^2)^{s-l}\,c_1^2\,|\mathbf{n}\cdot\boldsymbol{\zeta}|^{2\,b_1}\norme{U^{\ev}_{\mathbf{n}}(.,0)}^2_{H^l_+(\omega_T)},
	\end{align*}
	according to estimate \eqref{eq controle exp t A Pi C^N - t positif} of Proposition \ref{prop controle exp t A Pi } and recalling the notations of this result. It leads to the following estimate
	\begin{equation*}
	\norme{\E\,U^{\ev}}_{\mathcal{E}_{s,T}}\leq C \norme{U^{\ev}}_{\mathcal{E}_{s+\lceil b_1 \rceil,T}}.
	\end{equation*}
We investigate now the convergence towards zero in $ H^s(\omega_T\times\T^m) $ of the profile $ \E\,U^{\ev}(x_d,\psi_d) $, for every fixed $ x_d\geq 0 $. Consider $ \epsilon>0 $. Using the convergence of the following sum,
\begin{equation*}
	\sum_{\mathbf{n}\in\Z^m\privede{0}}\sum_{l=0}^{s}(1+|\mathbf{n}|^2)^{s-l}\,|\mathbf{n}\cdot\boldsymbol{\zeta}|^{2\,b_1}\norme{U^{\ev}_{\mathbf{n}}(.,x_d,0)}^2_{H^l_+(\omega_T)},
\end{equation*}
which is bounded by $ C\norme{U^{\ev}}_{\mathcal{E}_{s+\lceil b_1 \rceil,T}}^2 $, there exists $ M>0 $ such that
\begin{equation*}
	\sum_{|\mathbf{n}|>M}\sum_{l=0}^{s}(1+|\mathbf{n}|^2)^{s-l}\,|\mathbf{n}\cdot\boldsymbol{\zeta}|^{2\,b_1}\norme{U^{\ev}_{\mathbf{n}}(.,x_d,0)}^2_{H^l_+(\omega_T)}\leq \epsilon.
\end{equation*}
 Then we have
\begin{align*}
	\norme{\E\,U^{\ev}(x_d,\psi_d)}_{H^s(\omega_T\times\T^m)}^2&=\sum_{0<|\mathbf{n}|\leq M}\sum_{l=0}^s(1+|\mathbf{n}|^2)^{s-l}\norme{e^{\psi_d\mathcal{A}(\mathbf{n}\cdot\boldsymbol{\zeta})}\,\Pi^{e}_{\C^N}(\mathbf{n}\cdot\boldsymbol{\zeta})\,U^{\ev}_{\mathbf{n}}(.,0)}^2_{H^l_+(\omega_T)}\\
	&\quad +\sum_{|\mathbf{n}|>M}\sum_{l=0}^s(1+|\mathbf{n}|^2)^{s-l}\norme{e^{\psi_d\mathcal{A}(\mathbf{n}\cdot\boldsymbol{\zeta})}\,\Pi^{e}_{\C^N}(\mathbf{n}\cdot\boldsymbol{\zeta})\,U^{\ev}_{\mathbf{n}}(.,0)}^2_{H^l_+(\omega_T)}.
\end{align*}
According to estimate \eqref{eq controle exp t A Pi C^N - t positif} of Proposition \ref{prop controle exp t A Pi } and by construction of $ M $, the second sum of the right-hand side is less or equal to $ c_1^2\,\epsilon $. For the first one, according to the same estimate \eqref{eq controle exp t A Pi C^N - t positif}, we have
\begin{multline*}
	\sum_{0<|\mathbf{n}|\leq M}\sum_{l=0}^s(1+|\mathbf{n}|^2)^{s-l}\norme{e^{\psi_d\mathcal{A}(\mathbf{n}\cdot\boldsymbol{\zeta})}\,\Pi^{e}_{\C^N}(\mathbf{n}\cdot\boldsymbol{\zeta})\,U^{\ev}_{\mathbf{n}}(.,0)}^2_{H^l_+(\omega_T)}\\
	\leq \sum_{0<|\mathbf{n}|\leq M}\sum_{l=0}^sc_1^2\,(1+|\mathbf{n}|^2)^{s-l}\,|\mathbf{n}\cdot\boldsymbol{\zeta}|^{2\,b_1}\,e^{-2\,c_1\,\psi_d\,|\mathbf{n}\cdot\boldsymbol{\zeta}|^{-b_1}}\norme{U^{\ev}_{\mathbf{n}}(.,0)}^2_{H^l_+(\omega_T)}.
\end{multline*}
The right-hand side is a finite sum of functions of $ \psi_d $ converging to zero at infinity, so there exists $ B>0 $ such that for $ |\psi_d|>B $, the right-hand side is less or equal to $ \epsilon $. We get finally
\begin{equation*}
	\norme{\E\,U^{\ev}(x_d,\psi_d)}_{H^s(\omega_T\times\T^m)}^2\leq (1+c_1^2)\,\epsilon,
\end{equation*}
for all $ \psi_d $ such that $ |\psi_d|>B $, and the aimed convergence follows.
	The profile $ \E\,U^{\ev} $ thus satisfies the condition of Definition \ref{def profils ev} of the evanescent profiles of $ \P^{\ev}_{s,T} $, concluding the proof.
	
	Concerning the projectors $ \E^{\inc}_{\res} $ and $ \tilde{\E^i}^{\inc}_{\res} $, the proof is analogous to the one for the oscillating part of $ \E $ and $ \tilde{\E} $.
\end{proof}

Now that the projectors $ \E $, $ \tilde{\E^i} $, $ \E^{\inc}_{\res} $ and $ \tilde{\E^i}^{\inc}_{\res} $ are well-defined, it can be proved that the kernel in $ \P_{s,T} $ of the fast operator $ \mathcal{L}(\partial_{\theta},\partial_{\psi_d}) $ is actually given by the range of the projector $ \E $.

\begin{definition}
	For $ s\geq 0 $ and $ T>0 $, we denote by $ \N_{s,T} $ the range in $ \P_{s,T} $ of $ \P_{s+\lceil b_1 \rceil,T} $ projected by $ \E $. We also denote $ \N^{\osc}_{s,T}:=\N_{s,T}\cap\P_{s,T}^{\osc} $ and $ \N^{\ev}_{s,T}:=\N_{s,T}\cap\P_{s,T}^{\ev} $.
\end{definition}

\begin{lemma}[{\cite[Lemma 6.2.3.]{JolyMetivierRauch1995Coherent}}]
	The space $ \N_{s,T} $ is the kernel in $ \P_{s,T} $ of the operator $ \mathcal{L}(\partial_{\theta},\partial_{\psi_d}) $.
\end{lemma}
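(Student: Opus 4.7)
The plan is to prove the asserted identity by double inclusion, working mode by mode on the Fourier decompositions with respect to $(\theta,\psi_d)$ that are built into the profiles of $\P_{s,T}$.

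For the forward inclusion $\N_{s,T}\subseteq\ker\mathcal{L}(\partial_\theta,\partial_{\psi_d})$, I would take $V\in\P_{s+\lceil b_1\rceil,T}$ and verify $\mathcal{L}(\E V)=0$ by direct computation on each Fourier coefficient. The oscillating coefficient of $\E V$ indexed by $(\mathbf{n},\xi)$ is $\pi_{(\mathbf{n}\cdot\boldsymbol{\zeta},\xi)}V^{\osc}_{\mathbf{n},\xi}$, and applying $\mathcal{L}$ contributes the factor $i\tilde L(0,(\mathbf{n}\cdot\boldsymbol{\zeta},\xi))$, which annihilates $\Im\pi_{(\mathbf{n}\cdot\boldsymbol{\zeta},\xi)}$ by the identity \eqref{eq freq ker L ima pi}. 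The evanescent coefficient indexed by $\mathbf{n}$ is $e^{\psi_d\mathcal{A}(\mathbf{n}\cdot\boldsymbol{\zeta})}\Pi^e_{\C^N}(\mathbf{n}\cdot\boldsymbol{\zeta})V^{\ev}_{\mathbf{n}}(\cdot,0)$; since $\mathcal{L}$ acts on a mode of fixed $\mathbf{n}$ as $\partial_{\psi_d}-\mathcal{A}(\mathbf{n}\cdot\boldsymbol{\zeta})$ and $\partial_{\psi_d}e^{\psi_d\mathcal{A}}=\mathcal{A}\,e^{\psi_d\mathcal{A}}$, this factor vanishes identically.

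For the reverse inclusion, let $U\in\P_{s,T}$ with $\mathcal{L}U=0$, and decompose $U=U^{\osc}+U^{\ev}$ using the direct-sum structure of Lemma \ref{lemme propr algebre}. On the oscillating side, $\mathcal{L}U=0$ reads $\tilde L(0,(\mathbf{n}\cdot\boldsymbol{\zeta},\xi))U^{\osc}_{\mathbf{n},\xi}=0$ on each mode, so $U^{\osc}_{\mathbf{n},\xi}\in\ker\tilde L(0,(\mathbf{n}\cdot\boldsymbol{\zeta},\xi))=\Im\pi_{(\mathbf{n}\cdot\boldsymbol{\zeta},\xi)}$ by \eqref{eq freq ker L ima pi}; consequently $\pi_{(\mathbf{n}\cdot\boldsymbol{\zeta},\xi)}U^{\osc}_{\mathbf{n},\xi}=U^{\osc}_{\mathbf{n},\xi}$ and summation yields $\E U^{\osc}=U^{\osc}$. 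On the evanescent side, $\mathcal{L}U=0$ is the ODE $\partial_{\psi_d}U^{\ev}_{\mathbf{n}}=\mathcal{A}(\mathbf{n}\cdot\boldsymbol{\zeta})U^{\ev}_{\mathbf{n}}$, integrating to $U^{\ev}_{\mathbf{n}}(z,\psi_d)=e^{\psi_d\mathcal{A}(\mathbf{n}\cdot\boldsymbol{\zeta})}U^{\ev}_{\mathbf{n}}(z,0)$. The evanescence requirement of Definition \ref{def profils ev} (convergence to zero in $H^s(\omega_T\times\T^m)$ as $\psi_d\to\infty$), combined with the spectral description of Proposition \ref{prop decomp E_-}, forces every non-elliptic-stable component of $U^{\ev}_{\mathbf{n}}(z,0)$ to vanish: the components along subspaces with purely imaginary eigenvalues produce bounded but nondecaying oscillations, and the components along the unstable elliptic subspace grow exponentially, each contradicting the decay. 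Hence $U^{\ev}_{\mathbf{n}}(z,0)=\Pi^e_{\C^N}(\mathbf{n}\cdot\boldsymbol{\zeta})U^{\ev}_{\mathbf{n}}(z,0)$, giving $\E U^{\ev}=U^{\ev}$.

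Combined, the two parts furnish $\E U=U$, so it remains to exhibit some $V\in\P_{s+\lceil b_1\rceil,T}$ with $\E V=U$. For the oscillating part one may simply take $V^{\osc}=U^{\osc}$ since $\E$ operates without loss of regularity on $\P^{\osc}$. For the evanescent part, the closed form $U^{\ev}(z,\psi_d)=e^{\psi_d\mathcal{A}(\mathbf{n}\cdot\boldsymbol{\zeta})}\Pi^e_{\C^N}(\mathbf{n}\cdot\boldsymbol{\zeta})U^{\ev}(z,0)$ together with the exponential-decay bounds of Proposition \ref{prop controle exp t A Pi } ensures that the trace $U^{\ev}(\cdot,0)$ gains the $\lceil b_1\rceil$ extra derivatives encoded in Assumption \ref{hypothese petits diviseurs 1}, so that $V^{\ev}=U^{\ev}$ can also be used as antecedent.

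The main obstacle is this last regularity-matching step: reconciling the inherent loss of $\lceil b_1\rceil$ derivatives in the a priori bound for $\E$ on the evanescent part with the fact that elements already in the kernel must have traces of improved regularity. This is exactly where the small divisors Assumption \ref{hypothese petits diviseurs 1}, through Proposition \ref{prop controle exp t A Pi }, enters the picture and controls the almost-resonant behavior of $\mathcal{A}(\mathbf{n}\cdot\boldsymbol{\zeta})$ near the glancing set; everything else in the argument is a routine mode-by-mode verification.
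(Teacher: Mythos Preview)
Your forward inclusion is essentially the paper's argument (the paper phrases the oscillating half via density of trigonometric polynomials and passage to the limit in the distributional sense, but the content is the same mode-by-mode verification).

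The real gap is in your reverse inclusion. You split $U=U^{\osc}+U^{\ev}$ at the outset and then assert that ``on the evanescent side, $\mathcal{L}U=0$ is the ODE $\partial_{\psi_d}U^{\ev}_{\mathbf{n}}=\mathcal{A}(\mathbf{n}\cdot\boldsymbol{\zeta})U^{\ev}_{\mathbf{n}}$''. But $\mathcal{L}U=0$, read on the $\theta$-Fourier mode $\mathbf{n}$, only gives the ODE $\partial_{\psi_d}U_{\mathbf{n}}=\mathcal{A}(\mathbf{n}\cdot\boldsymbol{\zeta})U_{\mathbf{n}}$ for the \emph{full} coefficient $U_{\mathbf{n}}=U^{\osc}_{\mathbf{n}}+U^{\ev}_{\mathbf{n}}$; nothing tells you a priori that $\mathcal{L}$ preserves the splitting $\P^{\osc}\oplus\P^{\ev}$ (indeed $\partial_{\psi_d}$ is not even controlled on $\P^{\osc}_{s,T}$). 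The paper avoids this by solving the ODE for the full $U_{\mathbf{n}}$: it decomposes $U_{\mathbf{n}}$ along the spectral subspaces of $\mathcal{A}(\mathbf{n}\cdot\boldsymbol{\zeta})$ in $\C^N$ (stable elliptic, unstable elliptic, and each hyperbolic line $\ker L(0,\alpha_j)$), integrates each piece explicitly as $e^{\psi_d\mathcal{A}}$ applied to the trace at $\psi_d=0$, and only \emph{a posteriori} reads off which pieces belong to $\P^{\osc}$ (the $e^{i\xi\psi_d}$ factors) and which to $\P^{\ev}$ (the decaying $e^{\psi_d\mathcal{A}}\Pi^e_{\C^N}$ piece), the unstable-elliptic component being forced to vanish because $U$ is bounded in $\psi_d$. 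That spectral-first, split-after route is the key idea your argument is missing.

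Your final paragraph is also off: the paper does not try to exhibit a preimage in $\P_{s+\lceil b_1\rceil,T}$ but simply identifies $\N_{s,T}$ with $\ker(I-\E)$ in $\P_{s,T}$ at the start of the proof and works from there. And your claim that the evanescent trace ``gains $\lceil b_1\rceil$ extra derivatives'' from the exponential bound is not correct—the factor $|\zeta|^{b_1}$ in that estimate is a \emph{loss}, and the exponential decay does not compensate it uniformly in $\psi_d$.
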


\begin{proof}
	Since $ \N_{s,T} $ is equal to the kernel $ \ker I-\E $ in $ \P_{s,T} $, it must be shown that the equality of kernels $ \ker I-\E=\ker \mathcal{L}(\partial_{\theta},\partial_{\psi_d}) $ holds in $ \P_{s,T} $. Let $ U=U^{\osc}+U^{\ev} $ be in $ \ker \mathcal{L}(\partial_{\theta},\partial_{\psi_d}) $, and write
	\begin{equation*}
		U(z,\theta,\psi_d)=U^*(z,\psi_d)+\sum_{\mathbf{n}\in\Z^m\privede{0}}U_{\mathbf{n}}(z,\psi_d)\,e^{i\,\mathbf{n}\cdot\theta},
	\end{equation*}
	where $ U_{\mathbf{n}} $ decomposes in $ \P_{s,T}=\P^{\osc}_{s,T}\oplus\P^{\ev}_{s,T} $ as $ U_{\mathbf{n}}=U^{\osc}_{\mathbf{n}}+U^{\ev}_{\mathbf{n}} $ for all $ \mathbf{n} $ in $ \Z^m\privede{0} $.
	Then one gets the following differential equations, 
	\begin{equation*}
	\partial_{\psi_d}\,U^*=0,\qquad \text{and} \qquad \big(-\mathcal{A}(\mathbf{n}\cdot\boldsymbol{\zeta})+\partial_{\psi_d}\big)\,U_{\mathbf{n}}=0,\quad \forall\mathbf{n}\in\Z^m\privede{0}.
	\end{equation*}
	Thus, on one hand, it follows $ U^*(z,\psi_d)=U^*(z) $, which therefore belongs to $ \P_{s,T}^{\osc} $. On the other hand, for every $ \mathbf{n} $ in $ \Z^m $, the amplitude $ U_{\mathbf{n}} $ admits the following expansion, according to  decomposition \eqref{eq decomp C^N E + E -} of $ \C^N $ into subspaces stable under the action of $ \mathcal{A}(\freq) $,
	\begin{equation*}
		U_{\mathbf{n}}=\Pi_{\C^N}^e(\mathbf{n}\cdot\boldsymbol{\zeta})\,U_{\mathbf{n}}+\Pi_{\C^N}^{e,+}(\mathbf{n}\cdot\boldsymbol{\zeta})\,U_{\mathbf{n}}+\sum_{\xi\in\mathcal{C}(\mathbf{n})}\pi_{(\freq,\xi)}\,U_{\mathbf{n}},
	\end{equation*}
	where, recalling the notations of Definition \ref{def projecteurs Pi}, $ \Pi_{\C^N}^e(\mathbf{n}\cdot\boldsymbol{\zeta})\,U_{\mathbf{n}} $ (resp. $ \Pi^{e,+}_{\C^N}(\freq)\,U_{\mathbf{n}} $) belongs to the stable (resp. unstable) elliptic component $ E^e_-(\freq) $ (resp. $ E^e_+(\freq) $), and for $ \xi=\xi_j(\freq)\in\mathcal{C}(\mathbf{n}) $, $ \pi_{(\freq,\xi)}\,U_{\mathbf{n}} $ belongs to the subspace $ \ker L\big(0,\alpha_j(\freq)\big) $ (recall that according to proposition \ref{prop decomp E_-}, for $ j $ in $ \mathcal{I}(\freq) $, we have $ E^j_-(\freq)=\ker L\big(0,\alpha_j(\freq)\big) $, and a similar result holds for $ j $ in $ \mathcal{O}(\freq) $). Using that $ \mathcal{A}(\freq)=-i\tilde{L}(0,\freq) $ and the property of the projectors $ \pi_{(\freq,\xi)} $, we get the following differential equations for each part,
	\begin{subequations}
		\begin{align}
		\partial_{\psi_d}\,\big(\Pi_{\C^N}^e(\mathbf{n}\cdot\boldsymbol{\zeta})\,U_{\mathbf{n}}\big)&=\mathcal{A}(\freq)\,\big(\Pi_{\C^N}^e(\mathbf{n}\cdot\boldsymbol{\zeta})\,U_{\mathbf{n}}\big),\label{eq obtention eq diff ell stable}\\
		\partial_{\psi_d}\,\big(\Pi_{\C^N}^{e,+}(\mathbf{n}\cdot\boldsymbol{\zeta})\,U_{\mathbf{n}}\big)&=\mathcal{A}(\freq)\,\big(\Pi_{\C^N}^{e,+}(\mathbf{n}\cdot\boldsymbol{\zeta})\,U_{\mathbf{n}}\big),\label{eq obtention eq diff ell instable}\\		
		\partial_{\psi_d}\,\big(\pi_{(\freq,\xi)}\,U_{\mathbf{n}}\big)&=i\,\xi\,\big(\pi_{(\freq,\xi)}\,U_{\mathbf{n}}\big),\qquad \forall\xi\in\mathcal{C}(\mathbf{n}).\label{eq obtention eq diff oscillant}
	\end{align}
	\end{subequations}
	Resolving equation \eqref{eq obtention eq diff ell stable}, one get
	\begin{equation*}
		\Pi_{\C^N}^e(\mathbf{n}\cdot\boldsymbol{\zeta})\,U_{\mathbf{n}}(z,\psi_d)=e^{\psi_d\mathcal{A}(\freq)}\,\Pi_{\C^N}^e(\mathbf{n}\cdot\boldsymbol{\zeta})\,U_{\mathbf{n}}(z,0),
	\end{equation*}
	which goes to zero as $ \psi_d $ goes to infinity, so belongs to $ \P_{s,T}^{\ev} $. In the same way, we get, with \eqref{eq obtention eq diff ell instable}, 
	\begin{equation*}
		\Pi_{\C^N}^{e,+}(\mathbf{n}\cdot\boldsymbol{\zeta})\,U_{\mathbf{n}}(z,\psi_d)=e^{\psi_d\mathcal{A}(\freq)}\,\Pi_{\C^N}^{e,+}(\mathbf{n}\cdot\boldsymbol{\zeta})\,U_{\mathbf{n}}(z,0).
	\end{equation*}
	But since $ \Pi^{e,+}_{\C^N}(\freq) $ is the projector on the unstable elliptic component, if $ \Pi_{\C^N}^{e,+}(\mathbf{n}\cdot\boldsymbol{\zeta})\,U_{\mathbf{n}}(z,0) $ is nonzero then $ \Pi_{\C^N}^{e,+}(\mathbf{n}\cdot\boldsymbol{\zeta})\,U_{\mathbf{n}} $ is unbounded, so we have 
	\begin{equation*}
		\Pi_{\C^N}^{e,+}(\mathbf{n}\cdot\boldsymbol{\zeta})\,U_{\mathbf{n}}=0.
	\end{equation*}
	Finally, \eqref{eq obtention eq diff oscillant} gives, for $ \xi $ in $ \mathcal{C}(\mathbf{n}) $, 
	\begin{equation*}
		\pi_{(\freq,\xi)}\,U_{\mathbf{n}}(z,\psi_d)=e^{i\,\xi\,\psi_d}\,\pi_{(\freq,\xi)}\,U_{\mathbf{n}}(z,0),
	\end{equation*}
	which belongs to $ \P_{s,T}^{\osc} $. To summarize, $ U_{\mathbf{n}}=U^{\osc}_{\mathbf{n}}+U^{\ev}_{\mathbf{n}} $ is given by
		\begin{equation*}
			U^{\osc}_{\mathbf{n}}(z,\psi_d)=\sum_{\xi\in\mathcal{C}(\mathbf{n})}\pi_{(\freq,\xi)}\,U_{\mathbf{n}}(z,0)\,e^{i\,\xi\,\psi_d},\qquad
		U^{\ev}_{\mathbf{n}}(z,\psi_d)=e^{\psi_d\mathcal{A}(\freq)}\,\Pi_{\C^N}^{e,+}(\mathbf{n}\cdot\boldsymbol{\zeta})\,U_{\mathbf{n}}(z,0),
		\end{equation*}
	so the equality $ \E\,U=U $ clearly holds.
	
	Conversely, if $ U^{\ev}=\E\,U^{\ev} $, it immediately leads to $ \mathcal{L}(\partial_{\theta},\partial_{\psi_d})\,U^{\ev}=0 $. On an other hand, supposing $ U^{\osc}=\E\,U^{\osc} $, we consider a sequence $ (U^{\osc}_{\nu})_{\nu} $ of trigonometric polynomials converging in $ \mathcal{E}_{s+\lceil b_1 \rceil,T} $ towards $ U^{\osc} $. By continuity of the projector $ \E $, the sequence of trigonometric polynomials $ (\E\,U^{\osc}_{\nu})_{\nu} $ converges in $ \mathcal{E}_{s,T} $ towards $ U^{\osc} $. But one can check immediately that these trigonometric polynomials satisfy $ \mathcal{L}(\partial_{\theta},\partial_{\psi_d})\,\E\,U^{\osc}_{\nu}=0 $, then passing to the limit yields to $ \mathcal{L}(\partial_{\theta},\partial_{\psi_d})\,U^{\osc}=0 $.
\end{proof}

\begin{remark}\label{remarque forme U osc polarise}
	In the proof above, it has been proven in particular that if $ U=U^{\osc}+U^{\ev} $ belongs to $ \N^{\osc}_{s,T}\oplus\N^{\ev}_{s,T} $, then the profile $ U^{\osc} $ writes
	\begin{equation}\label{eq obtention forme U osc noyau E}
	U^{\osc}(z,\theta,\psi_d)=U^*(z)+\sum_{\mathbf{n}\in\Z^m\privede{0}}\sum_{\xi\in\mathcal{C}(\mathbf{n})}U^{\osc}_{\mathbf{n},\xi}(z)\,e^{i\,\mathbf{n}\cdot\theta}\,e^{i\,\xi\,\psi_d},
	\end{equation}
	with, for $ \mathbf{n} $ in $ \Z^m\privede{0} $ and $ \xi $ in $ \mathcal{C}(\mathbf{n}) $, $ \pi_{(\mathbf{n}\cdot\boldsymbol{\zeta},\xi)}\,U^{\osc}_{\mathbf{n},\xi}=U^{\osc}_{\mathbf{n},\xi} $, and the profile $ U^{\ev} $ writes
	\begin{equation}\label{eq obtention forme U ev noyau E}
	U^{\ev}(z,\theta,\psi_d)=\sum_{\mathbf{n}\in\Z^m\privede{0}}e^{\psi_d\mathcal{A}(\mathbf{n}\cdot\boldsymbol{\zeta})}\,\Pi^{e}_{\C^N}(\mathbf{n}\cdot\boldsymbol{\zeta})\,U^{\ev}_{\mathbf{n}}(z,0)\,e^{i\,\mathbf{n}\cdot\theta}.
	\end{equation}
\end{remark}

\bigskip

The previous remark leads to the following result, which links the norm $ \mathcal{C}_b(\R^+_{\psi_d},L^2(\omega_T\times\T^m)) $ of a profile of $ \N^{\osc}_{0,T} $ and its incoming scalar product \eqref{eq def prod scal rentrant} with itself. This result will be used in the following to deduce from a priori estimates on the scalar product a priori estimates on the norm $ \P_{s,T}^{\osc} $. It is analogous to \cite[Lemma 6.2.4]{JolyMetivierRauch1995Coherent}, in a weaker form (because of a lack of symmetry in our context).

\begin{lemma}\label{lemme correspondance norme prod scal}
	There exists a constant $ C>0 $ such that for every profile $ U^{\osc} $ of $ \N^{\osc}_{0,T} $, we have, for $ x_d\geq 0 $,
	\begin{equation*}
		C\norme{U^{\osc}}^2_{\mathcal{C}_b(\R^+_{\psi_d},L^2(\omega_T\times\T^m))}(x_d)\leq \prodscal{U^{\osc}}{U^{\osc}}_{\inc}(x_d)\leq\norme{U^{\osc}}^2_{\mathcal{C}_b(\R^+_{\psi_d},L^2(\omega_T\times\T^m))}(x_d).
	\end{equation*}
\end{lemma}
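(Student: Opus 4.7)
The upper bound is immediate from the fact that the incoming scalar product is defined as a mean value in $\psi_d$: for any $U^{\osc}$ in $\P^{\osc}_{0,T}$ the function $\psi_d\mapsto \|U^{\osc}(\cdot,x_d,\cdot,\psi_d)\|^2_{L^2(\omega_T\times\T^m)}$ is bounded and continuous on $\R_+$, so its mean value $\prodscal{U^{\osc}}{U^{\osc}}_{\inc}(x_d)$ is dominated by its supremum, which is precisely $\|U^{\osc}\|^2_{\mathcal{C}_b(\R^+_{\psi_d},L^2(\omega_T\times\T^m))}(x_d)$. No polarization is needed here.

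For the lower bound I would first argue on a trigonometric polynomial $U^{\osc}$ in $\N^{\osc}_{0,T}$, then conclude by density. By Remark \ref{remarque forme U osc polarise}, such a profile decomposes in the form
\begin{equation*}
U^{\osc}(z,\theta,\psi_d)=U^*(z)+\sum_{\mathbf{n}\in\Z^m\privede{0}}V_{\mathbf{n}}(z,\psi_d)\,e^{i\,\mathbf{n}\cdot\theta},\qquad V_{\mathbf{n}}(z,\psi_d):=\sum_{\xi\in\mathcal{C}(\mathbf{n})}U^{\osc}_{\mathbf{n},\xi}(z)\,e^{i\,\xi\,\psi_d}.
\end{equation*}
The decisive point is that for each $\mathbf{n}$ the inner sum runs over the finite set $\mathcal{C}(\mathbf{n})$, whose cardinality is bounded uniformly in $\mathbf{n}$ by $N$ thanks to the strict hyperbolicity Assumption \ref{hypothese stricte hyp}. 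Applying Parseval's identity in $\theta$ at fixed $(x_d,\psi_d)$, one obtains
\begin{equation*}
\norme{U^{\osc}(\cdot,x_d,\cdot,\psi_d)}^2_{L^2(\omega_T\times\T^m)}
=(2\pi)^m\Big(\norme{U^*(\cdot,x_d)}^2_{L^2(\omega_T)}+\sum_{\mathbf{n}\neq 0}\norme{V_{\mathbf{n}}(\cdot,x_d,\psi_d)}^2_{L^2(\omega_T)}\Big).
\end{equation*}

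The Cauchy--Schwarz inequality applied to the finite sum defining $V_{\mathbf{n}}$ then gives, for every $\psi_d\geq 0$,
\begin{equation*}
\norme{V_{\mathbf{n}}(\cdot,x_d,\psi_d)}^2_{L^2(\omega_T)}\leq N\sum_{\xi\in\mathcal{C}(\mathbf{n})}\norme{U^{\osc}_{\mathbf{n},\xi}(\cdot,x_d)}^2_{L^2(\omega_T)},
\end{equation*}
a bound that is \emph{independent of} $\psi_d$. Summing over $\mathbf{n}\in\Z^m\privede{0}$ and using formula \eqref{eq prod scal pol trigo rentrants} for the scalar product of trigonometric polynomials, one concludes
\begin{equation*}
\sup_{\psi_d>0}\norme{U^{\osc}(\cdot,x_d,\cdot,\psi_d)}^2_{L^2(\omega_T\times\T^m)}\leq N\,\prodscal{U^{\osc}}{U^{\osc}}_{\inc}(x_d),
\end{equation*}
which is exactly the required lower bound with constant $C=1/N$.

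Finally, both the scalar product $\prodscal{\cdot}{\cdot}_{\inc}(x_d)$ (by the continuity estimate \eqref{eq prod scal pol trigo rentrants} and Fatou) and the $\mathcal{C}_b(\R^+_{\psi_d},L^2(\omega_T\times\T^m))$ norm are continuous on $\P^{\osc}_{0,T}$ with respect to the $\mathcal{E}_{0,T}$ topology, so both inequalities extend from trigonometric polynomials of $\N^{\osc}_{0,T}$ to all of $\N^{\osc}_{0,T}$ by density. The main obstacle in the argument is really the lower bound, where the finiteness of the spectrum $\mathcal{C}(\mathbf{n})$ provided by the polarization is crucial: without it, the Cauchy--Schwarz step would fail and the supremum in $\psi_d$ could not be controlled by the almost-periodic mean value.
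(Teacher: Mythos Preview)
Your argument is correct and follows essentially the same route as the paper: the upper bound comes from the mean being dominated by the sup, and the lower bound from Parseval in $\theta$ combined with Cauchy--Schwarz on the finite sum over $\mathcal{C}(\mathbf{n})$, whose cardinality is at most $N$. The only superfluous step is the density argument at the end: Remark~\ref{remarque forme U osc polarise} already gives the decomposition \eqref{eq obtention forme U osc noyau E} for \emph{every} element of $\N^{\osc}_{0,T}$, not just trigonometric polynomials, and formula \eqref{eq prod scal pol trigo rentrants} is stated to hold on all of $\P^{\osc}_{0,T}$, so the paper applies the estimate directly without passing to the limit.
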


\begin{proof}
	The second inequality is obvious by definition of the scalar product $ \prodscal{.}{.}_{\inc} $, since we have
	\begin{align*}
		\prodscal{U^{\osc}}{U^{\osc}}_{\inc}(x_d)&=\lim_{R\rightarrow+\infty}\inv{R}\int_{0}^{R}\norme{U^{\osc}}^2_{L^2(\omega_T\times\T^m)}(x_d,\psi_d)\,d\psi_d\leq \sup_{\psi_d\geq 0}\norme{U^{\osc}}^2_{L^2(\omega_T\times\T^m)}(x_d,\psi_d).
	\end{align*}
	On the other hand, according to Remark \ref{remarque forme U osc polarise}, if $ U^{\osc} $ belongs to $ \N_{0,T}^{\osc} $, then the profile writes
	\begin{equation*}
		U^{\osc}(z,\theta,\psi_d)=U^*(z)+\sum_{\mathbf{n}\in\Z^m\privede{0}}\sum_{\xi\in\mathcal{C}(\mathbf{n})}U^{\osc}_{\mathbf{n},\xi}(z)\,e^{i\,\mathbf{n}\cdot\theta}\,e^{i\,\xi\,\psi_d},
	\end{equation*}
	where, for $ \mathbf{n} $ in $ \Z^m\privede{0} $ and $ \xi $ in $ \mathcal{C}(\mathbf{n}) $, each amplitude satisfies $ U^{\osc}_{\mathbf{n},\xi}=\pi_{(\mathbf{n}\cdot\boldsymbol{\zeta},\xi)}\,U^{\osc}_{\mathbf{n},\xi} $. The Parseval's identity then gives
	\begin{align*}
		\norme{U^{\osc}}^2_{L^2(\omega_T\times\T^m)}(x_d,\psi_d)&=\norme{U^*}^2_{L^2(\omega_T)}+\sum_{\mathbf{n}\in\Z^m\privede{0}}\norme{ \sum_{\xi\in\mathcal{C}(\mathbf{n})}U^{\osc}_{\mathbf{n},\xi}(z)\,e^{i\,\xi\,\psi_d}}^2_{L^2(\omega_T)}.
		\intertext{Therefore, since for all $ \mathbf{n} $ in $ \Z^m\privede{0} $, the set $ \mathcal{C}(\mathbf{n}) $ is of cardinality at most $ N $, we have}
		\norme{U^{\osc}}^2_{L^2(\omega_T\times\T^m)}(x_d,\psi_d)&\leq \norme{U^*}^2_{L^2(\omega_T)}+N\sum_{\mathbf{n}\in\Z^m\privede{0}} \sum_{\xi\in\mathcal{C}(\mathbf{n})}\norme{U^{\osc}_{\mathbf{n},\xi}}^2_{L^2(\omega_T)}(x_d)\\
		&\leq C \prodscal{U^{\osc}}{U^{\osc}}_{\inc}(x_d),
	\end{align*}
	according to formula \eqref{eq prod scal pol trigo rentrants}. The first inequality of Lemma \ref{lemme correspondance norme prod scal}  follows finally by passing to the supremum in $ \psi_d\geq 0 $.
\end{proof}

\subsection{Reducing the system}\label{subsection reducing}

It is shown in this part that in every solution to system \eqref{eq obtention U 1} there occur only incoming modes (in particular every solution is of zero mean), and every solution is supported in a finite interval in $ x_d $. We also show that system \eqref{eq obtention U 1} decouples according to the oscillating and the evanescent part, and even, for the oscillating part, according to the set $ \F^{\inc}_{\res} $ of resonant modes and each non resonant mode. More precisely, the following result is proved. Recall that $ s_0 $ is given by $ s_0=h+(d+m)/2 $ where $ h $ is an integer greater or equal to $ (3+a_1)/2 $ occurring in estimate \eqref{eq est gamma n n}, with $ a_1$ the real number of Assumption \ref{hypothese petits diviseurs 1}.

\begin{proposition}\label{prop equivalence systeme}
	Consider $ T>0 $, and $ s>s_0 $. Every solution $ U $ in $ \P_{s,T} $ of system \eqref{eq obtention U 1} is such that its oscillating part $ U^{\osc} $ features only incoming modes. Furthermore, system \eqref{eq obtention U 1} on $ U=U^{\osc}+U^{\ev} $ in $ \P_{s,T} $
	is equivalent to the following decoupled systems, the first one involving the resonant incoming modes,
	\begin{subequations}\label{eq red syst osc res}
		\begin{align}
			\E^{\inc}_{\res}\, U_{\res}^{\osc}&=U_{\res}^{\osc} \label{eq red syst osc E U = U res}  \\
			\tilde{\E^i}^{\inc}_{\res}\Big[\tilde{L}(0,\partial_z)\,\beta_TU_{\res}^{\osc}+\sum_{j=1}^m\tilde{L}_1(\beta_TU_{\res}^{\osc},\zeta_j)\,\partial_{\theta_j}\beta_TU_{\res}^{\osc}\Big]&=0 \label{eq red syst osc E i U res}\\
			\big(U_{\res}^{\osc}\big)_{|x_d=0,\psi_d=0}&=H_{\res}^{\osc} \label{eq red syst osc cond bord res} \\[5pt]
			\big(U_{\res}^{\osc}\big)_{|t\leq 0}&=0, \label{eq red syst osc cond init res}
		\end{align}
	\end{subequations}
	then the system verified by each non resonant incoming mode, for $ (\mathbf{n}_0,\xi_0) $ in  $\big(\B_{\Z^m}\times\mathcal{C}_{\inc}(\mathbf{n}_0)\big)\setminus\F^{\inc}_{\res} $,
	\begin{subequations}\label{eq red syst osc non res}
		\begin{align}
			\tilde{X}_{(\mathbf{n}_0\cdot\boldsymbol{\zeta},\xi_0)}S_{\mathbf{n}_0,\xi_0}+\Gamma\big((\mathbf{n}_0,\xi_0),(\mathbf{n}_0,\xi_0)\big)S_{\mathbf{n}_0,\xi_0}\partial_{\Theta}S_{\mathbf{n}_0,\xi_0}&=0\label{eq red syst osc Burgers non res}\\
			\big(S_{\mathbf{n}_0,\xi_0}\big)_{|x_d=0}&=h_{\mathbf{n}_0,\xi_0}\label{eq red syst osc cond bord non res}\\
			\big(S_{\mathbf{n}_0,\xi_0}\big)_{|t\leq0}&=0,\label{eq red syst osc cond init non res}
		\end{align}
	\end{subequations}
	and finally the system for the evanescent part $ U^{\ev} $,
	\begin{subequations}\label{eq red syst ev}
		\begin{align}
			\E\, U^{\ev}&=U^{\ev} \label{eq red syst ev E U = U}  \\
			U^{\ev}_{|x_d=0,\psi_d=0}&=H^{\ev}, \label{eq red syst ev cond bord} 
		\end{align}
	\end{subequations}
	where, if the solution $ U^{\osc} $ (occurring only incoming modes and being polarized) writes
	\begin{equation*}
		U^{\osc}(z,\theta,\psi_d)=\sum_{\substack{\mathbf{n}_0\in\B_{\Z^m}\\\xi_0\in\mathcal{C}_{\inc}(\mathbf{n}_0)}}\sum_{\lambda\in\Z^*}\sigma
		_{\lambda,\mathbf{n}_0,\xi_0}(z)\,e^{i\,\lambda\mathbf{n}_0\cdot\theta}\,e^{i\,\lambda\xi_0\,\psi_d}\,E(\mathbf{n}_0,\xi_0),
	\end{equation*}
	where $ \sigma_{\lambda,\mathbf{n}_0,\xi_0} $ are scalar functions, then the resonant part $ U^{\osc}_{\res} $ is given by
	\begin{equation*}
		U_{\res}^{\osc}(z,\theta,\psi_d)=\sum_{(\mathbf{n}_0,\xi_0)\in\F^{\inc}_{\res}}\sum_{\lambda\in\Z^*}\sigma_{\lambda,\mathbf{n}_0,\xi_0}(z)\,e^{i\,\lambda\mathbf{n}_0\cdot\theta}\,e^{i\,\lambda\xi_0\,\psi_d}\,E(\mathbf{n}_0,\xi_0),
	\end{equation*}
	and the scalar component $ S_{\mathbf{n}_0,\xi_0} : \Omega_T\times\T \rightarrow \C $ for each non resonant direction $ (\mathbf{n}_0,\xi_0) $ in  $\big(\B_{\Z^m}\times\mathcal{C}_{\inc}(\mathbf{n}_0)\big)\setminus\F^{\inc}_{\res} $, is given by
	\begin{equation*}
		S_{\mathbf{n}_0,\xi_0}(z,\Theta):=\sum_{\lambda\in\Z^*}\sigma_{\lambda,\mathbf{n}_0,\xi_0}(z)\,e^{i\lambda\,\Theta},
	\end{equation*}
	where the function $ \beta_{T} $ of $ x_d $, of class $ \mathcal{C}^{\infty} $, equals 1 on $ [0,\V^*T] $ and 0 on $ [2\V^*T,+\infty) $ (where $ \V^* $ has been defined in Lemma \ref{lemme vitesse de groupe bornees}), and where $ H^{\osc}_{\res} $, $ h_{\mathbf{n}_0,\xi_0} $ for  $ (\mathbf{n}_0,\xi_0) $ in  $\big(\B_{\Z^m}\times\mathcal{C}_{\inc}(\mathbf{n}_0)\big)\setminus\F^{\inc}_{\res} $, and $ H^{\ev} $ are defined from $ G $ by the formulas 
	\begin{subequations}\label{eq red def H}
		\begin{align}\label{eq red def H osc res}
			H^{\osc}_{\res}(z',\theta)&:=\sum_{(\mathbf{n}_0,\xi_0)\in\F^{\inc}_{\res}}\sum_{\lambda\in\Z^*}\Pi_-^{j(\lambda\mathbf{n}_0,\lambda\xi_0)}(\lambda\mathbf{n}_0\cdot\boldsymbol{\zeta})\,\big(B_{|E_-(\lambda\mathbf{n}_0\cdot\boldsymbol{\zeta})}\big)^{-1}G_{\lambda\mathbf{n}_0}(z')\,e^{i\lambda\mathbf{n}_0\cdot\theta},
			\\\label{eq red def H osc non res}h_{\mathbf{n}_0,\xi_0}(z',\Theta)&:=\sum_{\lambda\in\Z^*}\prodscal{\Pi_-^{j(\lambda\mathbf{n}_0,\lambda\xi_0)}(\lambda\mathbf{n}_0\cdot\boldsymbol{\zeta})\,\big(B_{|E_-(\lambda\mathbf{n}_0\cdot\boldsymbol{\zeta})}\big)^{-1}G_{\lambda\mathbf{n}_0}(z')}{E(\mathbf{n}_0,\xi_0)}_{\C^N}e^{i\lambda\Theta},
			\\H^{\ev}(z',\theta)&:=\sum_{\mathbf{n}\in\Z^m\privede{0}}\Pi_-^e(\freq)\,\big(B_{|E_-(\freq)}\big)^{-1}G_{\mathbf{n}}(z')\,e^{i\mathbf{n}\cdot\theta}\label{eq red def H ev},
		\end{align}
	\end{subequations}
	where, for $ \mathbf{n} $ in $ \Z^m\privede{0} $ and $ \xi $ in $ \mathcal{C}_{\inc}(\mathbf{n}) $, $ j(\mathbf{n},\xi) $ is the index such that $ \xi=\xi_{j(\mathbf{n},\xi)}(\freq) $. Recall that amplitudes $ G_{\mathbf{n}} $ of the function $ G $ have been defined by \eqref{eq def G_n}, and that projectors $ \E^{\inc}_{\res} $ and $ \tilde{\E^i}^{\inc}_{\res} $ have been introduced in Definition \ref{def proj E res}. Note that in these notations, solution $ U $ decomposes as
	\begin{equation*}
		U(z,\theta,\psi_d)=U^{\osc}_{\res}(z,\theta,\psi_d)+\sum_{\substack{(\mathbf{n}_0,\xi_0) \in\\(\B_{\Z^m}\times\mathcal{C}_{\inc}(\mathbf{n}_0))\setminus\F^{\inc}_{\res}}}S_{\mathbf{n}_0,\xi_0}(z,\mathbf{n}_0\cdot\theta+\xi_0\,\psi_d)\,E(\mathbf{n}_0,\xi_0)+U^{\ev}(z,\theta,\psi_d).
	\end{equation*}
\end{proposition}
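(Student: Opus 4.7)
The plan is to prove the reduction in four main steps that mirror the narrative announced in the introduction: (i) kill the mean value, (ii) establish finite speed of propagation, (iii) kill the outgoing modes using an outgoing energy estimate, (iv) decouple the boundary condition and the interior equation into the three subsystems.

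First I would project equation \eqref{eq obtention E i U} onto the Fourier mode $\mathbf{n}=0$. Because $U$ is polarized by \eqref{eq obtention E U} and all the contributions coming from $\sum_j \tilde{L}_1(U,\zeta_j)\partial_{\theta_j}U$ involve factors $\partial_{\theta_j}$ that annihilate the mean, the only surviving term is $\tilde{L}(0,\partial_z)U^*=0$. Combined with $BU^*_{|x_d=0}=0$ (since $G$ has zero mean in $\theta$) and the zero initial condition, the standard well-posedness of the linear boundary value problem under Assumptions \ref{hypothese bord non caract}, \ref{hypothese stricte hyp} and \ref{hypothese UKL} gives $U^*=0$.

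Second, since the outgoing scalar product \eqref{eq def prod scal sortant} is defined only on profiles with bounded $x_d$-support, I first establish a finite-speed-of-propagation lemma: using the propagation scalar product \eqref{eq def prod scal ppgat} over a truncated cone $\K = \ensemble{(t,y,x_d)\mid x_d > \V^*\,t}$ with $\V^*$ from Lemma \ref{lemme vitesse de groupe bornees}, testing the polarized equation against $\E\,U^{\osc}$ and using Lax's Lemma \ref{lemme Lax} to reduce each $\tilde{\pi}_{\alpha}\tilde{L}(0,\partial_z)\pi_{\alpha}$ to $\tilde{X}_\alpha \tilde{\pi}_\alpha\pi_\alpha$ whose normal speed is bounded by $\V^*$, I show $U^{\osc}$ is supported in $\ensemble{0\leq x_d\leq \V^*T}$. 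This justifies the cutoff $\beta_T$ appearing in \eqref{eq red syst osc E i U res}.

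Third, once $U^{\osc}$ has compact support in $x_d$ I derive the outgoing $L^2$ estimate: apply $\prodscal{\cdot}{\cdot}_{\out}(t)$ between the outgoing-projected equation and the outgoing part of a modified profile. Four terms appear; the transport piece $\tilde{X}_\alpha$ gives a $\partial_t$-term yielding $\frac{d}{dt}\|U^{\osc}_{\out}\|^2_{\out}$ after integration by parts in $y$; the Burgers-type self-interaction terms are antisymmetric in the standard way; and crucially the resonance terms, by Assumption \ref{hypothese pas de sortant}, cannot couple two incoming modes into an outgoing one nor an outgoing and an incoming mode into another outgoing mode, so the outgoing subsystem is closed and controlled by itself. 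Combined with the zero initial trace and Gronwall's inequality, this forces the outgoing part to vanish identically.

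Fourth and last, with only incoming and evanescent modes remaining, I decouple the boundary condition \eqref{eq obtention cond bord U}. For each frequency $\mathbf{n}\in\Z^m\privede{0}$, the trace $U_{|x_d=0,\psi_d=0}$ on the mode $e^{i\mathbf{n}\cdot\theta}$ is a sum of vectors in $\ker L(0,\alpha_j(\freq))$ for $j\in\mathcal{I}(\freq)$ (the incoming oscillating contribution) plus a vector in the elliptic component $E^e_-(\freq)$ (the evanescent trace), i.e.\ an arbitrary element of $E_-(\freq)$. By Assumption \ref{hypothese UKL} and Remark \ref{remarque B inverse bornee}, inversion of $B_{|E_-(\freq)}$ is uniformly bounded, and the spectral projectors $\Pi^{j}_-$ and $\Pi^e_-$ of Proposition \ref{prop proj bornes} are uniformly bounded, producing exactly formulas \eqref{eq red def H}. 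For the interior equation, Assumption \ref{hypothese pas de sortant} together with the decomposition \eqref{eq propr def F in 1 2} implies that no resonance links $\F^{\inc}_{\res}$ with any of the remaining singletons, so the system splits. For each non-resonant direction $(\mathbf{n}_0,\xi_0)$, the only surviving nonlinear contribution is the self-interaction among collinear harmonics $\lambda(\mathbf{n}_0,\xi_0)$, and applying Lemma \ref{lemme Lax} together with the homogeneity identity \eqref{eq propr gamma AI} reduces the polarized vector equation to the scalar Burgers equation \eqref{eq red syst osc Burgers non res} in the phase variable $\Theta$.

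The main obstacle will be the second and third steps, i.e.\ simultaneously carrying out the finite-speed-of-propagation argument for $U^{\osc}$ while correctly handling the resonance terms in the outgoing energy estimate. Indeed the lack of symmetry makes the standard symmetrizer approach of \cite{JolyMetivierRauch1995Coherent} inapplicable, and it is precisely Assumption \ref{hypothese pas de sortant} that allows to close the outgoing subsystem without having to control an infinite family of incoming$\to$outgoing couplings.
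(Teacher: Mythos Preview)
Your four-step outline matches the paper's architecture, but two of the steps contain genuine gaps.

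\textbf{Step (i), mean value.} Your argument that ``the contributions coming from $\sum_j\tilde{L}_1(U,\zeta_j)\partial_{\theta_j}U$ involve factors $\partial_{\theta_j}$ that annihilate the mean'' is incorrect. The operator $\partial_{\theta_j}$ acts only on the second factor; the zero Fourier mode of the \emph{product} is not automatically zero. Concretely, the interaction of harmonic $\lambda(\mathbf{n}_0,\xi_0)$ with harmonic $-\lambda(\mathbf{n}_0,\xi_0)$ produces a genuine contribution to the mean, namely the term \eqref{eq moy E i L U res 0} in the paper's expansion. The paper kills this term by the symmetry substitution $\lambda\mapsto-\lambda$ (using that $U^{\osc}$ is real, so $\sigma_{-\lambda}=\overline{\sigma_\lambda}$), not by the trivial observation you invoke. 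Without this cancellation the mean value does not satisfy a homogeneous linear problem and your argument does not close.

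\textbf{Step (iii), outgoing estimate.} You correctly observe that Assumption~\ref{hypothese pas de sortant} makes the outgoing subsystem closed (no incoming--outgoing coupling). But closure is not enough: there can still be resonances \emph{among} outgoing modes, and your sketch gives no mechanism to control them. Self-interaction antisymmetry handles only the collinear (Burgers) part. The paper treats this by splitting outgoing modes into non-resonant ones (pure Burgers, handled individually) and the resonant set $\F^{\out}_{\res}$, which is \emph{finite} by Assumption~\ref{hypothese type resonance}~(a); that finiteness lets one bound the resonance coefficients $\Gamma$ by a uniform constant and close the Gr\"onwall estimate. You never invoke Assumption~\ref{hypothese type resonance} in this step, and without it the outgoing energy estimate does not close when there are infinitely many outgoing resonances.

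The remaining steps (finite speed via $\prodscal{\cdot}{\cdot}_{\K}$, and the boundary decoupling via $(B_{|E_-})^{-1}$ and the projectors $\Pi^j_-,\Pi^e_-$) are essentially as in the paper, though for step (ii) the paper tests against the modified profile weighted by $-\partial_\xi\tau_k\,\tilde{\pi}_\alpha E$, not simply $\E U^{\osc}$; this weight is what turns $\tilde{X}_\alpha$ back into $X_\alpha$ and makes the boundary flux signs come out right in Green's formula.
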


To prove Theorem \ref{thm existence profils}, it is therefore equivalent to prove that there exists solutions $ U^{\osc}_{\res} $, $ S_{\mathbf{n}_0,\xi_0} $ and $ U^{\ev} $ to systems \eqref{eq red syst osc res},  \eqref{eq red syst osc non res} and $ \eqref{eq red syst ev} $.

In this part dedicated to the proof of Proposition \ref{prop equivalence systeme}, we consider a solution $ U $ to \eqref{eq obtention U 1} sufficiently regular, and we start by showing that its mean value $ U^* $ is zero, by extracting from \eqref{eq obtention U 1} a homogeneous linear hyperbolic system satisfied by it. To show that there is no outgoing mode, the scalar product \eqref{eq def prod scal sortant} for outgoing modes is used, which is defined for profiles of compact support with respect to $ x_d $. Thus we must prove before that the considered solution $ U $ is of compact support with respect to $ x_d $. Then the outgoing modes are isolated in equation \eqref{eq obtention U 1}, deducing that they are zero. First the left term of equation \eqref{eq obtention E i U} is rewritten.

\subsubsection{Rewriting the evolution equation} 

According to remark \ref{remarque forme U osc polarise}, since $ U $ satisfies the polarization condition \eqref{eq obtention E U}, and according to Remark \ref{remarque forme U osc polarise}, the profile writes $ U=U^{\osc}+U^{\ev} $, where
\begin{subequations}
	\begin{equation}\label{eq red pola osc}
	U^{\osc}(z,\theta,\psi_d)=U^*(z)+\sum_{\mathbf{n}\in\Z^m\privede{0}}\sum_{\xi\in\mathcal{C}(\mathbf{n})}U^{\osc}_{\mathbf{n},\xi}(z)\,e^{i\,\mathbf{n}\cdot\theta}\,e^{i\,\xi\,\psi_d},
	\end{equation}
	and
	\begin{equation}\label{eq red pola ev}
	U^{\ev}(z,\theta,\psi_d)=\sum_{\mathbf{n}\in\Z^m\privede{0}}e^{\psi_d\,\mathcal{A}(\mathbf{n}\cdot\boldsymbol{\zeta})}\,\Pi^e_{\C^N}(\mathbf{n}\cdot\boldsymbol{\zeta})\,U^{\ev}_{\mathbf{n}}(z,0)\,e^{i\mathbf{n}\cdot\theta},
	\end{equation}
\end{subequations}
with $ \pi_{(\freq,\xi)}\,U^{\osc}_{\mathbf{n},\xi}=U^{\osc}_{\mathbf{n},\xi} $ for all $ \mathbf{n},\xi $.
Then $ U^{\osc} $ is rewritten to take advantage of collinearities, using notations of Part \ref{notations 2}. 
Let $ \mathbf{n} $ be in $ \Z^m\privede{0} $, and $ \mathbf{n}_0 $ in $ \mathcal{B}_{\Z^m} $, $ \lambda $ in $ \Z^* $ such that $ \mathbf{n}=\lambda\,\mathbf{n}_0 $, and let also $ \xi $ be in $ \mathcal{C}(\mathbf{n}) $. Since the set $ \mathcal{C} $ is homogeneous of degree 1, we have  $ \mathcal{C}(\mathbf{n})=\lambda\,\mathcal{C}(\mathbf{n}_0) $, so there exists $ \xi_0 $ in $ \mathcal{C}(\mathbf{n}_0) $ such that $ \xi=\lambda\,\xi_0 $. By polarization of the profile $ U $, the amplitude $ U^{\osc}_{\mathbf{n},\xi} $ belongs to the kernel of $ L\big(0,(\mathbf{n}\cdot\boldsymbol{\zeta},\xi)\big) $, which is given according to Definition \ref{def E} by $ \Vect E(\mathbf{n},\xi)=\Vect E(\mathbf{n}_0,\xi_0) $.
One may thus write
\begin{equation*}
U^{\osc}_{\mathbf{n},\xi}(z)=\sigma_{\lambda,\mathbf{n}_0,\xi_0}(z)\,E(\mathbf{n}_0,\xi_0),
\end{equation*}
where $ \sigma_{\lambda,\mathbf{n}_0,\xi_0} $ is a scalar function defined on $ \Omega_T $. Since the profile $ U^{\osc} $ is assumed to be real, coefficients $ \sigma_{\lambda,\mathbf{n}_0,\xi_0} $ satisfy $ \sigma_{-\lambda,\mathbf{n}_0,\xi_0}=\bar{\sigma_{\lambda,\mathbf{n}_0,\xi_0} } $ for all $ \lambda $, $ \mathbf{n}_0 $ and $ \xi_0 $.
In this notation, the profile $ U^{\osc} $ writes
\begin{equation*}
U^{\osc}(z,\theta,\psi_d)=U^*(z)+\sum_{\substack{\mathbf{n}_0\in\B_{\Z^m}\\\xi_0\in\mathcal{C}(\mathbf{n}_0)}}\sum_{\lambda\in\Z^*}\sigma_{\lambda,\mathbf{n}_0,\xi_0}(z)\,e^{i\,\lambda\mathbf{n}_0\cdot\theta}\,e^{i\,\lambda\xi_0\,\psi_d}\,E(\mathbf{n}_0,\xi_0).
\end{equation*}
Note that according to identity \eqref{eq prod scal pol trigo sortants} and since the vectors $ E(\mathbf{n}_0,\xi_0) $ are of norm 1, when it is well-defined, the scalar product $ \prodscal{U^{\osc}}{U^{\osc}}_{\out}(t) $ is given in these notations by
\begin{equation*}
	\prodscal{U^{\osc}}{U^{\osc}}_{\out}(t) =(2\pi)^m\norme{U^*}_{L^2(\R^{d-1}\times\R_+)}^2(t)+(2\pi)^m\sum_{\substack{\mathbf{n}_0\in\B_{\Z^m}\\\xi_0\in\mathcal{C}(\mathbf{n}_0)}}\sum_{\lambda\in\Z^*}\norme{\sigma_{\lambda,\mathbf{n}_0,\xi_0}}_{L^2(\R^{d-1}\times\R_+)}^2(t).
\end{equation*}
Recall that the scalar product $ \prodscal{.}{.}_{\out} $ is defined only for profiles with compact support in the normal direction, but we will prove that every solution of \eqref{eq obtention U 1} is indeed compactly supported in the $ x_d $-direction before using this scalar product.

Since the projector $ \tilde{\E^i} $ occurring in equation \eqref{eq obtention E i U} only acts on oscillating profiles, the oscillating part of the term $ \tilde{L}(0,\partial_z)\,U+\sum_{j=1}^m\tilde{L}_1(U,\zeta_j)\,\partial_{\theta_j}U $ must be determined. On one hand, the oscillating part of $ \tilde{L}(0,\partial_z)\,U $ is given by
\begin{equation*}
\tilde{L}(0,\partial_z)\,U^{\osc}=\tilde{L}(0,\partial_z)\,U^*+\sum_{\substack{\mathbf{n}_0\in\B_{\Z^m}\\\xi_0\in\mathcal{C}(\mathbf{n}_0)}}\sum_{\lambda\in\Z^*}\tilde{L}(0,\partial_z)\,\sigma_{\lambda,\mathbf{n}_0,\xi_0}\,e^{i\,\lambda\mathbf{n}_0\cdot\theta}\,e^{i\,\lambda\xi_0\,\psi_d}\,E(\mathbf{n}_0,\xi_0).
\end{equation*}
These two terms correspond to terms \eqref{eq moy E i L U trsp moy} and \eqref{eq moy E i L U trsp osc} of equation \eqref{eq moy E i L U} below.

On the other hand, according to Lemma \ref{lemme propr algebre} concerning the algebra properties of the space of profiles $ \P_{s,T} $, the oscillating part of the quadratic term $ \sum_{j=1}^m\tilde{L}_1(U,\zeta_j)\,\partial_{\theta_j}U $ is given by
\begin{align}\label{eq moy partie osc L 1 U}
	\sum_{j=1}^m\tilde{L}_1(U^{\osc},\zeta_j)\,\partial_{\theta_j}U^{\osc}
	&=\sum_{\substack{\mathbf{n}_0\in\B_{\Z^m}\\\xi_0\in\mathcal{C}(\mathbf{n}_0)}}\sum_{\lambda\in\Z^*}\tilde{L}_1(U^*,i\,\lambda\mathbf{n}_0\cdot\boldsymbol{\zeta})\,E(\mathbf{n}_0,\xi_0)\,\sigma_{\lambda,\mathbf{n}_0,\xi_0}\,e^{i\,\lambda\mathbf{n}_0\cdot\theta}\,e^{i\,\lambda\xi_0\,\psi_d}\\
	\nonumber&+\sum_{\mathbf{n}_1,\mathbf{n}_2\in\B_{\Z^m}}\sum_{\substack{\xi_1\in\mathcal{C}(\mathbf{n}_1)\\\xi_2\in\mathcal{C}(\mathbf{n}_2)}}\sum_{\lambda_1,\lambda_2\in\Z^*}\tilde{L}_1(E(\mathbf{n}_1,\xi_1),i\,\lambda_2\mathbf{n}_2\cdot\boldsymbol{\zeta})\,E(\mathbf{n}_2,\xi_2)\,\\
	\nonumber&\quad\sigma_{\lambda_1,\mathbf{n}_1,\xi_1}\,\sigma_{\lambda_2,\mathbf{n}_2,\xi_2}\,e^{i\,(\lambda_1\mathbf{n}_1+\lambda_2\mathbf{n}_2)\cdot\theta}\,e^{i\,(\lambda_1\xi_1+\lambda_2\xi_2)\,\psi_d}.
\end{align}

The first term of the right hand side of equation \eqref{eq moy partie osc L 1 U} corresponds to term \eqref{eq moy E i L U res moy} of equation \eqref{eq moy E i L U} below. In the second term of the right hand side of equation \eqref{eq moy partie osc L 1 U}, since the projectors $\tilde{\pi}_{\alpha}$ appear in the projector $ \tilde{\E^i} $, only the frequencies $ \big((\lambda_1\mathbf{n}_1+\lambda_2\mathbf{n}_2)\cdot\boldsymbol{\zeta},\lambda_1\xi_1+\lambda_2\xi_2\big) $ that are characteristic will be preserved.
\begin{enumerate}[label=\roman*), leftmargin=0.8cm]
	\item If  $ \mathbf{n}_1=\mathbf{n}_2 $, $\xi_1=\xi_2$ and $ \lambda_1=-\lambda_2 $, the created frequency is zero, so it is characteristic. This non oscillating term corresponds to term \eqref{eq moy E i L U res 0} of equation \eqref{eq moy E i L U} below.
	\item  If $ \mathbf{n}_1=\mathbf{n}_2 $, $\xi_1=\xi_2$ and $ \lambda_1+\lambda_2\neq0 $, then the nonzero frequency obtained is given by $ (\lambda_1+\lambda_2)\,(\mathbf{n}_1\cdot\boldsymbol{\zeta},\xi_1) $ which is characteristic. This is called \emph{self-interaction} of the frequency $ (\mathbf{n}_1\cdot\boldsymbol{\zeta},\xi_1) $ with itself, and constitutes term \eqref{eq moy E i L U AI} of equation  \eqref{eq moy E i L U}.
	\item Finally, in the remaining cases, if the nonzero frequency obtained $ \lambda_1\,(\mathbf{n}_1\cdot\boldsymbol{\zeta},\xi_1)+ \lambda_2\,(\mathbf{n}_2\cdot\boldsymbol{\zeta},\xi_2) $ is characteristic, then it corresponds to a resonance in the sense of Definition \ref{def ens resonances}. Namely there exist $ \lambda_0 $ in $ \Z^* $, $ \mathbf{n}_0 $ in $ \B_{\Z^m} $ and $ \xi_0 $ in $ \mathcal{C}(\mathbf{n}_0) $ such that
	\begin{equation*}
	 \lambda_1\,(\mathbf{n}_1\cdot\boldsymbol{\zeta},\xi_1)+ \lambda_2\,(\mathbf{n}_2\cdot\boldsymbol{\zeta},\xi_2)= \lambda_0\,(\mathbf{n}_0\cdot\boldsymbol{\zeta},\xi_0),
	\end{equation*}  
	thus there exists $ \ell $ in $ \Z^* $ such that $ (\lambda_1,\lambda_2,\lambda_0)=\ell\,(\lambda_p,\lambda_q,\lambda_r) $ where the 7-tuple $ (\lambda_p,\lambda_q,\lambda_r, \allowbreak \mathbf{n}_p, \allowbreak\mathbf{n}_q,\xi_p,\xi_q) $ belongs to one of the sets $ \mathcal{R}_1(\mathbf{n}_0,\xi_0) $ or $ \mathcal{R}_2(\mathbf{n}_0,\xi_0) $.
	These resonances constitute terms \eqref{eq moy E i L U R1} and \eqref{eq moy E i L U R2} of equation \eqref{eq moy E i L U}.
\end{enumerate} 

According to the expression of the projector $ \tilde{\E^i} $ and since $ \tilde{\pi}_0=\Id $, the term $ \tilde{\E^i}\big[\tilde{L}(0,\partial_z)\,U+\sum_{j=1}^m\tilde{L}_1(U,\zeta_j)\,\partial_{\theta_j}U\big] $ is thus given by 
\begin{subequations}\label{eq moy E i L U}
\begin{align}
	\tilde{\E^i}\Big[&\tilde{L}(0,\partial_z)\,U+\sum_{j=1}^m\tilde{L}_1(U,\zeta_j)\,\partial_{\theta_j}U\Big]=\tilde{L}(0,\partial_z)\,U^*\label{eq moy E i L U trsp moy}\\
	&\quad+\sum_{\substack{\mathbf{n}_0\in\B_{\Z^m}\\\xi_0\in\mathcal{C}(\mathbf{n}_0)}}\sum_{\lambda\in\Z^*}\tilde{\pi}_{(\mathbf{n}_0\cdot\boldsymbol{\zeta},\xi_0)}\,\tilde{L}(0,\partial_z)\,E(\lambda\mathbf{n}_0,\lambda\xi_0)\,\sigma_{\lambda,\mathbf{n}_0,\xi_0}\,e^{i\,\lambda\mathbf{n}_0\cdot\theta}\,e^{i\,\lambda\xi_0\,\psi_d},\label{eq moy E i L U trsp osc}
	\intertext{constituting the transport terms of the mean value and the oscillating part, then the terms of resonances with the mean value as well as the resonances creating a zero frequency}
	&\quad+\sum_{\substack{\mathbf{n}_0\in\B_{\Z^m}\\\xi_0\in\mathcal{C}(\mathbf{n}_0)}}\sum_{\lambda\in\Z^*}\tilde{\pi}_{(\mathbf{n}_0\cdot\boldsymbol{\zeta},\xi_0)}\,\tilde{L}_1(U^*,i\,\lambda\mathbf{n}_0\cdot\boldsymbol{\zeta})\,E(\lambda\mathbf{n}_0,\lambda\xi_0)\,\sigma_{\lambda,\mathbf{n}_0,\xi_0}\,e^{i\,\lambda\mathbf{n}_0\cdot\theta}\,e^{i\,\lambda\xi_0\,\psi_d}\label{eq moy E i L U res moy}\\
	&\quad+\sum_{\substack{\mathbf{n}_0\in\B_{\Z^m}\\\xi_0\in\mathcal{C}(\mathbf{n}_0)}}\sum_{\lambda\in\Z^*}\tilde{L}_1(E(-\lambda\mathbf{n}_0,-\lambda\xi_0),i\,\lambda\mathbf{n}_0\cdot\boldsymbol{\zeta})\, E(\lambda\mathbf{n}_0,\lambda\xi_0)\,\sigma_{\lambda,\mathbf{n}_0,\xi_0}\,\sigma_{-\lambda,\mathbf{n}_0,\xi_0},\label{eq moy E i L U res 0}
	\intertext{and finally the self-interaction term}
	&\quad+\sum_{\substack{\mathbf{n}_0\in\B_{\Z^m}\\\xi_0\in\mathcal{C}(\mathbf{n}_0)}}\sum_{\lambda\in\Z^*}\sum_{\substack{\lambda_1,\lambda_2\in\Z^*\\\lambda_1+\lambda_2=\lambda}}\tilde{\pi}_{(\mathbf{n}_0\cdot\boldsymbol{\zeta},\xi_0)}\,\tilde{L}_1(E(\lambda_1\mathbf{n}_0,\lambda_1\xi_0),i\,\lambda_2\,\mathbf{n}_0\cdot\boldsymbol{\zeta})\,\label{eq moy E i L U AI}\\
	\nonumber&\qquad\qquad E(\lambda_2\mathbf{n}_0,\lambda_2\xi_0)\,\sigma_{\lambda_1,\mathbf{n}_0,\xi_0}\,\sigma_{\lambda_2,\mathbf{n}_0,\xi_0}\,e^{i\,\lambda\mathbf{n}_0\cdot\theta}\,e^{i\,\lambda\xi_0\,\psi_d},\intertext{and the resonances of types 1 and 2 terms}
	&\quad+\sum_{\substack{\mathbf{n}_0\in\B_{\Z^m}\\\xi_0\in\mathcal{C}(\mathbf{n}_0)}}\sum_{\substack{(\lambda_p,\lambda_q,\lambda_r,\mathbf{n}_p,\mathbf{n}_q,\\\xi_p,\xi_q)\in\mathcal{R}_1(\mathbf{n}_0,\xi_0)}}\sum_{\ell\in\Z^*}\tilde{\pi}_{(\mathbf{n}_0\cdot\boldsymbol{\zeta},\xi_0)}\,\tilde{L}_1(E(\ell\lambda_p\mathbf{n}_p,\ell\lambda_p\xi_p),i\ell\lambda_q\mathbf{n}_q\cdot\boldsymbol{\zeta})\,\label{eq moy E i L U R1}\\
	\nonumber&\qquad\qquad E(\ell\lambda_q\mathbf{n}_q,\ell\lambda_q\xi_q)\,\sigma_{\ell\lambda_p,\mathbf{n}_p,\xi_p}\,\sigma_{\ell\lambda_q,\mathbf{n}_q,\xi_q}\,e^{i\,\ell\,\lambda_r\,\mathbf{n}_0\cdot\theta}\,e^{i\,\ell\,\lambda_r\xi_0\,\psi_d}\\[5pt]
	&\quad+\sum_{\substack{\mathbf{n}_0\in\B_{\Z^m}\\\xi_0\in\mathcal{C}(\mathbf{n}_0)}}\sum_{\substack{(\lambda_p,\lambda_q,\lambda_r,\mathbf{n}_p,\mathbf{n}_q,\\\xi_p,\xi_q)\in\mathcal{R}_2(\mathbf{n}_0,\xi_0)}}\sum_{\ell\in\Z^*}\tilde{\pi}_{(\mathbf{n}_0\cdot\boldsymbol{\zeta},\xi_0)}\,\tilde{L}_1(E(\ell\lambda_p\mathbf{n}_p,\ell\lambda_p\xi_p),i\ell\lambda_q\mathbf{n}_q\cdot\boldsymbol{\zeta})\,\label{eq moy E i L U R2}\\
	\nonumber&\qquad\qquad E(\ell\lambda_q\mathbf{n}_q,\ell\lambda_q\xi_q)\,\sigma_{\ell\lambda_p,\mathbf{n}_p,\xi_p}\,\sigma_{\ell\lambda_q,\mathbf{n}_q,\xi_q}\,e^{i\,\ell\,\lambda_r\,\mathbf{n}_0\cdot\theta}\,e^{i\,\ell\,\lambda_r\xi_0\,\psi_d}.
\end{align}
\end{subequations}
The homogeneity of degree zero of the projectors $ \tilde{\pi}_{\alpha} $ has been used here. In the following, Definition \ref{def coefficient gamma} of coefficients $ \Gamma $ will be used to rewrite the different terms of \eqref{eq moy E i L U}. In equation \eqref{eq moy E i L U}, the vectors $ E(\mathbf{n},\xi) $ being homogeneous of degree 0, coefficients $ \lambda,\lambda_1,\lambda_2, \lambda_p,\lambda_q $ and $ \ell $ may or may not appear. They are indicated here because they will be useful in a computation below. They may however be removed without any mention being made.

\subsubsection{The mean value is zero}
We prove now that the mean value $ U^* $ is zero, by extracting the system verified by it. According to equation \eqref{eq moy E i L U}, the mean value of the term $ \tilde{\E^i}\,\big( \tilde{L}(0,\partial_z)\,U+\sum_{j=1}^m\tilde{L}_1(U,\zeta_j)\,\partial_{\theta_j}U\big) $ is a priori given by
\begin{equation}\label{eq moy moy E i U}
 \tilde{L}(0,\partial_z)\,U^*+\sum_{\substack{\mathbf{n}_0\in\B_{\Z^m}\\\xi_0\in\mathcal{C}(\mathbf{n}_0)}}\sum_{\lambda\in\Z^*}\tilde{L}_1(E(\mathbf{n}_0,\xi_0),i\lambda\mathbf{n}_0\cdot\boldsymbol{\zeta})\, E(\mathbf{n}_0,\xi_0)\,\sigma_{\lambda,\mathbf{n}_0,\xi_0}\,\sigma_{-\lambda,\mathbf{n}_0,\xi_0}.
\end{equation}
The change of variables $ \lambda=-\lambda $ then shows that term \eqref{eq moy moy E i U} is actually zero. Indeed one can compute
\begin{align*}
&\sum_{\substack{\mathbf{n}_0\in\B_{\Z^m}\\\xi_0\in\mathcal{C}(\mathbf{n}_0)}}\sum_{\lambda\in\Z^*}\tilde{L}_1(E(\mathbf{n}_0,\xi_0),i\lambda\mathbf{n}_0\cdot\boldsymbol{\zeta})\, E(\mathbf{n}_0,\xi_0)\,\sigma_{\lambda,\mathbf{n}_0,\xi_0}\,\sigma_{-\lambda,\mathbf{n}_0,\xi_0}\\
=&\sum_{\substack{\mathbf{n}_0\in\B_{\Z^m}\\\xi_0\in\mathcal{C}(\mathbf{n}_0)}}\sum_{\lambda\in\Z^*}\tilde{L}_1(E(\mathbf{n}_0,\xi_0),-i\lambda\mathbf{n}_0\cdot\boldsymbol{\zeta})\, E(\mathbf{n}_0,\xi_0)\,\sigma_{-\lambda,\mathbf{n}_0,\xi_0}\,\sigma_{\lambda,\mathbf{n}_0,\xi_0}\\
=&-\sum_{\substack{\mathbf{n}_0\in\B_{\Z^m}\\\xi_0\in\mathcal{C}(\mathbf{n}_0)}}\sum_{\lambda\in\Z^*}\tilde{L}_1(E(\mathbf{n}_0,\xi_0),i\lambda\mathbf{n}_0\cdot\boldsymbol{\zeta})\, E(\mathbf{n}_0,\xi_0)\,\sigma_{\lambda,\mathbf{n}_0,\xi_0}\,\sigma_{-\lambda,\mathbf{n}_0,\xi_0}
=0.
\end{align*}
The second term of \eqref{eq moy moy E i U} being zero, the non oscillating terms of \eqref{eq moy E i L U} are given by the term $ \tilde{L}(0,\partial_z)\,U^* $ only. Thus, using system \eqref{eq obtention U 1}, we see that the mean value $ U^* $ satisfies the decoupled system
\begin{equation*}
\left\lbrace \begin{array}{l}
\tilde{L}(0,\partial_z)\,U^*=0,\\[5pt]
B\,U^*_{|x_d=0}=0,\\[10pt]
U^*_{|t\leq 0}=0
\end{array}
\right.
\end{equation*}
since $ G $ is of zero mean value. The mean value $ U^* $ therefore satisfies a boundary value problem verifying the uniform Kreiss-Lopatinskii condition with a strictly hyperbolic operator $ \tilde{L}(0,\partial_z) $. According to \cite{Kreiss1970Initial}, the problem is thus well-posed so $ U^* $ is zero on $ \Omega_T $.

At this point we should note that equation \eqref{eq moy E i L U} can be decoupled between incoming and outgoing modes, thanks to Assumption \ref{hypothese pas de sortant} and the nullity of terms \eqref{eq moy E i L U trsp moy}, \eqref{eq moy E i L U res moy} and \eqref{eq moy E i L U res 0}. The difficulty in decoupling the system now relies on decoupling the boundary condition, and we use the fact that there is no outgoing mode to do it. In its turn the nullity of outgoing modes relies on the nullity of the mean value.

\subsubsection{Finite speed propagation}

It can be proved that if $ U $ is a smooth enough solution to \eqref{eq obtention U 1}, then it is supported in a finite interval in $ x_d $. More precisely the following result is verified.

\begin{lemma}\label{lemme propagation vitesse finie}
	Consider $ T>0 $ and $ s>s_0 $, and let $ U $ in $ \P_{s,T} $ be a solution to system \eqref{eq obtention U 1}. Then its oscillating part $ U^{\osc} $ is zero outside the dihedron $ \ensemble{(t,y,x_d)\in\Omega_T\,\middle|\,0\leq x_d\leq \mathcal{V}^*t} $ (see Figure \ref{figure zone de propagation}).
\end{lemma}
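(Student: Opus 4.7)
The plan is to prove finite propagation speed by an almost-periodic energy estimate on a family of truncated dihedra with slope $\mathcal{V}^*$, relying crucially on the uniform bound $|\partial_\xi \tau_k| \leq \mathcal{V}^*$ from Lemma \ref{lemme vitesse de groupe bornees}. Since $\tilde{\E^i}$ annihilates any evanescent profile, equation \eqref{eq obtention E i U} combined with the vanishing of $U^*$ already established yields a closed system for $U^{\osc}$ alone, in which the right-hand side of \eqref{eq moy E i L U} collapses to the oscillating transport term \eqref{eq moy E i L U trsp osc} together with the quadratic self-interaction and resonance contributions \eqref{eq moy E i L U AI}--\eqref{eq moy E i L U R2}. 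Mode by mode, Lax Lemma \ref{lemme Lax} converts this into a family of scalar transport equations of the form
\begin{equation*}
\tilde{X}_{\lambda \alpha_0}\,\sigma_{\lambda,\mathbf{n}_0,\xi_0} + F_{\lambda,\mathbf{n}_0,\xi_0}\big[U^{\osc}\big] = 0,
\end{equation*}
whose sources are quadratic in the oscillating modes.

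Next, fix $t_0 \in (0,T]$, $h>0$ and $R > \mathcal{V}^* t_0 + h$, and introduce the bounded truncated dihedron
\begin{equation*}
\K(t_0, h, R) := \big\{(t,y,x_d) \in \Omega_{t_0} \,\big|\, \mathcal{V}^*(t_0 - t) + h \leq x_d \leq R \big\}.
\end{equation*}
For each scalar mode, multiplying its transport equation by $\partial_\xi \tau_k \cdot \bar{\sigma}_{\lambda,\mathbf{n}_0,\xi_0}$, taking real parts and evaluating in the almost-periodic bracket $\prodscal{\cdot}{\cdot}_{\K(t_0,h,R)}$ of \eqref{eq def prod scal ppgat}, summing over all modes and invoking the divergence theorem produces a flux identity with four boundary contributions. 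On $\{t=0\}$ the flux vanishes by the initial condition \eqref{eq obtention cond initiale U}; on the top face $\{t=t_0\}$ it yields the coercive almost-periodic $L^2$-mass of $U^{\osc}(t_0,\cdot)$ on the slice $\{x_d \geq h\}$; on the slanted face $\{x_d + \mathcal{V}^* t = \mathcal{V}^* t_0 + h\}$ the flux coefficient reduces to $\mathcal{V}^* - \partial_\xi \tau_k$, non-negative by Lemma \ref{lemme vitesse de groupe bornees}, so it enters with the correct sign and can be discarded; and on the outer face $\{x_d = R\}$ the contribution tends to zero as $R \to +\infty$ thanks to the uniform $\P_{s,T}$-control of $U^{\osc}$ combined with the Parseval-type formula \eqref{eq prod scal pol trigo ppgat}. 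Bounding the quadratic bulk source by $C\,\|U^{\osc}\|_{\P_{s,T}} \,\prodscal{U^{\osc}}{U^{\osc}}_{\K(s,h,\infty)}$ via the algebra property (Lemma \ref{lemme propr algebre}), and then running the same argument on the variable family $\K(s,h,+\infty)$ for $s\in[0,t_0]$, one arrives at a Gr\"onwall inequality which forces $U^{\osc} \equiv 0$ on $\K(t_0,h,+\infty)$. Letting $h \to 0^+$ and varying $t_0$ over $(0,T]$ yields the desired vanishing on $\{x_d > \mathcal{V}^* t\}$.

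The main obstacle lies in controlling the resonance contributions \eqref{eq moy E i L U R1}--\eqref{eq moy E i L U R2}: they involve infinitely many couplings whose coefficients $\Gamma$ need not be symmetric and may grow polynomially in the frequency, see \eqref{eq est gamma n n}. Fortunately, this step only requires a qualitative $L^2$-type bound (loss of derivatives is permitted here, since we merely propagate the zero initial datum rather than derive the sharp existence estimate of Section \ref{subsection estimate resonant part}), so that the high-regularity $H^s_+(\omega_T \times \T^m)$ norm with $s > s_0$ absorbs the polynomial factors directly through Lemma \ref{lemme propr algebre}, without any appeal to the refined symmetry Assumption \ref{hypothese type resonance}.
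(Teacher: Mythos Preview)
Your overall strategy---an almost-periodic energy identity on a slanted region in $(t,x_d)$ followed by Gr\"onwall from zero initial data---is exactly the paper's, but two steps do not go through as written.

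\medskip

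\textbf{The domain.} Your truncated set $\K(t_0,h,R)$ has a vertical outer face $\{x_d=R\}$, and you assert that its flux tends to zero as $R\to+\infty$. It does not: the $\P_{s,T}$ norm is a supremum over $x_d$ and gives no decay, so the boundary term $\sum_\alpha(\pm\partial_\xi\tau_k)\|\sigma_\alpha(\cdot,R)\|^2$ neither vanishes in the limit nor has a sign (it is negative for outgoing modes, which at this stage of the proof have not yet been shown to be absent). Note also that for $h<\mathcal V^*t_0$ your top face $\{t=t_0,\,x_d\in[h,R]\}$ meets the \emph{interior} of the propagation cone, where $U^{\osc}$ is generically nonzero, so the argument would prove too much. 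The paper avoids all of this by taking the \emph{bounded} region $\K(t_0,x_d^0)=\{\mathcal V^*t\le x_d\le x_d^0+\mathcal V^*(t_0-t)\}$ with \emph{two} slanted lateral faces; on each of them the flux coefficient is $\mathcal V^*\mp\partial_\xi\tau_k\ge0$ by Lemma~\ref{lemme vitesse de groupe bornees}, and the top face $[\mathcal V^*t_0,x_d^0]$ lies entirely outside the cone.

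\medskip

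\textbf{The resonance bound.} The more serious gap is the claim that the quadratic resonance contribution can be controlled ``without any appeal to Assumption~\ref{hypothese type resonance}''. After pairing with the test profile that carries the direction $\tilde\pi_\alpha E_\alpha$ (this choice is forced: any other either destroys the signed boundary flux or introduces the unbounded factor $|\tilde\pi_\alpha E_\alpha|^{-1}$), Green's formula produces the \emph{weighted} top-face energy $\sum_\alpha|\tilde\pi_\alpha E_\alpha|^2\|\sigma_\alpha(t_0)\|^2$. Closing Gr\"onwall requires bounding the bulk resonance term by the \emph{same} weighted quantity. A crude algebra/Cauchy--Schwarz bound yields at best the \emph{unweighted} sum $\sum_\alpha\|\sigma_\alpha\|^2_{L^2(\K)}$, and since $|\tilde\pi_\alpha E_\alpha|\sim|\partial_\xi\tau_k|$ (Lemma~\ref{lemme pi tilde E vitesse de groupe}) can be arbitrarily small along sequences approaching the glancing set (Lemma~\ref{lemme minoration vitesse de groupe}), the two are not comparable. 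The paper closes precisely this mismatch via Assumption~\ref{hypothese type resonance}: it invokes ``the same techniques as for \eqref{eq estim prod R2} and \eqref{eq estim prod R1}'', i.e.\ the symmetry control \eqref{eq propr res type 1} for type-1 resonances (which transfers the $\partial_\theta$-weight onto the $L^\infty$ factor), the finiteness of type-2 resonances, and the uniform lower bound \eqref{eq propr minoration pi tilde E R1} on the resonant set (which makes weighted and unweighted energies equivalent there). Your ``loss of derivatives'' remark does not address this: the obstruction is not a derivative count but a weight mismatch in the Gr\"onwall loop, and an inequality of the form $g'\le C\sqrt{g}$ from zero data does \emph{not} force $g\equiv0$.
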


The proof of this lemma uses techniques developed below, so it is postponed, in order to focus on the derivation of a priori estimates. We use the fact that $ U $  travels at finite speed in the normal direction, according to Lemma \ref{lemme vitesse de groupe bornees}.

According to this result, in system \eqref{eq obtention U 1} and the associated linearized systems, the profile $ U^{\osc} $ can be replaced by $ \beta_{T}\,U^{\osc} $, where $ \beta_{T} $ is the function of $ \mathcal{C}_{0}^{\infty}(\R^+_{x_d}) $ introduced in Proposition \ref{prop equivalence systeme}, equating 1 on $ [0,\V^*T] $ and 0 on $ [2\V^*T,+\infty) $. In the following, the scalar product \eqref{eq def prod scal sortant} suited for outgoing profiles can be used, since it is well-defined for profile of compact support with respect to $ x_d $.

\subsubsection{There is no outgoing mode} 
The aim is now to determine the equations satisfied by the outgoing modes. According to equation \eqref{eq moy E i L U}, since the mean value $ U^* $ is zero, the following equality holds
\begin{subequations}\label{eq sortants evolution 1}
\begin{align}
&\sum_{\substack{\mathbf{n}_0\in\B_{\Z^m}\\\xi_0\in\mathcal{C}_{\out}(\mathbf{n}_0)}}\sum_{\lambda\in\Z^*}\tilde{X}_{(\mathbf{n}_0\cdot\boldsymbol{\zeta},\xi_0)}\,\sigma_{\lambda,\mathbf{n}_0,\xi_0}\,e^{i\,\lambda\mathbf{n}_0\cdot\theta}\,e^{i\,\lambda\xi_0\,\psi_d}\,\tilde{\pi}_{(\mathbf{n}_0\cdot\boldsymbol{\zeta},\xi_0)}\,E(\mathbf{n}_0,\xi_0)\label{eq sortants evolution 1 trsp}\\
+&\sum_{\substack{\mathbf{n}_0\in\B_{\Z^m}\\\xi_0\in\mathcal{C}_{\out}(\mathbf{n}_0)}}\sum_{\lambda\in\Z^*}\sum_{\substack{\lambda_1,\lambda_2\in\Z^*\\\lambda_1+\lambda_2=\lambda}}i\,\sigma_{\lambda_1,\mathbf{n}_0,\xi_0}\,\sigma_{\lambda_2,\mathbf{n}_0,\xi_0}\,\Gamma\big(\lambda_1(\mathbf{n}_0,\xi_0),\lambda_2(\mathbf{n}_0,\xi_0)\big)\label{eq sortants evolution 1 AI}\\
\nonumber&\qquad\qquad e^{i\,\lambda\mathbf{n}_0\cdot\theta}\,e^{i\,\lambda\xi_0\,\psi_d}\,\tilde{\pi}_{(\mathbf{n}_0\cdot\boldsymbol{\zeta},\xi_0)}\,E(\mathbf{n}_0,\xi_0)\\
+&\sum_{\substack{\mathbf{n}_0\in\B_{\Z^m}\\\xi_0\in\mathcal{C}_{\out}(\mathbf{n}_0)}}\sum_{\substack{(\lambda_p,\lambda_q,\lambda_r,\mathbf{n}_p,\mathbf{n}_q,\xi_p,\xi_q)\\\in\mathcal{R}_1(\mathbf{n}_0,\xi_0)\cup\mathcal{R}_2(\mathbf{n}_0,\xi_0)}}\sum_{\ell\in\Z^*}i\,\sigma_{\ell\lambda_p,\mathbf{n}_p,\xi_p}\,\sigma_{\ell\lambda_q,\mathbf{n}_q,\xi_q}\,\Gamma\big(\ell\lambda_p(\mathbf{n}_p,\xi_p),\ell\lambda_q(\mathbf{n}_q,\xi_q)\big)\label{eq sortants evolution 1 res}\\
\nonumber&\qquad\qquad e^{i\,\ell\,\lambda_r\mathbf{n}_0\cdot\theta}\,e^{i\,\ell\,\lambda_r\xi_0\,\psi_d}\,\tilde{\pi}_{(\mathbf{n}_0\cdot\boldsymbol{\zeta},\xi_0)}\,E(\mathbf{n}_0,\xi_0)=0.
\end{align}
\end{subequations}
The Lax Lemma \ref{lemme Lax} has been used here to rewrite the term \eqref{eq moy E i L U trsp osc} as the term \eqref{eq sortants evolution 1 trsp}, using that, by definition, we have $ E(\mathbf{n}_0,\xi_0)=\pi_{(\mathbf{n}_0\cdot\boldsymbol{\zeta},\xi_0)}\,E(\mathbf{n}_0,\xi_0) $, and Definition \ref{def coefficient gamma} of coefficients $ \Gamma $ has also been used to rewrite the terms \eqref{eq moy E i L U AI} and \eqref{eq moy E i L U R2} as \eqref{eq sortants evolution 1 AI} and \eqref{eq sortants evolution 1 res}. Note that according to Assumption \ref{hypothese pas de sortant}, all modes $ U^{\osc}_{\mathbf{n},\xi} $ involved in equation \eqref{eq sortants evolution 1} are outgoing modes, and also that the equations are now scalar up to a constant vector depending only on the directions.

Equation \eqref{eq sortants evolution 1} is coupled to the initial condition
\begin{equation}\label{eq sortants cond init}
\big(U^{\osc}_{\mathbf{n},\xi}\big)_{|t\leq 0}=0,\quad \mathbf{n}\in\Z^m\privede{0},\xi\in\mathcal{C}_{\out}(\mathbf{n}).
\end{equation}
We thus seek to solve the problem \eqref{eq sortants evolution 1}, \eqref{eq sortants cond init}. We will prove a priori estimates for this purpose, using the scalar product \eqref{eq def prod scal sortant}.
The decomposition \eqref{eq propr def F out 1 2} of Definition \ref{def ensembles F 1 2 in out} will be used, and the set $ \F^{\out}_{\res} $ of outgoing frequencies involved in resonances will be treated separately, which is finite according to Assumption \ref{hypothese type resonance}.

\emph{Non resonant modes.}
First the modes that are not involved in resonances are investigated, namely we consider $ \mathbf{n}_0 $ in $ \B_{\Z^m} $ and $ \xi_0 $ in $ \mathcal{C}_{\out}(\mathbf{n}_0) $ such that $ (\mathbf{n}_0,\xi_0) $ does not belong to $ \F^{\out}_{\res} $. The sets $ \mathcal{R}_1(\mathbf{n}_0,\xi_0) $ and $ \mathcal{R}_2(\mathbf{n}_0,\xi_0) $ are therefore empty, so, according to equation \eqref{eq sortants evolution 1}, we obtain
\begin{subequations}\label{eq sortants evolution 2}
	\begin{align}
	&\sum_{\lambda\in\Z^*}\tilde{X}_{(\mathbf{n}_0\cdot\boldsymbol{\zeta},\xi_0)}\,\sigma_{\lambda,\mathbf{n}_0,\xi_0}\,e^{i\,\lambda\mathbf{n}_0\cdot\theta}\,e^{i\,\lambda\xi_0\,\psi_d}\,\tilde{\pi}_{(\mathbf{n}_0\cdot\boldsymbol{\zeta},\xi_0)}\,E(\mathbf{n}_0,\xi_0)\label{eq sortants evolution 2 trsp}\\
	+&\sum_{\lambda\in\Z^*}\sum_{\substack{\lambda_1,\lambda_2\in\Z^*\\\lambda_1+\lambda_2=\lambda}}i\,\sigma_{\lambda_1,\mathbf{n}_0,\xi_0}\,\sigma_{\lambda_2,\mathbf{n}_0,\xi_0}\,\lambda_2\,\Gamma\big((\mathbf{n}_0,\xi_0),(\mathbf{n}_0,\xi_0)\big)\label{eq sortants evolution 2 AI}\\
	\nonumber&\qquad e^{i\,\lambda\mathbf{n}_0\cdot\theta}\,e^{i\,\lambda\xi_0\,\psi_d}\,\tilde{\pi}_{(\mathbf{n}_0\cdot\boldsymbol{\zeta},\xi_0)}\,E(\mathbf{n}_0,\xi_0)=0.
	\end{align}
\end{subequations}
Here we have used identity \eqref{eq propr gamma AI} to get the term  \eqref{eq sortants evolution 2 AI}.
Note that if, for $ z $ in $ \Omega_T $ and $ \Theta $ in $ \T $, we define
\begin{equation*}
	S_{\mathbf{n}_0,\xi_0}(z,\Theta):=\sum_{\lambda\in\Z^*}\sigma_{\lambda,\mathbf{n}_0,\xi_0}(z)\,e^{i\lambda\Theta},
\end{equation*}
then 
one can check that the real valued function $ S_{\mathbf{n}_0,\xi_0} $ satisfies the following scalar Burgers equation
\begin{equation*}
	\tilde{X}_{(\mathbf{n}_0\cdot\boldsymbol{\zeta},\xi_0)}S_{\mathbf{n}_0,\xi_0}+\Gamma\big((\mathbf{n}_0,\xi_0),(\mathbf{n}_0,\xi_0)\big)S_{\mathbf{n}_0,\xi_0}\partial_{\Theta}S_{\mathbf{n}_0,\xi_0}=0,
\end{equation*}
that could be solved classically. Indeed, recall that $ \tilde{X}_{(\mathbf{n}_0\cdot\boldsymbol{\zeta},\xi_0)} $, defined in Lemma \ref{lemme Lax}, is given by
\begin{equation*}
	\tilde{X}_{(\mathbf{n}_0\cdot\boldsymbol{\zeta},\xi_0)}=\frac{-1}{\partial_{\xi}\tau_{k(\mathbf{n}_0,\xi_0)}(\mathbf{n}_0\cdot\boldsymbol{\eta},\xi_0)}\partial_t+\inv{\partial_{\xi}\tau_{k(\mathbf{n}_0,\xi_0)}(\mathbf{n}_0\cdot\boldsymbol{\eta},\xi_0)}\nabla_{\eta}\tau_{k(\mathbf{n}_0,\xi_0)}(\mathbf{n}_0\cdot\boldsymbol{\eta},\xi_0)\cdot\nabla_y+\partial_{x_d}.
\end{equation*} 
If, for $ k=1,\dots,m $, we denote by $ \eta_k $ the last $ d-1 $ coordinates of $ \zeta_k $, then we have denoted by $ \boldsymbol{\eta} $ the $ m $-tuple $ \boldsymbol{\eta}:=(\eta_1,\dots,\eta_m) $, in a similar way than $ \boldsymbol{\zeta} $. In this notation, for each $ \mathbf{n}_0 $, the frequency $ \mathbf{n}_0\cdot\boldsymbol{\eta} $ is given by the $ d-1 $ last coordinates of $ \mathbf{n}_0\cdot\boldsymbol{\zeta} $.

We choose however to explain on this easy example the techniques that shall be applied in the following to equations that go beyond the scope of the mere Burgers equations.

We take the scalar product $ \prodscal{.}{.}_{\out} $ of equality \eqref{eq sortants evolution 2} with the quantity
\begin{equation*}
\sum_{\lambda\in\Z^*}\sigma_{\lambda,\mathbf{n}_0,\xi_0}\,e^{i\,\lambda\mathbf{n}_0\cdot\theta}\,e^{i\,\lambda\xi_0\,\psi_d}\,\frac{\tilde{\pi}_{(\mathbf{n}_0\cdot\boldsymbol{\zeta},\xi_0)}\,E(\mathbf{n}_0,\xi_0)}{\left|\tilde{\pi}_{(\mathbf{n}_0\cdot\boldsymbol{\zeta},\xi_0)}\,E(\mathbf{n}_0,\xi_0)\right|^2},
\end{equation*}
to obtain
\begin{subequations}\label{eq sortants prod 2}
\begin{align}
&\sum_{\lambda\in\Z^*}\prodscal{\tilde{X}_{(\mathbf{n}_0\cdot\boldsymbol{\zeta},\xi_0)}\,\sigma_{\lambda,\mathbf{n}_0,\xi_0}}{\sigma_{\lambda,\mathbf{n}_0,\xi_0}}_{L^2(\R^{d-1}\times\R_+)}(t)\label{eq sortants prod 2 trsp}\\
+&\sum_{\lambda\in\Z^*}\sum_{\substack{\lambda_1,\lambda_2\in\Z^*\\\lambda_1+\lambda_2=\lambda}}i\,\lambda_2\,\Gamma\big((\mathbf{n}_0,\xi_0),(\mathbf{n}_0,\xi_0)\big)\label{eq sortants prod 2 AI}\prodscal{\sigma_{\lambda_1,\mathbf{n}_0,\xi_0}\,\sigma_{\lambda_2,\mathbf{n}_0,\xi_0}}{\sigma_{\lambda,\mathbf{n}_0,\xi_0}}_{L^2(\R^{d-1}\times\R_+)}(t)=0.
\end{align}
\end{subequations}
Note that the scalar product is well defined since $ U^{\osc} $ is of compact support with respect to $ x_d $. An integration by parts shows that the transport term \eqref{eq sortants prod 2 trsp} satisfies
\begin{align*}
	2\Re\,\eqref{eq sortants prod 2 trsp}=&
	\sum_{\lambda\in\Z^*}
	\frac{-1}{\partial_{\xi}\tau_{k(\mathbf{n}_0,\xi_0)}(\mathbf{n}_0\cdot\boldsymbol{\eta},\xi_0)}\,\partial_t\norme{\sigma_{\lambda,\mathbf{n}_0,\xi_0}}^2_{L^2(\R^{d-1}\times\R_+)}(t)\\
	&-\sum_{\lambda\in\Z^*}\norme{\sigma_{\lambda,\mathbf{n}_0,\xi_0}}^2_{L^2(\R^{d-1})}(t,0).
\end{align*}
We have denoted by $ k(\mathbf{n}_0,\xi_0) $ the integer between $ 1 $ and $ N $ such that if $ (\tau,\eta,\xi_0):=(\mathbf{n}_0\cdot\boldsymbol{\zeta},\xi_0) $, then $ \tau=\tau_{k(\mathbf{n}_0,\xi_0)}(\eta,\xi_0) $. 
It leads to the following equality
\begin{align}\label{eq sortants derivee 1}
2\Re\,\eqref{eq sortants prod 2 trsp}=&
\frac{-1}{\partial_{\xi}\tau_{k(\mathbf{n}_0,\xi_0)}(\mathbf{n}_0\cdot\boldsymbol{\eta},\xi_0)}\,\frac{d}{dt}\sum_{\lambda\in\Z^*} \norme{\sigma_{\lambda,\mathbf{n}_0,\xi_0}}^2_{L^2(\R^{d-1}\times\R_+)}(t)\\\nonumber
&-\sum_{\lambda\in\Z^*}\norme{\sigma_{\lambda,\mathbf{n}_0,\xi_0}}^2_{L^2(\R^{d-1})}(t,0).
\end{align}

 Then the Burgers term \eqref{eq sortants prod 2 AI} is studied, and more precisely the following sums, that is $ S $ given by
\begin{equation*}
S:=\sum_{\lambda\in\Z^*}\sum_{\substack{\lambda_1,\lambda_2\in\Z^*\\\lambda_1+\lambda_2=\lambda}}i\,\lambda\prodscal{\sigma_{\lambda_1,\mathbf{n}_0,\xi_0}\,\sigma_{\lambda_2,\mathbf{n}_0,\xi_0}}{\sigma_{\lambda,\mathbf{n}_0,\xi_0}}_{L^2(\R^{d-1}\times\R_+)}(t,x_d).
\end{equation*}
and, for $ j=1,2 $,
\begin{align*}
S_j&:=\sum_{\lambda\in\Z^*}\sum_{\substack{\lambda_1,\lambda_2\in\Z^*\\\lambda_1+\lambda_2=\lambda}}i\,\lambda_j\,\prodscal{\sigma_{\lambda_1,\mathbf{n}_0,\xi_0}\,\sigma_{\lambda_2,\mathbf{n}_0,\xi_0}}{\sigma_{\lambda,\mathbf{n}_0,\xi_0}}_{L^2(\R^{d-1}\times\R_+)}(t,x_d),
\end{align*}
the term \eqref{eq sortants prod 2 AI} being given by $ \Gamma\big((\mathbf{n}_0,\xi_0),(\mathbf{n}_0,\xi_0)\big)\,S_2 $.
First, one can verify that $ S=S_1+S_2 $.
But, on one hand, we have immediately $ S_1=S_2 $. On the other hand, the following equality holds: 
\begin{align*}
S&=\sum_{\lambda\in\Z^*}\sum_{\substack{\lambda_1,\lambda_2\in\Z^*\\\lambda_1+\lambda_2=\lambda}}i\,\lambda\prodscal{\sigma_{\lambda_1,\mathbf{n}_0,\xi_0}\,\bar{\sigma_{\lambda,\mathbf{n}_0,\xi_0}}}{\bar{\sigma_{\lambda_2,\mathbf{n}_0,\xi_0}}}_{L^2(\R^{d-1}\times\R_+)}(t,x_d)
\intertext{then, with the  consecutive changes of variables $ \lambda_2=\lambda_1+\lambda_2 $ and $ \lambda_1=-\lambda_1 $,}
S&=\sum_{\lambda\in\Z^*}\sum_{\substack{\lambda_1,\lambda_2\in\Z^*\\\lambda_2-\lambda_1=\lambda}}i\,\lambda_2\prodscal{\sigma_{\lambda_1,\mathbf{n}_0,\xi_0}\,\bar{\sigma_{\lambda_2,\mathbf{n}_0,\xi_0}}}{\bar{\sigma_{\lambda,\mathbf{n}_0,\xi_0}}}_{L^2(\R^{d-1}\times\R_+)}(t,x_d)\\
&=\sum_{\lambda\in\Z^*}\sum_{\substack{\lambda_1,\lambda_2\in\Z^*\\\lambda_2+\lambda_1=\lambda}}i\,\lambda_2\prodscal{\sigma_{-\lambda_1,\mathbf{n}_0,\xi_0}\,\bar{\sigma_{\lambda_2,\mathbf{n}_0,\xi_0}}}{\bar{\sigma_{\lambda,\mathbf{n}_0,\xi_0}}}_{L^2(\R^{d-1}\times\R_+)}(t,x_d)
\intertext{finally, since we have $ \sigma_{-\lambda_1,\mathbf{n}_0,\xi_0}=\bar{\sigma_{\lambda_1,\mathbf{n}_0,\xi_0}} $ (the profile $ U^{\osc} $ being real), one gets}
S&=\sum_{\lambda\in\Z^*}\sum_{\substack{\lambda_1,\lambda_2\in\Z^*\\\lambda_2+\lambda_1=\lambda}}i\,\lambda_2\prodscal{\bar{\sigma_{\lambda_1,\mathbf{n}_0,\xi_0}} \,\bar{\sigma_{\lambda_2,\mathbf{n}_0,\xi_0}}}{\bar{\sigma_{\lambda,\mathbf{n}_0,\xi_0}}}_{L^2(\R^{d-1}\times\R_+)}(t,x_d)\\
&=-\bar{S_2}.
\end{align*}
It follows from $ S=2S_2 $ that $ S_2=0 $, so the term \eqref{eq sortants prod 2 AI} is zero. With equalities \eqref{eq sortants prod 2} and \eqref{eq sortants derivee 1}, we thus obtain
\begin{equation*}
\frac{d}{dt}\sum_{\lambda\in\Z^*} \norme{\sigma_{\lambda,\mathbf{n}_0,\xi_0}}^2_{L^2(\R^{d-1}\times\R_+)}(t)+\partial_{\xi}\tau_{k(\mathbf{n}_0,\xi_0)}(\mathbf{n}_0\cdot\boldsymbol{\eta},\xi_0)\sum_{\lambda\in\Z^*}\norme{\sigma_{\lambda,\mathbf{n}_0,\xi_0}}^2_{L^2(\R^{d-1})}(t,0)=0,
\end{equation*}
and therefore, with the initial condition \eqref{eq sortants cond init}, for $ t\geq 0 $, we get
\begin{equation*}
\sum_{\lambda\in\Z^*} \norme{\sigma_{\lambda,\mathbf{n}_0,\xi_0}}^2_{L^2(\R^{d-1}\times\R_+)}(t)+\partial_{\xi}\tau_{k(\mathbf{n}_0,\xi_0)}(\mathbf{n}_0\cdot\boldsymbol{\eta},\xi_0)\int_0^t\sum_{\lambda\in\Z^*}\norme{\sigma_{\lambda,\mathbf{n}_0,\xi_0}}^2_{L^2(\R^{d-1})}(\rho,0)\,d\rho=0.
\end{equation*}
Since the quantity $ \partial_{\xi}\tau_{k(\mathbf{n}_0,\xi_0)}(\mathbf{n}_0\cdot\boldsymbol{\eta},\xi_0) $ is positive (the frequency $ (\mathbf{n}_0\cdot\boldsymbol{\zeta},\xi_0) $ being outgoing),  we deduce that $ \sigma_{\lambda,\mathbf{n}_0,\xi_0} $ is zero for all $ \lambda $ in $ \Z^* $, and $ U^{\osc}_{\lambda\mathbf{n}_0,\lambda\xi_0} $ is therefore zero for $ \mathbf{n}_0 $ in $ \B_{\Z^m} $, $ \xi_0 $ in $ \mathcal{C}_{\out}(\mathbf{n}_0) $ such that $ \mathcal{R}_2(\mathbf{n}_0,\xi_0) $ and $ \mathcal{R}_2(\mathbf{n}_0,\xi_0) $ are empty, and $ \lambda $ in $ \Z^* $.

\emph{Resonant modes.} Outgoing modes involved in resonances are now investigated, namely the couples $ (\mathbf{n}_0,\xi_0) $ of the set $ \F^{\out}_{\res} $, that are coupled through equation \eqref{eq sortants evolution 1}, because of the resonances, and therefore must be treated all together.
From equation \eqref{eq sortants evolution 1} is deduced the equation for the resonant modes, involving, in addition to a transport and a Burgers terms, a resonant one. There holds
\begin{subequations}\label{eq sortants evolution 3}
	\begin{align}
	&\sum_{(\mathbf{n}_0,\xi_0)\in\F^{\out}_{\res}}\sum_{\lambda\in\Z^*}\tilde{X}_{(\mathbf{n}_0\cdot\boldsymbol{\zeta},\xi_0)}\,\sigma_{\lambda,\mathbf{n}_0,\xi_0}\,e^{i\,\lambda\mathbf{n}_0\cdot\theta}\,e^{i\,\lambda\xi_0\,\psi_d}\,\tilde{\pi}_{(\mathbf{n}_0\cdot\boldsymbol{\zeta},\xi_0)}\,E(\mathbf{n}_0,\xi_0)\label{eq sortants evolution 3 trsp}\\
	+&\sum_{(\mathbf{n}_0,\xi_0)\in\F^{\out}_{\res}}\sum_{\lambda\in\Z^*}\sum_{\substack{\lambda_1,\lambda_2\in\Z^*\\\lambda_1+\lambda_2=\lambda}}i\,\lambda_2\,\sigma_{\lambda_1,\mathbf{n}_0,\xi_0}\,\sigma_{\lambda_2,\mathbf{n}_0,\xi_0}\,\Gamma\big((\mathbf{n}_0,\xi_0),(\mathbf{n}_0,\xi_0)\big)\label{eq sortants evolution 3 AI}\\
	\nonumber&\qquad e^{i\,\lambda\mathbf{n}_0\cdot\theta}\,e^{i\,\lambda\xi_0\,\psi_d}\,\tilde{\pi}_{(\mathbf{n}_0\cdot\boldsymbol{\zeta},\xi_0)}\,E(\mathbf{n}_0,\xi_0)\\
	+&\sum_{(\mathbf{n}_0,\xi_0)\in\F^{\out}_{\res}}\sum_{\substack{(\lambda_p,\lambda_q,\lambda_r,\mathbf{n}_p,\mathbf{n}_q,\xi_p,\xi_q)\\\in\mathcal{R}_1(\mathbf{n}_0,\xi_0)\cup\mathcal{R}_2(\mathbf{n}_0,\xi_0)}}\sum_{\ell\in\Z^*}i\,\ell\,\sigma_{\ell\lambda_p,\mathbf{n}_p,\xi_p}\,\sigma_{\ell\lambda_q,\mathbf{n}_q,\xi_q}\,\Gamma\big(\lambda_p(\mathbf{n}_p,\xi_p),\lambda_q(\mathbf{n}_q,\xi_q)\big)\label{eq sortants evolution 3 res}\\
	\nonumber&\qquad e^{i\,\ell\,\lambda_r\mathbf{n}_0\cdot\theta}\,e^{i\,\ell\,\lambda_r\xi_0\,\psi_d}\,\tilde{\pi}_{(\mathbf{n}_0\cdot\boldsymbol{\zeta},\xi_0)}\,E(\mathbf{n}_0,\xi_0)=0.
	\end{align}
\end{subequations}
Identities \eqref{eq propr homogeneite Gamma} and \eqref{eq propr gamma AI} have been used here.
Two profiles constructed from the resonant modes are defined here, that will be used in the following
\begin{align*}
U^{\osc}_{\mathcal{R}^{\out}}(z,\theta,\psi_d) &:=\sum_{(\mathbf{n}_0,\xi_0)\in\F^{\out}_{\res}}\sum_{\lambda\in\Z^*}\sigma_{\lambda,\mathbf{n}_0,\xi_0}(z)\,e^{i\,\lambda\mathbf{n}_0\cdot\theta}\,e^{i\,\lambda\xi_0\,\psi_d}\,E(\mathbf{n}_0,\xi_0),\\
\tilde{U}^{\osc}_{\mathcal{R}^{\out}}(z,\theta,\psi_d) &:=\sum_{(\mathbf{n}_0,\xi_0)\in\F^{\out}_{\res}}\sum_{\lambda\in\Z^*}\sigma_{\lambda,\mathbf{n}_0,\xi_0}(z)\,e^{i\,\lambda\mathbf{n}_0\cdot\theta}\,e^{i\,\lambda\xi_0\,\psi_d}\,\tilde{\pi}_{(\mathbf{n}_0\cdot\boldsymbol{\zeta},\xi_0)}\,E(\mathbf{n}_0,\xi_0).
\end{align*}
Taking the scalar product $ \prodscal{.}{.}_{\out} $ of equality \eqref{eq sortants evolution 3} with the profile $ \tilde{U}^{\osc}_{\mathcal{R}^{\out}} $, one gets
\begin{subequations}\label{eq sortants prod 3}
	\begin{align}
	&\sum_{(\mathbf{n}_0,\xi_0)\in\F^{\out}_{\res}}\sum_{\lambda\in\Z^*} \left|\tilde{\pi}_{(\mathbf{n}_0\cdot\boldsymbol{\zeta},\xi_0)}\,E(\mathbf{n}_0,\xi_0)\right|^2\prodscal{\tilde{X}_{(\mathbf{n}_0\cdot\boldsymbol{\zeta},\xi_0)}\,\sigma_{\lambda,\mathbf{n}_0,\xi_0}}{\sigma_{\lambda,\mathbf{n}_0,\xi_0}}_{L^2(\R^{d-1}\times\R_+)}(t)\label{eq sortants prod 3 trsp}\\
	+&\sum_{(\mathbf{n}_0,\xi_0)\in\F^{\out}_{\res}}\sum_{\lambda\in\Z^*}\sum_{\substack{\lambda_1,\lambda_2\in\Z^*\\\lambda_1+\lambda_2=\lambda}}i\,\lambda_2\,\Gamma\big((\mathbf{n}_0,\xi_0),(\mathbf{n}_0,\xi_0)\big)\left|\tilde{\pi}_{(\mathbf{n}_0\cdot\boldsymbol{\zeta},\xi_0)}\,E(\mathbf{n}_0,\xi_0)\right|^2\label{eq sortants prod 3 AI}
	\\
	\nonumber&\qquad  
	\prodscal{\sigma_{\lambda_1,\mathbf{n}_0,\xi_0}\,\sigma_{\lambda_2,\mathbf{n}_0,\xi_0}}{\sigma_{\lambda,\mathbf{n}_0,\xi_0}}_{L^2(\R^{d-1}\times\R_+)}(t)\\
	+&\sum_{(\mathbf{n}_0,\xi_0)\in\F^{\out}_{\res}}\sum_{\substack{(\lambda_p,\lambda_q,\lambda_r,\mathbf{n}_p,\mathbf{n}_q,\xi_p,\xi_q)\\\in\mathcal{R}_1(\mathbf{n}_0,\xi_0)\cup\mathcal{R}_2(\mathbf{n}_0,\xi_0)}}\sum_{\ell\in\Z^*}i\,\ell\,\Gamma\big(\lambda_p(\mathbf{n}_p,\xi_p),\lambda_q(\mathbf{n}_q,\xi_q)\big)\left|\tilde{\pi}_{(\mathbf{n}_0\cdot\boldsymbol{\zeta},\xi_0)}\,E(\mathbf{n}_0,\xi_0)\right|^2\label{eq sortants prod 3 res}
	\\\nonumber&\qquad 
	\prodscal{\sigma_{\ell\lambda_p,\mathbf{n}_p,\xi_p}\,\sigma_{\ell\lambda_q,\mathbf{n}_q,\xi_q}}{\sigma_{\ell\lambda_r,\mathbf{n}_0,\xi_0}}_{L^2(\R^{d-1}\times\R_+)}(t) =0.
	\end{align}
\end{subequations}
The first two terms are treated in the same way as for the non resonant modes (see above), so we obtain
\begin{align}\label{eq sortants derivee 2}
2\Re\,\eqref{eq sortants prod 3 trsp}=&-\sum_{(\mathbf{n}_0,\xi_0)\in\F^{\out}_{\res}}\frac{\left|\tilde{\pi}_{(\mathbf{n}_0\cdot\boldsymbol{\zeta},\xi_0)}\,E(\mathbf{n}_0,\xi_0)\right|^2}{\partial_{\xi}\tau_{k(\mathbf{n}_0,\xi_0)}(\mathbf{n}_0\cdot\boldsymbol{\eta},\xi_0)}\,\frac{d}{dt}\sum_{\lambda\in\Z^*} \norme{\sigma_{\lambda,\mathbf{n}_0,\xi_0}}^2_{L^2(\R^{d-1}\times\R_+)}(t)\\
&-\sum_{(\mathbf{n}_0,\xi_0)\in\F^{\out}_{\res}}\sum_{\lambda\in\Z^*}\norme{\sigma_{\lambda,\mathbf{n}_0,\xi_0}}^2_{L^2(\R^{d-1})}(t,0)\nonumber\\
\leq &-C\,\frac{d}{dt}\prodscal{\tilde{U}^{\osc}_{\mathcal{R}^{\out}}}{\tilde{U}^{\osc}_{\mathcal{R}^{\out}}}_{\out}(t),\nonumber
\end{align}
with $ C>0 $, using that $ \F^{\out}_{\res} $ is finite and that the group velocity $ \partial_{\xi}\tau_{k(\mathbf{n}_0,\xi_0)}(\mathbf{n}_0\cdot\boldsymbol{\eta},\xi_0) $ is positive. On an other hand, with the same techniques than for the non resonant modes, one gets $ \eqref{eq sortants prod 3 AI}=0 $. 
Finally the resonance term \eqref{eq sortants prod 3 res} is investigated. Since the sets $ \F^{\out}_{\res} $ and $ \bigcup_{(\mathbf{n}_0,\xi_0)\in\F^{\out}_{\res}}\big(\mathcal{R}_1(\mathbf{n}_0,\xi_0) \cup\mathcal{R}_2(\mathbf{n}_0,\xi_0)\big) $ are finite, the following bound holds
\begin{equation*}
	\left|\Gamma\big(\lambda_p(\mathbf{n}_p,\xi_p),\lambda_q(\mathbf{n}_q,\xi_q)\big)\right|\left|\tilde{\pi}_{(\mathbf{n}_0\cdot\boldsymbol{\zeta},\xi_0)}\,E(\mathbf{n}_0,\xi_0)\right|^2\leq C,
\end{equation*}
where the constant $ C>0 $ is independent of $ \mathbf{n}_p,\mathbf{n}_q,\mathbf{n}_r,\xi_p,\xi_q $ and $ \xi_r $. Thus a term of the form $ \prodscal{f*g}{g} $ is obtained in \eqref{eq sortants prod 3 res}, which is estimated using Cauchy-Schwarz and Young inequalities, and the injection of $ L^2(\T^m) $ into $ L^1(\T^m) $, which gives the following estimate on the term \eqref{eq sortants prod 3 res}:
\begin{equation}
	\left|2\Re \eqref{eq sortants prod 3 res}\right|
	\label{eq sortants est res} 
	\leq C
	\norme{U^{\osc}_{\mathcal{R}^{\out}}}_{\mathcal{E}_{s,T}}\prodscal{\tilde{U}^{\osc}_{\mathcal{R}^{\out}}}{\tilde{U}^{\osc}_{\mathcal{R}^{\out}}}_{\out}(t).
\end{equation}
It follows from equations \eqref{eq sortants evolution 3}, \eqref{eq sortants derivee 2} and \eqref{eq sortants est res} the differential inequality
\begin{equation*}
\frac{d}{dt}\prodscal{\tilde{U}^{\osc}_{\mathcal{R}^{\out}}}{\tilde{U}^{\osc}_{\mathcal{R}^{\out}}}_{\out}(t)\leq C
\norme{U^{\osc}_{\mathcal{R}^{\out}}}_{\mathcal{E}_{s,T}}\prodscal{\tilde{U}^{\osc}_{\mathcal{R}^{\out}}}{\tilde{U}^{\osc}_{\mathcal{R}^{\out}}}_{\out}(t)
\end{equation*}
The initial conditions \eqref{eq sortants cond init} ensure that $ \big(\tilde{U}^{\osc}_{\mathcal{R}^{\out}}\big)_{|t=0}=0 $, so, for $ t\geq 0 $, we have
\begin{equation*}
\prodscal{\tilde{U}^{\osc}_{\mathcal{R}^{\out}}}{\tilde{U}^{\osc}_{\mathcal{R}^{\out}}}_{\out}(t)
=\sum_{(\mathbf{n}_0,\xi_0)\in\F^{\out}_{\res}}\sum_{\lambda\in\Z^*} \norme{\sigma_{\lambda,\mathbf{n}_0,\xi_0}}^2_{L^2(\R^{d-1}\times\R_+)}(t)=0.
\end{equation*}
Thus, for all $ (\mathbf{n}_0,\xi_0) $ in $ \F^{\out}_{\res} $ and $ \lambda $ in $ \Z^* $, and for all $ t\geq 0 $, the function $ \sigma_{\lambda,\mathbf{n}_0,\xi_0}(t,.) $ is zero, therefore the same holds for the outgoing amplitude $ U^{\osc}_{\lambda\mathbf{n}_0,\lambda\xi_0} $, for all $ (\mathbf{n}_0,\xi_0) $ in $ \F^{\out}_{\res} $ and $ \lambda $ in $ \Z^* $.

In conclusion, it has been proven that for every profile $ U $ regular enough solution to system \eqref{eq obtention U 1}, its mean value $ U^{*} $ as well as each of its outgoing modes $ U^{\osc}_{\mathbf{n},\xi} $, $ \mathbf{n}\in\Z^m\privede{0} $, $ \xi\in\mathcal{C}_{\out}(\mathbf{n}) $, are zero.

\subsubsection{Decoupling the system}

Because of the algebra property of the space of profiles $ \P_{s,T} $, and since the projectors $ \E $ and $ \tilde{\E^i} $ preserve the decomposition $ \P_{s,T}=\P_{s,T}^{\osc}\oplus\P_{s,T}^{\ev} $, equations \eqref{eq obtention E U} and $ \eqref{eq obtention E i U} $ decoupled according to the oscillating and evanescent parts, and the same holds for equation \eqref{eq obtention cond initiale U}. The evanescent part therefore satisfies the equation
\begin{equation*}
	\E\, U^{\ev}=U^{\ev}
\end{equation*}
and the oscillating part the equations
\begin{align*}
	\E\,U^{\osc}&=U^{\osc}\\
	\tilde{\E^i}\Big[\tilde{L}(0,\partial_z)\,U^{\osc}+\sum_{j=1}^m\tilde{L}_1(U^{\osc},\zeta_j)\,\partial_{\theta_j}U^{\osc}\Big]&=0.
\end{align*}
The second equation may be rewritten as, using notations for $ U^{\osc} $ that have been already introduced,
\begin{subequations}\label{eq sortants evolution 4}
	\begin{align}
		&\sum_{\substack{\mathbf{n}_0\in\B_{\Z^m}\\\xi_0\in\mathcal{C}_{\inc}(\mathbf{n}_0)}}\sum_{\lambda\in\Z^*}\tilde{X}_{(\mathbf{n}_0\cdot\boldsymbol{\zeta},\xi_0)}\,\sigma_{\lambda,\mathbf{n}_0,\xi_0}\,e^{i\,\lambda\mathbf{n}_0\cdot\theta}\,e^{i\,\lambda\xi_0\,\psi_d}\,\tilde{\pi}_{(\mathbf{n}_0\cdot\boldsymbol{\zeta},\xi_0)}\,E(\mathbf{n}_0,\xi_0)\label{eq sortants evolution 4 trsp}\\
		+&\sum_{\substack{\mathbf{n}_0\in\B_{\Z^m}\\\xi_0\in\mathcal{C}_{\inc}(\mathbf{n}_0)}}\sum_{\lambda\in\Z^*}\sum_{\substack{\lambda_1,\lambda_2\in\Z^*\\\lambda_1+\lambda_2=\lambda}}i\,\lambda_2\,\sigma_{\lambda_1,\mathbf{n}_0,\xi_0}\,\sigma_{\lambda_2,\mathbf{n}_0,\xi_0}\,\Gamma\big((\mathbf{n}_0,\xi_0),(\mathbf{n}_0,\xi_0)\big)\label{eq sortants evolution 4 AI}\\
		\nonumber&\qquad e^{i\,\lambda\mathbf{n}_0\cdot\theta}\,e^{i\,\lambda\xi_0\,\psi_d}\,\tilde{\pi}_{(\mathbf{n}_0\cdot\boldsymbol{\zeta},\xi_0)}\,E(\mathbf{n}_0,\xi_0)\\
		+&\sum_{\substack{\mathbf{n}_0\in\B_{\Z^m}\\\xi_0\in\mathcal{C}_{\inc}(\mathbf{n}_0)}}\sum_{\substack{(\lambda_p,\lambda_q,\lambda_r,\mathbf{n}_p,\mathbf{n}_q,\xi_p,\xi_q)\\\in\mathcal{R}_1(\mathbf{n}_0,\xi_0)\cup\mathcal{R}_2(\mathbf{n}_0,\xi_0)}}\sum_{\ell\in\Z^*}i\,\ell\,\sigma_{\ell\lambda_p,\mathbf{n}_p,\xi_p}\,\sigma_{\ell\lambda_q,\mathbf{n}_q,\xi_q}\,\Gamma\big(\lambda_p(\mathbf{n}_p,\xi_p),\lambda_q(\mathbf{n}_q,\xi_q)\big)\label{eq sortants evolution 4 res}\\
		\nonumber&\qquad e^{i\,\ell\,\lambda_r\mathbf{n}_0\cdot\theta}\,e^{i\,\ell\,\lambda_r\xi_0\,\psi_d}\,\tilde{\pi}_{(\mathbf{n}_0\cdot\boldsymbol{\zeta},\xi_0)}\,E(\mathbf{n}_0,\xi_0)=0.
	\end{align}
\end{subequations}
This equation decouples according to the set $ \F^{\inc}_{\res} $ of resonant modes, and each of the non resonant mode. For each mode $ (\mathbf{n}_0,\xi_0) $ in $ (\B_{\Z^m}\times\mathcal{C}_{\inc}(\mathbf{n}_0))\setminus\F^{\inc}_{\res} $, which is therefore such that the sets $ \mathcal{R}_1(\mathbf{n}_0,\xi_0) $ and $ \mathcal{R}_2(\mathbf{n}_0,\xi_0) $ are empty, we define
\begin{equation*}
	S_{\mathbf{n}_0,\xi_0}(z,\Theta):=\sum_{\lambda\in\Z^*}\sigma_{\lambda,\mathbf{n}_0,\xi_0}(z)\,e^{i\lambda\,\Theta}.
\end{equation*}
Then, according to equations \eqref{eq sortants evolution 4 trsp} and \eqref{eq sortants evolution 4 AI}, this function satisfies the following scalar Burgers equation
\begin{equation*}
	\tilde{X}_{(\mathbf{n}_0\cdot\boldsymbol{\zeta},\xi_0)}S_{\mathbf{n}_0,\xi_0}+\Gamma\big((\mathbf{n}_0,\xi_0),(\mathbf{n}_0,\xi_0)\big)\,S_{\mathbf{n}_0,\xi_0}\,\partial_{\Theta}S_{\mathbf{n}_0,\xi_0}=0.
\end{equation*}
 On an other hand the resonant modes of $ \F^{\inc}_{\res} $ satisfy the independent equation
\begin{align*}
	&\sum_{(\mathbf{n}_0,\xi_0)\in\F^{\inc}_{\res}}\sum_{\lambda\in\Z^*}\tilde{X}_{(\mathbf{n}_0\cdot\boldsymbol{\zeta},\xi_0)}\,\sigma_{\lambda,\mathbf{n}_0,\xi_0}\,e^{i\,\lambda\mathbf{n}_0\cdot\theta}\,e^{i\,\lambda\xi_0\,\psi_d}\,\tilde{\pi}_{(\mathbf{n}_0\cdot\boldsymbol{\zeta},\xi_0)}\,E(\mathbf{n}_0,\xi_0)\\
	+&\sum_{(\mathbf{n}_0,\xi_0)\in\F^{\inc}_{\res}}\sum_{\lambda\in\Z^*}\sum_{\substack{\lambda_1,\lambda_2\in\Z^*\\\lambda_1+\lambda_2=\lambda}}i\,\lambda_2\,\sigma_{\lambda_1,\mathbf{n}_0,\xi_0}\,\sigma_{\lambda_2,\mathbf{n}_0,\xi_0}\,\Gamma\big((\mathbf{n}_0,\xi_0),(\mathbf{n}_0,\xi_0)\big)\\
	\nonumber&\qquad e^{i\,\lambda\mathbf{n}_0\cdot\theta}\,e^{i\,\lambda\xi_0\,\psi_d}\,\tilde{\pi}_{(\mathbf{n}_0\cdot\boldsymbol{\zeta},\xi_0)}\,E(\mathbf{n}_0,\xi_0)\\
	+&\sum_{(\mathbf{n}_0,\xi_0)\in\F^{\inc}_{\res}}\sum_{\substack{(\lambda_p,\lambda_q,\lambda_r,\mathbf{n}_p,\mathbf{n}_q,\xi_p,\xi_q)\\\in\mathcal{R}_1(\mathbf{n}_0,\xi_0)\cup\mathcal{R}_2(\mathbf{n}_0,\xi_0)}}\sum_{\ell\in\Z^*}i\,\ell\,\sigma_{\ell\lambda_p,\mathbf{n}_p,\xi_p}\,\sigma_{\ell\lambda_q,\mathbf{n}_q,\xi_q}\,\Gamma\big(\lambda_p(\mathbf{n}_p,\xi_p),\lambda_q(\mathbf{n}_q,\xi_q)\big)\\
	\nonumber&\qquad e^{i\,\ell\,\lambda_r\mathbf{n}_0\cdot\theta}\,e^{i\,\ell\,\lambda_r\xi_0\,\psi_d}\,\tilde{\pi}_{(\mathbf{n}_0\cdot\boldsymbol{\zeta},\xi_0)}\,E(\mathbf{n}_0,\xi_0)=0,
\end{align*}
that may be rewritten, with already introduced notations (see Definition \ref{def proj E res}), as
\begin{equation*}
	\tilde{\E^i}^{\inc}_{\res}\Big[\tilde{L}(0,\partial_z)\,U^{\osc}_{\res}+\sum_{j=1}^m\tilde{L}_1(U^{\osc}_{\res},\zeta_j)\,\partial_{\theta_j}U^{\osc}_{\res}\Big]=0.
\end{equation*}
Note that by assumption on the set $ \F^{\inc}_{\res} $, all modes involved in this equation are part of the set $ \F^{\inc}_{\res} $.
Furthermore it is clear that the polarization condition $ \E\,U^{\osc}=U^{\osc} $ as well as the initial condition decouple in the same way. Therefore, to conclude the proof of Proposition \ref{prop equivalence systeme}, it must be shown that the boundary condition also decouples in this manner.

\subsubsection{Determination of the trace on the boundary}

It is possible to determine the traces on the boundary $ (U^{\osc}_{\res})_{|x_d=0,\psi_d=0} $, $ (S_{\mathbf{n}_0,\xi_0})_{x_d=0} $ and $ U^{\ev}_{|x_d=0,\psi_d=0} $ from the boundary condition \eqref{eq obtention cond bord U} using the fact that there are only incoming modes, which will prove the intended decoupling of the system. According to polarization conditions \eqref{eq red pola osc} and \eqref{eq red pola ev}, and since there are only incoming frequencies, for $ \mathbf{n} $ in $ \Z^m\privede{0} $, boundary condition \eqref{eq obtention cond bord U} writes
\begin{equation}\label{eq red cond bord inter 1}
B\Big[\sum_{\xi\in\mathcal{C}_{\inc}(\mathbf{n})}\pi_{(\mathbf{n}\cdot\boldsymbol{\zeta},\xi)}\,U^{\osc}_{\mathbf{n},\xi}(z',0)+\Pi^e_{\C^N}(\mathbf{n}\cdot\boldsymbol{\zeta})\,U^{\ev}_{\mathbf{n}}(z',0,0)\Big]=G_{\mathbf{n}}(z'),
\end{equation}
where the amplitudes $ G_{\mathbf{n}} $ have been defined with the formula \eqref{eq def G_n}.
For all $ \xi $ in $ \mathcal{C}_{\inc}(\mathbf{n}) $, the term $ \pi_{(\mathbf{n}\cdot\boldsymbol{\zeta},\xi)}\,U^{\osc}_{\mathbf{n},\xi} $ belongs to $ \ker L\big(0,(\freq,\xi)\big) $ which is included in $ E_-(\freq) $ according to Proposition \ref{prop decomp E_-}, since the frequency $ (\freq,\xi) $ is incoming. In the same way, according to the definition of the projector $ \Pi^e_{\C^N}(\mathbf{n}\cdot\boldsymbol{\zeta}) $, the term $ \Pi^e_{\C^N}(\mathbf{n}\cdot\boldsymbol{\zeta})\,U^{\ev}_{\mathbf{n}}(z,0) $ belongs to the space $ E_-(\freq) $. The vector on which acts the matrix $ B $ in \eqref{eq red cond bord inter 1} therefore belongs to $ E_-(\freq) $, and the matrix $ B $ restricted to this subspace is invertible according to the uniform Kreiss-Lopatinskii condition Assumption \ref{hypothese UKL}. It then follows by projecting on the spaces $ E_-^j(\freq) $ and $ E_-^e(\freq) $ the following boundary conditions
\begin{subequations}\label{eq red trace}
	\begin{align}
	&\pi_{(\mathbf{n}\cdot\boldsymbol{\zeta},\xi)}\,U^{\osc}_{\mathbf{n},\xi}(z',0)=\Pi_-^{j(\mathbf{n},\xi)}(\freq)\,\big(B_{|E_-(\freq)}\big)^{-1}G_{\mathbf{n}}(z'),\quad \xi\in\mathcal{C}_{\inc}(\mathbf{n}),\label{eq red trace osc}\\[5pt]
	&\Pi^e_{\C^N}(\mathbf{n}\cdot\boldsymbol{\zeta})\,U^{\ev}_{\mathbf{n}}(z',0,0)=\Pi_-^e(\freq)\,\big(B_{|E_-(\freq)}\big)^{-1}G_{\mathbf{n}}(z'),\label{eq red trace ev}
	\end{align}
\end{subequations}
where, for $ \xi $ in $ \mathcal{C}_{\inc}(\mathbf{n}) $, $ j(\mathbf{n},\xi) $ is the index such that $ \xi=\xi_{j(\mathbf{n},\xi)}(\freq) $. Therefore, according to \eqref{eq red trace} and the polarization conditions \eqref{eq red pola osc} and \eqref{eq red pola ev}, the profiles $ H_{\res}^{\osc} $ and $ H^{\ev} $ defined by \eqref{eq red def H} are such that $ (U^{\osc}_{\res})_{|x_d=0,\psi_d=0}=H^{\osc}_{\res} $ and $ U^{\ev}_{|x_d=0,\psi_d=0}=H^{\ev} $. On an other hand, since, according to \eqref{eq red trace osc}, we have, for all $ (\mathbf{n}_0,\xi_0) $ in $ (\B_{\Z^m}\times\mathcal{C}_{\inc}(\mathbf{n}_0))\setminus\F^{\inc}_{\res} $,
\begin{equation*}
	(\sigma_{\lambda,\mathbf{n}_0,\xi_0})_{|z_d=0}\,E(\mathbf{n}_0,\xi_0)=\Pi_-^{j(\lambda\mathbf{n}_0,\lambda\xi_0)}(\lambda\mathbf{n}_0\cdot\boldsymbol{\zeta})\,\big(B_{|E_-(\lambda\mathbf{n}_0\cdot\boldsymbol{\zeta})}\big)^{-1}G_{\lambda\mathbf{n}_0\cdot\boldsymbol{\zeta}},
\end{equation*}
and since the vectors $ E(\mathbf{n}_0,\xi_0) $ are of norm 1, the function $ h_{\mathbf{n}_0,\xi_0} $ defined by formula \eqref{eq red def H osc non res} satisfies $ (S_{\mathbf{n}_0,\xi_0})_{x_d=0}=h_{\mathbf{n}_0,\xi_0} $.
We finally check that the boundary terms $ H^{\osc}_{\res} $, $ h_{\mathbf{n}_0,\xi_0}  $ and $ H^{\ev} $ are controlled in $ H^s(\omega_T) $ by $ G $. On one hand, according to the uniform Kreiss-Lopatinskii condition \ref{hypothese UKL}, the inverse matrix $ (B_{|E_-(\freq)})^{-1} $ is uniformly bounded, see Remark \ref{remarque B inverse bornee}. On the other hand, according to Proposition \ref{prop proj bornes}, projectors $ \Pi_-^{j(\mathbf{n},\xi)}(\freq) $ and $ \Pi_-^e(\freq) $ are uniformly bounded with respect to $ \mathbf{n} $ in $ \Z^m\privede{0} $. According to formulas \eqref{eq def G_n} and \eqref{eq red def H} and the Parseval's identity, the sought control in $ L^2(\omega_T) $ is ensured. The control in $ H^s(\omega_T\times\T^m) $ for all $ s\geq 0 $ follows, using that the quantities $ \Pi_-^{j(\mathbf{n},\xi)}(\freq)\,\big(B_{|E_-(\freq)}\big)^{-1} $  and $ \Pi_-^e(\freq)\,\big(B_{|E_-(\freq)}\big)^{-1} $ do not depend on $ z' $ in $ \omega_T $. Therefore we obtain
\begin{equation}\label{eq red est H h avec G}
	\norme{H^{\osc}_{\res}}^2_{H^s(\omega_T\times\T^m)}+\sum_{\substack{(\mathbf{n}_0,\xi_0) \in\\ (\B_{\Z^m}\times\mathcal{C}_{\inc}(\mathbf{n}_0))\setminus\F^{\inc}_{\res}}}\norme{h_{\mathbf{n}_0,\xi_0}}^2_{H^s(\omega_T\times\T)}+\norme{H^{\ev}}^2_{H^s(\omega_T\times\T^m)}\leq C \norme{G}^2_{H^s(\omega\times\T^m)},
\end{equation}
where the positive constant $ C $ does not depend on $ T $ or $ s $.
This completes the proof of Proposition \ref{prop equivalence systeme}.

\subsection{A priori estimate on the linearized system for the oscillating resonant part}\label{subsection estimate resonant part}

According to Proposition \ref{prop equivalence systeme}, the study may be narrowed down to the one of systems \eqref{eq red syst osc res}, $ \eqref{eq red syst osc non res} $ and \eqref{eq red syst ev}. This part deals with the first one, and we will prove a priori estimates on the associated linearized system, which will be used to show the convergence of an iterative scheme. 
Recall that $ s_0 $ is given by $ s_0=h+(d+m)/2 $ where $ h $ is an integer greater than $ (3+a_1)/2 $, occurring in estimate \eqref{eq est gamma n n}, whith $ a_1$ the real number of Assumption \ref{hypothese petits diviseurs 1}.

\begin{proposition}\label{prop estim a priori}
	Consider $ s>s_0 $ and let $ U_{\res}^{\osc} $ be in $ \P^{\osc}_{s,T} $, $ V_{\res}^{\osc} $ in $ \N^{\osc}_{s,T} $ both involving only incoming resonant modes, and $ F_{\res}^{\osc} $ in $ \P^{\osc}_{s,T} $, satisfying the system
	\begin{subequations}\label{eq estim syst osc}
		\begin{align}
		\E^{\inc}_{\res}\, U_{\res}^{\osc}&=U_{\res}^{\osc} \label{eq estim syst osc E U = U}  \\
		\tilde{\E^i}^{\inc}_{\res}\Big[\tilde{L}(0,\partial_z)\,\beta_TU_{\res}^{\osc}+\sum_{j=1}^m\tilde{L}_1(\beta_TV_{\res}^{\osc},\zeta_j)\,\partial_{\theta_j}\beta_TU_{\res}^{\osc}\Big]&=\tilde{\E^i}\,F_{\res}^{\osc}\label{eq estim syst osc E i U}\\
		\big(U_{\res}^{\osc}\big)_{|x_d=0,\psi_d=0}&=H_{\res}^{\osc} \label{eq estim syst osc cond bord} \\[5pt]
		\big(U_{\res}^{\osc}\big)_{|t\leq 0}&=0, \label{eq estim syst osc cond init}
		\end{align}
	\end{subequations}
	where $ H^{\osc} $ is defined by equation \eqref{eq red def H osc res}.
	Then the profile $ U_{\res}^{\osc} $ satisfies the a priori estimate
	\begin{equation}\label{eq estim est a priori}
	\norme{U_{\res}^{\osc}}^2_{\mathcal{E}_{s,T}}\leq C_1\,e^{C(V)\,\mathcal{V}^*T}\norme{G}^2_{H^s(\omega_T\times\T^m)}
	+ \V^*T\,e^{C(V)\mathcal{V}^*T}\norme{F_{\res}^{\osc}}^2_{\mathcal{E}_{s,T}},
	\end{equation} 
	where $ C(V):=C_1\big(1+\norme{V_{\res}^{\osc}}^2_{\mathcal{E}_{s,T}}\big) $, with $ C_1>0 $ a positive constant depending only on the operator $ L(0,\partial_z) $ and of $ s $. 
	Recall that the real number $ \mathcal{V}^* $, which bounds the group velocities $ \mathbf{v}_{\alpha} $, has been defined in Lemma \ref{lemme vitesse de groupe bornees}.
\end{proposition}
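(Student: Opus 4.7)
The plan is to derive first an $L^2$ a priori bound, then upgrade it to an $H^s$ bound by commutator estimates, and finally close the inequality with Gronwall's lemma in the normal variable $x_d$. For the $L^2$ step I will take the scalar product $\prodscal{\cdot}{\cdot}_{\inc}(x_d)$ of the evolution equation \eqref{eq estim syst osc E i U} with a test profile obtained from $U^{\osc}_{\res}$ by replacing each coefficient vector $E(\mathbf{n}_0,\xi_0)$ by $\tilde{\pi}_{(\mathbf{n}_0\cdot\boldsymbol{\zeta},\xi_0)}E(\mathbf{n}_0,\xi_0)/|\tilde{\pi}_{(\mathbf{n}_0\cdot\boldsymbol{\zeta},\xi_0)}E(\mathbf{n}_0,\xi_0)|^2$. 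Thanks to the uniform lower bound \eqref{eq propr minoration pi tilde E R1} of Assumption \ref{hypothese type resonance} on frequencies in $\F^{\inc}_{\res}$, this weighted test profile is comparable, mode by mode, to $U^{\osc}_{\res}$ itself, so by Lemma \ref{lemme correspondance norme prod scal} the resulting energy is equivalent to $\norme{U^{\osc}_{\res}}^2_{L^2(\omega_T\times\T^m)}(x_d)$ up to a uniform constant.

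Four types of terms will appear on the left-hand side. The \emph{transport} term, rewritten via Lax Lemma \ref{lemme Lax} in the form $\tilde{X}_{\alpha}\tilde{\pi}_{\alpha}\pi_{\alpha}$, reduces after the cutoff $\beta_T$ to an expression whose real part is $-\partial_{x_d}$ of the energy (from the $\partial_{x_d}$ component of $\tilde X_\alpha$) plus a time-boundary contribution that vanishes thanks to the initial condition \eqref{eq estim syst osc cond init}, plus tangential divergences that integrate to zero. The \emph{self-interaction} (Burgers-type) term vanishes exactly by the symmetry trick already used for outgoing modes in the proof of Proposition \ref{prop equivalence systeme}: the two changes of summation variable $\lambda_1\leftrightarrow\lambda_2$ and $\lambda\mapsto-\lambda$, combined with $\sigma_{-\lambda}=\overline{\sigma_\lambda}$, force the triple sum to equal its own opposite.

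The crux lies in the \emph{resonance} terms. The type 2 contribution is easy since, by Assumption \ref{hypothese type resonance} a), the set of incoming type 2 resonances is finite: the corresponding $|\Gamma|$ and $|\tilde\pi_\alpha E|$ factors are thus uniformly bounded, and the term $\prodscal{\sigma_{\ell\lambda_p}\,\sigma_{\ell\lambda_q}}{\sigma_{\ell\lambda_r}}_{L^2}$ is estimated by Cauchy--Schwarz and Young combined with the algebra property of $\P^{\osc}_{s,T}$ (Lemma \ref{lemme propr algebre}) for $s>s_0>(d+m)/2$, producing a contribution $\leq C(V)\prodscal{U^{\osc}_{\res}}{U^{\osc}_{\res}}_{\inc}$. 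The type 1 contribution is infinite and this is where the main difficulty arises: estimate \eqref{eq est gamma n n resonance} gives only $|\Gamma|\lesssim|\mathbf n|$, which without symmetrization would cost a derivative, incompatible with an estimate without loss. I will pair the type 1 resonance $\lambda_p\alpha_p+\lambda_q\alpha_q=\lambda_r\alpha_r$ with the symmetric pairing obtained by exchanging the roles of $(\lambda_q\alpha_q,\lambda_r\alpha_r)$, exactly as in the Burgers symmetrization, so that the linearly growing coefficients combine into the quantity bounded by $C_0|\lambda_p\mathbf n_p|$ thanks to inequality \eqref{eq propr res type 1}. This remainder is then absorbed in $C(V)\prodscal{U^{\osc}_{\res}}{U^{\osc}_{\res}}_{\inc}$ via one tangential derivative hitting $V^{\osc}_{\res}$ (controlled in $\P^{\osc}_{s,T}$ since $s>s_0$), after again using the lower bound \eqref{eq propr minoration pi tilde E R1} to control the weight $|\tilde\pi_\alpha E|^{-2}$ uniformly on the resonance set.

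Collecting these estimates and including the contribution of $F^{\osc}_{\res}$ via Cauchy--Schwarz yields the differential inequality
\begin{equation*}
-\partial_{x_d}\prodscal{U^{\osc}_{\res}}{U^{\osc}_{\res}}_{\inc}(x_d)\leq C(V)\prodscal{U^{\osc}_{\res}}{U^{\osc}_{\res}}_{\inc}(x_d)+\norme{F^{\osc}_{\res}}^2_{\mathcal{E}_{0,T}}.
\end{equation*}
Propagating this inequality from $x_d=2\mathcal V^*T$ (where $\beta_T=0$ and thus the energy is zero) back to $x_d=0$ via Gronwall gives the bound $\prodscal{U^{\osc}_{\res}}{U^{\osc}_{\res}}_{\inc}(0)\leq \mathcal V^*T\,e^{C(V)\mathcal V^*T}\norme{F^{\osc}_{\res}}^2_{\mathcal E_{0,T}}$, and then propagating \emph{forward} from $x_d=0$ using the boundary trace \eqref{eq estim syst osc cond bord} and the control \eqref{eq red est H h avec G} of $H^{\osc}_{\res}$ by $G$ supplies the $\norme G_{H^s}^2$ contribution. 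Higher-order derivatives are handled in the standard fashion: one differentiates \eqref{eq estim syst osc E i U} in the tangential and fast variables (which commute with $\tilde{\E^i}^{\inc}_{\res}$ and the normal transport), reruns the same $L^2$ argument, and estimates the commutators with $\tilde L_1(\beta_TV^{\osc}_{\res},\zeta_j)\,\partial_{\theta_j}$ by Moser-type inequalities in the algebra $\P^{\osc}_{s,T}$, yielding at each order an additional $C(V)\norme{U^{\osc}_{\res}}^2_{\mathcal E_{s,T}}$ term; summation and a final Gronwall then produces the announced bound \eqref{eq estim est a priori}. I expect the main obstacle to be precisely the infinite type 1 resonance sum: without the symmetrization provided by \eqref{eq propr res type 1} and the uniform lower bound \eqref{eq propr minoration pi tilde E R1}, no estimate without loss of derivatives seems attainable.
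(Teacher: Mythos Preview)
Your overall strategy---weighted test profile via $\tilde\pi_\alpha E/|\tilde\pi_\alpha E|^2$, mode-by-mode energy, symmetrization of resonance terms using \eqref{eq propr res type 1}, commutators for higher order, Gronwall in $x_d$---is exactly the paper's. Two points, however, are genuinely wrong.

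First, the self-interaction (Burgers-type) term does \emph{not} vanish in the linearized setting. The argument you invoke from the outgoing-mode computation relies on all three factors in $\sum i\lambda_2\,\sigma_{\lambda_1}\sigma_{\lambda_2}\overline{\sigma_\lambda}$ being the \emph{same} sequence $\sigma$: the swap $\lambda_1\leftrightarrow\lambda_2$ then gives $S_1=S_2$, and together with $S=-\overline{S_2}$ this forces $S_2=0$. In the linearized equation the triple is $\omega_{\lambda_1}\sigma_{\lambda_2}\overline{\sigma_\lambda}$ and the swap is no longer a symmetry. What the paper does instead is write $i\lambda_2=i\lambda-i\lambda_1$, show that the $i\lambda$ piece equals minus the conjugate of the original sum (so $2\Re$ reduces to the $i\lambda_1$ piece), and thereby transfer the $\theta$-derivative from $U^{\osc}_{\res}$ onto $V^{\osc}_{\res}$; the residual is then bounded by $C\norme{V^{\osc}_{\res}}_{\mathcal E_{s,T}}\prodscal{U^{\osc}_{\res}}{U^{\osc}_{\res}}_{\inc}$ via \eqref{eq est gamma n n resonance}.

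Second, your sign and direction of propagation are wrong. Since $\tilde X_\alpha=\partial_{x_d}+\dots$, the transport contribution is $+\frac{d}{dx_d}\prodscal{U^{\osc}_{\res}}{U^{\osc}_{\res}}_{\inc}$ plus a $t=T$ boundary term which is \emph{nonnegative} (because the modes are incoming, $-1/\partial_\xi\tau_k>0$) and can be dropped; the $t\le0$ boundary term vanishes by \eqref{eq estim syst osc cond init}. The resulting differential inequality is
\[
\frac{d}{dx_d}\prodscal{U^{\osc}_{\res}}{U^{\osc}_{\res}}_{\inc}(x_d)\le C\prodscal{F^{\osc}_{\res}}{F^{\osc}_{\res}}_{\inc}(x_d)+C(V)\prodscal{U^{\osc}_{\res}}{U^{\osc}_{\res}}_{\inc}(x_d),
\]
and Gronwall is applied \emph{forward} from $x_d=0$, using the trace \eqref{eq estim syst osc cond bord} and its control \eqref{eq red est H h avec G} by $G$. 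The cutoff $\beta_T$ enters only at the very end, to bound $x_d\le 2\mathcal V^*T$ in the exponential when taking the supremum. Your backward propagation from $x_d=2\mathcal V^*T$ is not available: nothing in the system forces $\prodscal{U^{\osc}_{\res}}{U^{\osc}_{\res}}_{\inc}$ to vanish there, and with the correct sign the inequality gives no control in that direction.
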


Consider from now on an integer $ s>s_0 $.

\subsubsection{Rewriting the linearized oscillating system}

In system \eqref{eq estim syst osc} which is the linearization of system \eqref{eq red syst osc res} around $ V_{\res}^{\osc} $ in $ \P^{\osc}_{s,T} $, a source term $ F_{\res}^{\osc} $ in $ \P^{\osc}_{s,T} $ has been added, which will be useful to deduce from the $ L^2 $ estimate the higher order estimates, as well as in the iterative schemes used to construct solutions to the linearized system \eqref{eq estim syst osc} and to system \eqref{eq red syst osc res}. 
To simplify the equations, the function $ \beta_T $ will be omitted in the following.

The analysis conducted in the previous subsection is now reproduced to rewrite the left term of equality \eqref{eq estim syst osc E i U}. 
Since the profile $ U_{\res}^{\osc} $ satisfies the polarization condition \eqref{eq estim syst osc E U = U} and involves only incoming modes,
according to Remark \ref{remarque forme U osc polarise}, it writes
\begin{equation}\label{eq estim pola osc}
U_{\res}^{\osc}(z,\theta,\psi_d)=\sum_{(\mathbf{n}_0,\xi_0)\in\F^{\inc}_{\res}}\sum_{\lambda\in\Z^*}U^{\osc}_{\lambda\mathbf{n}_0,\lambda\xi_0}(z)\,e^{i\lambda\mathbf{n}_0\cdot\theta}\,e^{i\lambda\xi_0\,\psi_d}.
\end{equation}
with $ U^{\osc}_{\lambda\mathbf{n}_0,\lambda\xi_0}=\pi_{(\lambda\mathbf{n}_0\cdot\boldsymbol{\zeta},\lambda\xi_0)}\,U^{\osc}_{\lambda\mathbf{n}_0,\lambda\xi_0} $ for all $ \mathbf{n}_0,\xi_0,\lambda $. 
In the same way, since $ V_{\res}^{\osc} $ is in $ \N^{\osc}_{s,T} $ with only incoming resonant modes, we have
\begin{equation}\label{eq estim pola V osc}
	V_{\res}^{\osc}(z,\theta,\psi_d)=\sum_{(\mathbf{n}_0,\xi_0)\in\F^{\inc}_{\res}}\sum_{\lambda\in\Z^*}V^{\osc}_{\lambda\mathbf{n}_0,\lambda\xi_0}(z)\,e^{i\lambda\mathbf{n}_0\cdot\theta}\,e^{i\lambda\xi_0\,\psi_d}.
\end{equation}
with $ V^{\osc}_{\lambda\mathbf{n}_0,\lambda\xi_0}=\pi_{(\lambda\mathbf{n}_0\cdot\boldsymbol{\zeta},\lambda\xi_0)}\,V^{\osc}_{\lambda\mathbf{n}_0,\lambda\xi_0}  $ for all $ \mathbf{n}_0,\xi_0,\lambda $.
Once again, for $ (\mathbf{n}_0,\xi_0) $ in $ \F^{\inc}_{\res} $ and $ \lambda $ in $ \Z^* $, since the profiles $ U $ and $ V $ are polarized, we write
\begin{align*}
U^{\osc}_{\lambda\mathbf{n}_0,\lambda\xi_0}(z)&=\sigma_{\lambda,\mathbf{n}_0,\xi_0}(z)\,E(\mathbf{n}_0,\xi_0),\\[5pt]
V^{\osc}_{\lambda\mathbf{n}_0,\lambda\xi_0}(z)&=\omega_{\lambda,\mathbf{n}_0,\xi_0}(z)\,E(\mathbf{n}_0,\xi_0).
\end{align*} 
Note that according to identity \eqref{eq prod scal pol trigo rentrants}, the scalar product $ \prodscal{U_{\res}^{\osc}}{U_{\res}^{\osc}}_{\inc}(x_d) $ is given in this notation by
\begin{equation*}
\prodscal{U_{\res}^{\osc}}{U_{\res}^{\osc}}_{\inc}(x_d) =(2\pi)^m\sum_{(\mathbf{n}_0,\xi_0)\in\F^{\inc}_{\res}}\sum_{\lambda\in\Z^*}\norme{\sigma_{\lambda,\mathbf{n}_0,\xi_0}}_{L^2(\omega_T)}^2(x_d)  .
\end{equation*}
Since the projector $ \tilde{\E^i}^{\inc}_{\res} $ is applied to the source term $ F_{\res}^{\osc} $, one can assume without loss of generality that the latter writes
\begin{equation*}
	F_{\res}^{\osc}(z,\theta,\psi_d)=\sum_{(\mathbf{n}_0,\xi_0)\in\F^{\inc}_{\res}}\sum_{\lambda\in\Z^*}F_{\lambda,\mathbf{n}_0,\xi_0}(z)\,e^{i\mathbf{n}\cdot\theta}\,e^{i\xi\,\psi_d}.
\end{equation*}
We then denote, for $(\mathbf{n}_0,\xi_0) $ in $ \F^{\inc}_{\res} $ and $ \lambda $ in $ \Z^* $, by $ f_{\lambda,\mathbf{n}_0,\xi_0} $ the scalar function of $ \Omega_T $ such that
\begin{equation*}
	\tilde{\pi}_{(\mathbf{n}_0\cdot\boldsymbol{\zeta},\xi_0)}\,F_{\lambda,\mathbf{n}_0,\xi_0}=f_{\lambda,\mathbf{n}_0,\xi_0} \,\tilde{\pi}_{(\mathbf{n}_0\cdot\boldsymbol{\zeta},\xi_0)}\,E(\mathbf{n}_0,\xi_0)
\end{equation*}
so that $ \tilde{\E^i}^{\inc}_{\res}\,F_{\res}^{\osc} $ writes
\begin{equation*}
	\tilde{\E^i}^{\inc}_{\res}\,F_{\res}^{\osc} (z,\theta,\psi_d)=\sum_{(\mathbf{n}_0,\xi_0)\in\F^{\inc}_{\res}}\sum_{\lambda\in\Z^*}f_{\lambda,\mathbf{n}_0,\xi_0}(z)\,e^{i\mathbf{n}\cdot\theta}\,e^{i\xi\,\psi_d}\tilde{\pi}_{(\mathbf{n}_0\cdot\boldsymbol{\zeta},\xi_0)}\,E(\mathbf{n}_0,\xi_0).
\end{equation*}
According to estimate \eqref{eq propr minoration pi tilde E R1} of Assumption \ref{hypothese type resonance}, there exists a positive constant $ C $ such that for all $ (\mathbf{n}_0,\xi_0) $ in $ \F^{\inc}_{\res} $ and all $ \lambda $ in $ \Z^* $, we have
\begin{equation}\label{eq estim f F}
 \norme{f_{\lambda,\mathbf{n}_0,\xi_0}}_{L^2(\Omega_T)}\leq C \norme{F_{\lambda,\mathbf{n}_0,\xi_0}}_{L^2(\Omega_T)}.
\end{equation}

In this notation, the resonant incoming modes satisfy the following coupled equation, connecting the source term  $ \tilde{\E^i}^{\inc}_{\res}\,F_{\res}^{\osc} $
\begin{subequations}\label{eq estim E i L U}
	\begin{align}
	&\sum_{(\mathbf{n}_0,\xi_0)\in\F^{\inc}_{\res}}\sum_{\lambda\in\Z^*}f_{\lambda,\mathbf{n}_0,\xi_0}(z)\,e^{i\,\lambda\mathbf{n}_0\cdot\theta}\,e^{i\,\lambda\xi_0\,\psi_d}\,\tilde{\pi}_{(\mathbf{n}_0\cdot\boldsymbol{\zeta},\xi_0)}\,E(\mathbf{n}_0,\xi_0)
	\intertext{with the sum of a transport term, corresponding to $ \tilde{L}(0,\partial_z)\,U_{\res}^{\osc} $,}
	&\quad=\sum_{(\mathbf{n}_0,\xi_0)\in\F^{\inc}_{\res}}\sum_{\lambda\in\Z^*}\tilde{X}_{(\mathbf{n}_0\cdot\boldsymbol{\zeta},\xi_0)}\,\sigma_{\lambda,\mathbf{n}_0,\xi_0}\,e^{i\,\lambda\mathbf{n}_0\cdot\theta}\,e^{i\,\lambda\xi_0\,\psi_d}\,\tilde{\pi}_{(\mathbf{n}_0\cdot\boldsymbol{\zeta},\xi_0)}\,E(\mathbf{n}_0,\xi_0),\label{eq estim E i L U trsp osc}
	\intertext{a self-interaction term,}
	&\quad+\sum_{(\mathbf{n}_0,\xi_0)\in\F^{\inc}_{\res}}\sum_{\lambda\in\Z^*}\sum_{\substack{\lambda_1,\lambda_2\in\Z^*\\\lambda_1+\lambda_2=\lambda}}i\,\lambda_2\,\omega_{\lambda_1,\mathbf{n}_0,\xi_0}\,\sigma_{\lambda_2,\mathbf{n}_0,\xi_0}\,\Gamma\big((\mathbf{n}_0,\xi_0),(\mathbf{n}_0,\xi_0)\big)\label{eq estim E i L U AI}\\
	\nonumber&\qquad\qquad e^{i\,\lambda\mathbf{n}_0\cdot\theta}\,e^{i\,\lambda\xi_0\,\psi_d}\,\tilde{\pi}_{(\mathbf{n}_0\cdot\boldsymbol{\zeta},\xi_0)}\,E(\mathbf{n}_0,\xi_0),\intertext{and resonance terms of type 1,}
	&\quad+\sum_{(\mathbf{n}_0,\xi_0)\in\F^{\inc}_{\res}}\sum_{\substack{(\lambda_p,\lambda_q,\lambda_r,\mathbf{n}_p,\mathbf{n}_q,\\\xi_p,\xi_q)\in\mathcal{R}_1(\mathbf{n}_0,\xi_0)}}\sum_{\ell\in\Z^*}i\,\ell\,\omega_{\ell\lambda_p,\mathbf{n}_p,\xi_p}\,\sigma_{\ell\lambda_q,\mathbf{n}_q,\xi_q}\,\Gamma\big(\lambda_p(\mathbf{n}_p,\xi_p),\lambda_q(\mathbf{n}_q,\xi_q)\big)\label{eq estim E i L U R1}\\
	\nonumber&\qquad\qquad e^{i\,\ell\,\lambda_r\mathbf{n}_0\cdot\theta}\,e^{i\,\ell\,\lambda_r\xi_0\,\psi_d}\,\tilde{\pi}_{(\mathbf{n}_0\cdot\boldsymbol{\zeta},\xi_0)}\,E(\mathbf{n}_0,\xi_0),
	\intertext{and of type 2,}
	&\quad+\sum_{(\mathbf{n}_0,\xi_0)\in\F^{\inc}_{\res}}\sum_{\substack{(\lambda_p,\lambda_q,\lambda_r,\mathbf{n}_p,\mathbf{n}_q,\\\xi_p,\xi_q)\in\mathcal{R}_2(\mathbf{n}_0,\xi_0)}}\sum_{\ell\in\Z^*}i\,\ell\,\omega_{\ell\lambda_p,\mathbf{n}_p,\xi_p}\,\sigma_{\ell\lambda_q,\mathbf{n}_q,\xi_q}\,\Gamma\big(\lambda_p(\mathbf{n}_p,\xi_p),\lambda_q(\mathbf{n}_q,\xi_q)\big)\label{eq estim E i L U R2}\\
	\nonumber&\qquad\qquad e^{i\,\ell\,\lambda_r\mathbf{n}_0\cdot\theta}\,e^{i\,\ell\,\lambda_r\xi_0\,\psi_d}\,\tilde{\pi}_{(\mathbf{n}_0\cdot\boldsymbol{\zeta},\xi_0)}\,E(\mathbf{n}_0,\xi_0).
	\end{align}
\end{subequations}

Note that in terms \eqref{eq estim E i L U AI}, \eqref{eq estim E i L U R1} and \eqref{eq estim E i L U R2}, factors $ \lambda_2\,\Gamma\big((\mathbf{n}_0,\xi_0),(\mathbf{n}_0,\xi_0)\big) $ and $ \Gamma\big(\lambda_p(\mathbf{n}_p,\xi_p),\allowbreak\lambda_q(\mathbf{n}_q,\xi_q)\big) $ imply that something like a derivative with respect to $ \theta $ is applied to $ U $. To obtain estimates without loss of derivatives, one therefore needs the $ \theta $ derivative to apply to the coefficient $ V^{\osc}_{\res} $, which is the whole point of the following paragraph. Terms \eqref{eq estim E i L U AI} and \eqref{eq estim E i L U R2} are treated in the same way as the corresponding terms for the outgoing modes, whereas Assumption \ref{hypothese type resonance} is used to treat  term \eqref{eq estim E i L U R1} of resonances of type 1. 

\subsubsection{$ L^2 $ estimate}

This subsection is devoted to the proof of the following lemma.

\begin{lemma}\label{lemme estim est a priori L2}
	Consider $ s>s_0 $ and $ T>0 $, and let $ U_{\res}^{\osc} $ be in $ \P^{\osc}_{1,T} $, $ F^{\osc}_{\res} $ in $ \P^{\osc}_{0,T} $ and $ V_{\res}^{\osc} $ in $ \N^{\osc}_{s,T} $, only involving resonant incoming modes, satisfying system \eqref{eq estim syst osc}. Then the following estimate holds for $ x_d\geq0 $,
	\begin{equation}\label{eq estim est a priori L2}
		\frac{d}{dx_d}\prodscal{U_{\res}^{\osc}}{U_{\res}^{\osc}}_{\inc}(x_d)
		\leq C\prodscal{F^{\osc}_{\res}}{F_{\res}^{\osc}}_{\inc}(x_d)+C\left(1+\norme{V_{\res}^{\osc}}_{\mathcal{E}_{s,T}}\right)\prodscal{U_{\res}^{\osc}}{U_{\res}^{\osc}}_{\inc}(x_d).
	\end{equation}
\end{lemma}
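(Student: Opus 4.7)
The plan is to test the evolution equation \eqref{eq estim E i L U} in the incoming scalar product $\prodscal{\cdot}{\cdot}_{\inc}(x_d)$ against the ``dual'' profile
$$\tilde U^{\osc}_{\res}(z,\theta,\psi_d):=\sum_{(\mathbf{n}_0,\xi_0)\in\F^{\inc}_{\res}}\sum_{\lambda\in\Z^*}\sigma_{\lambda,\mathbf{n}_0,\xi_0}(z)\,e^{i\lambda\mathbf{n}_0\cdot\theta}\,e^{i\lambda\xi_0\psi_d}\,\tilde{\pi}_{(\mathbf{n}_0\cdot\boldsymbol{\zeta},\xi_0)}E(\mathbf{n}_0,\xi_0),$$
whose Fourier structure matches exactly the right-hand sides in \eqref{eq estim E i L U}. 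Thanks to the uniform lower bound \eqref{eq propr minoration pi tilde E R1} of Assumption \ref{hypothese type resonance}(b) and the uniform upper bound of Remark \ref{remarque proj pi et Q bornes}, the mode-wise weights $|\tilde{\pi}_{(\mathbf{n}_0\cdot\boldsymbol{\zeta},\xi_0)}E(\mathbf{n}_0,\xi_0)|^2$ are framed from above and below by positive constants on $\F^{\inc}_{\res}$, so $\prodscal{\tilde U^{\osc}_{\res}}{\tilde U^{\osc}_{\res}}_{\inc}(x_d)$ is mode-wise equivalent to $\prodscal{U^{\osc}_{\res}}{U^{\osc}_{\res}}_{\inc}(x_d)$, and it is enough to establish \eqref{eq estim est a priori L2} with the former.

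The transport term \eqref{eq estim E i L U trsp osc} is handled classically: because $\tilde{X}_{(\mathbf{n}_0\cdot\boldsymbol{\zeta},\xi_0)}$ has its $\partial_{x_d}$-coefficient normalized to $1$ and its tangential part is a constant-coefficient real vector field on $\omega_T$, integrations by parts in $(t,y)$ produce $\tfrac{d}{dx_d}\prodscal{\tilde U^{\osc}_{\res}}{\tilde U^{\osc}_{\res}}_{\inc}(x_d)$ plus a boundary trace at $t=T$, which has the right sign $-1/\partial_\xi\tau_k>0$ on incoming modes and can be discarded in the estimate. The source contribution, using \eqref{eq estim f F}, is controlled by Cauchy--Schwarz and Young as $C\prodscal{F^{\osc}_{\res}}{F^{\osc}_{\res}}_{\inc}+C\prodscal{U^{\osc}_{\res}}{U^{\osc}_{\res}}_{\inc}$. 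The self-interaction term \eqref{eq estim E i L U AI} is a trilinear Burgers-type expression: the algebra property of $H^s_+(\omega_T\times\T^m)$ (Lemma \ref{lemme propr algebre}, valid since $s>(d+m)/2$), Plancherel on $\T^m$, and the bound \eqref{eq est gamma n n resonance} on $\Gamma$ for resonant directions control it by $C\norme{V^{\osc}_{\res}}_{\mathcal{E}_{s,T}}\prodscal{U^{\osc}_{\res}}{U^{\osc}_{\res}}_{\inc}$. The type 2 resonance term \eqref{eq estim E i L U R2} is a finite sum by Assumption \ref{hypothese type resonance}(a), so its $\Gamma$-coefficients are uniformly bounded and the same algebra argument yields the same control.

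The main obstacle is the type 1 resonance term \eqref{eq estim E i L U R1}, which ranges over an a priori infinite set of resonances on which $\Gamma\bigl(\lambda_p(\mathbf{n}_p,\xi_p),\lambda_q(\mathbf{n}_q,\xi_q)\bigr)$ grows linearly in $|(\mathbf{n}_p,\mathbf{n}_q)|$, so a direct Cauchy--Schwarz bound would cost one full tangential derivative. To circumvent this, I would symmetrize the sum as in \cite[Chapter 11]{Rauch2012Hyperbolic}: by Remark \ref{remarque type resonance}(iii), whenever a 7-tuple belongs to $\mathcal{R}_1(\mathbf{n}_0,\xi_0)$ the companion tuple obtained by exchanging the roles of $(\lambda_q,\mathbf{n}_q,\xi_q)$ and $(-\lambda_r,\mathbf{n}_0,\xi_0)$ is again of type 1, and regrouping these two contributions in the Fourier pairing produces the antisymmetrized coefficient
$$\Gamma\bigl(\lambda_p(\mathbf{n}_p,\xi_p),\lambda_q(\mathbf{n}_q,\xi_q)\bigr)+\Gamma\bigl(\lambda_p(\mathbf{n}_p,\xi_p),-\lambda_r(\mathbf{n}_r,\xi_r)\bigr),$$
which by the defining inequality \eqref{eq propr res type 1} is uniformly bounded by $C_0\,|\lambda_p\,\mathbf{n}_p|$. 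The remaining $|\mathbf{n}_p|$-growth is transferred onto the known profile $V^{\osc}_{\res}$, where it amounts to one tangential $\theta$-derivative of the amplitude $\omega_{\ell\lambda_p,\mathbf{n}_p,\xi_p}$, which is controlled by $\norme{V^{\osc}_{\res}}_{\mathcal{E}_{s,T}}$ since $s>s_0>(d+m)/2+1$. A Cauchy--Schwarz estimate on the resulting bilinear-in-$U$ expression then yields the bound $C\norme{V^{\osc}_{\res}}_{\mathcal{E}_{s,T}}\prodscal{U^{\osc}_{\res}}{U^{\osc}_{\res}}_{\inc}$, and summing all four contributions proves \eqref{eq estim est a priori L2}.
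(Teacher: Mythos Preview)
Your outline follows the paper's proof closely: test the mode-expanded equation \eqref{eq estim E i L U} against a dual profile built from the vectors $\tilde{\pi}_{(\mathbf{n}_0\cdot\boldsymbol{\zeta},\xi_0)}E(\mathbf{n}_0,\xi_0)$, integrate the transport part by parts to produce the $\tfrac{d}{dx_d}$ of the squared norm (plus a trace of the good sign), control the source by Cauchy--Schwarz and \eqref{eq estim f F}, treat the self-interaction by the Burgers symmetrization that moves the factor $\lambda_2$ onto $\lambda_1$ (hence onto $V$), dispose of type~2 resonances by finiteness, and for type~1 pair each resonance with its $(q,r)$-companion via Remark~\ref{remarque type resonance}(iii) so that \eqref{eq propr res type 1} bounds the combined $\Gamma$ by $C_0|\lambda_p\mathbf{n}_p|$, which lands on $V$.

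There is one technical point where your choice of test profile creates a gap. You test against $\sum\sigma\,\tilde{\pi}E(\mathbf{n}_0,\xi_0)$, whereas the paper tests against $\sum\sigma\,\tilde{\pi}E(\mathbf{n}_0,\xi_0)/|\tilde{\pi}E(\mathbf{n}_0,\xi_0)|^{2}$. Your choice inserts the direction-dependent weight $w(\mathbf{n}_0,\xi_0):=|\tilde{\pi}E(\mathbf{n}_0,\xi_0)|^{2}$ in every mode of the pairing. This is harmless for the transport, source, self-interaction (the weight is constant along each fixed direction, so the $\lambda$-symmetrization still goes through) and type~2 terms. But in the type~1 symmetrization the swap of the roles of $q$ and $r$ carries the weight $w_r$ to $w_q$, so the two paired contributions combine into $\Gamma(p,q)\,w_r+\Gamma(p,r)\,w_q$ rather than $\bigl[\Gamma(p,q)+\Gamma(p,r)\bigr]$; the defining bound \eqref{eq propr res type 1} then no longer applies, and the residual cross term $\Gamma(p,r)(w_q-w_r)$ carries a full linear growth that falls on $U$, not on $V$. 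The paper's normalization by $|\tilde{\pi}E|^{-2}$, which is legitimate precisely because of \eqref{eq propr minoration pi tilde E R1}, erases all weights and is what allows the two $\Gamma$'s to add. With that normalization the transport term directly produces $\tfrac{d}{dx_d}\prodscal{U^{\osc}_{\res}}{U^{\osc}_{\res}}_{\inc}$ rather than its weighted analogue, and your argument becomes the paper's.
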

 
 \begin{proof}
Consider the modified profile $ \tilde{U}_{\res}^{\osc} $ given by
\begin{equation*}
	\tilde{U}_{\res}^{\osc}(z,\theta,\psi_d):=\sum_{(\mathbf{n}_0,\xi_0)\in\F^{\inc}_{\res}}\sum_{\lambda\in\Z^*}\sigma_{\lambda,\mathbf{n}_0,\xi_0}(z)\,e^{i\,\lambda\mathbf{n}_0\cdot\theta}\,e^{i\,\lambda\xi_0\,\psi_d}\frac{\tilde{\pi}_{(\mathbf{n}_0\cdot\boldsymbol{\zeta},\xi_0)}\,E(\mathbf{n}_0,\xi_0)}{|\tilde{\pi}_{(\mathbf{n}_0\cdot\boldsymbol{\zeta},\xi_0)}\,E(\mathbf{n}_0,\xi_0)|^2}.
\end{equation*}
Note that despite the factor $ |\tilde{\pi}_{(\mathbf{n}_0\cdot\boldsymbol{\zeta},\xi_0)}\,E(\mathbf{n}_0,\xi_0)|^{-1} $ that could present a problem, the profile $ \tilde{U}^{\osc}_{\res} $ is well defined, since the set $ \F^{\inc}_{\res} $ satisfies the property \eqref{eq propr minoration pi tilde E R1} ensuring that these factors are uniformly lower bounded.
They have been introduced to balance the factors $ |\tilde{\pi}_{(\mathbf{n}_0\cdot\boldsymbol{\zeta},\xi_0)}\,E(\mathbf{n}_0,\xi_0)| $ which will occur in the estimate. These factors may not be uniformly bounded with respect to $ (\mathbf{n}_0,\xi_0) $ varying in the (potentially infinite) set of directions $ (\mathbf{n}_0,\xi_0) $ of $ \B_{\Z^m}\times\mathcal{C}_{\inc}(\mathbf{n}_0) $ such that $ \mathcal{R}_1(\mathbf{n}_0,\xi_0)\cup\mathcal{R}_2(\mathbf{n}_0,\xi_0)  $ is empty, justifying the choice to treat them separately below.

Taking the double of the real part of the scalar product \eqref{eq def prod scal rentrant} of equality \eqref{eq estim E i L U} with the profile $ \tilde{U}^{\osc}_{\res} $,
one gets an equality, with on one side the term
\begin{equation*}
2\Re \prodscal{\tilde{\E^i}^{\inc}_{\res}\,F^{\osc}_{\res}}{\tilde{U}^{\osc}_{\res}}_{\inc}(x_d),
\end{equation*}
which is estimated in the following way:
\begin{align}
	\nonumber\left|2\Re \prodscal{\tilde{\E^i}^{\inc}_{\res}\,F^{\osc}_{\res}}{\tilde{U}^{\osc}_{\res}}_{\inc}(x_d)\right|&=\left|2\Re \prodscal{\hat{F}^{\osc}_{\res}}{\hat{U}^{\osc}_{\res}}_{\inc}(x_d)\right|\\
	&\nonumber\leq C\prodscal{\hat{F}^{\osc}_{\res}}{\hat{F}^{\osc}_{\res}}_{\inc}^{1/2}(x_d)\prodscal{\hat{U}^{\osc}_{\res}}{\hat{U}^{\osc}_{\res}}_{\inc}^{1/2}(x_d)\\[5pt]
	&\leq 
	C\prodscal{F^{\osc}_{\res}}{F^{\osc}_{\res}}_{\inc}(x_d)+C\prodscal{U^{\osc}_{\res}}{U^{\osc}_{\res}}_{\inc}(x_d),\label{eq estim prod F}
\end{align}
where it has been denoted
\begin{align*}
\hat{F}^{\osc}_{(\mathbf{n}_0,\xi_0)}(z,\theta,\psi_d)&:=\sum_{\lambda\in\Z^*}f_{\lambda,\mathbf{n}_0,\xi_0}(z)\,e^{i\,\lambda\mathbf{n}_0\cdot\theta}\,e^{i\,\lambda\xi_0\,\psi_d}\,\frac{\tilde{\pi}_{(\mathbf{n}_0\cdot\boldsymbol{\zeta},\xi_0)}\,E(\mathbf{n}_0,\xi_0)}{\left|\tilde{\pi}_{(\mathbf{n}_0\cdot\boldsymbol{\zeta},\xi_0)}\,E(\mathbf{n}_0,\xi_0)\right|},
\intertext{and}
\hat{U}^{\osc}_{(\mathbf{n}_0,\xi_0)}(z,\theta,\psi_d) &:=\sum_{\lambda\in\Z^*}\sigma_{\lambda,\mathbf{n}_0,\xi_0}(z)\,e^{i\,\lambda\mathbf{n}_0\cdot\theta}\,e^{i\,\lambda\xi_0\,\psi_d}\,\frac{\tilde{\pi}_{(\mathbf{n}_0\cdot\boldsymbol{\zeta},\xi_0)}\,E(\mathbf{n}_0,\xi_0)}{\left|\tilde{\pi}_{(\mathbf{n}_0\cdot\boldsymbol{\zeta},\xi_0)}\,E(\mathbf{n}_0,\xi_0)\right|},
\end{align*}
so that the profile $ \hat{U}_{\res}^{\osc} $  is such that its scalar product with itself equals the one of $ U^{\osc}_{(\mathbf{n}_0,\xi_0)} $ with itself, and according to estimate \eqref{eq estim f F}, the scalar product of $ \hat{F}^{\osc}_{\res} $ with itself is bounded, up to a positive multiplicative constant, by the one of $ F^{\osc}_{\res} $ with itself. Since the lower bound \eqref{eq propr minoration pi tilde E R1} is in general not verified by the non resonant modes, the analogue of estimate \eqref{eq estim prod F} seems false, explaining why these modes cannot be treated in the same way as the resonant modes in this subsection, which leads us to go back to scalar equations for the first ones.

Now the right hand side terms of the equality obtained by taking the double of the real part of the scalar product of equality \eqref{eq estim E i L U} with $ \tilde{U}^{\osc}_{\res} $ are investigated. The analysis of the terms corresponding to terms \eqref{eq estim E i L U trsp osc}, \eqref{eq estim E i L U AI} and \eqref{eq estim E i L U R2} is analogous to the one made for the outgoing modes.

Concerning the transport term \eqref{eq estim E i L U trsp osc}, identity \eqref{eq prod scal pol trigo rentrants} and an integration by parts lead to
\begin{multline}\label{eq estim prod trsp}
	2\Re\prodscal{\eqref{eq estim E i L U trsp osc}}{\tilde{U}^{\osc}_{\res}}_{\inc}(x_d)=\frac{d}{dx_d}\prodscal{U_{\res}^{\osc}}{U_{\res}^{\osc}}_{\inc}(x_d)\\
	+(2\pi)^m\sum_{(\mathbf{n}_0,\xi_0)\in\F^{\inc}_{\res}}\sum_{\lambda\in\Z^*}\frac{-1}{\partial_{\xi}\tau_{k(\mathbf{n}_0,\xi_0)}(\mathbf{n}_0\cdot\boldsymbol{\eta},\xi_0)}\norme{\sigma_{\lambda,\mathbf{n}_0,\xi_0}}^2_{L^2(\R^{d-1})}(T).
\end{multline}
Note that since all modes are incoming here, the quantity $ -\partial_{\xi}\tau_{k(\mathbf{n}_0,\xi_0)}(\mathbf{n}_0\cdot\boldsymbol{\eta},\xi_0) $ is positive for all $ (\mathbf{n}_0,\xi_0) $, which will allow us to omit the second term on the right of the equality in the estimates below.

For the self-interaction term \eqref{eq estim E i L U AI} one can compute, 
\begin{subequations}
	\begin{align}
		\nonumber&\prodscal{\eqref{eq estim E i L U AI}}{\tilde{U}_{\res}^{\osc}}_{\inc}(x_d)\\
		&\quad=(2\pi)^m\sum_{(\mathbf{n}_0,\xi_0)\in\F^{\inc}_{\res}}\sum_{\lambda\in\Z^*}\sum_{\substack{\lambda_1,\lambda_2\in\Z^*\\\lambda_1+\lambda_2=\lambda}}i\lambda_2\,\Gamma\big((\mathbf{n}_0,\xi_0),(\mathbf{n}_0,\xi_0)\big)\prodscal{\omega_{\lambda_1,\mathbf{n}_0,\xi_0}\,\sigma_{\lambda_2,\mathbf{n}_0,\xi_0}}{\sigma_{\lambda,\mathbf{n}_0,\xi_0}}_{L^2(\omega_T)}(x_d)\label{eq estim inter 1 res}\\
		&\quad=(2\pi)^m\sum_{(\mathbf{n}_0,\xi_0)\in\F^{\inc}_{\res}}\sum_{\lambda\in\Z^*}\sum_{\substack{\lambda_1,\lambda_2\in\Z^*\\\lambda_1+\lambda_2=\lambda}}i\lambda\,\Gamma\big((\mathbf{n}_0,\xi_0),(\mathbf{n}_0,\xi_0)\big)\prodscal{\omega_{\lambda_1,\mathbf{n}_0,\xi_0}\,\sigma_{\lambda_2,\mathbf{n}_0,\xi_0}}{\sigma_{\lambda,\mathbf{n}_0,\xi_0}}_{L^2(\omega_T)}(x_d)\label{eq estim inter 2 res}\\
		&\quad-(2\pi)^m\sum_{(\mathbf{n}_0,\xi_0)\in\F^{\inc}_{\res}}\sum_{\lambda\in\Z^*}\sum_{\substack{\lambda_1,\lambda_2\in\Z^*\\\lambda_1+\lambda_2=\lambda}}i\lambda_1\,\Gamma\big((\mathbf{n}_0,\xi_0),(\mathbf{n}_0,\xi_0)\big)\prodscal{\omega_{\lambda_1,\mathbf{n}_0,\xi_0}\,\sigma_{\lambda_2,\mathbf{n}_0,\xi_0}}{\sigma_{\lambda,\mathbf{n}_0,\xi_0}}_{L^2(\omega_T)}(x_d).
	\end{align}
\end{subequations}
But with already detailed computations, one gets $
\eqref{eq estim inter 2 res}=-\bar{\eqref{eq estim inter 1 res}} $, so
\begin{multline*}
2\Re \prodscal{\eqref{eq estim E i L U AI}}{\tilde{U}^{\osc}_{\res}}_{\inc}(x_d)=\\
-(2\pi)^m\sum_{(\mathbf{n}_0,\xi_0)\in\F^{\inc}_{\res}}\sum_{\lambda\in\Z^*}\sum_{\substack{\lambda_1,\lambda_2\in\Z^*\\\lambda_1+\lambda_2=\lambda}}i\lambda_1\,\Gamma\big((\mathbf{n}_0,\xi_0),(\mathbf{n}_0,\xi_0)\big)\prodscal{\omega_{\lambda_1,\mathbf{n}_0,\xi_0}\,\sigma_{\lambda_2,\mathbf{n}_0,\xi_0}}{\sigma_{\lambda,\mathbf{n}_0,\xi_0}}_{L^2(\omega_T)}(x_d).
\end{multline*}
Note that this term differs from \eqref{eq estim inter 1 res} because of the coefficient $ \lambda_1 $ instead of $ \lambda_2 $, which makes the derivatives with respect to $ \theta $ apply on the coefficient $ V_{\res}^{\osc} $ instead of on the unknown $ U_{\res}^{\osc} $. Upper bound \eqref{eq est gamma n n resonance} therefore leads to
\begin{multline*}
	\left|2\Re \prodscal{\eqref{eq estim E i L U AI}}{\tilde{U}^{\osc}_{\res}}_{\inc}(x_d)\right|\\
	\leq C\sum_{(\mathbf{n}_0,\xi_0)\in\F^{\inc}_{\res}}\sum_{\lambda\in\Z^*}\sum_{\substack{\lambda_1,\lambda_2\in\Z^*\\\lambda_1+\lambda_2=\lambda}}|\lambda_1\mathbf{n}_0|\left|\prodscal{\omega_{\lambda_1,\mathbf{n}_0,\xi_0}\,\sigma_{\lambda_2,\mathbf{n}_0,\xi_0}}{\sigma_{\lambda,\mathbf{n}_0,\xi_0}}_{L^2(\omega_T)}(x_d)\right|.
\end{multline*}
The term on the right of the equality is of the form $ \prodscal{fg}{g} $, so we get
\begin{equation}
	\left|2\Re \prodscal{\eqref{eq estim E i L U AI}}{\tilde{U}^{\osc}_{\res}}_{\inc}(x_d)\right|\leq C
	\norme{V_{\res}^{\osc}}_{\mathcal{E}_{s,T}}\prodscal{U_{\res}^{\osc}}{U_{\res}^{\osc}}_{\inc}(x_d).\label{eq estim prod AI}
\end{equation}

For term \eqref{eq estim E i L U R2} of type 2 resonances, we write
\begin{multline*}
\prodscal{\eqref{eq estim E i L U R2}}{\tilde{U}^{\osc}_{\res}}_{\inc}(x_d)=
\sum_{(\mathbf{n}_0,\xi_0)\in\F^{\inc}_{\res}}\sum_{\substack{(\lambda_p,\lambda_q,\lambda_r,\mathbf{n}_p,\mathbf{n}_q,\\\xi_p,\xi_q)\in\mathcal{R}_2(\mathbf{n}_0,\xi_0)}} \sum_{\ell\in\Z^*} i\ell\,\Gamma\big(\lambda_p(\mathbf{n}_p,\xi_p),\lambda_q(\mathbf{n}_q,\xi_q)\big) \\
\prodscal{\omega_{\ell\lambda_p,\mathbf{n}_p,\xi_p}\,\sigma_{\ell\lambda_q,\mathbf{n}_q,\xi_q}}{\sigma_{\ell\lambda_r,\mathbf{n}_r,\xi_r}}_{L^2(\omega_T)}(x_d).
\end{multline*} 
Then the following upper bound is derived, for all $ (\lambda_p,\lambda_q,\lambda_r,\mathbf{n}_p,\mathbf{n}_q,\xi_p,\xi_q)\in\mathcal{R}_2(\mathbf{n}_0,\xi_0) $ with $ \mathbf{n}_0\in\B_{\Z^m} $, $ \xi_0\in\mathcal{C}_{\inc}(\mathbf{n}_0) $ (which constitutes a finite set, see Assumption \ref{hypothese type resonance}), 
\begin{equation*}
 \left|\Gamma\big(\lambda_p(\mathbf{n}_p,\xi_p),\lambda_q(\mathbf{n}_q,\xi_q)\big)\right|\leq C |\lambda_p|\left|\mathbf{n}_p,\xi_p\right|,
\end{equation*}
where the constant $ C>0 $ is independent of $ \mathbf{n}_p,\mathbf{n}_q,\mathbf{n}_0,\xi_p,\xi_q $ and $ \xi_0 $. Once again, with this bound, the derivative with respect to $ \theta $ no longer apply on $ U_{\res}^{\osc} $ but only on $ V_{\res}^{\osc} $. The following estimate is thus deduced:
\begin{equation}\label{eq estim prod R2}
\left|2\Re \prodscal{\eqref{eq estim E i L U R2}}{\tilde{U}^{\osc}_{\res}}_{\inc}(x_d)\right|\leq
C\norme{V_{\res}^{\osc}}_{\mathcal{E}_{s,T}}\prodscal{U_{\res}^{\osc}}{U_{\res}^{\osc}}_{\inc}(x_d).
\end{equation}

Finally term \eqref{eq estim E i L U R1} of type 1 resonances is investigated, which is treated following \cite[Chapter 11]{Rauch2012Hyperbolic}. Once again the aim is to have a derivative applying totally on $ V_{\res}^{\osc} $. First the set on which the sum \eqref{eq estim E i L U R1} is taken is parameterized in a different way. The set $ \mathcal{R}_1 $ of type 1 incoming resonant 6-tuples is defined as
\begin{equation*}
\displaystyle\mathcal{R}_1:=\ensemble{
	\begin{array}{c}
	\big(\ell\,\lambda_p\,\mathbf{n}_p,\ell\,\lambda_p\,\xi_p,\ell\,\lambda_q\,\mathbf{n}_q,\\
	\ell\,\lambda_q\,\xi_q,-\ell\,\lambda_r\,\mathbf{n}_0,-\ell\,\lambda_r\,\xi_0\big)
	\end{array}
	\,\middle|\,\begin{array}{c}
	\ell\in\Z^*,\,\mathbf{n}_0\in\B_{\Z^m},\,\xi_0\in\mathcal{C}_{\inc}(\mathbf{n}_0),\\(\lambda_p,\lambda_q,\lambda_r,\mathbf{n}_p,\mathbf{n}_q,\xi_p,\xi_q)\in\mathcal{R}_1(\mathbf{n}_0,\xi_0)
	\end{array}}.
\end{equation*}
Note that if $ (\mathbf{n}_p,\xi_p,\mathbf{n}_q,\xi_q,\mathbf{n}_r,\xi_r) $ is in $ \mathcal{R}_1 $, then $ \mathbf{n}_p+ \mathbf{n}_q+\mathbf{n}_r=0 $, and $ \xi_p+\xi_q+\xi_r=0 $. We also see that, according to remark \ref{remarque type resonance}, a 6-tuple $ (\mathbf{n}_p,\xi_p,\mathbf{n}_q,\xi_q,\mathbf{n}_r,\xi_r) $ is in $ \mathcal{R}_1 $ if and only if the symmetrical 6-tuple $ (\mathbf{n}_p,\xi_p,\mathbf{n}_r,\xi_r,\mathbf{n}_q,\xi_q) $ is in $ \mathcal{R}_1 $.
According to identity \eqref{eq propr homogeneite Gamma}, we have
\begin{align*}
	\nonumber\eqref{eq estim E i L U R1}=
	&\nonumber\sum_{(\mathbf{n}_0,\xi_0)\in\F^{\inc}_{\res}}\sum_{\substack{(\lambda_p,\lambda_q,\lambda_r,\mathbf{n}_p,\mathbf{n}_q,\\\xi_p,\xi_q)\in\mathcal{R}_1(\mathbf{n}_0,\xi_0)}}\sum_{\ell\in\Z^*}i\,\omega_{\ell\lambda_p,\mathbf{n}_p,\xi_p}\,\sigma_{\ell\lambda_q,\mathbf{n}_q,\xi_q}\,\Gamma\big((\ell\lambda_p\mathbf{n}_p,\ell\lambda_p\xi_p),(\ell\lambda_q\mathbf{n}_q,\ell\lambda_q\xi_q)\big)\\
	\nonumber&\qquad\qquad e^{i\,\ell\lambda_r\mathbf{n}_0\cdot\theta}\,e^{i\,\ell\lambda_r\xi_0\,\psi_d}\,\tilde{\pi}_{(\ell\lambda_r\mathbf{n}_0\cdot\boldsymbol{\zeta},\ell\lambda_r\xi_0)}\,E(\ell\lambda_r\mathbf{n}_0,\ell\lambda_r\xi_0)\\[5pt]
	=&\sum_{\substack{(\mathbf{n}_p,\xi_p,\mathbf{n}_q,\xi_q,\\\mathbf{n}_r,\xi_r)\in\mathcal{R}_1}}i\,\omega_{\mathbf{n}_p,\xi_p}\,\sigma_{\mathbf{n}_q,\xi_q}\,\Gamma\big((\mathbf{n}_p,\xi_p),(\mathbf{n}_q,\xi_q)\big)
	e^{-i\,\mathbf{n}_r\cdot\theta}\,e^{-i\xi_r\,\psi_d}\,\tilde{\pi}_{(\mathbf{n}_r\cdot\boldsymbol{\zeta},\xi_r)}\,E(\mathbf{n}_r,\xi_r).
\end{align*}
If $ \mathbf{n} $ in $ \Z^m\privede{0} $ and $ \xi $ in $ \mathcal{C}(\mathbf{n}) $ write as $ (\mathbf{n},\xi)=\lambda\,(\mathbf{n}_0,\xi_0) $ with $ \mathbf{n}_0\in\B_{\Z^m} $, $ \xi\in\mathcal{C}(\mathbf{n}_0) $ and $ \lambda\in\Z^* $, we have denoted
\begin{equation*}
\sigma_{\mathbf{n},\xi}:=\sigma_{\lambda,\mathbf{n}_0,\xi_0},\quad\text{and}\quad \omega_{\mathbf{n},\xi}:=\omega_{\lambda,\mathbf{n}_0,\xi_0}.
\end{equation*}
 Therefore we have
\begin{align*}
	\prodscal{\eqref{eq estim E i L U R1}}{\tilde{U}^{\osc}_{\res}}_{\inc}(x_d)
	&=\sum_{\substack{(\mathbf{n}_p,\xi_p,\mathbf{n}_q,\xi_q,\\\mathbf{n}_r,\xi_r)\in\mathcal{R}_1}}i\,\Gamma\big((\mathbf{n}_p,\xi_p),(\mathbf{n}_q,\xi_q)\big)\prodscal{\omega_{\mathbf{n}_p,\xi_p}\,\sigma_{\mathbf{n}_q,\xi_q}}{\sigma_{-\mathbf{n}_r,-\xi_r}}_{L^2(\omega_T)}(x_d)\\
	&=\sum_{\substack{(\mathbf{n}_p,\xi_p,\mathbf{n}_q,\xi_q,\\\mathbf{n}_r,\xi_r)\in\mathcal{R}_1}}i\,\Gamma\big((\mathbf{n}_p,\xi_p),(\mathbf{n}_q,\xi_q)\big)\bar{\prodscal{\omega_{-\mathbf{n}_p,-\xi_p}\,\sigma_{-\mathbf{n}_r,-\xi_r}}{\sigma_{\mathbf{n}_q,\xi_q}}}_{L^2(\omega_T)}(x_d)\\
	&=\sum_{\substack{(\mathbf{n}_p,\xi_p,\mathbf{n}_q,\xi_q,\\\mathbf{n}_r,\xi_r)\in\mathcal{R}_1}}-i\,\Gamma\big((\mathbf{n}_p,\xi_p),(\mathbf{n}_r,\xi_r)\big)\bar{\prodscal{\omega_{\mathbf{n}_p,\xi_p}\,\sigma_{\mathbf{n}_q,\xi_q}}{\sigma_{-\mathbf{n}_r,-\xi_r}}}_{L^2(\omega_T)}(x_d).
\end{align*}
We have used here the fact that $ \bar{\omega_{\mathbf{n}_p,\xi_p}}=\omega_{-\mathbf{n}_p,-\xi_p} $, the profile $ V^{\osc} $ being real, a change of variables $ (\mathbf{n}_p,\allowbreak\mathbf{n}_r,\mathbf{n}_q,\allowbreak\xi_p,\xi_r,\xi_q)=-(\mathbf{n}_p,\mathbf{n}_q,\mathbf{n}_r,\xi_p,\xi_q,\xi_r) $, the fact that $ -\mathcal{R}_1=\mathcal{R}_1 $ and the identity \eqref{eq propr homogeneite Gamma}. Thus we obtain
\begin{align*}
 2\Re &\prodscal{\eqref{eq estim E i L U R1}}{\tilde{U}^{\osc}_{\res}}_{\inc}(x_d) \\
  =&\sum_{\substack{(\mathbf{n}_p,\xi_p,\mathbf{n}_q,\xi_q,\\\mathbf{n}_r,\xi_r)\in\mathcal{R}_1}}i\Big\{\Gamma\big((\mathbf{n}_p,\xi_p),(\mathbf{n}_q,\xi_q)\big)+\Gamma\big((\mathbf{n}_p,\xi_p),(\mathbf{n}_r,\xi_r)\big)\Big\}\prodscal{\omega_{\mathbf{n}_p,\xi_p}\,\sigma_{\mathbf{n}_q,\xi_q}}{\sigma_{-\mathbf{n}_r,-\xi_r}}_{L^2(\omega_T)}(x_d).
\end{align*}
Using the uniform estimate \eqref{eq propr res type 1} given by Assumption \ref{hypothese type resonance}, one obtains
\begin{align}
\nonumber\left|2\Re \prodscal{\eqref{eq estim E i L U R1}}{\tilde{U}^{\osc}_{\res}}_{\inc}(x_d)\right|&\leq C\sum_{\substack{(\mathbf{n}_p,\xi_p,\mathbf{n}_q,\xi_q,\\\mathbf{n}_r,\xi_r)\in\mathcal{R}_1}} \left|(\mathbf{n}_p,\xi_p)\right|\left|\prodscal{\omega_{\mathbf{n}_p,\xi_p}\,\sigma_{\mathbf{n}_q,\xi_q}}{\sigma_{-\mathbf{n}_r,-\xi_r}}_{L^2(\omega_T)}(x_d)\right|\\
&\leq C\norme{V_{\mathcal{R}^{\inc}}^{\osc}}_{\mathcal{E}_{s,T}}\prodscal{U_{\res}^{\osc}}{U_{\res}^{\osc}}_{\inc}(x_d).\label{eq estim prod R1}
\end{align}
Only the profiles $ V_{\res}^{\osc} $ and $ U_{\res}^{\osc} $ appear in the estimate since only frequencies of $ \F^{\inc}_{\res}$ occur in $ \mathcal{R}_1 $.

Equations \eqref{eq estim E i L U} and \eqref{eq estim prod trsp} and estimates \eqref{eq estim prod F}, \eqref{eq estim prod AI}, \eqref{eq estim prod R2} and \eqref{eq estim prod R1} finally lead to
\begin{equation}\label{eq estim ineq diff F etoile}
\frac{d}{dx_d}\prodscal{U_{\res}^{\osc}}{U_{\res}^{\osc}}_{\inc}(x_d)
\leq C\prodscal{F_{\res}^{\osc}}{F_{\res}^{\osc}}_{\inc}(x_d)+C\left(1+\norme{V_{\res}^{\osc}}_{\mathcal{E}_{s,T}}\right)\prodscal{U_{\res}^{\osc}}{U_{\res}^{\osc}}_{\inc}(x_d),
\end{equation}
which is the expected differential inequality.
\end{proof}

\subsubsection{Proof of Lemma \ref{lemme propagation vitesse finie}}

All requisite techniques to show Lemma \ref{lemme propagation vitesse finie} have now been developed, so the proof is given here. It follows \cite[Section 1.3.1]{BenzoniSerre2007Multi}. Recall that at this stage, we have considered a solution $ U $ to \eqref{eq obtention U 1} regular enough, and we have shown that its mean value is zero.

\begin{proof}[Proof (Lemma \ref{lemme propagation vitesse finie})]

It has been shown that if  $ U $ is a solution to \eqref{eq obtention U 1}, then, with already introduced notations, we have
\begin{subequations}\label{eq ppgat evolution 1}
	\begin{align}
	&\sum_{\substack{\mathbf{n}_0\in\B_{\Z^m}\\\xi_0\in\mathcal{C}(\mathbf{n}_0)}}\sum_{\lambda\in\Z^*}\tilde{X}_{(\mathbf{n}_0\cdot\boldsymbol{\zeta},\xi_0)}\,\sigma_{\lambda,\mathbf{n}_0,\xi_0}\,e^{i\,\lambda\mathbf{n}_0\cdot\theta}\,e^{i\,\lambda\xi_0\,\psi_d}\,\tilde{\pi}_{(\mathbf{n}_0\cdot\boldsymbol{\zeta},\xi_0)}\,E(\mathbf{n}_0,\xi_0)\label{eq ppgat evolution 1 trsp}\\
	+&\sum_{\substack{\mathbf{n}_0\in\B_{\Z^m}\\\xi_0\in\mathcal{C}(\mathbf{n}_0)}}\sum_{\lambda\in\Z^*}\sum_{\substack{\lambda_1,\lambda_2\in\Z^*\\\lambda_1+\lambda_2=\lambda}}i\,\lambda_2\,\sigma_{\lambda_1,\mathbf{n}_0,\xi_0}\,\sigma_{\lambda_2,\mathbf{n}_0,\xi_0}\,\Gamma\big((\mathbf{n}_0,\xi_0),(\mathbf{n}_0,\xi_0)\big)\label{eq ppgat evolution 1 AI}\\
	\nonumber&\qquad\qquad e^{i\,\lambda\mathbf{n}_0\cdot\theta}\,e^{i\,\lambda\xi_0\,\psi_d}\,\tilde{\pi}_{(\mathbf{n}_0\cdot\boldsymbol{\zeta},\xi_0)}\,E(\mathbf{n}_0,\xi_0)\\
	+&\sum_{\substack{\mathbf{n}_0\in\B_{\Z^m}\\\xi_0\in\mathcal{C}(\mathbf{n}_0)}}\sum_{\substack{(\lambda_p,\lambda_q,\lambda_r,\mathbf{n}_p,\mathbf{n}_q,\\\xi_p,\xi_q)\in\mathcal{R}_1(\mathbf{n}_0,\xi_0)\cup\mathcal{R}_2(\mathbf{n}_0,\xi_0)}}\sum_{\ell\in\Z^*}i\,\ell\,\sigma_{\ell\lambda_p,\mathbf{n}_p,\xi_p}\,\sigma_{\ell\lambda_q,\mathbf{n}_q,\xi_q}\,\label{eq ppgat evolution 1 res}\\
	\nonumber&\qquad\qquad \Gamma\big(\lambda_p(\mathbf{n}_p,\xi_p),\lambda_q(\mathbf{n}_q,\xi_q)\big)\,e^{i\,\ell\,\lambda_r\mathbf{n}_0\cdot\theta}\,e^{i\,\ell\,\lambda_r\xi_0\,\psi_d}\,\tilde{\pi}_{(\mathbf{n}_0\cdot\boldsymbol{\zeta},\xi_0)}\,E(\mathbf{n}_0,\xi_0)=0.
	\end{align}
\end{subequations}
Let us point out that despite formula \eqref{eq ppgat evolution 1} looks like formula \eqref{eq sortants evolution 4}, the former only uses the fact that the mean value $ U^{*} $ is zero, and involves both incoming and outgoing modes.

For $ 0\leq t_0\leq T $ and $ x_d^0\geq 0 $, consider the domain $ \K(t_0,x_d^0) $, bounded with respect to $ x_d $, given by
\begin{equation*}
	\K(t_0,x_d^0):=\ensemble{(t,y,x_d)\in\Omega_T\,\middle|\,\mathcal{V}^*t\leq x_d\leq x_d^0+\V^*(t_0-t), \;0\leq t\leq t_0},
\end{equation*}
 see Figure \ref{figure zone de propagation}. Let us prove that $ U^{\osc} $ is zero on the upper boundary of this domain, namely for $ t=t_0 $ and $ \mathcal{V}^*t_0\leq x_d\leq x_d^0 $, which suffices to prove that $ U^{\osc} $ is zero outside $ \ensemble{0\leq x_d\leq \V^*t} $ for all $ t $ in $ [0,T] $.
Take the scalar product \eqref{eq def prod scal ppgat} of expression \eqref{eq ppgat evolution 1} with the modified profile
\begin{equation*}
-\sum_{\substack{\mathbf{n}_0\in\B_{\Z^m}\\\xi_0\in\mathcal{C}(\mathbf{n}_0)}}\sum_{\lambda\in\Z^*}\partial_{\xi}\tau_{k(\mathbf{n}_0,\xi_0)}(\mathbf{n}_0\cdot\boldsymbol{\eta},\xi_0)\,\sigma_{\lambda,\mathbf{n}_0,\xi_0}\,e^{i\,\lambda\mathbf{n}_0\cdot\theta}\,e^{i\,\lambda\xi_0\,\psi_d}\,\tilde{\pi}_{(\mathbf{n}_0\cdot\boldsymbol{\zeta},\xi_0)}\,E(\mathbf{n}_0,\xi_0),
\end{equation*}
to obtain, according to \eqref{eq prod scal pol trigo ppgat}, 
\begin{subequations}\label{eq ppgat prod scal}
	\begin{align}
	&\sum_{\substack{\mathbf{n}_0\in\B_{\Z^m}\\\xi_0\in\mathcal{C}(\mathbf{n}_0)}}\sum_{\lambda\in\Z^*}\prodscal{X_{(\mathbf{n}_0\cdot\boldsymbol{\zeta},\xi_0)}\,\sigma_{\lambda,\mathbf{n}_0,\xi_0}}{\sigma_{\lambda,\mathbf{n}_0,\xi_0}}_{L^2(\K(t_0,x_d^0))}\left|\tilde{\pi}_{(\mathbf{n}_0\cdot\boldsymbol{\zeta},\xi_0)}\,E(\mathbf{n}_0,\xi_0)\right|^2\label{eq ppgat prod scal trsp}\\
	+&\sum_{\substack{\mathbf{n}_0\in\B_{\Z^m}\\\xi_0\in\mathcal{C}(\mathbf{n}_0)}}\sum_{\lambda\in\Z^*}\sum_{\substack{\lambda_1,\lambda_2\in\Z^*\\\lambda_1+\lambda_2=\lambda}}i\,\lambda_2\,\Gamma\big((\mathbf{n}_0,\xi_0),(\mathbf{n}_0,\xi_0)\big)\prodscal{\sigma_{\lambda_1,\mathbf{n}_0,\xi_0}\,\sigma_{\lambda_2,\mathbf{n}_0,\xi_0}}{\sigma_{\lambda,\mathbf{n}_0,\xi_0}}_{L^2(\K(t_0,x_d^0))}\label{eq ppgat prod scal AI}\\
	\nonumber&\qquad\qquad \big(-\partial_{\xi}\tau_{k(\mathbf{n}_0,\xi_0)}(\mathbf{n}_0\cdot\boldsymbol{\eta},\xi_0)\big)\left|\tilde{\pi}_{(\mathbf{n}_0\cdot\boldsymbol{\zeta},\xi_0)}\,E(\mathbf{n}_0,\xi_0)\right|^2\\
	+&\sum_{\substack{\mathbf{n}_0\in\B_{\Z^m}\\\xi_0\in\mathcal{C}(\mathbf{n}_0)}}\sum_{\substack{(\lambda_p,\lambda_q,\lambda_r,\mathbf{n}_p,\mathbf{n}_q,\\\xi_p,\xi_q)\in\mathcal{R}_1(\mathbf{n}_0,\xi_0)\cup\mathcal{R}_2(\mathbf{n}_0,\xi_0)}}\sum_{\ell\in\Z^*}i\,\ell\,\Gamma\big(\lambda_p(\mathbf{n}_p,\xi_p),\lambda_q(\mathbf{n}_q,\xi_q)\big)\big(-\partial_{\xi}\tau_{k(\mathbf{n}_0,\xi_0)}(\mathbf{n}_0\cdot\boldsymbol{\eta},\xi_0)\big)\label{eq ppgat prod scal R1}\\
	\nonumber&\qquad\qquad \prodscal{\sigma_{\ell\lambda_p,\mathbf{n}_p,\xi_p}\,\sigma_{\ell\lambda_q,\mathbf{n}_q,\xi_q}}{\sigma_{\lambda,\mathbf{n}_0,\xi_0}}_{L^2(\K(t_0,x_d^0))}\left|\tilde{\pi}_{(\mathbf{n}_0\cdot\boldsymbol{\zeta},\xi_0)}\,E(\mathbf{n}_0,\xi_0)\right|^2=0.
	\end{align}
\end{subequations}
Term \eqref{eq ppgat prod scal trsp} is obtained by noting that, with the notations of Definition \ref{def sortant rentrant alpha X alpha} and Lemma \ref{lemme Lax}, we have $ \big(-\partial_{\xi}\tau_k(\eta,\xi)\big)\,\tilde{X}_{\alpha}=X_{\alpha} $. First term \eqref{eq ppgat prod scal trsp} is investigated. According to Green's formula, for all $ \mathbf{n}_0 $ in $ \B_{\Z^m} $, $ \xi_0 $ in $ \mathcal{C}(\mathbf{n}_0) $ and $ \lambda $ in $ \Z^* $, we obtain
\begin{equation*}
2\Re\prodscal{X_{(\mathbf{n}_0\cdot\boldsymbol{\zeta},\xi_0)}\,\sigma_{\lambda,\mathbf{n}_0,\xi_0}}{\sigma_{\lambda,\mathbf{n}_0,\xi_0}}_{L^2(\K(t_0,x_d^0))}=2\Re\int_{\partial\K(t_0,x_d^0)}\big(n_t+\vec{n}_x\cdot\mathbf{v}_{(\mathbf{n}_0\cdot\boldsymbol{\zeta},\xi_0)}\big)|\sigma_{\lambda,\mathbf{n}_0,\xi_0}|^2\,dS,
\end{equation*}
where the notation $ \mathbf{v}_{\alpha} $ has been introduced in Definition \ref{def sortant rentrant alpha X alpha}, $ \vec{n}:=(n_t,\vec{n}_x) $ is the outward normal vector associated with $ \partial\K(t_0,x_d^0) $, and $ dS $ is the surface measure. The vector $ \vec{n} $ is given (see Figure \ref{figure zone de propagation}), for the upper boundary by $ \vec{n}=(1,0,\dots,0) $, for the lower boundary by $ \vec{n}=(-1,0,\dots,0) $, for the left boundary by $ \vec{n}=(\V^*,0,\dots,0,-1)/\sqrt{1+(\V^*)^2} $ and for the right boundary by $ \vec{n}=(\V^*,0,\dots,0,1)/\sqrt{1+(\V^*)^2} $. Thus we get
\begin{align*}
	2\Re&\prodscal{X_{(\mathbf{n}_0\cdot\boldsymbol{\zeta},\xi_0)}\,\sigma_{\lambda,\mathbf{n}_0,\xi_0}}{\sigma_{\lambda,\mathbf{n}_0,\xi_0}}_{L^2(\K(t_0,x_d^0))}\\
	=&2\norme{\sigma_{\lambda,\mathbf{n}_0,\xi_0}}^2_{L^2(\R^{d-1}\times[\V^*t_0,x_d^0])}(t_0)-2\norme{\sigma_{\lambda,\mathbf{n}_0,\xi_0}}^2_{L^2(\R^{d-1}\times[0,x_d^0])}(0)\\
	&+\frac{2}{\sqrt{1+(\V^*)^2}}\int_{\ensemble{(t,y,\V^*t),0\leq t \leq t_0}}\big(\V^*-\partial_{\xi}\tau_{k(\mathbf{n}_0,\xi_0)}(\mathbf{n}_0\cdot\boldsymbol{\eta},\xi_0)\big)|\sigma_{\lambda,\mathbf{n}_0,\xi_0}|^2\,dS\\
	&+\frac{2}{\sqrt{1+(\V^*)^2}}\int_{\ensemble{(t,y,x_d^0+\V^*(t_0-t)),0\leq t \leq t_0}}\big(\V^*+\partial_{\xi}\tau_{k(\mathbf{n}_0,\xi_0)}(\mathbf{n}_0\cdot\boldsymbol{\eta},\xi_0)\big)|\sigma_{\lambda,\mathbf{n}_0,\xi_0}|^2\,dS.
\end{align*}
Then note that on one hand we have $ \norme{\sigma_{\lambda,\mathbf{n}_0,\xi_0}}^2_{L^2(\R^{d-1}\times\R_+)}(0)=0 $ according to the initial condition \eqref{eq obtention cond initiale U}, and on the other hand, according to Lemma \ref{lemme vitesse de groupe bornees}, the quantities $ \big(\V^*-\partial_{\xi}\tau_{k(\mathbf{n}_0,\xi_0)}(\mathbf{n}_0\cdot\boldsymbol{\eta},\xi_0)\big) $ and $ \big(\V^*+\partial_{\xi}\tau_{k(\mathbf{n}_0,\xi_0)}(\mathbf{n}_0\cdot\boldsymbol{\eta},\xi_0)\big) $ are non-negative. Therefore,
\begin{equation}\label{eq ppgat ineq 1}
2\Re \eqref{eq ppgat prod scal trsp}\geq 2\sum_{\substack{\mathbf{n}_0\in\B_{\Z^m}\\\xi_0\in\mathcal{C}(\mathbf{n}_0)}}\sum_{\lambda\in\Z^*}\norme{\sigma_{\lambda,\mathbf{n}_0,\xi_0}}^2_{L^2(\R^{d-1}\times[\V^*t_0,x_d^0])}(t_0)\left|\tilde{\pi}_{(\mathbf{n}_0\cdot\boldsymbol{\zeta},\xi_0)}\,E(\mathbf{n}_0,\xi_0)\right|^2.
\end{equation} 

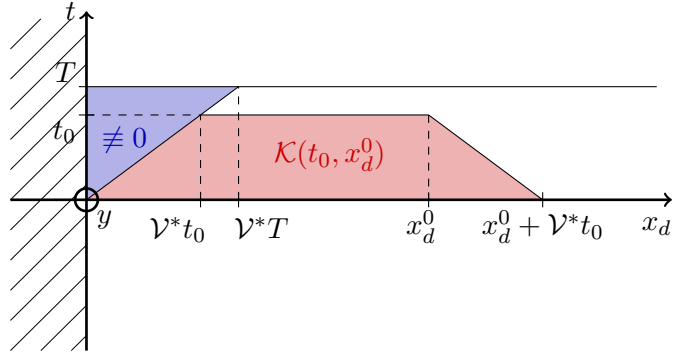
\begin{figure}
	\centering
	\begin{tikzpicture}
		\fill[altblue!30] (0,0)--(2,1.5) -- (0,1.5) -- cycle;
		\fill[altred!30] (0,0)--(1.5,1.125) -- (4.5,1.125) --(6,0) -- cycle;
		\draw[line width = 1pt,->] (-1,0) -- (7.7,0);
		\draw[line width = 1pt,->] (0,-2) -- (0,2.5);
		\draw[line width = 1pt] (0,0) circle (0.15);
		\draw (0,0) --(2,1.5);
		\draw (-0.1,1.5)--(7.5,1.5);
		\draw (1.5,1.125) -- (4.5,1.125) ;
		\draw (4.5,1.125) -- (6,0);
		\draw[below] (7.5,-0.1) node{$ x_d $};
		\draw[left] (0,2.5) node{$ t $};
		\draw[below right] (0,0) node{$ y $};
		\draw[altblue] (0.5,0.8) node{$ \not\equiv 0 $};
		\draw[altred] (3.2,0.6) node{$ \mathcal{K}(t_0,x_d^0) $};
		\draw[left] (0,1.7) node{$ T $};
		\draw (2,-0.1) -- (2,0.1);
		\draw[left] (0,0.9) node{$ t_0 $};
		\draw[dashed] (0,1.125) -- (1.5,1.125) ;
		\draw (-0.1,1.125) -- (0.1,1.125);
		\draw[below] (2.3,-0.1) node{$ \mathcal{V}^*T $};
		\draw[dashed] (2,0) -- (2,1.5);
		\draw (1.5,-0.1) -- (1.5,0.1);
		\draw[below] (1.2,-0.1) node{$ \mathcal{V}^*t_0 $};
		\draw[dashed] (1.5,0) -- (1.5,1.125);
		\draw (4.5,-0.1) -- (4.5,0.1);
		\draw[below] (4.4,0) node{$ x_d^0 $};
		\draw[dashed] (4.5,1.125) -- (4.5,0) ;
		\draw (6,-0.1) -- (6,0.1);
		\draw[below] (6,0) node{$ x_d^0+\V^*t_0 $};
		\draw[line width = 0.2pt] (0,-1.8) -- (-0.2,-2);
		\draw[line width = 0.2pt] (0,-1.4) -- (-0.6,-2);
		\draw[line width = 0.2pt] (0,-1) -- (-1,-2);
		\draw[line width = 0.2pt] (0,-0.6) -- (-1,-1.6);
		\draw[line width = 0.2pt] (0,-0.2) -- (-1,-1.2);
		\draw[line width = 0.2pt] (0,0.2) -- (-1,-0.8);
		\draw[line width = 0.2pt] (0,0.6) -- (-1,-0.4);
		\draw[line width = 0.2pt] (0,1) -- (-1,0);
		\draw[line width = 0.2pt] (0,1.4) -- (-1,0.4);
		\draw[line width = 0.2pt] (0,1.8) -- (-1,0.8);
		\draw[line width = 0.2pt] (0,2.2) -- (-1,1.2);
		\draw[line width = 0.2pt] (-0.2,2.4) -- (-1,1.6);
		\draw[line width = 0.2pt] (-0.6,2.4) -- (-1,2);
	\end{tikzpicture}
	\caption{Propagation zone.}
	\label{figure zone de propagation}
\end{figure}

As for them, terms \eqref{eq ppgat prod scal AI} and \eqref{eq ppgat prod scal R1} are treated the same way as before. For the self-interaction term \eqref{eq ppgat prod scal AI}, it is proved in the same manner than term \eqref{eq sortants prod 2 AI} that it satisfies
\begin{equation*}
2\Re \eqref{eq ppgat prod scal AI}=0.
\end{equation*}
For the resonance term \eqref{eq ppgat prod scal R1}, the same techniques as for terms \eqref{eq estim prod R2} and \eqref{eq estim prod R1} are used. According to Lemma \ref{lemme vitesse de groupe bornees}, the group velocities $ \big(-\partial_{\xi}\tau_{k(\mathbf{n}_0,\xi_0)}\allowbreak(\mathbf{n}_0\cdot\boldsymbol{\eta},\xi_0)\big) $ can be uniformly bounded to obtain
\begin{equation}\label{eq ppgat ineq 2}
\left|2\Re \eqref{eq ppgat prod scal R1}\right| \leq C \norme{ U^{\osc}}_{\mathcal{E}_{s,T}}\sum_{\substack{\mathbf{n}_0\in\B_{\Z^m}\\\xi_0\in\mathcal{C}(\mathbf{n}_0)}}\sum_{\lambda\in\Z^*}\norme{\sigma_{\lambda,\mathbf{n}_0,\xi_0}}^2_{L^2(\K(t_0,x_d^0))}\left|\tilde{\pi}_{(\mathbf{n}_0\cdot\boldsymbol{\zeta},\xi_0)}\,E(\mathbf{n}_0,\xi_0)\right|^2.
\end{equation}
Noting that
\begin{multline*}
\sum_{\substack{\mathbf{n}_0\in\B_{\Z^m}\\\xi_0\in\mathcal{C}(\mathbf{n}_0)}}\sum_{\lambda\in\Z^*}\norme{\sigma_{\lambda,\mathbf{n}_0,\xi_0}}^2_{L^2(\K(t_0,x_d^0))}\left|\tilde{\pi}_{(\mathbf{n}_0\cdot\boldsymbol{\zeta},\xi_0)}\,E(\mathbf{n}_0,\xi_0)\right|^2\\
=\int_0^{t_0}\sum_{\substack{\mathbf{n}_0\in\B_{\Z^m}\\\xi_0\in\mathcal{C}(\mathbf{n}_0)}}\sum_{\lambda\in\Z^*}\norme{\sigma_{\lambda,\mathbf{n}_0,\xi_0}}^2_{L^2(\R^{d-1}\times[\V^*t,x_d^0+\V^*(t_0-t)])}(t)\left|\tilde{\pi}_{(\mathbf{n}_0\cdot\boldsymbol{\zeta},\xi_0)}\,E(\mathbf{n}_0,\xi_0)\right|^2\,dt,
\end{multline*}
using equations \eqref{eq ppgat prod scal}, \eqref{eq ppgat ineq 1} and \eqref{eq ppgat ineq 2} and according to the Grönwall's inequality, it follows
\begin{equation*}
\sum_{\substack{\mathbf{n}_0\in\B_{\Z^m}\\\xi_0\in\mathcal{C}(\mathbf{n}_0)}}\sum_{\lambda\in\Z^*}\norme{\sigma_{\lambda,\mathbf{n}_0,\xi_0}}^2_{L^2(\R^{d-1}\times[\V^*t_0,x_d^0])}(t_0)\left|\tilde{\pi}_{(\mathbf{n}_0\cdot\boldsymbol{\zeta},\xi_0)}\,E(\mathbf{n}_0,\xi_0)\right|^2=0.
\end{equation*}
Therefore, for all $ \mathbf{n}_0 $ in $ \B_{\Z^m} $, $ \xi_0 $ in $ \mathcal{C}(\mathbf{n}_0) $ and $ \lambda $ in $ \Z^* $, the function $ \sigma_{\lambda,\mathbf{n}_0,\xi_0} $ is zero on $ \big\{t=t_0,\allowbreak\V^*t_0\leq x_d\leq x_d^0\big\} $, so the profile $ U^{\osc} $ is also zero in this set, concluding the proof of the Lemma.
\end{proof}

\subsubsection{Estimating the derivatives}

Returning to the proof of the a priori estimate, Proposition \ref{prop estim a priori} is proved here using estimate \eqref{eq estim est a priori L2} of Lemma \ref{lemme estim est a priori L2}. Consider a multi-index $ \alpha $ of $ \mathbb{N}^{d+m} $ such that $ |\alpha|\leq s $. Since the operator  $ \partial_{z',\theta}^{\alpha} $ commutes with the projectors $ \E^{\inc}_{\res} $ and $ \tilde{\E^i}^{\inc}_{\res} $, the profile $ \partial_{z',\theta}^{\alpha}U^{\osc} $ satisfies a system of the form \eqref{eq estim syst osc}, with $ \partial_{z',\theta}^{\alpha}G $ as boundary term, and $ \tilde{\E^i}^{\inc}_{\res}\big[\partial_{z',\theta}^{\alpha}F^{\osc}_{\res}+F_{\alpha}\big] $ as source term, where $ F_{\alpha} $ is the following commutator
\begin{equation*}
F_{\alpha}:=\Big[\sum_{j=1}^m\tilde{L}_1(V^{\osc}_{\res},\zeta_j)\,\partial_{\theta_j},\partial_{z',\theta}^{\alpha}\Big]U_{\res}^{\osc}.
\end{equation*}
Thus, according to estimate \eqref{eq estim est a priori L2} and applying the triangle inequality, we get
\begin{multline}\label{eq estim est derivee alpha 1}
\frac{d}{dx_d}\prodscal{\partial_{z',\theta}^{\alpha}U_{\res}^{\osc}}{\partial_{z',\theta}^{\alpha}U_{\res}^{\osc}}_{\inc}(x_d)\leq C\prodscal{\partial_{z',\theta}^{\alpha}F_{\res}^{\osc}}{\partial_{z',\theta}^{\alpha}F_{\res}^{\osc}}_{\inc}(x_d)
+ C\prodscal{F_{\alpha}}{F_{\alpha}}_{\inc}(x_d)\\
+C\left(1+\norme{V_{\res}^{\osc}}_{\mathcal{E}_{s,T}}\right)\prodscal{\partial_{z',\theta}^{\alpha}U_{\res}^{\osc}}{\partial_{z',\theta}^{\alpha}U_{\res}^{\osc}}_{\inc}(x_d).
\end{multline}

Note that according to Lemma \ref{lemme correspondance norme prod scal} we have
\begin{align*}
\prodscal{F_{\alpha}}{F_{\alpha}}_{\inc}(x_d)&\leq \norme{F_{\alpha}}^2_{\mathcal{C}_b(\R^+_{\psi_d},L^2(\omega_T\times\T^m))}(x_d),
\intertext{and in the same way}
\prodscal{\partial_{z',\theta}^{\alpha}F_{\res}^{\osc}}{\partial_{z',\theta}^{\alpha}F_{\res}^{\osc}}_{\inc}(x_d)&\leq\norme{\partial_{z',\theta}^{\alpha}F_{\res}^{\osc}}^2_{\mathcal{C}_b(\R^+_{\psi_d},L^2(\omega_T\times\T^m))}(x_d).
\end{align*}
On an other hand, according to the algebra property of $ H^s(\omega_T\times\T^m) $ (since $ s>(d+m)/2+1 $) and the commutator estimate \cite[Proposition C.13]{BenzoniSerre2007Multi}, we obtain
\begin{align*}
\norme{F_{\alpha}}^2_{\mathcal{C}_b(\R_+,L^2(\omega_T\times\T^m))}(x_d)
&\leq C\norme{V_{\res}^{\osc}}^2_{\mathcal{C}_b(\R_+,H^s(\omega_T\times\T^m))}(x_d)\norme{U_{\res}^{\osc}}^2_{\mathcal{C}_b(\R_+,H^s(\omega_T\times\T^m))}(x_d)\\[5pt]
&\leq C\norme{V_{\res}^{\osc}}^2_{\mathcal{E}_{s,T}}\norme{U_{\res}^{\osc}}^2_{\mathcal{C}_b(\R_+,H^s(\omega_T\times\T^m))}(x_d).
\end{align*}
Finally, by definition of the $ H^s(\omega_T\times\T^m) $ norm and according to Lemma \ref{lemme correspondance norme prod scal}, we get
\begin{align}\label{eq estim maj U osc prod scal}
	\norme{U_{\res}^{\osc}}^2_{\mathcal{C}_b(\R_+,H^s(\omega_T\times\T^m))}(x_d)&=\sum_{|\alpha|\leq s}\norme{\partial_{z',\theta}^{\alpha}U_{\res}^{\osc}}^2_{\mathcal{C}_b(\R_+,L^2(\omega_T\times\T^m))}(x_d)\\\nonumber
	&\leq C\sum_{|\alpha|\leq s}\prodscal{\partial_{z',\theta}^{\alpha}U_{\res}^{\osc}}{\partial_{z',\theta}^{\alpha}U_{\res}^{\osc}}_{\inc}(x_d).
\end{align}
Therefore, by summing equations \eqref{eq estim est derivee alpha 1} for $ |\alpha|\leq s $, one gets
\begin{multline}\label{eq estim est derivee alpha 2}
	\frac{d}{dx_d}\sum_{|\alpha|\leq s}\prodscal{\partial_{z',\theta}^{\alpha}U_{\res}^{\osc}}{\partial_{z',\theta}^{\alpha}U_{\res}^{\osc}}_{\inc}(x_d)
	\leq
	C\norme{F_{\res}^{\osc}}^2_{\mathcal{C}_b(\R^+_{\psi_d},H^s(\omega_T\times\T^m))}(x_d)\\
	+C\left(1+\norme{V_{\res}^{\osc}}^2_{\mathcal{E}_{s,T}}\right)\sum_{|\alpha|\leq s}\prodscal{\partial_{z',\theta}^{\alpha}U_{\res}^{\osc}}{\partial_{z',\theta}^{\alpha}U_{\res}^{\osc}}_{\inc}(x_d).
\end{multline}
Thus, according to Grönwall's inequality, 
\begin{multline}\label{eq estim est a priori 1}
	\sum_{|\alpha|\leq s}\prodscal{\partial_{z',\theta}^{\alpha}U_{\res}^{\osc}}{\partial_{z',\theta}^{\alpha}U_{\res}^{\osc}}_{\inc}(x_d)\leq e^{C(V)\,x_d}\sum_{|\alpha|\leq s}\prodscal{\partial_{z',\theta}^{\alpha}U_{\res}^{\osc}}{\partial_{z',\theta}^{\alpha}U_{\res}^{\osc}}_{\inc}(0)\\
	+\int_0^{x_d}e^{C(V)(x_d-x_d')}\norme{F_{\res}^{\osc}}^2_{\mathcal{C}_b(\R^+_{\psi_d},H^s(\omega_T\times\T^m))}(x_d')\,dx_d',
\end{multline}
where $ C(V)=C(1+\norme{V_{\res}^{\osc}}^2_{\mathcal{E}_{s,T}}) $.
The trace on the boundary $ U_{\res}^{\osc} $ is therefore given by $ \big(U_{\res}^{\osc}\big)_{|x_d=0,\psi_d=0}=H_{\res}^{\osc} $ where $ H_{\res}^{\osc} $ is determined by equation \eqref{eq red def H osc res}.
Remark \ref{remarque B inverse bornee}, Proposition \ref{prop proj bornes} and Lemma \ref{lemme correspondance norme prod scal} ensure that, for $ |\alpha|\leq s $,
\begin{align}\label{eq estim maj U osc 0 prod scal}
\prodscal{\partial_{z',\theta}^{\alpha}U_{\res}^{\osc}}{\partial_{z',\theta}^{\alpha}U_{\res}^{\osc}}_{\inc}(0)&\leq \norme{\partial_{z',\theta}^{\alpha}U_{\res}^{\osc}(0)}^2_{L^2(\omega_T\times\T^m)}=\norme{\partial_{z',\theta}^{\alpha}H_{\res}^{\osc}}^2_{L^2(\omega_T\times\T^m)}\\
&\nonumber\leq C \norme{\partial_{z',\theta}^{\alpha}G}^2_{L^2(\omega_T\times\T^m)}.
\end{align}
It thus follows, with \eqref{eq estim maj U osc prod scal}, \eqref{eq estim est a priori 1} and \eqref{eq estim maj U osc 0 prod scal},
\begin{multline}\label{eq estim est a priori bis}
\norme{U_{\res}^{\osc}}^2_{\mathcal{C}_b(\R^+_{\psi_d},H^s(\omega_T\times\T^m))}(x_d)\leq Ce^{C(V)\,x_d}\norme{G}^2_{H^s(\omega_T\times\T^m)}\\
+\int_0^{x_d}e^{C(V)(x_d-x_d')}\norme{F_{\res}^{\osc}}^2_{\mathcal{C}_b(\R^+_{\psi_d},H^s(\omega_T\times\T^m))}(x_d')\,dx_d'.
\end{multline}
Because of the function $ \beta_T $ in equation \eqref{eq estim syst osc E i U}, it is possible to bound $ x_d $ by $ 2\mathcal{V}^*T $ then to pass to the upper bound with respect to $ x_d $ in estimate \eqref{eq estim est a priori bis} to obtain the required estimate \eqref{eq estim est a priori}, concluding the proof of Proposition \ref{prop estim a priori}.

\subsection{A priori estimate for the linearized Burgers equations}\label{subsection est Burgers}

We prove now a priori estimates for the linearized Burgers equations \eqref{eq red syst osc non res}, corresponding to the non-resonant incoming modes. These a priori estimates will be used to prove the existence of solution to these Burgers equations. However the estimates will have to be summed with respect to $ (\mathbf{n}_0,\xi_0) $, so we wish for constants independent of $ (\mathbf{n}_0,\xi_0) $. This part is devoted to the proof of the following result.

\begin{proposition}\label{prop estim a priori Burgers}
	Consider $ (\mathbf{n}_0,\xi_0)\in(\B_{\Z^m}\times\mathcal{C}_{\inc}(\mathbf{n}_0))\setminus\F^{\inc}_{\res} $, $ s>s_0 $ and let $ S_{\mathbf{n}_0,\xi_0} $, $ W_{\mathbf{n}_0,\xi_0} $ and $ F_{\mathbf{n}_0,\xi_0} $ be in  $ \mathcal{C}(\R^+_{x_d},H^s(\omega_T\times\T)) $ of zero mean, satisfying the scalar boundary value problem
	\begin{subequations}\label{eq estim syst osc non res}
		\begin{align}
			\tilde{X}_{(\mathbf{n}_0\cdot\boldsymbol{\zeta},\xi_0)}S_{\mathbf{n}_0,\xi_0}+\Gamma\big((\mathbf{n}_0,\xi_0),(\mathbf{n}_0,\xi_0)\big)W_{\mathbf{n}_0,\xi_0}\partial_{\Theta}S_{\mathbf{n}_0,\xi_0}&=F_{\mathbf{n}_0,\xi_0}\label{eq estim syst osc Burgers non res}\\
			\big(S_{\mathbf{n}_0,\xi_0}\big)_{|x_d=0}&=h_{\mathbf{n}_0,\xi_0}\label{eq estim syst osc cond bord non res}\\
			\big(S_{\mathbf{n}_0,\xi_0}\big)_{|t\leq0}&=0,\label{eq estim syst osc cond init non res}
		\end{align}
	\end{subequations}
	where $ h_{\mathbf{n}_0,\xi_0} $ is defined by equation \eqref{eq red def H osc non res}.
	Then the function $ S_{\mathbf{n}_0,\xi_0}$
	satisfies the a priori estimate
	\begin{multline}\label{eq estim est a priori Burgers}
		\norme{S_{\mathbf{n}_0,\xi_0}}^2_{\mathcal{C}(\R^+_{x_d},H^s(\omega_T\times\T))}\\
		\leq C_1\,e^{C(W)\,\mathcal{V}^*T}\norme{h_{\mathbf{n}_0,\xi_0}}^2_{H^s(\omega_T\times\T^m)}
		+ \V^*T\,e^{C(W)\mathcal{V}^*T}\norme{F_{\mathbf{n}_0,\xi_0}}^2_{\mathcal{C}(\R^+_{x_d},H^s(\omega_T\times\T))},
	\end{multline} 
	where
	$ C(W):=C_1\big(1+\norme{W_{\mathbf{n}_0,\xi_0}}^2_{\mathcal{C}(\R^+_{x_d},H^s(\Omega_T\times\T))}\big) $, with $ C_1>0 $ a constant depending only on the operator $ L(0,\partial_z) $ and of $ s $, but not on $ \mathbf{n}_0,\xi_0 $. 
	Recall that the real  number $ \mathcal{V}^* $, bounding the group velocities $ \mathbf{v}_{\alpha} $, has been defined in Lemma \ref{lemme vitesse de groupe bornees}.
\end{proposition}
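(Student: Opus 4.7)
The proof parallels the $L^{2}$ energy estimate performed for the resonant incoming system in the proof of Proposition~\ref{prop estim a priori}, but is considerably simpler: because $(\mathbf{n}_{0},\xi_{0})$ is non-resonant, there are no coupling terms to handle, only a transport contribution and a single Burgers-type contribution. The strategy is (i) derive an $L^{2}$ energy inequality in $x_{d}$, (ii) upgrade it to $H^{s}$ through tame commutator estimates, and (iii) close the argument by Grönwall, exploiting the finite propagation speed (Lemma~\ref{lemme propagation vitesse finie}) to localise $x_{d}$ in $[0,2\mathcal{V}^{\ast}T]$.

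For step (i), I would multiply \eqref{eq estim syst osc Burgers non res} by $S_{\mathbf{n}_{0},\xi_{0}}$ and integrate over $\omega_{T}\times\mathbb{T}$ at fixed $x_{d}$. Writing $\tilde{X}_{(\mathbf{n}_{0}\cdot\boldsymbol{\zeta},\xi_{0})}(S^{2}) = 2S\,\tilde{X}_{(\mathbf{n}_{0}\cdot\boldsymbol{\zeta},\xi_{0})}S$ and integrating by parts, the $\partial_{x_{d}}$ part of $\tilde{X}$ yields $\tfrac{1}{2}\tfrac{d}{dx_{d}}\|S\|^{2}_{L^{2}(\omega_{T}\times\mathbb{T})}$, and the $\partial_{t}$ part produces the boundary term
\begin{equation*}
-\frac{1}{2\,\partial_{\xi}\tau_{k(\mathbf{n}_{0},\xi_{0})}(\mathbf{n}_{0}\cdot\boldsymbol{\eta},\xi_{0})}\,\|S_{\mathbf{n}_{0},\xi_{0}}(T,\cdot,x_{d},\cdot)\|^{2}_{L^{2}(\mathbb{R}^{d-1}\times\mathbb{T})},
\end{equation*}
which is \emph{non-negative} since $\xi_{0}\in\mathcal{C}_{\inc}(\mathbf{n}_{0})$, and may therefore be dropped. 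For the Burgers term, integration by parts in $\Theta$ gives
\begin{equation*}
\int_{\mathbb{T}} W_{\mathbf{n}_{0},\xi_{0}}\,\partial_{\Theta}S_{\mathbf{n}_{0},\xi_{0}}\cdot S_{\mathbf{n}_{0},\xi_{0}}\,d\Theta = -\tfrac{1}{2}\int_{\mathbb{T}}\partial_{\Theta}W_{\mathbf{n}_{0},\xi_{0}}\cdot S_{\mathbf{n}_{0},\xi_{0}}^{2}\,d\Theta,
\end{equation*}
and the Sobolev embedding $H^{s}(\Omega_{T}\times\mathbb{T})\hookrightarrow L^{\infty}$ (valid since $s>s_{0}>(d+m)/2+1$) controls this by $C(1+\|W_{\mathbf{n}_{0},\xi_{0}}\|^{2}_{H^{s}})\|S_{\mathbf{n}_{0},\xi_{0}}\|^{2}_{L^{2}}$. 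Applying Cauchy--Schwarz and Young on the source $F_{\mathbf{n}_{0},\xi_{0}}$ yields
\begin{equation*}
\frac{d}{dx_{d}}\|S_{\mathbf{n}_{0},\xi_{0}}\|^{2}_{L^{2}(\omega_{T}\times\mathbb{T})}(x_{d})\leq C\|F_{\mathbf{n}_{0},\xi_{0}}\|^{2}_{L^{2}}(x_{d}) + C\bigl(1+\|W_{\mathbf{n}_{0},\xi_{0}}\|^{2}_{H^{s}}\bigr)\|S_{\mathbf{n}_{0},\xi_{0}}\|^{2}_{L^{2}}(x_{d}).
\end{equation*}

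For step (ii), since $\tilde{X}$ has constant coefficients it commutes with $\partial^{\alpha}_{z',\Theta}$; applying $\partial^{\alpha}$ for $|\alpha|\leq s$ produces a commutator $[\Gamma W_{\mathbf{n}_{0},\xi_{0}}\partial_{\Theta},\partial^{\alpha}]S_{\mathbf{n}_{0},\xi_{0}}$ that by the tame estimate of \cite[Proposition~C.13]{BenzoniSerre2007Multi} is bounded in $L^{2}$ by $C\|W_{\mathbf{n}_{0},\xi_{0}}\|_{H^{s}}\|S_{\mathbf{n}_{0},\xi_{0}}\|_{H^{s}}$. Summing the $L^{2}$ estimate applied to $\partial^{\alpha}S_{\mathbf{n}_{0},\xi_{0}}$ over $|\alpha|\leq s$ and absorbing the commutator by Young's inequality gives
\begin{equation*}
\frac{d}{dx_{d}}\|S_{\mathbf{n}_{0},\xi_{0}}\|^{2}_{H^{s}(\omega_{T}\times\mathbb{T})}(x_{d})\leq C\|F_{\mathbf{n}_{0},\xi_{0}}\|^{2}_{H^{s}}(x_{d}) + C(W)\|S_{\mathbf{n}_{0},\xi_{0}}\|^{2}_{H^{s}}(x_{d}).
\end{equation*}
For step (iii), the trace at $x_{d}=0$ is controlled by $\|h_{\mathbf{n}_{0},\xi_{0}}\|^{2}_{H^{s}(\omega_{T}\times\mathbb{T})}$, finite propagation speed (or equivalently inserting the cutoff $\beta_{T}$ as in Proposition~\ref{prop equivalence systeme}) confines $x_{d}$ to $[0,2\mathcal{V}^{\ast}T]$, and Grönwall yields the claimed estimate \eqref{eq estim est a priori Burgers}.

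The main obstacle — and the reason the proposition insists on treating each non-resonant direction \emph{separately} rather than summing over $(\mathbf{n}_{0},\xi_{0})\in(\mathcal{B}_{\Z^{m}}\times\mathcal{C}_{\inc}(\mathbf{n}_{0}))\setminus\mathcal{F}_{\res}^{\inc}$ — is the factor $\Gamma((\mathbf{n}_{0},\xi_{0}),(\mathbf{n}_{0},\xi_{0}))$, which can grow like $|\mathbf{n}_{0}|^{h}$ according to \eqref{eq est gamma n n}. Were one to sum over directions, this polynomial growth could not be absorbed by any $H^{s}$ norm of $W_{\mathbf{n}_{0},\xi_{0}}$ uniformly in $(\mathbf{n}_{0},\xi_{0})$; working mode by mode hides the dependence on the direction inside the single scalar coefficient in front of $\partial_{\Theta}S$, and then the structural constant $C_{1}$ in the differential inequality depends only on $L(0,\partial_{z})$ and $s$. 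Note also that the sign of $\partial_{\xi}\tau_{k(\mathbf{n}_{0},\xi_{0})}$ at the boundary $t=T$ is what makes the $L^{2}$ estimate close without a loss, so the assumption $\xi_{0}\in\mathcal{C}_{\inc}(\mathbf{n}_{0})$ is essential here and is the exact counterpart of the analogous positivity exploited for the outgoing modes in Proposition~\ref{prop equivalence systeme}.
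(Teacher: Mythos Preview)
Your overall strategy — $L^{2}$ energy inequality in $x_{d}$ obtained by pairing with $S_{\mathbf{n}_{0},\xi_{0}}$, then commutator estimates, then Grönwall on $[0,2\mathcal{V}^{*}T]$ — is exactly what the paper does, and your handling of the transport term, the sign of the $t=T$ boundary contribution, and the commutator step all match. The gap is in your treatment of the Burgers term and, specifically, in your last paragraph.

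After your integration by parts in $\Theta$ you obtain
\[
\Gamma\bigl((\mathbf{n}_{0},\xi_{0}),(\mathbf{n}_{0},\xi_{0})\bigr)\int_{\omega_{T}\times\mathbb{T}} W\,\partial_{\Theta}S\cdot S
=-\tfrac{1}{2}\,\Gamma\bigl((\mathbf{n}_{0},\xi_{0}),(\mathbf{n}_{0},\xi_{0})\bigr)\int_{\omega_{T}\times\mathbb{T}}\partial_{\Theta}W\cdot S^{2},
\]
and the scalar factor $\Gamma$ is \emph{still present}. By estimate \eqref{eq est gamma n n} one only has $|\Gamma|\leq C|\mathbf{n}_{0}|^{h}$, so bounding $\|\partial_{\Theta}W\|_{L^{\infty}}$ by a Sobolev norm of $W$ produces a constant of size $C|\mathbf{n}_{0}|^{h}$, not a constant independent of $(\mathbf{n}_{0},\xi_{0})$. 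Your assertion that ``working mode by mode hides the dependence on the direction inside the single scalar coefficient'' is therefore incorrect: the dependence is not hidden, it is sitting in $\Gamma$.

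The paper resolves this exactly at this point. Working in Fourier in $\Theta$, after the same symmetry trick that turns the factor $i\lambda_{2}$ into $-i\lambda_{1}$, it bounds
\[
|\lambda_{1}|\,\bigl|\Gamma\bigl((\mathbf{n}_{0},\xi_{0}),(\mathbf{n}_{0},\xi_{0})\bigr)\bigr|\leq C\,|\lambda_{1}\mathbf{n}_{0}|^{h},
\]
and then interprets $|\lambda_{1}\mathbf{n}_{0}|^{h}$ as $h$ derivatives of $W$ \emph{in the $\theta\in\mathbb{T}^{m}$ variables} (through the identification $\Theta\leftrightarrow\mathbf{n}_{0}\cdot\theta$, so that the Fourier mode $\lambda_{1}$ on $\mathbb{T}$ corresponds to $\lambda_{1}\mathbf{n}_{0}\in\mathbb{Z}^{m}$). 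It is precisely this step that absorbs $|\mathbf{n}_{0}|^{h}$ into the norm of $W$, and it is why the regularity threshold is $s>s_{0}=h+(d+m)/2$ rather than merely $s>(d+1)/2+1$. Without this device the constant $C_{1}$ in your differential inequality carries a hidden factor $|\mathbf{n}_{0}|^{h}$, the existence time obtained by Grönwall then depends on $\mathbf{n}_{0}$, and the subsequent summation over non-resonant directions in \eqref{eq constr est U osc G} fails.
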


First the $ L^2(\omega_T\times\T) $ estimate, analogous to estimate \eqref{eq estim est a priori L2} for resonant modes, is investigated, and equation \eqref{eq estim syst osc Burgers non res} is rewritten. Write $ S_{\mathbf{n}_0,\xi_0} $, $ W_{\mathbf{n}_0,\xi_0} $ and $ F_{\mathbf{n}_0,\xi_0} $ in $ \mathcal{C}(\R^+_{x_d},H^s(\omega_T\times\T)) $ as
\begin{align*}
	S_{\mathbf{n}_0,\xi_0}(z,\Theta)&=\sum_{\lambda\in\Z^*}\sigma_{\lambda,\mathbf{n}_0,\xi_0}(z)\,e^{i\lambda\Theta},\quad 
	W_{\mathbf{n}_0,\xi_0}(z,\Theta)=\sum_{\lambda\in\Z^*}\omega_{\lambda,\mathbf{n}_0,\xi_0}(z)\,e^{i\lambda\Theta},\\
	F_{\mathbf{n}_0,\xi_0}(z,\Theta)&=\sum_{\lambda\in\Z^*}f_{\lambda,\mathbf{n}_0,\xi_0}(z)\,e^{i\lambda\Theta},
\end{align*}
with $ \sigma_{\lambda,\mathbf{n}_0,\xi_0} $, $ \omega_{\lambda,\mathbf{n}_0,\xi_0} $ and $ f_{\lambda,\mathbf{n}_0,\xi_0} $ scalar functions on $ \Omega_T $. Then equation \eqref{eq estim syst osc Burgers non res} writes
\begin{subequations}\label{eq estim E i L U singletons}
	\begin{align}
		\sum_{\lambda\in\Z^*}f_{\lambda,\mathbf{n}_0,\xi_0}(z)\,e^{i\,\lambda\mathbf{n}_0\cdot\theta}\,e^{i\lambda\Theta}&=\tilde{X}_{(\mathbf{n}_0\cdot\boldsymbol{\zeta},\xi_0)}\sum_{\lambda\in\Z^*}\sigma_{\lambda,\mathbf{n}_0,\xi_0}\,e^{i\lambda\Theta},\label{eq estim E i L U trsp osc singletons}\\
		&+\Gamma\big((\mathbf{n}_0,\xi_0),(\mathbf{n}_0,\xi_0)\big)\sum_{\lambda\in\Z^*}\sum_{\substack{\lambda_1,\lambda_2\in\Z^*\\\lambda_1+\lambda_2=\lambda}}i\,\lambda_2\,\omega_{\lambda_1,\mathbf{n}_0,\xi_0}\,\sigma_{\lambda_2,\mathbf{n}_0,\xi_0}\,e^{i\lambda\Theta}.\label{eq estim E i L U AI singletons}
	\end{align}
\end{subequations}
Recall that the vector field $ \tilde{X}_{(\mathbf{n}_0\cdot\boldsymbol{\zeta},\xi_0)} $, defined in Lemma \ref{lemme Lax}, is given by
\begin{equation*}
	\tilde{X}_{(\mathbf{n}_0\cdot\boldsymbol{\zeta},\xi_0)}=\frac{-1}{\partial_{\xi}\tau_{k(\mathbf{n}_0,\xi_0)}(\mathbf{n}_0\cdot\boldsymbol{\eta},\xi_0)}\partial_t+\inv{\partial_{\xi}\tau_{k(\mathbf{n}_0,\xi_0)}(\mathbf{n}_0\cdot\boldsymbol{\eta},\xi_0)}\nabla_{\eta}\tau_{k(\eta_0,\xi_0)}(\mathbf{n}_0\cdot\boldsymbol{\eta},\xi_0)\cdot\nabla_y+\partial_{x_d}.
\end{equation*} 

By taking the double of the real part of the $ L^2(\omega_T\times\T) $ scalar product of equality \eqref{eq estim E i L U singletons} with the function $ S_{\mathbf{n}_0,\xi_0} $, one obtains an equality, with on one side of it the term
\begin{equation*}
	2\Re \prodscal{F_{\mathbf{n}_0,\xi_0}}{S_{\mathbf{n}_0,\xi_0}}_{L^2(\omega_T\times\T)}(x_d),
\end{equation*}
which is estimated in a similar manner than for the resonant incoming frequencies:
\begin{equation}
	\left|2\Re \prodscal{F_{\mathbf{n}_0,\xi_0}}{S_{\mathbf{n}_0,\xi_0}}_{L^2(\omega_T\times\T)}(x_d)\right|
	\leq
	C\norme{F_{\mathbf{n}_0,\xi_0}}_{L^2(\omega_T\times\T)}(x_d)+C\norme{S_{\mathbf{n}_0,\xi_0}}_{L^2(\omega_T\times\T)}(x_d).\label{eq estim prod F singletons}
\end{equation}
One may note here the interest of having reduced the equations to scalar Burgers equations for the non-resonant modes, since the coefficients $ |\tilde{\pi}_{(\mathbf{n}_0\cdot\boldsymbol{\zeta},\xi_0)}\,E(\mathbf{n}_0,\xi_0)|^{-1} $ no longer appear, these ones being not uniformly bounded for the non resonant modes $ (\mathbf{n}_0,\xi_0) $.

We now focus on the other side of the equality obtained by taking the double of the real part of the scalar product of equation \eqref{eq estim E i L U singletons} with the profile $ S_{\mathbf{n}_0,\xi_0} $. The analysis of terms \eqref{eq estim E i L U trsp osc singletons} and \eqref{eq estim E i L U AI singletons} is analogous to the one made for the outgoing non-resonant modes.

Concerning the transport term \eqref{eq estim E i L U trsp osc singletons}, according to identity \eqref{eq prod scal pol trigo rentrants} and using an integration by parts, we obtain
\begin{multline}\label{eq estim prod trsp singletons}
	2\Re\prodscal{\eqref{eq estim E i L U trsp osc singletons}}{S_{\mathbf{n}_0,\xi_0}}_{L^2(\omega_T\times\T)}(x_d)=\frac{d}{dx_d}\norme{S_{\mathbf{n}_0,\xi_0} }_{L^2(\omega_T\times\T)}(x_d)\\
	-\frac{1}{\partial_{\xi}\tau_{k(\mathbf{n}_0,\xi_0)}(\mathbf{n}_0\cdot\boldsymbol{\eta},\xi_0)}\norme{S_{\mathbf{n}_0,\xi_0} }^2_{L^2(\R^{d-1}\times\T)}(T).
\end{multline}
Note that since $ (\mathbf{n}_0,\xi_0) $ is an incoming mode, the quantity $ -\partial_{\xi}\tau_{k(\mathbf{n}_0,\xi_0)}(\mathbf{n}_0\cdot\boldsymbol{\eta},\xi_0) $ is positive, allowing to omit the second term on the right hand side of the equality in the estimates.

For the self-interaction term \eqref{eq estim E i L U AI singletons}, with computations analogous to the ones used for the incoming resonant modes, we obtain
\begin{multline*}
	2\Re \prodscal{\eqref{eq estim E i L U AI singletons}}{S_{\mathbf{n}_0,\xi_0} }_{L^2(\omega_T\times\T)}(x_d)=\\
	-(2\pi)^m\sum_{\lambda\in\Z^*}\sum_{\substack{\lambda_1,\lambda_2\in\Z^*\\\lambda_1+\lambda_2=\lambda}}i\lambda_1\,\Gamma\big((\mathbf{n}_0,\xi_0),(\mathbf{n}_0,\xi_0)\big)\prodscal{\omega_{\lambda_1,\mathbf{n}_0,\xi_0}\,\sigma_{\lambda_2,\mathbf{n}_0,\xi_0}}{\sigma_{\lambda,\mathbf{n}_0,\xi_0}}_{L^2(\omega_T)}(x_d).
\end{multline*}
Therefore, using the upper bound \eqref{eq est gamma n n}, we get
\begin{multline*}
	\left|2\Re \prodscal{\eqref{eq estim E i L U AI singletons}}{S_{\mathbf{n}_0,\xi_0} }_{L^2(\omega_T\times\T)}(x_d)\right|\\
	\leq
	C\sum_{\lambda\in\Z^*}\sum_{\substack{\lambda_1,\lambda_2\in\Z^*\\\lambda_1+\lambda_2=\lambda}}|\lambda_1\mathbf{n}_0|^h\left|\prodscal{\omega_{\lambda_1,\mathbf{n}_0,\xi_0}\,\sigma_{\lambda_2,\mathbf{n}_0,\xi_0}}{\sigma_{\lambda,\mathbf{n}_0,\xi_0}}_{L^2(\omega_T)}(x_d)\right|.
\end{multline*}
Here the order of regularity must be taken down to $ h $ since we wish for an upper bound independent of $ \mathbf{n}_0 $, in the purpose of summing the inequality with respect to $ \mathbf{n}_0 $. An upper bound of the form $ C(\mathbf{n}_0) |\lambda_1|$ instead of $ C|\lambda_1\mathbf{n}_0|^h $ could be obtained, but where the constant $ C(\mathbf{n}_0) $ depends on $ \mathbf{n}_0 $, and may be arbitrarily large since we consider modes $ \mathbf{n}_0 $ close to the glancing set.
The right hand side of the equality being of the form $ \prodscal{fg}{g} $, the following inequality holds
\begin{align}
	\left|2\Re \prodscal{\eqref{eq estim E i L U AI singletons}}{S_{\mathbf{n}_0,\xi_0} }_{L^2(\omega_T\times\T)}(x_d)\right|
	&\leq C
	\norme{W_{\mathbf{n}_0,\xi_0}}_{\mathcal{C}(\R^+_{x_d},H^s(\omega_T\times\T))}\norme{S_{\mathbf{n}_0,\xi_0}}_{L^2(\omega_T\times\T)}(x_d),\label{eq estim prod AI singletons}
\end{align}
using Sobolev inequality, $ s $ being such that $ s>h+(d+m)/2 $.
Using equations \eqref{eq estim E i L U singletons} and \eqref{eq estim prod trsp singletons} and  estimates \eqref{eq estim prod F singletons} and \eqref{eq estim prod AI singletons}, it finally follows the differential inequality
\begin{multline}\label{eq estim ineq diff F etoile singletons}
	\frac{d}{dx_d}\norme{S_{\mathbf{n}_0,\xi_0}}_{L^2(\omega_T\times\T)}(x_d)
	\\
	\leq C\norme{F_{\mathbf{n}_0,\xi_0}}_{L^2(\omega_T\times\T)}(x_d)+C\left(1+\norme{W_{\mathbf{n}_0,\xi_0}}_{\mathcal{C}(\R^+_{x_d},H^s(\omega_T\times\T))}\right)\norme{S_{\mathbf{n}_0,\xi_0}}_{L^2(\omega_T\times\T)}(x_d).
\end{multline}

To obtain the required $ H^s(\omega_T\times\T) $ estimate, we use commutators estimates analogous to the one for resonant incoming modes, which we do not detail here. Finally we obtain the sought estimate \eqref{eq estim est a priori Burgers}.

\bigskip

The a priori estimates \eqref{eq estim est a priori L2} and \eqref{eq estim est a priori Burgers} (for $ s=0 $) as well as the equivalence property of Proposition \ref{prop equivalence systeme} ensure the uniqueness of the solution to \eqref{eq obtention U 1}.

\subsection{Construction of a solution}\label{subsection construction solution}

\subsubsection{Construction of an oscillating solution to the linearized system for the resonant incoming modes}

Thanks to the a priori estimate \eqref{eq estim est a priori} of Proposition \ref{prop estim a priori} on the linearized system \eqref{eq estim syst osc}, a solution to this system can be constructed, proving the following result.

\begin{proposition}\label{prop exist sol syst linearise}
	Consider $ s>s_0 $ and $ T>0 $, and let $ V_{\res}^{\osc} $ be a profile of $ \N^{\osc}_{s,T} $ involving only resonant incoming modes, $ F^{\osc}_{\res} $ be in $ \P_{s,T}^{\osc} $ and $ G $ be in $ H^s(\omega_T\times\T^m) $. Then there exists a solution $ U_{\res}^{\osc} $ in $ \P_{s,T}^{\osc} $ to system \eqref{eq estim syst osc}, involving only resonant incoming modes, that moreover satisfies the following estimate
	\begin{equation*}
		\norme{U_{\res}^{\osc}}^2_{\mathcal{E}_{s,T}}\leq C_1\,e^{C(V)\,\mathcal{V}^*T}\norme{G}^2_{H^s(\omega_T\times\T^m)}
		+ \V^*T\,e^{C(V)\mathcal{V}^*T}\norme{F_{\res}^{\osc}}^2_{\mathcal{E}_{s,T}},
	\end{equation*}
	where $ C(V):=C_1(1+\norme{V_{\res}^{\osc}}^2_{\mathcal{E}_{s,T}}) $, with $ C_1 $ a positive constant depending only on $ L(0,\partial_z) $, on the boundary frequencies $ \zeta_1,\dots,\zeta_m  $, and on $ s $.
\end{proposition}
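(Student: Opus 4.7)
The plan is to construct $U^{\osc}_{\res}$ as the limit of a sequence of approximate solutions built by a Galerkin-type truncation in the fast variables, with the a priori estimate \eqref{eq estim est a priori} providing uniform control throughout. As announced in the introduction, I only outline the procedure, which follows a classical pattern, and the a priori estimate of Proposition \ref{prop estim a priori} is what makes it work.

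Using the polarization \eqref{eq estim syst osc E U = U} and the reduction of Section \ref{subsection estimate resonant part}, the unknown is encoded by the family of scalar coefficients $(\sigma_{\lambda, \mathbf{n}_0, \xi_0})_{\lambda \in \Z^*,\, (\mathbf{n}_0, \xi_0) \in \F^{\inc}_{\res}}$, and \eqref{eq estim syst osc E i U} becomes the infinite coupled system \eqref{eq estim E i L U} of scalar transport equations along the incoming vector fields $X_{(\mathbf{n}_0 \cdot \boldsymbol{\zeta}, \xi_0)}$. For each integer $N \geq 1$, I would truncate $V^{\osc}_{\res}$, $F^{\osc}_{\res}$ and $G$ to the index set $\mathcal{J}_N := \{(\lambda, \mathbf{n}_0, \xi_0) : |\lambda| \leq N,\ |\mathbf{n}_0| \leq N\}$ (which is finite thanks to strict hyperbolicity bounding $|\mathcal{C}_{\inc}(\mathbf{n}_0)| \leq N$) and keep only those resonance and self-interaction terms of \eqref{eq estim E i L U} whose three input/output indices all lie in $\mathcal{J}_N$. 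The resulting problem is a finite linear coupled system of scalar transport equations with zero-order coupling and with boundary data in a finite-dimensional subspace of the trace space; since all retained modes are incoming, the uniform Kreiss--Lopatinskii condition is trivially satisfied and the method of characteristics combined with a Banach fixed-point argument on short $x_d$-intervals (handling the finite lower-order coupling) yields a unique approximate solution $U^N_{\res} \in \P^{\osc}_{s,T}$ supported in the modes of $\mathcal{J}_N$.

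The a priori estimate \eqref{eq estim est a priori} applies to $U^N_{\res}$ with constants independent of $N$, since they depend only on $L(0,\partial_z)$, on $s$, on $\norme{V^{\osc}_{\res}}_{\mathcal{E}_{s,T}}$ (which majorates $\norme{V^N_{\res}}_{\mathcal{E}_{s,T}}$ by Parseval), and on $\norme{F^{\osc}_{\res}}_{\mathcal{E}_{s,T}}$ and $\norme{G}_{H^s(\omega_T\times\T^m)}$; hence $(U^N_{\res})_{N \geq 1}$ is uniformly bounded in $\mathcal{E}_{s,T}$. The same estimate applied to differences $U^{N_1}_{\res} - U^{N_2}_{\res}$ at regularity $s-1$, where the right-hand side involves the differences of truncated data together with an error term $(V^{N_1}_{\res} - V^{N_2}_{\res})\,\partial_{\theta_j} U^{N_2}_{\res}$ controlled by the uniform $s$-bound, shows that $(U^N_{\res})$ is Cauchy in $\mathcal{E}_{s-1,T}$; its limit $U^{\osc}_{\res}$ lies in $\mathcal{E}_{s,T}$ by weak-$\ast$ compactness and interpolation, solves the full system \eqref{eq estim syst osc} (the polarization passing to the limit since $\Ima \E^{\inc}_{\res}$ is closed), and satisfies the announced estimate by lower semi-continuity of the norm. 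Uniqueness of $U^{\osc}_{\res}$ in $\P^{\osc}_{s,T}$ is immediate from the a priori estimate applied to the difference of two solutions.

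The main delicate point is the consistency of the truncation with the \emph{infinitely many} type 1 resonances that may contribute to each mode of $U$: one must show that the tail discarded by restricting to $\mathcal{J}_N$ vanishes in $\mathcal{E}_{s-1,T}$ as $N \to \infty$. This is done along the same lines as in the derivation of the a priori estimate, using the uniform bound \eqref{eq propr res type 1} of Assumption \ref{hypothese type resonance} combined with the $H^s$-regularity of $V^{\osc}_{\res}$; once this is granted, every remaining verification reduces to standard linear hyperbolic theory, which is why the details are omitted in the paper.
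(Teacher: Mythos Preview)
Your Galerkin truncation in the mode index set $\mathcal{J}_N$ is a valid alternative, but it is not the route the paper takes. The paper follows \cite[Theorem 6.3.3]{JolyMetivierRauch1995Coherent} and regularizes by replacing each $\partial_{\theta_j}$ in \eqref{eq estim syst osc E i U} with the centered finite difference operator
\[
\delta_j^h U(z,\theta,\psi_d):=\big(U(z,\theta+he_j)-U(z,\theta-he_j)\big)/2h,
\]
keeping the full (infinite) set of resonant incoming modes throughout. The point of this choice is that $\delta_j^h$ is bounded on $H^s(\omega_T\times\T^m)$ and, crucially, skew-symmetric on $L^2(\T^m)$; hence every cancellation used in the proof of Lemma \ref{lemme estim est a priori L2} (the self-interaction identity and the type~1 resonance symmetry \eqref{eq propr res type 1}) goes through verbatim for the $h$-regularized system, and the a priori estimate \eqref{eq estim est a priori} holds with constants independent of $h$. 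One then extracts a weakly convergent subsequence in $\mathcal{E}_{s,T}$ and passes to the limit.

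Your spectral truncation buys a genuinely finite-dimensional ODE at each step, which is conceptually pleasant, but it forces you to verify that the estimate survives the truncation. You correctly flag this as the delicate point: for the self-interaction term the index set $\{(\lambda_1,\lambda_2,\lambda):\lambda_1+\lambda_2=\lambda\}\cap\mathcal{J}_N^3$ is stable under the change of variables used in the proof, and for the type~1 resonances your choice to keep only triples with all three indices in $\mathcal{J}_N$ preserves the $(q,r)$-symmetry of Remark~\ref{remarque type resonance}~iii), so the argument of \eqref{eq estim prod R1} still applies. With this checked, your scheme works. The paper's finite-difference regularization simply sidesteps this verification, since skew-symmetry of $\delta_j^h$ makes the structural identities automatic; that is the trade-off between the two approaches.
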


The proof of such a result using an a priori estimate of the form \eqref{eq estim est a priori} is detailed in \cite[Theorem 6.3.3]{JolyMetivierRauch1995Coherent}. Its main ideas are recalled here.

The uniqueness of the solution follows directly from the a priori estimate \eqref{eq estim est a priori}. Concerning the existence, a finite difference scheme is used. Since the operators $ \partial_{\theta_j} $ for $ j=1,\dots,m $, are skew-symmetric, skew-symmetric finite difference operators must be considered. Denoting $ e_1,\dots,e_m $ the canonical basis of $ \R^m $, we define, for every function $ U $ of $ \Omega_T\times\T^m\times\R_+ $,
\begin{equation*}
	\delta_j^h U(z,\theta,\psi_d):=\big(U(z,\theta+he_j)-U(z,\theta-he_j\big)/2h,
\end{equation*}
for $ j=1,\dots,m $ and $ h>0 $. The proof then consists in showing that there exists, for $ h>0 $, a unique solution $ U^{\osc}_h $ to the regularized system
\begin{subequations}\label{eq constr syst h}
	\begin{align}
		\E^{\inc}_{\res}\, U^{\osc}_h&=U^{\osc}_h  \\
		\tilde{\E^i}^{\inc}_{\res}\Big[\tilde{L}(0,\partial_z)\,U^{\osc}_h+\sum_{j=1}^m\tilde{L}_1(V_{\res}^{\osc},\zeta_j)\,\delta_j^h U^{\osc}_h\Big]
		&=\tilde{\E^i}^{\inc}_{\res}\, F_{\res}^{\osc} \\
		\big(U^{\osc}_h\big)_{|x_d=0,\psi_d=0}&=H_{\res}^{\osc}  \\[5pt]
		\big(U^{\osc}_h\big)_{|t\leq 0}&=0,
	\end{align}
\end{subequations}
and that this solution satisfies the estimate uniform with respect to $ h>0 $,
\begin{equation*}
	\norme{U^{\osc}_h}^2_{\mathcal{E}_{s,T}}\leq C_1\,e^{C(V)\,\mathcal{V}^*T}\norme{G}^2_{H^s(\omega_T\times\T^m)}
	+ \V^*T\,e^{C(V)\mathcal{V}^*T}\norme{F^{\osc}_{\res}}^2_{\mathcal{E}_{s,T}},
\end{equation*}
where $ C(V):=C_1(1+\norme{V_{\res}^{\osc}}^2_{\mathcal{E}_{s,T}}) $, with $ C_1>0 $ a constant depending only on the operator $ L(0,\partial_z) $, on the boundary frequencies $ \zeta_1,\dots,\zeta_m  $, and on $ s $. This uniform estimate allows to extract a sequence $ (U^{\osc}_{h_n})_n $ weakly converging towards $ U_{\res}^{\osc} $ in $ \mathcal{E}_{s,T} $. Passing to the limit in system \eqref{eq constr syst h} leads to the result of Proposition \ref{prop exist sol syst linearise}.

\subsubsection{Construction of an oscillating solution to systems \eqref{eq red syst osc res} and \eqref{eq red syst osc non res}}

This part is devoted to the following result, constituting a part of the result of Theorem \ref{thm existence profils}.

\begin{proposition}\label{prop existence sol osc}
	Consider $ s>s_0 $, and $ G $ in $ H^{\infty}(\R^d\times\T^m) $, zero for negative times $ t $. There exists a time $ T>0 $, depending only on the operator $ L(0,\partial_z) $, on the boundary frequencies $ \zeta_1,\dots,\zeta_m $, on the $ H^s(\R^d\times\T^m) $ norm of $ G $ and on $ s $, such that system \eqref{eq red syst osc res} and, for every $ (\mathbf{n}_0,\xi_0) $ in  $(\B_{\Z^m}\times\mathcal{C}_{\inc}(\mathbf{n}_0))\setminus\F^{\inc}_{\res} $, system \eqref{eq red syst osc non res}, admit solutions $ U_{\res}^{\osc} $ and $ S_{\mathbf{n}_0,\xi_0} $ in $ \P^{\osc}_{s,T} $ and $ \mathcal{C}(\R_+,H^s(\omega_T\times\T)) $, where the functions $ S_{\mathbf{n}_0,\xi_0} $ are of zero mean. Furthermore, if we denote, for $ (\mathbf{n}_0,\xi_0) $ in  $(\B_{\Z^m}\times\mathcal{C}_{\inc}(\mathbf{n}_0))\setminus\F^{\inc}_{\res} $,
	\begin{equation*}
		S_{\mathbf{n}_0,\xi_0}(z,\Theta)=:\sum_{\lambda\in\Z^*}\sigma_{\lambda,\mathbf{n}_0,\xi_0}(z)\,e^{i\lambda\,\Theta},
	\end{equation*}
	then the profile $ U^{\osc} $ defined, for $ (z,\theta,\psi_d)  $ in $ \Omega_T\times\T^m\times\R_+ $, by
	\begin{equation}\label{eq constr def U osc}
		U^{\osc}(z,\theta,\psi_d):=U^{\osc}_{\res}(z,\theta,\psi_d)+\sum_{\substack{(\mathbf{n}_0,\xi_0)\in\\(\B_{\Z^m}\times\mathcal{C}_{\inc}(\mathbf{n}_0))\setminus\F^{\inc}_{\res}}}\sum_{\lambda\in\Z^*}\sigma_{\lambda,\mathbf{n}_0,\xi_0}(z)\,e^{i\lambda\mathbf{n}_0\cdot\theta}\,e^{i\lambda\xi_0\psi_d}\,E(\mathbf{n}_0,\xi_0),
	\end{equation}
	belongs to the space $ \P_{s,T}^{\osc} $.
\end{proposition}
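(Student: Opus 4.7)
The plan is to construct $U^{\osc}_{\res}$ and each $S_{\mathbf{n}_0,\xi_0}$ separately by Picard iteration, using the linear existence result of Proposition~\ref{prop exist sol syst linearise} and the uniform a priori estimate of Proposition~\ref{prop estim a priori Burgers}, then to assemble the pieces into a profile of $\P^{\osc}_{s,T}$ by summing the individual estimates.

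For the resonant part, I would define inductively $U^{(0)}_{\res}:=0$ and let $U^{(n+1)}_{\res}\in\P^{\osc}_{s,T}$ be the solution, given by Proposition~\ref{prop exist sol syst linearise}, of the linearized system \eqref{eq estim syst osc} with coefficient $V^{\osc}_{\res}=U^{(n)}_{\res}$, right-hand side $F^{\osc}_{\res}=0$ and boundary datum $H^{\osc}_{\res}$ of \eqref{eq red def H osc res}. Using \eqref{eq red est H h avec G} and the a priori estimate \eqref{eq estim est a priori} of Proposition~\ref{prop estim a priori}, a standard bootstrap argument shows that for $R:=2\sqrt{C_1}\,\|G\|_{H^s(\omega\times\T^m)}$ and $T>0$ chosen small enough so that $e^{C_1(1+R^2)\V^*T}\leq 2$, the sequence $(U^{(n)}_{\res})_n$ remains in the closed ball of radius $R$ in $\P^{\osc}_{s,T}$. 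To obtain convergence, I would estimate the difference $U^{(n+1)}_{\res}-U^{(n)}_{\res}$ at the lower regularity level $s-1$ (which still satisfies $s-1>s_0$ after adjusting $s$): it solves the linearized system with vanishing boundary datum, vanishing initial condition and source term controlled by $\|U^{(n)}_{\res}-U^{(n-1)}_{\res}\|_{\mathcal{E}_{s-1,T}}\,\|U^{(n+1)}_{\res}\|_{\mathcal{E}_{s,T}}$, which, after shrinking $T$ once more, gives a contraction in $\P^{\osc}_{s-1,T}$. The limit $U^{\osc}_{\res}\in\P^{\osc}_{s-1,T}$ inherits by weak-$*$ compactness the bound in $\mathcal{E}_{s,T}$ and hence lies in $\P^{\osc}_{s,T}$ and solves \eqref{eq red syst osc res}.

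For the non-resonant Burgers equations, I would run the same Picard scheme mode by mode: for each $(\mathbf{n}_0,\xi_0)\in(\B_{\Z^m}\times\mathcal{C}_{\inc}(\mathbf{n}_0))\setminus\F^{\inc}_{\res}$, define $S^{(0)}_{\mathbf{n}_0,\xi_0}:=0$ and let $S^{(n+1)}_{\mathbf{n}_0,\xi_0}$ solve \eqref{eq estim syst osc non res} with coefficient $W_{\mathbf{n}_0,\xi_0}=S^{(n)}_{\mathbf{n}_0,\xi_0}$ and $F_{\mathbf{n}_0,\xi_0}=0$ (the existence of $S^{(n+1)}_{\mathbf{n}_0,\xi_0}$ at each step is obtained exactly as in Proposition~\ref{prop exist sol syst linearise}, or alternatively by the method of characteristics since the equation is scalar Burgers). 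Crucially, the constant $C_1$ in Proposition~\ref{prop estim a priori Burgers} is \emph{independent} of $(\mathbf{n}_0,\xi_0)$, so the same $T$ as above will work for every non-resonant mode, and the uniform estimate
\begin{equation*}
\|S_{\mathbf{n}_0,\xi_0}\|^2_{\mathcal{C}(\R^+_{x_d},H^s(\omega_T\times\T))}\leq 2\,C_1\,\|h_{\mathbf{n}_0,\xi_0}\|^2_{H^s(\omega_T\times\T)}
\end{equation*}
will hold for each $(\mathbf{n}_0,\xi_0)$.

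Assembling the solutions is the delicate step and the main obstacle of the proof. Writing $U^{\osc}$ as in \eqref{eq constr def U osc}, I need to show it lies in $\P^{\osc}_{s,T}$, i.e.\ that the series converges in $\mathcal{E}_{s,T}$ and that the limit is a limit of trigonometric polynomials in $\psi_d$. For the first point, since the vectors $E(\mathbf{n}_0,\xi_0)$ are of norm $1$, since the exponentials $e^{i\lambda\mathbf{n}_0\cdot\theta}e^{i\lambda\xi_0\psi_d}$ are orthonormal in the appropriate sense, and since the cardinality of $\mathcal{C}_{\inc}(\mathbf{n}_0)$ is bounded by $N$, Parseval's identity and the uniform Burgers estimate yield
\begin{equation*}
\sum_{(\mathbf{n}_0,\xi_0)\notin\F^{\inc}_{\res}}\|S_{\mathbf{n}_0,\xi_0}\|^2_{\mathcal{C}(\R^+_{x_d},H^s(\omega_T\times\T))}\leq 2\,C_1\sum_{(\mathbf{n}_0,\xi_0)\notin\F^{\inc}_{\res}}\|h_{\mathbf{n}_0,\xi_0}\|^2_{H^s(\omega_T\times\T)}\leq C\,\|G\|^2_{H^s(\omega_T\times\T^m)}
\end{equation*}
by \eqref{eq red est H h avec G}. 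This gives the convergence of the non-resonant sum in $\mathcal{E}_{s,T}$. For the second point, truncating the sum over $(\mathbf{n}_0,\xi_0)$ to a finite set produces a trigonometric polynomial in $\psi_d$ (the set $\mathcal{C}_{\inc}(\mathbf{n}_0)$ being finite), and the above uniform bound shows these truncations converge to $U^{\osc}$ in $\mathcal{E}_{s,T}$; combined with the fact that $U^{\osc}_{\res}\in\P^{\osc}_{s,T}$ already, this places $U^{\osc}$ in the closure $\P^{\osc}_{s,T}$ of trigonometric polynomials with respect to $\psi_d$. Finally, $T$ is chosen to simultaneously meet all the smallness requirements above, and depends only on the quantities listed in the proposition.
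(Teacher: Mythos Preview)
Your proposal is correct and follows essentially the same Picard-iteration strategy as the paper (high-norm boundedness, low-norm contraction, then assembly), with two minor differences worth noting.

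First, the paper carries out the contraction in $\mathcal{E}_{0,T}$, invoking directly the $L^2$ estimate of Lemma~\ref{lemme estim est a priori L2} (where only the coefficient $V^{\osc}_{\res}$ needs to sit in $\mathcal{E}_{s,T}$, $s>s_0$), rather than in $\mathcal{E}_{s-1,T}$. This avoids your ad hoc ``after adjusting $s$'' device, which as written is not quite legitimate: the statement fixes $s>s_0$ and you cannot retroactively increase it. Contracting at level $0$ costs nothing and works for every $s>s_0$.

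Second, for the assembly step the paper goes through Lemma~\ref{lemme correspondance norme prod scal}, which compares $\|U^{\osc}\|_{\mathcal{E}_{s,T}}$ to the almost-periodic scalar product $\sum_{|\alpha|\leq s}\langle\partial^{\alpha}U^{\osc},\partial^{\alpha}U^{\osc}\rangle_{\inc}$; the latter then splits exactly into the resonant piece plus $\sum_{(\mathbf{n}_0,\xi_0)}\|S_{\mathbf{n}_0,\xi_0}\|^2$, and one concludes via \eqref{eq estim est a priori}, \eqref{eq estim est a priori Burgers} and \eqref{eq red est H h avec G}. Your Parseval argument is morally the same, but ``orthonormal in the appropriate sense'' hides precisely this lemma: the fast normal variable $\psi_d$ is almost-periodic, not periodic, so ordinary Parseval does not apply and one really needs the mean-value pairing $\langle\cdot,\cdot\rangle_{\inc}$ together with Lemma~\ref{lemme correspondance norme prod scal} to pass from it back to the $\mathcal{E}_{s,T}$ norm.
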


It is classical to deduce from an existence result of a solution to a linearized system with an estimate of the form \eqref{eq estim est a priori}, the existence of a solution to the original system. The main ideas of the method described in \cite[Théorème 10.1]{BenzoniSerre2007Multi} are recalled here.

First system \eqref{eq red syst osc res} is investigated, and the following iterative scheme is considered:
\begin{subequations}\label{eq constr syst osc}
	\begin{align}
		\E^{\inc}_{\res}\, U_{\nu+1}^{\osc}&=U_{\nu+1}^{\osc} \label{eq constr syst osc E U = U}  \\
		\tilde{\E^i}^{\inc}_{\res}\Big[\tilde{L}(0,\partial_z)\,\beta_TU_{\nu+1}^{\osc}+\sum_{j=1}^m\tilde{L}_1(\beta_TU_{\nu}^{\osc},\zeta_j)\,\partial_{\theta_j}\beta_TU_{\nu+1}^{\osc}\Big]
		&=0 \label{eq constr syst osc E i U}\\
		\big(U_{\nu+1}^{\osc}\big)_{|x_d=0,\psi_d=0}&=H_{\res}^{\osc} \label{eq constr syst osc cond bord} \\[5pt]
		\big(U_{\nu+1}^{\osc}\big)_{|t\leq 0}&=0, \label{eq constr syst osc cond init}
	\end{align}
\end{subequations}
initialized with $ U^{\osc}_{0}(.,x_d,.,\psi_d):=H_{\res}^{\osc} $, for all $ x_d,\psi_d $ in $ \R_+ $. Proposition \ref{prop exist sol syst linearise} ensures that the sequence $ (U^{\osc}_{\nu})_{\nu} $ is well defined in $ \P^{\osc}_{s,T} $. Then the proof consists in showing that the sequence $ (U^{\osc}_{\nu})_{\nu} $ is bounded in high norm, and contracting in low norm, in order to deduce its weak convergence in the Banach space $ \P^{\osc}_{s,T} $.

\emph{Bound in high norm.} According to estimate \eqref{eq estim est a priori}, we have, for $ \nu\geq 0 $,
\begin{equation}\label{eq constr est gd norme 1}
	\norme{U_{\nu+1}^{\osc}}^2_{\mathcal{E}_{s,T}}\leq C_1e^{C(U^{\osc}_{\nu})\,\mathcal{V}^*T}\norme{G}^2_{H^s(\omega_T\times\T^m)},
\end{equation}
where $ C(U^{\osc}_{\nu})=C_1\big(1+\norme{U_{\nu}^{\osc}}^2_{\mathcal{E}_{s,T}}\big) $. If the time $ T>0 $ is chosen sufficiently small so that
\begin{equation*}
	\exp \Big[ C_1\big(1+2\,C_1\norme{G}^2_{H^s(\omega_T\times\T^m)}\big)\,\V^*T\Big]\leq 2,
\end{equation*}
then an induction argument shows that $ (U^{\osc}_{\nu})_{\nu} $ is bounded in $ \mathcal{E}_{s,T} $ by $ \sqrt{2\,C_1}\norme{G}_{H^s(\omega_T\times\T^m)} $. Indeed, the initial step is obvious, up to assuming $ C_1>1/2 $. On an other hand, assuming $ \norme{U^{\osc}_{\nu}}_{\mathcal{E}_{s,T}}\leq \sqrt{2\,C_1}\norme{G}_{H^s(\omega_T\times\T^m)} $ for some $ \nu\geq 0 $,  according to \eqref{eq constr est gd norme 1} and the assumption on $ T $, we obtain
\begin{align*}
	\norme{U_{\nu+1}^{\osc}}^2_{\mathcal{E}_{s,T}}&\leq C_1\exp\Big[C_1\big(1+\norme{U_{\nu}^{\osc}}^2_{\mathcal{E}_{s,T}}\big) \,\mathcal{V}^*T\Big]\norme{G}^2_{H^s(\omega_T\times\T^m)}\\
	&\leq C_1\exp \Big[ C_1\big(1+2\,C_1\norme{G}^2_{H^s(\omega_T\times\T^m)}\big)\,\V^*T\Big]\norme{G}^2_{H^s(\omega_T\times\T^m)}\\
	&\leq 2\,C_1\norme{G}^2_{H^s(\omega_T\times\T^m)},
\end{align*}
which is the expected estimate.

\emph{Contraction in low norm.} Denote, for $ \nu\geq1 $, $ W^{\osc}_{\nu}:=U^{\osc}_{\nu}-U^{\osc}_{\nu-1} $, that satisfies the system
\begin{subequations}\label{eq constr syst osc diff}
	\begin{align}
		\E^{\inc}_{\res}\, W_{\nu+1}^{\osc}&=W_{\nu+1}^{\osc} \label{eq constr syst osc diff E U = U}  \\
		\tilde{\E^i}^{\inc}_{\res}\Big[\tilde{L}(0,\partial_z)\,\beta_TW_{\nu+1}^{\osc}+\sum_{j=1}^m\tilde{L}_1(\beta_TU_{\nu}^{\osc},\zeta_j)\,\partial_{\theta_j}\beta_TW_{\nu+1}^{\osc}
		\Big]&=\tilde{\E^i}^{\inc}_{\res}\,F_{\nu+1} \label{eq constr syst osc diff E i U}\\
		\big(W_{\nu+1}^{\osc}\big)_{|x_d=0,\psi_d=0}&=0 \label{eq constr syst osc diff cond bord} \\[5pt]
		\big(W_{\nu+1}^{\osc}\big)_{|t\leq 0}&=0, \label{eq constr syst osc diff cond init}
	\end{align}
\end{subequations}
where $ F_{\nu+1} $ is given by
\begin{align*}
	F_{\nu+1}:=&\sum_{j=1}^m\big(\tilde{L}_1(\beta_TU_{\nu-1}^{\osc},\zeta_j)-\tilde{L}_1(\beta_TU_{\nu}^{\osc},\zeta_j)\big)\partial_{\theta_j}\beta_TU_{\nu}^{\osc}.
\end{align*}
According to estimate \eqref{eq estim est a priori} applied to system \eqref{eq constr syst osc diff} for  $ s=0 $, the following inequality holds
\begin{equation*}
	\norme{W_{\nu+1}^{\osc}}^2_{\mathcal{E}_{0,T}}\leq \V^*T\,e^{C(U_{\nu}^{\osc})\V^*T}\norme{F_{\nu+1}}^2_{\mathcal{E}_{0,T}}.
\end{equation*}
First note that by assumption on $ T $, and since the sequence $ (U^{\osc}_{\nu})_{\nu} $ is bounded in $ \mathcal{E}_{s,T} $ by $ \sqrt{2}\,C_1\norme{G}_{H^s(\omega_T\times\T^m)} $, we have, for $ \nu\geq 0 $,
\begin{equation}\label{eq constr  contract est 1}
	e^{C(U_{\nu}^{\osc})\V^*T}\leq 2.
\end{equation}
Now the $ \mathcal{E}_{0,T} $ norm of $ F_{\nu+1} $ is estimated. 
Thanks to the product estimate and the choice of the index $ s $, we have
\begin{align*}
	\norme{F_{\nu+1}}^2_{\mathcal{E}_{0,T}}\leq &C \sum_{j=1}^m\norme{\big(\tilde{L}_1(\beta_TU_{\nu-1}^{\osc},\zeta_j)-\tilde{L}_1(\beta_TU_{\nu}^{\osc},\zeta_j)\big)}^2_{\mathcal{E}_{0,T}}\norme{U_{\nu}^{\osc}}^2_{\mathcal{E}_{s,T}}.
\end{align*}
According to the mean value inequality, and since the sequence $ (U^{\osc}_{\nu})_{\nu} $ is bounded in $ \mathcal{E}_{s,T} $, one then obtains
\begin{equation}\label{eq constr  contract est 2}
	\norme{F_{\nu+1}}^2_{\mathcal{E}_{0,T}}\leq C \norme{W_{\nu}}^2_{\mathcal{E}_{0,T}}\norme{G}^2_{H^s(\omega_T\times\T^m)}. 
\end{equation}
Therefore, according to estimates \eqref{eq constr  contract est 1} and \eqref{eq constr  contract est 2}, we get
\begin{equation*}
	\norme{W_{\nu+1}^{\osc}}^2_{\mathcal{E}_{0,T}}\leq C\V^*T\norme{G}^2_{H^s(\omega_T\times\T^m)} \norme{W^{\osc}_{\nu}}^2_{\mathcal{E}_{0,T}}.
\end{equation*}
For $ T>0 $ small enough, the sequence $ (U^{\osc}_{\nu})_{\nu} $ is therefore convergent in $ \mathcal{E}_{0,T} $.

Thus the sequence $ (U^{\osc}_{\nu})_{\nu} $ is a Cauchy sequence in the Banach space $ \P^{\osc}_{0,T} $, and therefore converges to a function $ U_{\res}^{\osc} $ of $ \P^{\osc}_{0,T} $. It is possible to show, with arguments that will not be recalled here, that $ U_{\res}^{\osc} $ is actually in $ \P_{s,T}^{\osc} $ and satisfies system \eqref{eq red syst osc res}, see \cite[Theorem 10.1]{BenzoniSerre2007Multi} for similar results.

The proof of the existence of a solution to \eqref{eq red syst osc non res} is identical, and is not detailed here. It relies on a result of existence of a solution to the linearized system \eqref{eq estim syst osc non res}, analogous to Proposition \ref{prop exist sol syst linearise}, that has not been spelled out. One may however note that the existence time $ T $ is indeed independent of $ (\mathbf{n}_0,\xi_0) $, since the constants in estimate \eqref{eq estim est a priori Burgers} are independent of $ (\mathbf{n}_0,\xi_0) $, and since according to estimate \eqref{eq red est H h avec G}, each boundary term $ h_{\mathbf{n}_0,\xi_0} $ is controlled in $ H^s(\omega_T\times\T) $ by $ C\norme{G}_{H^s(\omega_T\times\T^m)} $, uniformly with respect to $ (\mathbf{n}_0,\xi_0) $.

Finally, it is shown that the profile $ U^{\osc} $ defined by \eqref{eq constr def U osc} actually belongs to $ \P^{\osc}_{s,T} $. Indeed, according to Lemma \ref{lemme correspondance norme prod scal}, we have
\begin{align*}
	\norme{U^{\osc}}^2_{\mathcal{E}_{s,T}}&\leq C \sup_{x_d>0}\sum_{|\alpha|\leq s}\prodscal{\partial_{z',\theta}^{\alpha}U^{\osc}}{\partial_{z',\theta}^{\alpha}U^{\osc}}_{\inc}(x_d)\\
	&\leq C \sup_{x_d>0}\sum_{|\alpha|\leq s}\prodscal{\partial_{z',\theta}^{\alpha}U_{\res}^{\osc}}{\partial_{z',\theta}^{\alpha}U_{\res}^{\osc}}_{\inc}(x_d)\\
	&\quad+C\sup_{x_d>0}\sum_{\substack{(\mathbf{n}_0,\xi_0)\in\\(\B_{\Z^m}\times\mathcal{C}_{\inc}(\mathbf{n}_0))\setminus\F^{\inc}_{\res}}}\sum_{\lambda\in\Z^*}\norme{\sigma_{\lambda,\mathbf{n}_0,\xi_0}}_{H^s(\omega_T)}^2(x_d) \\
	&\leq C\norme{U^{\osc}_{\res}}^2_{\mathcal{E}_{s,T}}+C\sum_{\substack{(\mathbf{n}_0,\xi_0)\in\\(\B_{\Z^m}\times\mathcal{C}_{\inc}(\mathbf{n}_0))\setminus\F^{\inc}_{\res}}}\norme{S_{\mathbf{n}_0,\xi_0}}^2_{\mathcal{C}(\R_+,H^s(\omega_T\times\T))},
\end{align*}
so that, using the a priori estimates \eqref{eq estim est a priori} and  \eqref{eq estim est a priori Burgers} as well as the boundary term estimates \eqref{eq red est H h avec G}, one gets
\begin{equation}\label{eq constr est U osc G}
	\norme{U^{\osc}}^2_{\mathcal{E}_{s,T}}\leq C \norme{G}^2_{H^s(\omega_T\times\T^m)}+C\sum_{\substack{(\mathbf{n}_0,\xi_0)\in\\(\B_{\Z^m}\times\mathcal{C}_{\inc}(\mathbf{n}_0))\setminus\F^{\inc}_{\res}}}\norme{h_{\mathbf{n}_0,\xi_0}}^2_{H^s(\omega_T\times\T)}\leq C\norme{G}^2_{H^s(\omega_T\times\T^m)}.
\end{equation}

\subsubsection{Determination of the evanescent part and conclusion}

To conclude as to the proof of Theorem \ref{thm existence profils}, it must be proved that there exists a solution $ U^{\ev} $ in $ \P^{\ev}_{s,T} $ to system \eqref{eq red syst ev}, where the parameters $ s $ and $ T $ are those given in Proposition \ref{prop existence sol osc}. 

The polarization condition \eqref{eq red syst ev E U = U} results, according to Remark \ref{remarque forme U osc polarise}, to
\begin{equation*}
	U^{\ev}(z,\theta,\psi_d)=\sum_{\mathbf{n}\in\Z^m\privede{0}}e^{\psi_d\,\mathcal{A}(\mathbf{n}\cdot\boldsymbol{\zeta})}\,\Pi^e_{\C^N}(\mathbf{n}\cdot\boldsymbol{\zeta})\,U^{\ev}_{\mathbf{n}}(z,0)\,e^{i\mathbf{n}\cdot\theta}.
\end{equation*}
The traces $ \big(\Pi^e_{\C^N}(\mathbf{n}\cdot\boldsymbol{\zeta})\,U^{\ev}_{\mathbf{n}}\big)_{|\psi_d=0} $ for $ \mathbf{n} $ in $ \Z^m\privede{0} $ must therefore be determined to find the profile $ U^{\ev} $. The boundary condition \eqref{eq red syst ev cond bord} gives the double trace on the boundary, for $ \mathbf{n} $ in $ \Z^m\privede{0} $, 
\begin{equation*}
	U^{\ev}_{\mathbf{n}}(z',0,0)=\Pi^e_{\C^N}(\mathbf{n}\cdot\boldsymbol{\zeta})\,U^{\ev}_{\mathbf{n}}(z',0,0)=\Pi_-^e(\freq)\,\big(B_{|E_-(\freq)}\big)^{-1}G_{\mathbf{n}}(z').
\end{equation*}
Then this trace is lifted with respect to $ x_d $ using a function $ \chi $ of $ \mathcal{C}_0^{\infty}(\R_+) $, equaling 1 in 0. Namely we set
\begin{equation*}
	U^{\ev}(z,\theta,\psi_d):=\sum_{\mathbf{n}\in\Z^m\privede{0}}\chi(x_d)\,e^{\psi_d\,\mathcal{A}(\mathbf{n}\cdot\boldsymbol{\zeta})}\,\Pi_-^e(\freq)\,\big(B_{|E_-(\freq)}\big)^{-1}G_{\mathbf{n}}(z')\,e^{i\mathbf{n}\cdot\theta}.
\end{equation*}
Note that, by construction, the profile $ U^{\ev} $ satisfies the polarization condition \eqref{eq red syst ev E U = U} as well as the boundary condition \eqref{eq red syst ev cond bord}. It must be now verified that it belongs to the space of evanescent profiles $ \P^{\ev}_{s,T} $. First we note that the profile $ U^{\ev} $ belongs to $ L^{\infty}\big(\R^+_{x_d}\times\R^+_{\psi_d},H^s_+(\omega_T\times\T^m)\big) $. Indeed, on one hand, the functions $ G_{\mathbf{n}} $ being zero for negative times $ t $, the profile $ U^{\ev} $ is zero for negative times $ t $. On the other hand, since the function $ \chi $ is bounded, the inverse map $ \big(B_{|E_-(\freq)}\big)^{-1} $ is uniformly bounded according to remark \ref{remarque B inverse bornee}, and the terms $ e^{\psi_d\,\mathcal{A}(\mathbf{n}\cdot\boldsymbol{\zeta})}\,\Pi_-^e(\freq) $ are also uniformly bounded according to estimate \eqref{eq controle exp t A Pi - - t positif} of Proposition \ref{prop controle exp t A Pi }, for $ x_d,\psi_d\geq 0 $, the following estimate holds:
\begin{equation}\label{eq constr est U ev G}
	\norme{U^{\ev}}^2_{H^s(\omega_T\times\T^m)}(x_d,\psi_d)\leq C\sum_{\mathbf{n}\in\Z^m\privede{0}}\norme{G_{\mathbf{n}}}^2_{H^s(\omega_T)}=C\norme{G}^2_{H^s(\omega_T\times\T^m)}.
\end{equation}
From now on we denote, for $ \mathbf{n} $ in $ \Z^m\privede{0} $, \begin{equation*}
	U^{\ev}_{\mathbf{n}}(z,\psi_d):=\chi(x_d)\,e^{\psi_d\,\mathcal{A}(\mathbf{n}\cdot\boldsymbol{\zeta})}\,\Pi_-^e(\freq)\,\big(B_{|E_-(\freq)}\big)^{-1}G_{\mathbf{n}}(z'),
\end{equation*}
so that $ U^{\ev}(z,\theta,\psi_d)=\sum_{\mathbf{n}\in\Z^m\privede{0}}U^{\ev}_{\mathbf{n}}(z,\psi_d)\,e^{i\mathbf{n}\cdot\theta} $. 

Then it is proven that the profile $ U^{\ev} $ is continuous with respect to $ (x_d,\psi_d) $ in $ \R_+\times\R_+ $ with values in $ H^s(\omega_T\times\T^m) $. Consider $ (x_d^0,\psi_d^0) $ in $ \R_+\times\R_+ $, and $ \epsilon>0 $. 
There holds, for $ x_d,\psi_d\geq 0 $,
\begin{align*}
	\norme{U^{\ev}(x_d,\psi_d)-U^{\ev}(x_d^0,\psi_d^0)}_{H^s(\omega_T\times\T^m)}\leq& \norme{U^{\ev}(x_d,\psi_d)-U^{\ev}(x_d^0,\psi_d)}_{H^s(\omega_T\times\T^m)}\\
	+&\norme{U^{\ev}(x_d^0,\psi_d)-U^{\ev}(x_d^0,\psi_d^0)}_{H^s(\omega_T\times\T^m)},
\end{align*}
and we seek to estimate the two terms on the right hand side of the inequality. For the first one, according to estimate \eqref{eq controle exp t A Pi - - t positif} of Proposition \eqref{prop controle exp t A Pi } and Remark \ref{remarque B inverse bornee}, for $ \psi_d\geq 0 $, we get
\begin{equation*}
	\norme{U^{\ev}(x_d,\psi_d)-U^{\ev}(x_d^0,\psi_d)}_{H^s(\omega_T\times\T^m)}\leq C \left|\chi(x_d)-\chi(x_d^0)\right|\norme{G}_{H^s(\omega_T\times\T^m)}.
\end{equation*}
By continuity of $ \chi $, there exists therefore $ \delta_1>0 $, depending only on $ \epsilon $, such that for all $ x_d $ such that $ |x_d-x_d^0|<\delta_1 $ and for all $ \psi_d\geq 0 $, we have
\begin{equation*}
	\norme{U^{\ev}(x_d,\psi_d)-U^{\ev}(x_d^0,\psi_d)}_{H^s(\omega_T\times\T^m)}<\epsilon.
\end{equation*}
For the second one, we denote by $ M $ an integer such that
\begin{equation*}
	\norme{\sum_{|\mathbf{n}|>M}G_{\mathbf{n}}\,e^{i\mathbf{n}\cdot\theta}}_{H^s(\omega_T\times\T^m)}<\epsilon.
\end{equation*}
Thus, for $ \psi_d\geq 0  $,
\begin{subequations}
	\begin{align}
		&\norme{U^{\ev}(x_d^0,\psi_d)-U^{\ev}(x_d^0,\psi_d^0)}_{H^s(\omega_T\times\T^m)}\nonumber\\
		&\qquad\qquad\leq\norme{\sum_{0<|\mathbf{n}|\leq M}
			\left[U^{\ev}_{\mathbf{n}}(y,x_d^0,\psi_d)-U^{\ev}_{\mathbf{n}}(y,x_d^0,\psi_d^0)\right]e^{i\mathbf{n}\cdot\theta}}_{H^s(\omega_T\times\T^m)}\label{eq constr cont diff psi 1}\\
		&\qquad\qquad+\norme{\sum_{|\mathbf{n}|> M}\left[U^{\ev}_{\mathbf{n}}(y,x_d^0,\psi_d)-U^{\ev}_{\mathbf{n}}(y,x_d^0,\psi_d^0)\right]e^{i\mathbf{n}\cdot\theta}}_{H^s(\omega_T\times\T^m)}\label{eq constr cont diff psi 2}.
	\end{align}
\end{subequations}
The sum in term \eqref{eq constr cont diff psi 1} being finite and the functions $ U^{\ev}_{\mathbf{n}} $ being continuous with respect to $ \psi_d $ , there exists $ \delta_2>0 $ such that for all $ \psi_d $ such that $ |\psi_d-\psi_d^0|<\delta_2 $, we have $ \eqref{eq constr cont diff psi 1}<\epsilon $. On an other hand, according to estimate \eqref{eq controle exp t A Pi - - t positif}, Remark \ref{remarque B inverse bornee} and since $ \chi $ is bounded, we have, by construction of $ M $, for all $ \psi_d\geq 0 $, $ \eqref{eq constr cont diff psi 2}<C\epsilon $ where $ C>0 $ does not depend on $ \epsilon $. It is then possible to conclude: for all $ (x_d,\psi_d) $ such that $ |(x_d,\psi_d)-(x_d^0,\psi_d^0)|<\min(\delta_1,\delta_2) $, we have
\begin{equation*}
	\norme{U^{\ev}(x_d,\psi_d)-U^{\ev}(x_d^0,\psi_d^0)}_{H^s(\omega_T\times\T^m)}<(2+C)\epsilon,
\end{equation*}
showing the required continuity.

Finally, with similar arguments as above for the continuity property, it is possible to show that the profile $ U^{\ev} $ converges towards zero in the space $ \mathcal{C}\big(\R^+_{x_d},H^s(\omega_T\times\T^m)\big) $ when $ \psi_d $ goes to infinity (so in particular in $ H^s(\omega_T\times\T^m) $ for every fixed $ x_d $).

All points of Definition \ref{def profils ev} of evanescent profiles have therefore been verified, so it has been proven that the profile $ U^{\ev} $ belongs to the space $ \P^{\ev}_{s,T} $ of evanescent profiles.

\begin{remark}
	It has been shown in the previous paragraph, in estimate \eqref{eq constr est U osc G}, that the oscillating part $ U^{\osc} $ is controlled in $ \mathcal{E}_{s,T} $ by the $ H^{s}(\omega_T\times\T^m) $ norm of the boundary term $ G $. On an other hand, according to estimate \eqref{eq constr est U ev G}, the evanescent part $ U^{\ev} $ is also controlled by the $ H^{s}(\omega_T\times\T^m) $ norm of $ G $. Thus the leading profile $ U $ satisfies
	\begin{equation*}
		\norme{U}_{\P_{s,T}}\leq C\norme{G}_{H^{s}(\omega_T\times\T^m)}.
	\end{equation*}
\end{remark}

\subsection{Conclusion and perspectives}\label{subsection conclusion}

It has therefore been proven that for $ s>h+(d+m)/2 $, there exists a time $ T>0 $ small enough such that systems \eqref{eq red syst osc res}, \eqref{eq red syst osc non res} and \eqref{eq red syst ev} admit solutions $ U_{\res}^{\osc} $, $ S_{\mathbf{n}_0,\xi_0} $ and $ U^{\ev} $ in $ \P^{\osc}_{s,T} $, $ \mathcal{C}(\R_+,H^s(\omega_T\times\T)) $ and $ \P^{\ev}_{s,T} $. According to Proposition \ref{prop equivalence systeme}, the profile $ U=U^{\osc}+U^{\ev} $ (where $ U^{\osc} $ is defined from $ U^{\osc}_{\res} $ and $ S_{\mathbf{n}_0,\xi_0} $ by equation \eqref{eq constr def U osc}) is therefore a solution in $ \P_{s,T} $ to system \eqref{eq obtention U 1}. It concludes the proof of Theorem \ref{thm existence profils}.

Estimate \eqref{eq estim est a priori} is not tame since the  norm of $ V^{\osc} $ in the estimate depends on the regularity index $ s $. Therefore, it is a priori not possible to obtain the existence of a solution $ U^{\osc} $ of infinite regularity considering a boundary term $ G $ infinitely regular, since without a tame estimate, the existence time $ T $ a priori depends on the considered index $ s $. It has been chosen not to attempt to keep the estimates tame until the end for the sake of simplicity, for example in estimate \eqref{eq estim est a priori L2}, but it is however conceivable to achieve this more precise statement in further work.

Possible extensions of the result of this article to less restrictive assumptions are now discussed. It seems reasonable to consider a similar result under the assumption that the system under study is hyperbolic with constant multiplicity, and not strictly hyperbolic (Assumption \ref{hypothese stricte hyp}). Similarly, Assumption \ref{hypothese pas de sortant} could be removed to allow outgoing frequencies to exist within the domain. It is a situation of this type which is considered in \cite{CoulombelGuesWilliams2011Resonant}. But in this case it is no longer possible to determine beforehand the traces of incoming modes, as done in Proposition \ref{prop equivalence systeme}. This may also open the way to an infinite number of resonances with outgoing phases, which complicates the functional framework.  The weakening of the uniform Kreiss-Lopatinskii condition Assumption \ref{hypothese UKL} shall be discussed in a future work. Concerning the glancing frequencies, Assumption \ref{hypothese mult vp} stating that all glancing frequencies are of order 2 seems to be crucial, see \cite{Williams2000Boundary}. Likewise, it seems difficult to do without Assumption \ref{hypothese pas de glancing} ensuring that no glancing frequencies are created on the boundary.

Finally, this work raises the question of the justification of the geometric optic expansion that has been constructed, namely to prove that the function
\begin{equation*}
	z\mapsto \epsilon\, U_1(z,z'\cdot\zeta_1/\epsilon,\dots,z'\cdot\zeta_m/\epsilon,x_d/\epsilon)
\end{equation*}
is indeed a good approximation on a fixed time interval of the exact solution to \eqref{eq systeme 1} as $ \epsilon $ goes to zero. To do so, two main methods are practicable. As conducted in \cite{Williams1996Boundary}, if there exists a solution on a time interval independent on the parameter $ \epsilon $, it is conceivable to show that this exact solution and the function defined above draw near each other when $ \epsilon $ goes to 0, see \cite{JolyMetivierRauch1995Coherent} and \cite{CoulombelGuesWilliams2011Resonant}. The problem is that in this work we do not have an exact solution on a fixed time interval. An other strategy relies on using a large number of corrector profiles, which we do not dispose either here (constructing correctors relies on small divisor accurate controls for noncharacteristic modes, which goes even further beyond Assumption \ref{hypothese petits diviseurs 1}).
Both of these points (getting an existence time of the exact solution independent of epsilon and building a large number of correctors) do not seem to be within our reach for the moment, but will be the topics of future studies.

\appendix

\section{Additional proofs}\label{appendix preuve}

\subsection{Proof of Proposition \ref{prop proj bornes}}

We detail here the proof of Proposition \ref{prop proj bornes}, omitted at first because of its length.

Recall that, for $ \zeta $ in $ \Xi_0 $, the projectors $ \Pi_j(\zeta) $, for $ j $ in $ \mathcal{G}(\zeta)\cup\mathcal{I}(\zeta) $, are defined as the projectors from $ E_-(\zeta) $ on $ E^j_-(\zeta) $ according to decomposition \eqref{eq decomp E_-(zeta)}, and that $ \Pi_-^e(\zeta) $ is defined as the projector from $ E_-(\zeta) $ on the elliptic stable component $ E^e_-(\zeta)=\oplus_{j\in\mathcal{P}(\zeta)}E^j_-(\zeta) $ according to the same decomposition. Proposition \ref{prop proj bornes} then reads as follows.

\begin{proposition}[{\cite{Williams1996Boundary}}]
	Under assumption \ref{hypothese stricte hyp} and \ref{hypothese mult vp}, for $\zeta\in\Xi_0$ the projectors $\Pi^j_-(\zeta)$ for $j$ in $\mathcal{G}(\zeta)\cup\mathcal{I}(\zeta)$, and the projectors $\Pi^{e}_-(\zeta)$ are uniformly bounded with respect to $\zeta$ in $ \Xi_0 $. 
\end{proposition}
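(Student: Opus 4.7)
The plan is to reduce the statement to a local analysis near each glancing point and combine it with a compactness/homogeneity argument. First, since $E_-(\zeta)$ and each component $E_-^j(\zeta)$ are homogeneous of degree $0$ in $\zeta \in \Xi_0$, the projectors $\Pi_-^j(\zeta)$ and $\Pi_-^e(\zeta)$ are also homogeneous of degree $0$, so it suffices to bound them on the compact set $\Sigma_0$. Away from the glancing set $\mathcal{G}$, strict hyperbolicity (Assumption \ref{hypothese stricte hyp}) ensures that the purely imaginary eigenvalues of $\mathcal{A}(\zeta)$ are simple and stay separated from the eigenvalues of nonzero real part, so the spectral projectors onto each component of decomposition \eqref{eq decomp E_-(zeta)} depend analytically on $\zeta$ in an open neighborhood. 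Thus the only possible source of blow-up is near points of $\mathcal{G} \cap \Sigma_0$, and a finite covering (made possible by the compactness of $\Sigma_0$) will yield the uniform global bound once local boundedness is proven around each glancing direction.

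Near a fixed $\underline{\zeta} \in \mathcal{G} \cap \Sigma_0$, I would invoke the block structure theorem (Proposition \ref{prop struct bloc}), which produces an analytic reduction of $\mathcal{A}(\zeta)$ to block diagonal form with blocks of four types on a neighborhood $\mathcal{V}$ of $\underline{\zeta}$. The blocks of types i), ii) and iii) pose no difficulty: by the positive/negative definiteness of the Hermitian part (types i, ii) or the simple purely imaginary structure with nonzero $\partial_\gamma \mathcal{A}_j(\underline{\zeta})$ (type iii), their contribution to $E_-(\zeta)$ is defined by definite spectral gaps, and the associated projectors depend analytically on $\zeta \in \mathcal{V}$. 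All of the trouble is concentrated in type iv) blocks, which encode the glancing eigenvalues. The next key step is to use Assumption \ref{hypothese mult vp} to show that every such block has size $\rho_j = 2$: the algebraic multiplicity $n_j$ of the glancing imaginary eigenvalue coincides with the order of vanishing at $\underline{\xi}_j$ of $\xi \mapsto \partial_\xi \tau_{k}(\underline{\eta},\xi)$, and Assumption \ref{hypothese mult vp} forces this order to be exactly $1$, so $n_j = 2$, which implies $\rho_j = 2$.

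The core of the proof is then the explicit handling of a single $2 \times 2$ glancing block $\mathcal{A}_j(\zeta)$, which by Proposition \ref{prop struct bloc} is imaginary at $\gamma = 0$, equals a Jordan block with eigenvalue $i\underline{\xi}_j$ at $\underline{\zeta}$, and has a real nonzero bottom-left entry for $\partial_\gamma \mathcal{A}_j(\underline{\zeta})$. For $\gamma > 0$ small the two eigenvalues of this block split as $i\underline{\xi}_j \pm c\sqrt{\gamma}\,(1+o(1))$ with $c$ of nonzero real part, producing exactly one stable and one unstable eigenvalue; one then writes down explicit eigenvectors, computes the associated rank-one spectral projectors, and checks that they remain bounded as $\gamma \to 0^+$ and admit continuous (non analytic) limits at $\gamma = 0$. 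These limits agree with the description in Proposition \ref{prop decomp E_-} (giving $\mu_j = 1$ and $E_-^j(\underline{\zeta}) = \ker L(0,\alpha_j(\underline{\zeta}))$). Conjugating back through the analytic reducing matrix $T(\zeta)$ of the block decomposition then produces continuous bounded extensions to $\mathcal{V}$ of the projectors $\Pi_-^j(\zeta)$ and $\Pi_-^e(\zeta)$.

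The main obstacle is precisely this explicit unfolding of the $2 \times 2$ Jordan block under the $\gamma$-perturbation: one must carefully identify the square-root branch that corresponds to the stable eigenvalue, check that the associated eigenvector has bounded normalization as $\gamma \to 0^+$, and confirm that the resulting rank-one projector extends continuously. Everything else (the analytic regularity of $T(\zeta)$ and of the non-glancing blocks, the decomposition into disjoint glancing clusters, the matching with the subspace descriptions in Proposition \ref{prop decomp E_-}) is then straightforward, and finite covering of $\mathcal{G} \cap \Sigma_0$ together with homogeneity concludes.
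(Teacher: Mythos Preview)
Your overall architecture (homogeneity to reduce to $\Sigma_0$, compactness, block structure, and a local analysis of the glancing blocks of size~$2$ under Assumption~\ref{hypothese mult vp}) matches the paper. But the ``core of the proof'' you describe is aimed in the wrong direction and misses the actual difficulty.

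You analyze the $2\times 2$ glancing block by perturbing in $\gamma>0$ from the fixed point $\underline{\zeta}$, obtaining eigenvalues $i\underline{\xi}_j\pm c\sqrt{\gamma}(1+o(1))$ and checking boundedness as $\gamma\to 0^+$. This only recovers the value of the projector \emph{at} $\underline{\zeta}$; it says nothing about uniformity as $\zeta$ varies \emph{within} $\Sigma_0$ near $\underline{\zeta}$, which is what the proposition requires. The genuine issue is tangential: as $\zeta$ moves in $\Sigma_0$, the two eigenvalues of the glancing block are governed by a real discriminant $\Delta_j(\zeta)$ whose sign changes across a hypersurface through $\underline{\zeta}$. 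On one side the eigenvalues are distinct and real (one incoming, one outgoing), on the hypersurface they coalesce (glancing), on the other side they are complex conjugate (one stable elliptic, one unstable). The paper handles this via Weierstrass preparation, writes down the stable eigenvector explicitly as a continuous function of $\zeta\in\Sigma_0$, and uses the $\gamma$-expansion only to select the correct real root when $\Delta_j(\zeta)>0$.

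This also invalidates your claim that $\Pi_-^j(\zeta)$ and $\Pi_-^e(\zeta)$ admit ``continuous bounded extensions to $\mathcal{V}$'': they do not. As $\zeta$ crosses $\{\Delta_j=0\}$ inside $\Sigma_0$, the stable direction coming from the glancing block switches from contributing to some $\Pi_-^{j'}$ with $j'\in\mathcal{I}(\zeta)$ to contributing to $\Pi_-^e$; both projectors are genuinely discontinuous there. The paper's mechanism is different: it builds \emph{continuous} auxiliary projectors $\tilde{\Pi}_-^l(\zeta)$ (onto the stable eigenline of each block), shows these are bounded near $\underline{\zeta}$, and then observes that each $\Pi_-^j$ and $\Pi_-^e$ is a finite (possibly $\zeta$-dependent, via indicator functions $\indicatrice_{\Delta_j(\zeta)<0}$) sum of the $\tilde{\Pi}_-^l$. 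Boundedness, not continuity, is what survives. Your write-up needs to replace the $\gamma\to 0^+$ computation by this tangential analysis, and to drop the continuity claim for the projectors themselves. (A minor imprecision: $n_j$ equals the vanishing order of $\xi\mapsto \underline{\tau}-\tau_k(\underline{\eta},\xi)$, not of $\partial_\xi\tau_k$; your conclusion $n_j=2$ is nonetheless correct.)
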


\begin{proof} In all the proof we indistinctly denote by $e$ every analytic function which, evaluated in a particular point $\underline{\alpha}$ precised below, is nonzero, and which is therefore nonzero in a neighborhood of the point $\underline{\alpha}$. Since the projectors $\Pi^j_-(\zeta)$, $j\in\mathcal{G}(\zeta)\cup\mathcal{I}(\zeta)$ and $\Pi^{e}_-(\zeta)$ are homogeneous of degree 0 with respect to $\zeta$, the claim is proved locally in $\Sigma_0$, and the result follows from the compactness of the sphere $\Sigma_0$. The study is therefore reduced locally in a neighborhood of every point of $\Sigma_0$.
	
	Consider $\underline{\zeta}=(\underline{\tau},\underline{\eta})\in\Sigma_0$. We are interested in the behavior, on a neighborhood of $\underline{\zeta}$ in $\Sigma_0$, of the projectors $\Pi^j_-(\zeta)$, $j\in\mathcal{G}(\zeta)\cup\mathcal{I}(\zeta)$ and $\Pi^{e}_-(\zeta)$, and therefore in the behavior, in a neighborhood of $\underline{\zeta}$, of the eigenvalues of $\mathcal{A}(\zeta)$. According to Proposition \ref{prop struct bloc} there exists a neighborhood  $\mathcal{V}$ of $\underline{\zeta}$ in $\Sigma_0$, an integer $L\geq 0$, and a regular basis $\C^N$ in which the matrix $\mathcal{A}(\zeta)$ is a block diagonal matrix of the form 
	\begin{equation}\label{eq bornitude decomp bloc A}
		\diag\big(\mathcal{A}_-(\zeta),\mathcal{A}_+(\zeta),\mathcal{A}_1(\zeta),\dots,\mathcal{A}_L(\zeta)\big),
	\end{equation}
	where the block $\mathcal{A}_-(\zeta)$ (resp. $\mathcal{A}_+(\zeta)$), eventually of size zero, is of negative definite (resp. positive definite) real part, and where the blocks $\mathcal{A}_j(\zeta)$ are of type \emph{iii)} or \emph{iv)} with the notations of Proposition \ref{prop struct bloc}. According to this proposition, the eigenvalues associated with the blocks of type \emph{iii)} remain imaginary for $\zeta\in\Sigma_0$ in a neighborhood of $\underline{\zeta}$ and therefore do not contribute to the elliptic parts of the stable and unstable subspaces. However, the eigenvalues of the blocks of type \emph{iv)} may have a nonzero real part in a neighborhood of $\underline{\zeta}$ and thus contribute to the elliptic parts. Thus, in a neighborhood of $\underline{\zeta}$ in $\Sigma_0$, the elliptic part $\oplus_{j\in\P(\zeta)}E^j_-(\zeta)$ writes as the direct sum of the stable subspace for $\mathcal{A}(\zeta)$ associated with the block $\mathcal{A}_-(\zeta)$ and of the generalized eigenspaces associated with the potential eigenvalues of negative real part of the blocks $\mathcal{A}_j(\zeta)$ of type \emph{iv)}. 
	The detailed description of these eigenspaces constitutes the central point of the analysis below. 
	
	In the basis adapted to decomposition \eqref{eq bornitude decomp bloc A}, which is analytic with respect to $\zeta\in\Sigma_0$, we consider the first vectors associated with the block $\mathcal{A}_-(\zeta)$ and the aim is to complete this set of vectors into an analytic basis of the stable subspace $E_-(\zeta)$. The purpose is to construct, in a neighborhood of $\underline{\zeta}$, a determination, continuous with respect to $ \zeta $, of the stable eigenvectors of $\mathcal{A}(\zeta)$  associated with the blocks  $\mathcal{A}_j(\zeta)$ of type \emph{iii)} and \emph{iv)} (which are therefore imaginary in $\underline{\zeta}$) and to deduce from that the existence of a linearly independent set of generalized eigenvectors continuously depending on $\zeta$. To this end, the analyis of \cite{Metivier2000Block} is followed.
	
	Let $i\,\underline{\xi}_j$ be an imaginary eigenvalue of $\mathcal{A}(\underline{\zeta})$ of algebraic multiplicity $n_j$.
	By definition of $\mathcal{A}(\underline{\zeta})$ and with the notations of Assumption \ref{hypothese stricte hyp}, there exists a unique index $k_j$ between 1 and $ N $ such that 
	\begin{equation*}
	\underline{\tau}=\tau_{k_j}(\underline{\eta},\underline{\xi}_j).
	\end{equation*}
	Two cases may occur, depending on the cancellation of the quantity $\frac{\partial \tau_{k_j}}{\partial\xi}(\underline{\eta},\underline{\xi}_j)$. In the first case we shall see that there exists a continuous extension of the eigenvalue $i\,\underline{\xi}_j$ which remains imaginary for $\zeta\in\Sigma_0$ in a neighborhood of $\underline{\zeta}$, and that there exists a regular projector on the associated subspace. In the second case, the eigenvalue $i\,\underline{\xi}_j$ is degenerate (i.e. is not semisimple) and extends to a continuous eigenvalue $i\,\xi_j$, which, depending on the position of $\zeta$ in the neighborhood of $\underline{\zeta}$, may become of nonzero real part, or imaginary and simple, or even remains imaginary and degenerate. 
	
	\bigskip
	
	First suppose that
	\[\frac{\partial \tau_{k_j}}{\partial\xi}(\underline{\eta},\underline{\xi}_j)\neq 0, \]
	that is $(\underline{\tau},\underline{\eta},\underline{\xi}_j)$ is incoming or outgoing. According to Assumption \ref{hypothese stricte hyp}, for $(\zeta,\xi)$  in $\R^{d+1}\setminus\ensemble{0}$, we have
	\begin{equation}\label{eq bornitude intermediaire 3}
		\det\big(\mathcal{A}(\zeta)-i \xi I\big)=\det(A_d(0))^{-1}i^N\det L\big(0,(\zeta,\xi)\big)=\big(\tau-\tau_{k_j}(\eta,\xi)\big)\,e(\zeta,\xi),
	\end{equation}
	where $e(\underline{\zeta},\underline{\xi}_j)\neq0$. Since $\frac{\partial \tau_{k_j}}{\partial\xi}(\underline{\eta},\underline{\xi}_j)\neq 0$, according to the Weierstrass preparation theorem \cite{Hormander1990Complex}, there exists a unique real analytic function $\xi_j$ defined in a neighborhood of $\underline{\zeta}$ in $\Sigma_0$ satisfying $\xi_j(\underline{\zeta})=\underline{\xi}_j$ and such that in a neighborhood of $(\underline{\zeta},\underline{\xi}_j)$ in $\Sigma_0\times \R$ we have
	\begin{equation}\label{eq bornitude intermediaire 1}
		\tau-\tau_{k_j}(\eta,\xi)=\big(\xi-\xi_j(\zeta)\big)\,e(\zeta,\xi),
	\end{equation}
	where $e(\underline{\zeta},\underline{\xi}_j)\neq0$. Thus, in a neighborhood of $(\underline{\zeta},\underline{\xi}_j)$ in $\Sigma_0\times \R$ we have
	\[\det\big(\mathcal{A}(\zeta)-i \xi I\big)=\big(\xi-\xi_j(\zeta)\big)\,e(\zeta,\xi), \]
	where $e(\underline{\zeta},\underline{\xi}_j)\neq0$, so in a neighborhood of $\underline{\zeta}$ in $\Sigma_0$, $i\,\xi_j(\zeta)$ is an eigenvalue (analytic with respect to $\zeta$) of $\mathcal{A}(\zeta)$ of algebraic multiplicity 1. On an other hand, according to identity \eqref{eq bornitude intermediaire 1}, we have $\tau=\tau_{k_j}\big(\eta,\xi_j(\zeta)\big)$, thus
	\begin{equation*}
		\mathcal{A}(\zeta)\,\pi_{k_j}(\eta,\xi_j(\zeta))=i\,\xi_j(\zeta)\,\pi_{k_j}(\eta,\xi_j(\zeta)).
	\end{equation*}
	In a neighborhood of $\underline{\zeta}$ in $\Sigma_0$, $i\,\xi_j(\zeta)$ is therefore an eigenvalue of $\mathcal{A}(\zeta)$ of geometric multiplicity 1, thus simple. Furthermore the projector $\pi_{k_j}\big(\eta,\xi_j(\zeta)\big)$ is analytic with respect to $\zeta$ and is a projector on the eigenspace of $\mathcal{A}(\zeta)$ associated with $i\,\xi_j(\zeta)$. Thus, in the block decomposition \eqref{eq bornitude decomp bloc A}, there is a unique scalar block among the blocks $\mathcal{A}_l(\zeta)$ corresponding to the eigenvalue $i\,\xi_j(\zeta)$. In the incoming case, we then obtain associated eigenvectors depending analyticly on $\zeta\in\Sigma_0$ in a neighborhood of $\underline{\zeta}$, contributing to the stable subspace $E_-(\zeta)$.
	
	\bigskip
	
	If now $\frac{\partial \tau_{k_j}}{\partial\xi}(\underline{\eta},\underline{\xi}_j)=0$, then, according to Assumption \ref{hypothese mult vp}, we have $\frac{\partial^2 \tau_{k_j}}{\partial\xi^2}(\underline{\eta},\underline{\xi}_j)\neq 0$ and in that case we say that $\underline{\xi}_j$ is \emph{glancing}. Thus there exists a function $e$ defined in a neighborhood of $\underline{\xi}_j$ with $e(\underline{\xi}_j)\neq 0$ such that for $\xi$ close to $\underline{\xi}_j$, we have\begin{equation*}
	\underline{\tau}-\tau_{k_j}(\underline{\eta},\xi)=(\xi-\underline{\xi}_j)^2\,e(\xi).
	\end{equation*}
	We deduce, according to \eqref{eq bornitude intermediaire 3} that for $\xi$ close to $\underline{\xi}_j$,
	\[\det\big(\mathcal{A}(\underline{\zeta})-i \xi I\big)=\big(\xi-\underline{\xi}_j\big)^{2}e(\xi),\]
	where $e(\underline{\xi}_j)\neq0$. The algebraic multiplicity $n_j$ of the eigenvalue $i\underline{\xi}_j$ is therefore equal to $2$ whereas its geometric multiplicity equals 1 since 
	\begin{equation*}
		\ker \big(\mathcal{A}(\underline{\zeta})-i\,\underline{\xi}_j\,I\big)=\ker L\big(0,(\underline{\zeta},\underline{\xi}_j)\big)=\Im \pi_{k_j}(\underline{\eta},\underline{\xi}_j),
	\end{equation*}
	and since the projector $\pi_{k_j}(\underline{\eta},\underline{\xi}_j)$ is of rank 1. The aim is therefore to find a basis of the generalized eigenspace associated with $i\,\underline{\xi}_j$, which is of dimension $2$. By definition of the analytic function $\tau_{k_j}$ and of the projector $\pi_{k_j}$, we have, for $\xi$ close to $\underline{\xi}_j$,
	\[L\big(0,\tau_{k_j}(\underline{\eta},\xi),\underline{\eta},\xi\big)\,\pi_{k_j}(\underline{\eta},\xi)=0.\]
	Differentiating this equation with respect to $ \xi $ and evaluating in $ \xi=\underline{\xi}_j $, one gets, since $ \tau_{k_j}(\underline{\eta},\underline{\xi}_j)=\underline{\tau} $,
	\begin{equation*}
		\partial_{\xi} \tau_{k_j}(\underline{\eta},\underline{\xi}_j)\,\partial_{\tau} L\big(0,\underline{\tau},\underline{\eta},\underline{\xi}_j\big)\,\pi_{k_j}(\underline{\eta},\underline{\xi}_j)+\partial_{\xi} L\big(0,\underline{\tau},\underline{\eta},\underline{\xi}_j\big)\,\pi_{k_j}(\underline{\eta},\underline{\xi}_j)
		\\
		+L\big(0,\underline{\tau},\underline{\eta},\underline{\xi}_j\big)\,\partial_{\xi} \pi_{k_j}(\underline{\eta},\underline{\xi}_j)=0,
	\end{equation*}
	that is to say, according to the expression of $ L\big(0,(\tau,\eta,\xi)\big) $ and using $ \partial_{\xi} \tau_{k_j}(\underline{\eta},\underline{\xi}_j)=0 $,
	\begin{equation*}
		A_d(0)\,\pi_{k_j}(\underline{\eta},\underline{\xi}_j)
		+iA_d(0)\big(\mathcal{A}(\underline{\zeta})-i\underline{\xi}_j\big)\frac{\partial \pi_{k_j}}{\partial \xi}(\underline{\eta},\underline{\xi}_j)=0.
	\end{equation*}
	Denoting $ \underline{P}_0:=\pi_{k_j}(\underline{\eta},\underline{\xi}_j) $ and $ \underline{P}_1:=\frac{\partial \pi_{k_j}}{\partial \xi}(\underline{\eta},\underline{\xi}_j) $ we obtain
	\begin{equation}\label{eq bornitude intermediaire 2}
		\big(\mathcal{A}(\underline{\zeta})-i\underline{\xi}_j\big)\underline{P}_1=i\underline{P}_0.
	\end{equation}
	We then denote by $ \underline{E}_j $ a nonzero vector of the linear line $ \Ima \pi_{k_j}(\underline{\eta},\underline{\xi}_j) $. Equation \eqref{eq bornitude intermediaire 2} thus leads to
	\begin{equation}\label{eq bornitude base esp propre}
		\big(\mathcal{A}(\underline{\zeta})-i\underline{\xi}_j\big)\underline{P}_1\underline{E}_j=i\underline{E}_j.
	\end{equation}
	One can then verify that $ (\underline{E}_j,\underline{P}_1\underline{E}_j) $ is a family of linearly independent vectors and that it therefore forms a basis of the generalized eigenspace associated with $ i\,\underline{\xi}_j $. In this basis, according to \eqref{eq bornitude base esp propre}, the operator $ \mathcal{A}(\underline{\zeta}) $ restricted to the generalized eigenspace associated with $ i\,\underline{\xi}_j $ is given by the following matrix:
	\begin{equation}\label{eq bornitude blocs Q point base}
		Q(\underline{\zeta})=\left(\begin{array}{cc}
			i\underline{\xi}_j & i \\
			0 & i\underline{\xi}_j
		\end{array}\right).
	\end{equation}
	We have therefore obtained a triangularization of the matrix $\mathcal{A}(\underline{\zeta})$ restricted to the generalized eigenspace associated with $ i\,\underline{\xi}_j $, and we seek to extend this structure in a neighborhood of $\underline{\zeta}$ and to study the behavior of the stable eigenvalues of the matrix $\mathcal{A}(\underline{\zeta})$ restricted to the generalized eigenspace associated with $ i\,\underline{\xi}_j $ in a neighborhood of $\underline{\zeta}$. In \cite{Metivier2000Block} and using a result of \cite{Ralston1971Note}, it is proved that there exists a linearly independent set of vectors $ E^0_j(\zeta),E^1_j(\zeta) $, analytic with respect to $ \zeta\in\Sigma $ in a neighborhood of $\underline{\zeta}$, generating a subspace $ F_j(\zeta) $ which is stable under $\mathcal{A}(\zeta)$, such that  $ E^0_j(\underline{\zeta})=\underline{E}_j $ and  $ E^1_j(\underline{\zeta})=\underline{P}_1\underline{E}_j $ and such that the restriction of $ \mathcal{A}(\zeta) $ to the subspace $ F_j(\zeta) $ is given by 
	\begin{equation}\label{eq bornitude blocs Q}
		Q(\zeta)=i\left(\begin{array}{cc}
			\underline{\xi}_j+q_1(\zeta) & 1 \\
			q_2(\zeta) & \underline{\xi}_j
		\end{array}\right),
	\end{equation} 
	where $ q_1(\underline{\zeta})=q_2(\underline{\zeta})=0 $ and where $ \frac{\partial q_2}{\partial \tau}(\underline{\zeta})\neq 0 $. 
	Among the blocks $\mathcal{A}_l(\zeta)$ of the block diagonalization \eqref{eq bornitude decomp bloc A} of the matrix $\mathcal{A}(\zeta)$ in a neighborhood of $ \underline{\zeta} $, there is therefore a $ 2\times 2 $ block given by $Q(\zeta)$.

	The aim is now to study the eigenvalues of the $ 2\times2 $ block $ Q(\zeta) $ above and to find a continuous determination of the stable eigenvalue in a neighborhood of $\underline{\zeta}$ in $\Sigma$ (and not only in $\Sigma_0$), namely the Laplace parameter $ \gamma $ is allowed to be positive. First the expression of the characteristic polynomial of $Q(\zeta)$ is investigated. It is of degree 2, allowing to obtain an explicit formula for the eigenvalues of $Q(\zeta)$.  According to \eqref{eq bornitude intermediaire 3}, in a neighborhood of $(\underline{\zeta},\underline{\xi}_j)$, we have
	\begin{equation*}
		\det\big(\mathcal{A}(\zeta)-i \xi I\big)=\big(\tau-\tau_{k_j}(\eta,\xi)\big)\,e(\zeta,\xi).
	\end{equation*}
	On an other hand, according to the Weierstrass preparation theorem and since $ \partial_{\xi} \tau_{k_j}(\underline{\eta},\underline{\xi}_j)=0$ and $\partial^2_{\xi} \tau_{k_j}(\underline{\eta},\underline{\xi}_j)\neq 0$, there exists a couple of  functions $(f_0,f_1)$, analytic with respect to $\underline{\zeta}$, satisfying $f_0(\underline{\zeta})=f_1(\underline{\zeta})=0$ and such that for $(\zeta,\xi)$ close to $(\underline{\zeta},\underline{\xi}_j)$,
	\begin{equation}\label{eq bornitude intermediaire 4}
		\tau-\tau_{k_j}(\eta,\xi)=\Big(\xi^2-\big(2\underline{\xi}_j+f_1(\zeta)\big)\,\xi+\underline{\xi}_j^2+ f_0(\zeta)\Big)\,e(\zeta,\xi),
	\end{equation}
	where $e(\underline{\zeta},\underline{\xi}_j)\neq0$. Thus
	\begin{equation*}
		\det\big(\mathcal{A}(\zeta)-i \xi I\big)=\Big(\xi^2-\big(2\underline{\xi}_j+f_1(\zeta)\big)\,\xi+\underline{\xi}_j^2+ f_0(\zeta)\Big)\,e(\zeta,\xi),
	\end{equation*}
	where $e(\underline{\zeta},\underline{\xi}_j)\neq0$. But according to the block decomposition of $\mathcal{A}(\zeta)$ we have
	\begin{equation*}
		\det\big(\mathcal{A}(\zeta)-i \xi I\big)=\det\big(Q(\zeta)-i \xi I\big)\,e(\zeta,\xi),
	\end{equation*}
	where $e(\underline{\zeta},\underline{\xi}_j)\neq0$ so that
	\begin{equation*}
		\xi^2-\big(2\underline{\xi}_j+f_1(\zeta)\big)\,\xi+\underline{\xi}_j^2+ f_0(\zeta)=\det\big(Q(\zeta)-i \xi I\big)\,e(\zeta,\xi),
	\end{equation*}
	where $e(\underline{\zeta},\underline{\xi}_j)\neq0$. Since according to \eqref{eq bornitude blocs Q} the $\xi$ polynomial given by $\det\big(Q(\zeta)-i \xi I\big)$ is of degree 2 and of leading coefficient $-1$, we obtain
	\[\det\big(Q(\zeta)-i \xi I\big)=-\Big(\xi^2-\big(2\underline{\xi}_j+f_1(\zeta)\big)\,\xi+\underline{\xi}_j^2+ f_0(\zeta)\Big).\]
	By identification, according to \eqref{eq bornitude blocs Q}, we get
	$f_1=q_1$ and $f_0=\underline{\xi}_j\,q_1-q_2$.
	
	The interest is now made on the behavior of the eigenvalues of $Q(\zeta)$, and therefore on the roots of the polynomial $\xi^2-\big(2\underline{\xi}_j+f_1(\zeta)\big)\,\xi+\underline{\xi}_j^2+ f_0(\zeta)$, for $\zeta$ in a neighborhood of $\underline{\zeta}$ in $\Sigma$. The Puiseux expansion theory ensure that for $\gamma>0$ small, the eigenvalues of $Q(\zeta)$ with $\zeta=(\underline{\tau}-i\gamma,\underline{\eta})$ admit an expansion of the form
	\[\xi(\zeta)=\underline{\xi}_j + \alpha_{1,2}\,\gamma^{1/2}+O(\gamma),\]
	where the coefficients $\alpha_{1,2}$ are obtained resolving
	\[\alpha_{1,2}^2=i\big(\partial_{\tau}f_0(\underline{\zeta})-\partial_{\tau}f_1(\underline{\zeta})\,\underline{\xi}_j\big).\]
	But since $f_1=q_1$ and $f_0=\underline{\xi}_j\,q_1-q_2$, we have
	\[\partial_{\tau}f_0(\underline{\zeta})-\partial_{\tau}f_1(\underline{\zeta})\,\underline{\xi}_j=-\partial_{\tau}q_2(\underline{\zeta})\neq 0,\]
	so that $\Im \alpha_{1,2}=\pm c$ where $c>0$. Thus for $\gamma>0$, $Q(\underline{\tau}-i\gamma,\underline{\eta})$  admits a unique stable eigenvalue $\xi_j^-(\zeta)$ (namely such that $\Im \xi_j^-(\zeta)>0$) and a unique unstable eigenvalue $\xi_j^+(\zeta)$ (such that $\Im \xi_j^+(\zeta)<0$). 
	It is deduced that for  $\zeta$ in a neighborhood of $\underline{\zeta}$ in $ \Sigma\setminus\Sigma_0$, $Q(\zeta)$ admits a unique stable eigenvalue denoted by $\xi_j(\zeta)$. We then seek to continuously extend the eigenvalue $\xi_j$ for $\gamma=0$, that is to say we are interested in the root $ \xi^2-\big(2\underline{\xi}_j+f_1(\zeta)\big)\,\xi+\underline{\xi}_j^2+ f_0(\zeta) $ that extends $\xi_j(\zeta)$ to a neighborhood of $\underline{\zeta}$ in $\Sigma$. The behavior of this extension $\xi_j$ shall then depends on the sign of the discriminant (real when $\zeta$ is real) $\Delta_j(\zeta):=4\,\underline{\xi}_j\,f_1(\zeta)+f_1(\zeta)^2-4f_0(\zeta)$ which has been represented in Figure \ref{figure discriminant}.
	
	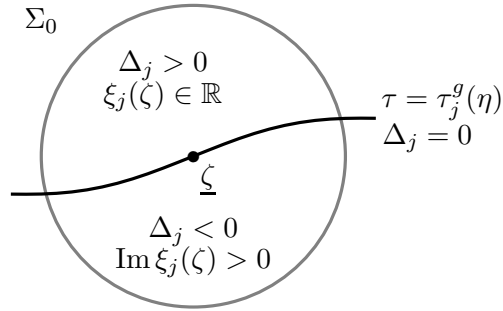
\begin{figure}[!ht]
		\begin{tikzpicture}[scale=0.4]
			\draw[black!50, line width=1.2pt] (0,0) circle(5);
			\draw [line width = 1.2pt, domain=-6:6] plot(\x,0.001*\x^4-0.013*\x^3+0.01*\x^2+0.4*\x);
			\draw (8.2,1.8) node{$\tau=\tau^g_j(\eta)$};
			\draw (7.7,0.6) node{$\Delta_j=0$};
			\draw (-5,4.5) node{$\Sigma_0$};
			\draw (-1,3) node{$\Delta_j>0$};
			\draw (-1,2) node{$\xi_j(\zeta)\in\R$};
			\draw (0,-2.5) node{$\Delta_j<0$};
			\draw (0,-3.5) node{$\Im\xi_j(\zeta)>0$};
			\draw (0,-0.05) node{$\bullet$};
			\draw (-0.1,0.1) node[below right]{$\underline{\zeta}$};
		\end{tikzpicture}
		\caption{Sign of the discriminant $\Delta_j(\zeta)$ in a neighborhood of $\underline{\zeta}$.}
		\label{figure discriminant}
	\end{figure}
	
	When the discriminant $\Delta_j(\zeta)$ of this polynomial is negative, the eigenvalue $\xi_j(\zeta)$ is necessarily given by
	\[\xi_j(\zeta)=\frac{2\,\underline{\xi}_j+f_1(\zeta)+i\sqrt{4f_0(\zeta)-4\,\underline{\xi}_j\,f_1(\zeta)-f_1(\zeta)^2}}{2},\]
	since it must be of non-negative imaginary part.
	When the discriminant $\Delta_j(\zeta)$ is zero, $\xi_j(\zeta)$ is given by
	\[\xi_j(\zeta)=\frac{2\,\underline{\xi}_j+f_1(\zeta)}{2}.\]
	The location of the discriminant roots may even be made precise, since it satisfies $\partial_{ \tau}\Delta_j(\underline{\zeta})=4\big(\partial_{\tau}f_1(\underline{\zeta})\,\underline{\xi}_j-\partial_{\tau}f_0(\underline{\zeta})\big)\neq 0$, so according to the implicit functions theorem, there exists an analytic function $\tau^g_j$ defined in a neighborhood of $\underline{\eta}$ which parameterizes in a neighborhood of  $\underline{\zeta}$ in $\Sigma_0$ the set of the discriminant's roots, see Figure \ref{figure discriminant}. Finally, when the discriminant $\Delta_j(\zeta)$ is positive, we must determine which one of the real roots
	\begin{equation}\label{eq bornitude intermediaire 5}
		\frac{2\,\underline{\xi}_j+f_1(\zeta)\pm\sqrt{4\,\underline{\xi}_j\,f_1(\zeta)+f_1(\zeta)^2-4f_0(\zeta)}}{2},
	\end{equation}
	continuously extends the stable eigenvalue $\xi_j(\zeta)$ when $\gamma=0$. If $\xi_j(\zeta)$  refers to the sought eigenvalue until $\gamma=0$, and if we denote $\zeta=(\sigma,\eta):=(\tau-i\gamma,\eta)$, since $\xi_j(\zeta)$ is real when $\gamma=0$ and $\Im \xi_j(\zeta)>0$ when $\gamma>0$, we have necessarily
	\[\frac{\partial \Im \xi_j}{\partial\Im \sigma}\Big|_{\gamma=0}\leq 0,\]
	so that according to the Cauchy-Riemann equations, we must have
	\[\partial_{\tau}\big(\Re \xi_j\big)|_{\gamma=0}\leq 0.\]
	Thus, if $\partial_{\tau}f_0(\underline{\zeta})-\partial_{\tau}f_1(\underline{\zeta})\,\underline{\xi}_j>0$, the real root
	\[\frac{2\,\underline{\xi}_j+f_1(\zeta)+\sqrt{4\,\underline{\xi}_j\,f_1(\zeta)+f_1(\zeta)^2-4f_0(\zeta)}}{2}\]
	is the one that continuously extends the stable eigenvalue $\xi_j(\zeta)$ when $\gamma=0$, and in the other case, the other root must be chosen.
	We have therefore obtained a continuous determination of the stable eigenvalue $\xi_j(\zeta)$ of the matrix $Q(\zeta)$ in a neighborhood of $\underline{\zeta}$. Note now that an eigenvector of the matrix
	\[\left(\begin{array}{cc}
		\underline{\xi}_j+q_1(\zeta) & 1 \\
		q_2(\zeta) & \underline{\xi}_j
	\end{array}\right)\]  
	associated with the eigenvalue $\xi_j(\zeta)$ writes $\Big(1,\frac{q_2(\zeta)}{\xi_j(\zeta)-\underline{\xi}_j}\Big)$. One thus gets, using the linearly independent vectors $ E^0_j(\zeta),E^1_j(\zeta) $, an eigenvector $\mathcal{A}(\zeta)$ associated with the stable eigenvalue $i\,\xi_j(\zeta)$ continuous with respect to $\zeta$\footnote{Since $\xi_j(\zeta)$ is a root of the polynomial $\xi^2-\big(2\underline{\xi}_j+q_1(\zeta)\big)\,\xi+\underline{\xi}_j^2+ \underline{\xi}_jq_1(\zeta)- q_2(\zeta)$, according to $f_1=q_1$ and $f_0=\underline{\xi}_j q_1-q_2$, we have $\frac{q_2(\zeta)}{\xi_j(\zeta)-\underline{\xi}_j}=\xi_j(\zeta)-\underline{\xi}_j-q_1(\zeta)$. Thus the following limit holds $\frac{q_2(\zeta)}{\xi_j(\zeta)-\underline{\xi}_j}\underset{\zeta\rightarrow \underline{\zeta}}{\longrightarrow 0}$, and the considered eigenvector continuously depends on $ \zeta $ in a neighborhood of $ \underline{\zeta} $}.
	
	In a nutshell, in a neighborhood of $\underline{\zeta}$ in $\Sigma_0$, the degenerate imaginary eigenvalue $i\,\underline{\xi}_j$ continuously extends in a stable eigenvalue $i\,\xi_j(\zeta)$ of which the behavior depends on the sign of the discriminant $\Delta_j(\zeta)$ in the neighborhood of $\underline{\zeta}$, which has been figured in Figure \ref{figure discriminant}. If $\Delta_j(\zeta)$  is negative, then $i\,\xi_j(\zeta)$ is of negative real part so the eigenvalue $i\,\xi_j(\zeta)$ contributes to the elliptic part of the stable subspace $E_-(\zeta)$. If $\Delta_j(\zeta)$ is positive, then $i\,\xi_j(\zeta)$ is a simple imaginary eigenvalue of $\mathcal{A}(\zeta)$ so it contributes to a subspace $E^l_-(\zeta)$ with $l$ in $\mathcal{I}(\zeta)$. Finally if $\Delta_j(\zeta)$ is zero, the eigenvalue $i\,\xi_j(\zeta)$ remains imaginary and degenerate so it contributes to a subspace $E^l_-(\zeta)$ with $l$ in $\mathcal{G}(\zeta)$.
	
	We denote now by $i\,\underline{\xi}_l$, $l=1,\dots,r$ the real incoming eigenvalues and by $i\,\underline{\xi}_l$, $l=r+1,\dots,r+g$ the glancing eigenvalues of the matrix $\mathcal{A}(\underline{\zeta})$. Using the notations of the beginning of the proof, we have found continuous extensions $i\,\xi_l$, $l=1,\dots,r+g$, of these eigenvalues in a neighborhood of $\underline{\zeta}$. Therefore, a continuous determination of the stable eigenvalues of the blocks $\mathcal{A}_1(\zeta),\dots,\mathcal{A}_L(\zeta)$ has been determined, as well as a continuous basis of the stable subspace $E_-(\zeta)$ constituted of generalized eigenvectors of the matrix $\mathcal{A}_-(\zeta)$ and of eigenvectors of the matrix $\mathcal{A}(\zeta)$ associated with the eigenvalues $i\,\xi_l(\zeta)$ for $l=1,\dots,r+g$. Then we denote by $\tilde{\Pi}^e_-(\zeta)$ the analytic projector from $E_-(\zeta)$ to the stable subspace associated with the elliptic block $\mathcal{A}_-(\zeta)$, and, for $l=1,\dots,r+g$, $\tilde{\Pi}^l_-(\zeta)$ the continuous projector from $E_-(\zeta)$ to the eigenspace associated with $i\,\xi_l(\zeta)$. Since these projectors are continuous with respect to $\zeta$ in a neighborhood of $\underline{\zeta}$, they can be assumed to be bounded on this neighborhood.
	
	If $l=1,\dots,r$, the eigenvalue $i\,\xi_l(\zeta)$ is imaginary and simple in a neighborhood of $\underline{\zeta}$, so, for all $\zeta$, the projector $\tilde{\Pi}_-^l(\zeta)$ contributes to a projector $\Pi_-^j(\zeta)$ for some $j$ (depending on $\zeta$) in $\mathcal{I}(\zeta)$. If $l=r+1,\dots,r+g$, then, depending on the sign of $\Delta_l$, the eigenvalue $i\,\xi_l(\zeta)$ may be imaginary and simple, or imaginary and degenerate, or even of nonzero real part, so depending on where $\zeta$ is in a neighborhood of $\underline{\zeta}$, the projector $\tilde{\Pi}_-^l(\zeta)$ contributes to $\Pi^{e}_-(\zeta)$ (when $\Delta_l<0$), to $\Pi^j_-(\zeta)$ for some $j$ in $\mathcal{G}(\zeta)$ (when $\Delta_l=0$) or to $\Pi^j_-(\zeta)$ for some $j$ in $\mathcal{I}(\zeta)$ (when $\Delta_l>0$). As for it, the projector $\tilde{\Pi}^e_-(\zeta)$ always contributes to $\Pi^{e}_-(\zeta)$.
	
	We seek now to explicitly describe the projectors $\Pi^{e}_-(\zeta)$ and $\Pi_-^j(\zeta)$ for $j\in \mathcal{I}(\zeta)\cup\mathcal{G}(\zeta)$. To simplify the notations, we assume that among the imaginary eigenvalues $i\,\underline{\xi}_l$, there is only one of them which is glancing, namely that $g=1$. The expressions of the sought projectors depend on whether the eigenvalue $i\,\xi_{r+1}(\zeta)$ is incoming, glancing, or of negative real part, and therefore on where is $ \zeta $ in the neighborhood of $\underline{\zeta}$, see Figure \ref{figure discriminant}. If $\zeta$ belongs to the area of the neighborhood of $\underline{\zeta}$ where $\Delta_{r+1}(\zeta)>0$, then the eigenvalue $i\,\xi_{r+1}(\zeta)$ is incoming, so $\mathcal{I}(\zeta)$ is of cardinality $r+1$ and $\mathcal{G}(\zeta)$ is empty.
	In this case, for all index $j$ in $\mathcal{I}(\zeta)$, we have
	\[
	\Pi_-^j(\zeta)=\tilde{\Pi}_-^l(\zeta),\]
	for some $l$ between $1$ and $r+1$, and
	\[\Pi^{e}_-(\zeta)=\tilde{\Pi}_-^{e}(\zeta).\] 
	If $\zeta$ is, in the neighborhood of $\underline{\zeta}$, on the hypersurface defined by $\Delta_{r+1}=0$, then $i\,\xi_{r+1}(\zeta)$ is glancing and in that case $\mathcal{I}(\zeta)$ is of cardinality $r$ and $\mathcal{G}(\zeta)$ is if cardinality $1$.
	We have therefore, for $j$ in $\mathcal{I}(\zeta)$,
	\[
	\Pi_-^j(\zeta)=\tilde{\Pi}_-^l(\zeta),\]
	for some $l$ between $1$ and $r$, for the index $j$ of $\mathcal{G}(\zeta)$,
	\[\Pi_-^j(\zeta)=\tilde{\Pi}_-^{r+1}(\zeta),\]
	and
	\[\Pi^{e}_-(\zeta)=\tilde{\Pi}_-^{e}(\zeta).\] 
	Finally, if $\zeta$ belongs to the area of the neighborhood of $\underline{\zeta}$ where $\Delta_j(\zeta)<0$, then $i\,\xi_{r+1}(\zeta)$ is of negative real part so it contributes to the elliptic part $E_-(\zeta)$. Thus $\mathcal{I}(\zeta)$ is of cardinality $r$, $\mathcal{G}(\zeta)$ is empty and the extension of the degenerate eigenvalue $ i\xi_{r+1}(\zeta) $ contributes to the elliptic part. In this case, for all index $j$ in $\mathcal{I}(\zeta)$, we have
	\[
	\Pi_-^j(\zeta)=\tilde{\Pi}_-^l(\zeta),\]
	for some $l$ between $1$ and $r$, and
	\[\Pi^{e}_-(\zeta)=\tilde{\Pi}_-^{e}(\zeta)+\tilde{\Pi}_-^{r+1}(\zeta).\] 
	Therefore, since the projectors $\tilde{\Pi}_-^l(\zeta)$, $l=1,\dots,r+1$ and $\tilde{\Pi}_-^{e}(\zeta)$ are bounded uniformly with respect to $\zeta$ in a neighborhood of $\underline{\zeta}$, we deduce that the projectors $\Pi^{e}_-(\zeta)$ and $\Pi_-^j(\zeta)$ for $j\in \mathcal{I}(\zeta)\cup\mathcal{G}(\zeta)$ are bounded uniformly with respect to $\zeta$ in a neighborhood of $\underline{\zeta}$, which concludes the proof of Proposition \ref{prop proj bornes}. 
	
	In the general case where there are multiple glancing eigenvalues $i\,\underline{\xi}_l$ (namely when $g\geq 1$), the projectors  $\Pi^{e}_-(\zeta)$ and $\Pi_-^j(\zeta)$ for $j\in \mathcal{I}(\zeta)\cup\mathcal{G}(\zeta)$ can still be expressed using the projectors $\tilde{\Pi}_-^l(\zeta)$, $l=1,\dots,r+g$ and $\tilde{\Pi}_-^{e}(\zeta)$. Since the expression of $\Pi^{e}_-(\zeta)$ will be needed in the proof of Proposition \ref{prop controle exp t A Pi } below, it is given here. For $\zeta$ in a neighborhood of $\underline{\zeta}$ in $\Sigma_0$, we have
	\begin{equation}\label{eq bornitude def Pi e}
		\Pi^{e}_-(\zeta)=\tilde{\Pi}^e_-(\zeta)+\sum_{j=r+1}^{r+g}\indicatrice_{\Delta_j(\zeta)<0}\,\tilde{\Pi}^j_-(\zeta).
	\end{equation}
\end{proof}

\subsection{Proof of Lemma \ref{lemme minoration vitesse de groupe}}

The following proof of Lemma \ref{lemme minoration vitesse de groupe} uses results and notations from the previous one, and is therefore given now. First we recall the statement of Lemma \ref{lemme minoration vitesse de groupe}.

\begin{lemma}
	There exists a positive constant $ C>0 $ such that, if the real frequency $ \alpha=(\tau,\eta,\xi) $ in $ \R^{1+d}\privede{0} $ is characteristic, and if $ k $ between 1 and $ N $ is such that $ \tau=\tau_k(\eta,\xi) $, then we have
	\begin{equation*}
		\left|\partial_{\xi}\tau_k(\eta,\xi)\right|\geq C\frac{\dist\big((\tau,\eta),\mathcal{G}\big)^{1/2}}{|(\tau,\eta)|^{1/2}}.
	\end{equation*}  
	Using Lemma \ref{lemme pi tilde E vitesse de groupe}, we therefore obtain the following estimate
	\begin{equation}
		\left|\tilde{\pi}_{\alpha}\,E_k(\eta,\xi)\right|\geq C\frac{\dist\big((\tau,\eta),\mathcal{G}\big)^{1/2}}{|(\tau,\eta)|^{1/2}}.
	\end{equation}  
\end{lemma}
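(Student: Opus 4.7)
By the positive homogeneity of degree $1$ of $\tau_k$ on $\R^d\privede{0}$, the partial derivative $\partial_{\xi}\tau_k(\eta,\xi)$ is homogeneous of degree $0$, while the glancing set $\mathcal{G}$ is a cone, so $\dist\big(\lambda(\tau,\eta),\mathcal{G}\big)=\lambda\,\dist\big((\tau,\eta),\mathcal{G}\big)$ for $\lambda>0$. Normalising by $\lambda=1/|(\tau,\eta)|$ therefore reduces the inequality to the statement: there exists $C>0$ such that for every characteristic frequency $(\tau,\eta,\xi)$ with $(\tau,\eta)\in\Sigma_0$,
\begin{equation*}
|\partial_{\xi}\tau_k(\eta,\xi)|\geq C\,\dist\big((\tau,\eta),\mathcal{G}\big)^{1/2}.
\end{equation*}
By compactness of $\Sigma_0$ it suffices to prove a local version of this inequality in a neighbourhood of each point $\underline{\zeta}\in\Sigma_0$. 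When $\underline{\zeta}\notin\mathcal{G}$, the quantity $\dist\big((\tau,\eta),\mathcal{G}\big)$ stays bounded away from $0$ near $\underline{\zeta}$ while $\partial_{\xi}\tau_k$ does not vanish there (by definition of $\mathcal{G}$), so the estimate is trivial on this neighbourhood.

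The real work is near a glancing point $\underline{\zeta}\in\mathcal{G}\cap\Sigma_0$ with glancing lifting $\underline{\xi}_j$ satisfying $\underline{\tau}=\tau_{k_j}(\underline{\eta},\underline{\xi}_j)$ and $\partial_{\xi}\tau_{k_j}(\underline{\eta},\underline{\xi}_j)=0$. Here I reuse the Weierstrass factorisation \eqref{eq bornitude intermediaire 4} from the proof of Proposition \ref{prop proj bornes}: on a neighbourhood of $(\underline{\zeta},\underline{\xi}_j)$,
\begin{equation*}
\tau-\tau_{k_j}(\eta,\xi)=P(\zeta,\xi)\,e(\zeta,\xi),\qquad P(\zeta,\xi):=\xi^2-\big(2\underline{\xi}_j+f_1(\zeta)\big)\xi+\underline{\xi}_j^2+f_0(\zeta),
\end{equation*}
with $e(\underline{\zeta},\underline{\xi}_j)\neq 0$, $f_0(\underline{\zeta})=f_1(\underline{\zeta})=0$, and discriminant $\Delta_j(\zeta)$ satisfying $\partial_{\tau}\Delta_j(\underline{\zeta})\neq 0$. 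The implicit function theorem applied to $\Delta_j$ parametrises the glancing set locally as the hypersurface $\{\tau=\tau_j^g(\eta)\}$ and yields the equivalence $\dist\big((\tau,\eta),\mathcal{G}\big)\approx |\Delta_j(\zeta)|$ on a neighbourhood of $\underline{\zeta}$.

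The crux is the following identity: if $\tau=\tau_k(\eta,\xi)$ (with necessarily $k=k_j$ in this neighbourhood), then $P(\zeta,\xi)=0$, so differentiating the factorisation with respect to $\xi$ at $\tau=\tau_k(\eta,\xi)$ gives
\begin{equation*}
-\partial_{\xi}\tau_k(\eta,\xi)=\partial_{\xi}P(\zeta,\xi)\,e(\zeta,\xi)=\big(2\xi-2\underline{\xi}_j-f_1(\zeta)\big)\,e(\zeta,\xi).
\end{equation*}
The quadratic formula at the root $\xi$ gives $2\xi-2\underline{\xi}_j-f_1(\zeta)=\pm\sqrt{\Delta_j(\zeta)}$, hence
\begin{equation*}
|\partial_{\xi}\tau_k(\eta,\xi)|=|e(\zeta,\xi)|\,|\Delta_j(\zeta)|^{1/2}\geq C\,\dist\big((\tau,\eta),\mathcal{G}\big)^{1/2}
\end{equation*}
on the neighbourhood, using $|e|\geq c>0$ there. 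Patching these local inequalities via a finite subcover of $\Sigma_0$ and reinstating homogeneity yields the claimed global bound; the second inequality then follows directly from Lemma \ref{lemme pi tilde E vitesse de groupe}, since $|A_d(0)^{-1}E_k(\eta,\xi)|$ is uniformly bounded below by homogeneity and continuity. The main technical obstacle is making precise the local-to-global passage and the equivalence $\dist\big((\tau,\eta),\mathcal{G}\big)\approx|\Delta_j(\zeta)|$, which requires controlling the transition between neighbourhoods of several glancing points when multiple glancing lifts coexist; the homogeneity argument handles the scaling uniformly.
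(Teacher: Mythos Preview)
Your proof plan is correct and follows essentially the same route as the paper: reduction to $\Sigma_0$ by homogeneity, local work near each $\underline{\zeta}$ via compactness, and for the glancing case the Weierstrass factorisation \eqref{eq bornitude intermediaire 4} differentiated in $\xi$ to produce $|\partial_{\xi}\tau_{k_j}|=|e|\sqrt{|\Delta_j|}$, combined with $|\Delta_j(\zeta)|\gtrsim\dist(\zeta,\mathcal{G})$. One organisational point: your parenthetical ``with necessarily $k=k_j$ in this neighbourhood'' only holds for lifts $\xi$ close to the glancing $\underline{\xi}_j$; when $\underline{\zeta}\in\mathcal{G}$ there may also be incoming/outgoing lifts $\underline{\xi}_l$ with $k_l\neq k_j$, and for these $|\partial_{\xi}\tau_{k_l}|$ is bounded below by a positive constant near $\underline{\zeta}$ (the paper treats this as a separate bullet in its case analysis), which completes the local estimate for all lifts.
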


\begin{proof}
	The interest is made at first in the first equality, which is proved using the homogeneity of degree zero of $ \partial_{\xi}\tau_k(\eta,\xi) $ and of degree one of the distance. The analysis is therefore made on the sphere $ \Sigma_0 $, and we denote, for $ \zeta $ in $ \Sigma_0 $,
	\begin{equation*}
		m_g(\zeta) =\left\lbrace\begin{array}{cl}
			1 & \text{ if } \spectre(\mathcal{A}(\zeta)) \cap i\R=\emptyset,  \\[10pt]
			\displaystyle\min_{\substack{j\in\mathcal{G}(\zeta)\cup\\ \mathcal{I}(\zeta)\cup\mathcal{O}(\zeta)}}\big|\partial_{\xi}\tau_{k_j}\big(\eta,\xi_j(\zeta)\big)\big| & \text{ otherwise,}
		\end{array}
		\right.
	\end{equation*}
	where $ \spectre(\mathcal{A}(\zeta)) $ refers to the spectrum of the matrix $ \mathcal{A}(\zeta) $, and where the notations $ k_j $ and $ \xi_j(\zeta) $ has been introduced in Proposition \ref{prop decomp E_-}. Using the compactness of the sphere $ \Sigma_0 $, it will be proved that $ m_g $ satisfies
	\begin{equation}\label{eq freq ineg m g 1}
		m_g(\zeta)\geq C \dist(\zeta,\mathcal{G})^{1/2},
	\end{equation}
	for all $ \zeta $ in $ \Sigma_0 $, where $ C>0 $ is a suitable fixed constant. We thus consider $ \underline{\zeta}:=(\underline{\tau},\underline{\eta}) $ in $ \Sigma_0 $, and we show that there exists a neighborhood $ \mathcal{V} $ of $ \underline{\zeta} $ in which the previous equality \eqref{eq freq ineg m g 1} is satisfied.
	
	We recall the results obtained in the proof of Proposition \ref{prop proj bornes}, in which a continuous determination of the eigenvalues of $ \mathcal{A}(\zeta) $ for $ \zeta $ in a neighborhood of $ \underline{\zeta} $ has been determined. The previous proof focused on describing the stable eigenvalues, but it can be immediately extended to all eigenvalues of $ \mathcal{A}(\zeta) $. Denote by $ i\,\underline{\xi}_j $ the imaginary eigenvalues of $ \mathcal{A}(\underline{\zeta}) $. If the imaginary eigenvalue $ i\,\underline{\xi}_j $ is not glancing, the proof of Proposition \ref{prop proj bornes} yields to a continuous extension $ i\,\xi_j(\zeta) $ in a neighborhood of $ \underline{\zeta} $, which is an eigenvalue of $ \mathcal{A}(\zeta) $. If $ i\,\underline{\xi}_j $ is glancing, then we obtain, in a neighborhood of $ \underline{\zeta} $, two continuous eigenvalues $ i\,\xi_j^-(\zeta) $ and $ i\,\xi_j^+(\zeta)  $ extending $ i\,\underline{\xi}_j $, which are possibly equal (when they are glancing). Finally, the block structure (see Proposition \ref{prop struct bloc}) and Proposition \ref{prop proj bornes} provide a basis of $ \C^N $ in which the matrix $ \mathcal{A}(\zeta) $ is, in a neighborhood of $ \underline{\zeta} $, block diagonal, with a block $ \mathcal{A}_{\pm}(\zeta) $ with eigenvalues of nonzero real part, and scalar blocks corresponding to the eigenvalues $ i\xi_j(\zeta) $. Three cases are then to be investigated.
	\begin{itemize}[label=$ \circ $, leftmargin=0.7cm]
		\item All eigenvalues of $ \mathcal{A}_{\pm}(\zeta) $ are of nonzero real part in a neighborhood of $ \underline{\zeta} $, so they don't contribute to $ m_g(\zeta) $.
		\item If $ i\,\underline{\xi}_j $ is imaginary and $ \partial_{\xi}\tau_{k_j}\big(\underline{\eta},\xi_j(\underline{\tau},\underline{\eta})\big)\neq0 $, namely if the real characteristic frequency $ \alpha_j(\underline{\zeta}) $ is incoming or outgoing, then it has been proven that the eigenvalue $ i\,\xi_j(\zeta) $ is still incoming or outgoing in a neighborhood of $ \underline{\zeta} $. Furthermore, according to equation \eqref{eq bornitude intermediaire 1} differentiated with respect to $ \xi $ and evaluated in $ \xi=\xi_j(\zeta) $, for $ \zeta $ in a neighborhood of $ \underline{\zeta} $, we have
		\begin{equation*}
			\partial_{\xi}\tau_{k_j}\big(\eta,\xi_j(\zeta)\big)=-e\big(\eta,\xi_j(\zeta)\big),
		\end{equation*}
		where $ e $ is an analytic function nonzero in $ \underline{\zeta} $, which is therefore lower bounded in a neighborhood of $ \big(\underline{\zeta},\xi_j(\underline{\zeta})\big) $. Thus, for $ \zeta $ in a neighborhood of $ \underline{\zeta} $, we have
		\begin{equation*}
			\left|\partial_{\xi}\tau_{k_j}\big(\eta,\xi_j(\zeta)\big)\right|\geq C,
		\end{equation*}
		with $ C>0 $.
		\item Finally, if $ i\,\underline{\xi}_j $ is glancing, namely if $ i\,\underline{\xi}_j $ is imaginary and $ \partial_{\xi}\tau_{k_j}(\underline{\eta},\underline{\xi}_j )=0 $, then $ i\,\underline{\xi}_j $ is extended by two eigenvalues $ i\,\xi_j^{\pm}(\zeta) $, of which the behavior depends on where $ \zeta $ is in the neighborhood of $ \underline{\zeta} $, see Figure \ref{figure discriminant}. Denote by $ \Delta_j(\zeta) $ the discriminant of the characteristic polynomial of the $ 2\times2 $ block associated with the glancing eigenvalue. If $ \zeta $ is such that $ \Delta_j(\zeta)<0 $, then the two eigenvalues $ i\,\xi_j^{\pm}(\zeta) $ are of nonzero real part, so they do not contribute to $ m_g(\zeta) $. If $ \Delta_j(\zeta)=0 $, then $ \xi_j^-(\zeta)=\xi_j^+(\zeta) $ and the characteristic frequency $ \big(\zeta,\xi_j^{\pm}(\zeta)\big) $ is glancing, so equality \eqref{eq freq ineg m g 1} is immediately satisfied. Finally, if $ \Delta_j(\zeta)>0 $, then the two distinct eigenvalues $ i\,\xi_j^{\pm}(\zeta) $ are imaginary, and contributes to $ m_g(\zeta) $. According to the relation \eqref{eq bornitude intermediaire 4}, differentiate with respect to $ \xi $ and evaluated in $ \xi=\xi_j^+(\zeta) $, we have, for $ \zeta=(\tau,\eta) $ in a neighborhood of $ \underline{\zeta} $,
		\begin{equation*}
			\partial_{\xi}\tau_{k_j}\big(\eta,\xi_j^+(\zeta)\big)=2\big(f_1(\zeta)/2+\underline{\xi}_j-\xi_j^+(\zeta)\big)\,e\big(\zeta,\xi_j^+(\zeta)\big),
		\end{equation*}
		where $ e $  is an analytic function, nonzero in $ (\underline{\zeta},\underline{\xi}_j) $. According to expression \eqref{eq bornitude intermediaire 5} of the roots $ \xi_j^{\pm}(\zeta) $, we obtain
		\begin{equation*}
			\partial_{\xi}\tau_{k_j}\big(\eta,\xi_j^+(\zeta)\big)=\pm\sqrt{\Delta_j(\zeta)} \,e\big(\zeta,\xi_j^+(\zeta)\big).
		\end{equation*}
		But, according to the proof of Proposition \ref{prop proj bornes} above, one may write, for $ \zeta=(\tau,\eta) $ in a neighborhood of $ \underline{\zeta} $, 
		\begin{equation*}
			\Delta_j(\zeta)=\big(\tau-\tau_j^g(\eta)\big)\,e(\zeta),
		\end{equation*}
		where $ e $ is an analytic function, nonzero in $ \underline{\zeta} $, and where the function $ \tau_j^g $ parameterizes the surface of the zeros of $ \Delta_j $. We finally infer
		\begin{equation*}
			\left|\partial_{\xi}\tau_{k_j}\big(\eta,\xi_j^+(\zeta)\big)\right|\geq C\left|\tau-\tau_j^g(\eta)\right|^{1/2}=C\left|(\tau,\eta)-\big(\tau_j^g(\eta),\eta\big)\right|^{1/2}\geq C \dist\big(\zeta,\mathcal{G}\big)^{1/2},
		\end{equation*}
		since the frequency $ \big(\tau_j^g(\eta),\eta\big) $ is glancing by construction of $ \tau_j^g $. The same arguments apply to $ i\,\xi_j^-(\zeta) $.
	\end{itemize}
	Up to reducing the constant $ C $, we have therefore proved the existence of a neighborhood $ \mathcal{V} $ in which equality \eqref{eq freq ineg m g 1} is satisfied.
	The result follows from the compactness of $ \Sigma_0 $ and by homogeneity.
	
	The second inequality of Lemma \ref{lemme minoration vitesse de groupe} is obtained immediately using the result of Lemma \ref{lemme pi tilde E vitesse de groupe}.
\end{proof}

\subsection{Proof of Proposition \ref{prop controle exp t A Pi }}

The following proof also comes after the one of Proposition \ref{prop proj bornes}.

We recall that $ \Pi^{e}_-(\zeta) $ is the projector from $E_-(\zeta)$ on the elliptic stable component $E_-^e(\zeta)=\oplus_{j\in\P(\zeta)}E^j_-(\zeta)$ according to decomposition \eqref{eq decomp E_-(zeta)} and that, when $ \zeta $ is not glancing, $ \Pi^{e}_{\C^N}(\zeta) $ is the projection from $ \C^N $ on the stable elliptic component $ E^e_-(\zeta) $ according decomposition \eqref{eq decomp C^N E + E -}. The statement of Proposition \ref{prop controle exp t A Pi } reads as follows.

\begin{proposition}
	Under Assumption \ref{hypothese petits diviseurs 1}, there exists a constant $ c_1>0 $ and a real number $ b_1 $ such that, for all $ \zeta $ in $ \F_b\privede{0} $, the following estimates hold
	\begin{subequations}
		\begin{align}
			&\label{eq controle exp t A Pi - - t positif app} \left|e^{t\mathcal{A}(\zeta)} \, \Pi^{e}_-(\zeta)\right|\leq c_1\,e^{-c_1\,t\,|\zeta|^{-b_1}}\leq c_1, &\forall t\geq 0,\\[5pt]
			&\label{eq controle exp t A Pi C^N - t positif app} \left|e^{t\mathcal{A}(\zeta)}\,\Pi^{e}_{\C^N}(\zeta)\right|\leq c_1\,|\zeta|^{b_1}\,e^{-c_1\,t\,|\zeta|^{-b_1}}, &\forall t\geq 0,\\[5pt]
			&\label{eq controle exp t A Pi C^N - t negatif app} \left|e^{t\mathcal{A}(\zeta)}\big(I-\Pi^{e}_{\C^N}(\zeta)\big)\right| \leq c_1\,|\zeta|^{b_1}, &\forall t\leq 0.
		\end{align}
	\end{subequations}
\end{proposition}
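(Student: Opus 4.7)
The strategy is to reduce the three estimates to a quantitative tracking of eigenvalues and spectral projectors of $\mathcal{A}(\zeta)$ near the glancing set, using the block structure from Proposition \ref{prop struct bloc} and the small divisors lower bound on $\dist(\zeta,\mathcal{G})$ provided by Assumption \ref{hypothese petits diviseurs 1}. All estimates are first proved on the unit sphere $\Sigma_0$ and then extended to $\F_b\privede{0}$ by homogeneity: recall that $\mathcal{A}(\zeta)=|\zeta|\,\mathcal{A}(\zeta/|\zeta|)$, so eigenvalues of $\mathcal{A}(\zeta)$ are $|\zeta|$ times those of $\mathcal{A}(\zeta/|\zeta|)$, while the projectors $\Pi^{e}_-$ and $\Pi^{e}_{\C^N}$ are homogeneous of degree zero, and the glancing cone $\mathcal{G}$ satisfies $\dist(\zeta/|\zeta|,\mathcal{G})=\dist(\zeta,\mathcal{G})/|\zeta|$.

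First I would extract from the analysis developed in the proofs of Proposition \ref{prop proj bornes} and Lemma \ref{lemme minoration vitesse de groupe} two quantitative statements. On the one hand, at each glancing point $\underline{\zeta}\in\Sigma_0$ the relevant type (iv) blocks of size $2$ provided by Proposition \ref{prop struct bloc} have their two eigenvalues $i\,\xi_j^{\pm}(\zeta)$ separated by $|i\,\xi_j^+(\zeta)-i\,\xi_j^-(\zeta)|\sim |\Delta_j(\zeta)|^{1/2}\sim \dist(\zeta,\mathcal{G})^{1/2}$; when $\Delta_j(\zeta)<0$ these become a conjugate pair whose real parts are $\pm\tfrac{1}{2}|\Delta_j(\zeta)|^{1/2}$, and thus satisfy $|\Re\,i\xi_j^{\pm}(\zeta)|\geq C\,\dist(\zeta,\mathcal{G})^{1/2}$ on $\Sigma_0$. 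Blocks of type (i) and (ii) provide eigenvalues whose real parts are uniformly bounded away from zero on $\Sigma$. Scaling back by homogeneity, any eigenvalue $\lambda$ of $\mathcal{A}(\zeta)$ lying in the elliptic subspace satisfies
\begin{equation*}
|\Re\,\lambda|\geq C\,|\zeta|^{1/2}\dist(\zeta,\mathcal{G})^{1/2}\geq C'\,|\zeta|^{(1-a_1)/2},
\end{equation*}
the last inequality using Assumption \ref{hypothese petits diviseurs 1}. On the other hand, the spectral projector onto one of the two eigenvalues of a type (iv) block has norm comparable to the inverse of the eigenvalue gap, hence bounded on $\Sigma_0$ by $C\,\dist(\zeta,\mathcal{G})^{-1/2}$; conjugating back by the analytic matrix $T(\zeta)$ from Proposition \ref{prop struct bloc} (which, together with $T(\zeta)^{-1}$, is uniformly bounded on $\Sigma$ by compactness) and invoking the small divisors assumption yields
\begin{equation*}
|\Pi^{e}_{\C^N}(\zeta)|\leq C\sqrt{|\zeta|/\dist(\zeta,\mathcal{G})}\leq C'\,|\zeta|^{(1+a_1)/2}.
\end{equation*}
This motivates the choice $b_1:=(1+a_1)/2$. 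Note that since $\F_b$ is a finitely generated group in $\R^d$ staying away from $\mathcal{G}$ by Assumption \ref{hypothese petits diviseurs 1}, and since $\dist(\zeta,\mathcal{G})\leq |\zeta|$, we have $|\zeta|\geq c^{1/(1+a_1)}$ uniformly on $\F_b\privede{0}$, which allows us to absorb constants between various powers of $|\zeta|$.

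With these ingredients I would establish the three estimates as follows. For \eqref{eq controle exp t A Pi - - t positif app}, one uses the block diagonalization of $\mathcal{A}(\zeta)$ restricted to $E^e_-(\zeta)$: all eigenvalues have real part $\leq -c_1|\zeta|^{(1-a_1)/2}\leq -c_1|\zeta|^{-b_1}$, and the change of basis and a standard Jordan-form (or resolvent) argument provide $|e^{t\mathcal{A}(\zeta)}v|\leq C\,e^{-c_1 t|\zeta|^{-b_1}}|v|$ for $v\in E^e_-(\zeta)$ and $t\geq0$; coupling this with the uniform bound on $\Pi^{e}_-(\zeta)$ given by Proposition \ref{prop proj bornes} yields the claim. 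For \eqref{eq controle exp t A Pi C^N - t positif app}, the same exponential decay applies (the range of $\Pi^{e}_{\C^N}(\zeta)$ is still $E^e_-(\zeta)$), but now the projector itself contributes a factor $|\zeta|^{b_1}$. For \eqref{eq controle exp t A Pi C^N - t negatif app}, the range of $I-\Pi^{e}_{\C^N}(\zeta)$ is the direct sum of $E^e_+(\zeta)$ (eigenvalues of positive real part) and the imaginary eigenspaces (all simple, $\zeta$ being non-glancing by Assumption \ref{hypothese pas de glancing}, which is itself a consequence of Assumption \ref{hypothese petits diviseurs 1}); on both components $e^{t\mathcal{A}(\zeta)}$ is uniformly bounded for $t\leq0$ (by $1$ on imaginary parts, and by an exponential $\leq1$ on the unstable elliptic part). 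Writing $I-\Pi^{e}_{\C^N}(\zeta)=I-\Pi^{e}_{\C^N}(\zeta)$ and using $|I-\Pi^{e}_{\C^N}(\zeta)|\leq 1+C|\zeta|^{b_1}$ concludes the proof.

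The main technical difficulty lies in the second step: obtaining, uniformly in $\underline{\zeta}\in\Sigma_0$, both the explicit lower bound on the real parts of elliptic eigenvalues in terms of $\dist(\zeta,\mathcal{G})^{1/2}$ and the companion upper bound on the spectral projector norms in terms of $\dist(\zeta,\mathcal{G})^{-1/2}$. Both rely on a careful use of the Weierstrass preparation and Puiseux expansion performed in the proof of Proposition \ref{prop proj bornes} near each glancing point, together with the non-degeneracy encoded by $\partial_\tau\Delta_j(\underline{\zeta})\neq 0$, which itself rests on Assumption \ref{hypothese mult vp} of double glancing. Once those local estimates are established and pieced together by compactness of $\Sigma_0$, the combination with Assumption \ref{hypothese petits diviseurs 1} and homogeneity mechanically produces the three bounds with the single exponent $b_1=(1+a_1)/2$.
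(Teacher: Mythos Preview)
Your plan is essentially the same as the paper's: reduce to $\Sigma_0$ by homogeneity and compactness, use the block structure from Proposition \ref{prop struct bloc} and the analysis in the proof of Proposition \ref{prop proj bornes} to obtain (i) a lower bound $|\Re\,i\xi_j^{\pm}(\zeta)|\gtrsim|\Delta_j(\zeta)|^{1/2}\gtrsim\dist(\zeta,\mathcal{G})^{1/2}$ for the elliptic eigenvalues coming from type (iv) blocks, and (ii) an upper bound $\lesssim\dist(\zeta,\mathcal{G})^{-1/2}$ on the projector norm from the inverse eigenvalue gap, then invoke Assumption \ref{hypothese petits diviseurs 1}. The paper carries this out by writing explicitly the change-of-basis matrix $P(\zeta)$ between the block basis $(E_p,E_{p+1})$ and the stable/unstable eigenvectors $(F_p,F_{p+1})$, computing $|\det P(\zeta)|=|\xi_-(\zeta)-\xi_+(\zeta)|=|\Delta_g(\zeta)|^{1/2}$, and expressing both $\Pi^{e}_{\C^N}$ and $e^{t\mathcal{A}(\zeta)}(I-\Pi^{e}_{\C^N})$ through $P$ and $P^{-1}$; your ``spectral projector $\sim$ inverse gap'' formulation is precisely this computation in disguise.

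One small caution on \eqref{eq controle exp t A Pi C^N - t negatif app}: the sentence ``on both components $e^{t\mathcal{A}(\zeta)}$ is uniformly bounded for $t\leq 0$ (by $1$ on imaginary parts)'' is only true \emph{after} passage to a diagonalizing basis; the conjugation cost is again $|P(\zeta)^{-1}|\lesssim\dist(\zeta,\mathcal{G})^{-1/2}$, which is exactly what supplies the factor $|\zeta|^{b_1}$ in your final line. The paper makes this explicit by writing $e^{t\mathcal{A}(\zeta)}(I-\Pi^{e}_{\C^N}(\zeta))$ as $\tilde T^{-1}P^{-1}(\cdots)P\tilde T$ with a bounded middle block; in your writeup you will want to make that step visible rather than absorbing it into a single appeal to $|I-\Pi^{e}_{\C^N}(\zeta)|$.
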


\begin{proof}
	The homogeneity of degree $1$ of the matrix $\mathcal{A}(\zeta)$ and of degree zero of the projectors $\Pi^{e}_-(\zeta)$ and $ \Pi_{\C^N}^e(\zeta) $, and the compactness of the unit ball $\Sigma_0$ are used, and we therefore work in a neighborhood of every point $\underline{\zeta}$ of $\Sigma_0$. The result is then extended to a finite conic covering of $ \Xi_0 $. Since the projector $ \Pi_{\C^N}^{e}(\zeta) $ is defined only for $ \zeta $ non glancing, for inequalities \eqref{eq controle exp t A Pi C^N - t positif app} and \eqref{eq controle exp t A Pi C^N - t negatif app} where it occurs, we are only interested in the points of the neighborhood of $ \underline{\zeta} $ which are not glancing. Thus we consider a point $\underline{\zeta}$ of $\Sigma_0$ and we come back to the notations of the proof of Proposition \ref{prop proj bornes}. 
	
	The interest is first made on the first estimate \eqref{eq controle exp t A Pi - - t positif app}.  In the proof of Proposition \ref{prop proj bornes}, we have constructed, in a neighborhood of $\underline{\zeta}$, a continuous basis of $E_-(\zeta)$
	associated with a regular change-of-basis matrix $T(\zeta)$
	in which the matrix $\mathcal{A}(\zeta)$ restricted to $E_-(\zeta)$ is the following block diagonal matrix of size $ p\times p $
	\[\left(\begin{array}{cccc}
		\mathcal{A}_-(\zeta) &&&0\\
		&i\xi_1(\zeta)I_{\omega_1}&&\\
		&&\ddots&\\
		0&&&i\xi_{r+g}(\zeta)I_{\omega_{r+g}}
	\end{array}\right),\]
	constituted of a block $\mathcal{A}_-(\zeta)$ of negative definite real part, of diagonal blocks $i\xi_j(\zeta)\,I_{\omega_j}$, $j=1,\dots,r$ associated with the incoming eigenvalues $\underline{\xi}_j$ and of diagonal blocks $i\xi_j(\zeta)I_{\omega_j}$, $j=r+1,\dots,r+g$ associated with the eigenvalues $\underline{\xi}_j$ which are glancing in $\underline{\zeta}$. In that case, according to expression  \eqref{eq bornitude def Pi e} of the projector $ \Pi^{e}_-(\zeta) $ in a neighborhood of $\underline{\zeta}$ in $\Sigma_0$, the linear map $ e^{t\mathcal{A}(\zeta)}\,\Pi^{e}_-(\zeta) $ from $ E_-(\zeta) $ to itself is given, in the basis associated with the matrix $ T(\zeta) $, by the following $ p\times p $ block diagonal matrix
	\begin{equation*} 
		\left(\begin{array}{ccrc}
			e^{t\mathcal{A}_-(\zeta)} &&&\\
			&0&&\\
			&&e^{i\,t\,\xi_{r+1}(\zeta) }\,\indicatrice_{\Delta_{r+1}(\zeta)<0}\,I_{\omega_{r+1}}&\\[5pt]
			&&\ddots&\\[5pt]
			&&&e^{i\,t\,\xi_{r+g}(\zeta)}\,\indicatrice_{\Delta_{r+g}(\zeta)<0}\,I_{\omega_{r+g}} 
		\end{array}\right).
	\end{equation*}
	On one hand, the block $\mathcal{A}_-(\zeta)$ is of negative definite real part, uniformly with respect to $\zeta$. On the other hand, one can check that $$ \Im \xi_{r+l}(\zeta)=|\Delta_{r+l}(\zeta)|^{1/2}/2, $$ for $ l=1,\dots,g $, where $ \Delta_{r+l}(\zeta) $ refers to the discriminant of the characteristic polynomial associated with the glancing eigenvalue $ \underline{\xi}_{r+l} $ defined in the proof of Proposition \ref{prop proj bornes} and is depicted in Figure \ref{figure discriminant}. But one can write, in a neighborhood of  $ \underline{\zeta} $ in $ \Sigma_0 $, $$ \Delta_j(\zeta)=\big[\tau-\tau_j^g(\eta)\big]e(\zeta),  $$ with $ e(\underline{\zeta})\neq0 $, where we recall that $ \tau_j^g $ parameterizes  in $ \Sigma_0 $ the surface of cancellation of $ \Delta_j $. It yields to the following estimate on $ \Delta_j $,
	\begin{equation}\label{eq controle preuve Delta dist}
		|\Delta_j(\zeta)|\geq C\,\big|\tau-\tau_j^g(\eta)\big|=C\,\big|\zeta-(\tau_j^g(\eta),\eta)\big|\geq C\,\dist (\zeta,\G).
	\end{equation}
	Since the matrix $ T $ is regular, and therefore uniformly bounded with respect to $ \zeta $ in a neighborhood of $ \underline{\zeta} $ in $ \Sigma_0 $, according to \eqref{eq controle preuve Delta dist}, we get
	\begin{equation}\label{eq controle preuve avant homogene}
		\left|e^{t\mathcal{A}(\zeta)} \, \Pi^{e}_-(\zeta)\right|\leq C e^{-C\dist(\zeta,\mathcal{G})^{1/2}\,t}.
	\end{equation}
	We consider now $ \zeta $ in a conic neighborhood of $ \underline{\zeta} $ in $ \Xi_0 $ with $ \zeta=\lambda\zeta^* $ where $ \lambda=|\zeta|\in \R_+^* $ and $ \zeta^* $ is in a neighborhood of $ \underline{\zeta} $ in $ \Sigma_0 $. Then, by homogeneity and using \eqref{eq controle preuve avant homogene} and Assumption \ref{hypothese petits diviseurs 1}, we obtain
	\begin{multline*}
		\left|e^{t\mathcal{A}(\zeta)} \, \Pi^{e}_-(\zeta)\right|=\left|e^{\lambda t\mathcal{A}(\zeta^*)} \, \Pi^{e}_-(\zeta^*)\right|\leq C e^{-C\dist(\zeta^*,\mathcal{G})^{1/2}\,\lambda t}\\
		=C e^{-C\dist(\lambda\zeta^*,\mathcal{G})^{1/2}\,\lambda^{1/2}t}\leq Ce^{-C|\zeta|^{(a_1+1)/2}\,t}.
	\end{multline*}
	Finally the inequality extends to the whole space $ \Xi_0 $ by compactness of $ \Sigma_0 $, yielding to the required inequality \eqref{eq controle exp t A Pi - - t positif app} for all $ \zeta $ in $ \F_b\privede{0} $.
	
	\bigskip
	
	Concerning estimate \eqref{eq controle exp t A Pi C^N - t positif app}, note that, for $ t\geq 0 $ and for $ \zeta $ non glancing,
	\begin{equation*}
		e^{t\mathcal{A}(\zeta)}\,\Pi^{e}_{\C^N}(\zeta)=e^{t\mathcal{A}(\zeta)}\,\Pi^{e}_-(\zeta)\,\Pi^-_{\C^N}(\zeta),
	\end{equation*}
	where $ \Pi_{\C^N}^-(\zeta) $ is the projector from $ \C^N $ to the stable subspace $ E_-(\zeta) $ according to decomposition \eqref{eq decomp C^N E + E -}, defined for $ \zeta $ non glancing. The aim is therefore to control the projector $ \Pi_{\C^N}^-(\zeta) $, and then use inequality \eqref{eq controle exp t A Pi - - t positif app} to conclude. We still work in a neighborhood of $ \underline{\zeta} $ in $ \Sigma_0 $ and, to simplify the notations, we assume that $ \mathcal{A}(\underline{\zeta}) $ admits a unique glancing eigenvalue $ \underline{\xi}_{g} $ of algebraic multiplicity 2.
	
	Applying the arguments of the proof of Proposition \ref{prop proj bornes} to the unstable part $ E^+(\zeta) $, one obtain an analytic basis
	\begin{equation*}
		E_1(\zeta),\dots,E_N(\zeta)
	\end{equation*}
	of $ \C^N $ associated with a change-of-basis matrix $ \tilde{T}(\zeta) $, analytic in a neighborhood of $ \underline{\zeta} $ in $ \Sigma_0 $, such that in this basis, the linear map $ \mathcal{A}(\zeta) $ writes
	\begin{equation*}
		\tilde{T}(\zeta)^{-1}\,\mathcal{A}(\zeta)\,\tilde{T}(\zeta)
		=\diag\big(\mathcal{A}_-(\zeta),\mathcal{A}_1(\zeta),Q(\zeta),\mathcal{A}_+(\zeta),\mathcal{A}_2(\zeta)\big),
	\end{equation*}
	where $ \mathcal{A}_-(\zeta) $ is of negative definite real part, $ \mathcal{A}_1(\zeta) $ is the diagonal block associated with the incoming eigenvalues, $ \mathcal{A}_+(\zeta) $ is of positive definite real part, $ \mathcal{A}_2(\zeta) $  is the diagonal block associated with the outgoing eigenvalues, and the unique $ 2\times2 $ block $ Q(\zeta) $, associated with the glancing eigenvalue $ \underline{\xi}_g $, writes
	\begin{equation*}
		Q(\zeta)=i\left(\begin{array}{cc}
			\underline{\xi}_g+q_1(\zeta) & 1 \\[5pt]
			q_2(\zeta) & \underline{\xi}_g
		\end{array}\right),
	\end{equation*} 
	where $ q_1(\underline{\zeta})=q_2(\underline{\zeta})=0 $ and $ \frac{\partial q_2}{\partial \tau}(\underline{\zeta})\neq 0 $.
	
	We want now to construct, using the basis $ E_1,\dots,E_N $, a new basis $ F_1,\dots,F_N $ adapted to the decomposition
	\begin{equation*}
		\C^N=E_-(\zeta)\oplus E_+(\zeta)
	\end{equation*} 
	for $ \zeta $ non glancing. If $ E_1(\zeta),\dots,E_{p-1}(\zeta) $ are the $ p-1 $ first vectors corresponding to the blocks $ \mathcal{A}_-(\zeta) $ and $ \mathcal{A}_1(\zeta) $ of the basis of $ \C^N $ associated with $ T(\zeta) $, we set, for $ j=1,\dots,p-1 $, $ F_j(\zeta):=E_j(\zeta) $. Note that $ \big(F_1(\zeta),\dots,F_{p-1}(\zeta)\big) $ is therefore a set of linearly independent vectors of $ E_-(\zeta) $. We set as well $ F_j(\zeta):=E_j(\zeta) $ for $ j=p+2,\dots,N $, where $ E_{p+2}(\zeta),\dots,E_N(\zeta) $ are the vectors of the basis of  $ \C^N $ defined by $ T(\zeta) $ associated with the blocks $ \mathcal{A}_+(\zeta) $ and $ \mathcal{A}_2(\zeta) $, constituting a set of linearly independent vectors of $ E_+(\zeta) $. 
	
	The two vectors $ F_p(\zeta) $ and $ F_{p+1}(\zeta) $ are now to be determined, which are the stable and unstable eigenvectors of $ \mathcal{A}(\zeta) $ associated with the block $ Q(\zeta) $. If $ \xi_-(\zeta) ,\xi_+(\zeta)$ are the two stable and unstable eigenvalues (equal for $ \zeta $ glancing) associated with the glancing eigenvalue $ \underline{\xi}_g $, then the stable and unstable eigenvectors of $ \mathcal{A}(\zeta) $ associated with $ \xi_-(\zeta) $ and $ \xi_+(\zeta) $ are given by
	\begin{equation*}
		F_{p}(\zeta):=E_p(\zeta)+\frac{q_2(\zeta)}{\xi_-(\zeta)-\underline{\xi}_g}\,E_{p+1}(\zeta),\qquad 
		F_{p+1}(\zeta):= E_p(\zeta)+\frac{q_2(\zeta)}{\xi_+(\zeta)-\underline{\xi}_g}\,E_{p+1}(\zeta).
	\end{equation*}
	Indeed, an eigenvector of the matrix $ \tilde{T}(\zeta)^{-1}\,\mathcal{A}(\zeta)\,\tilde{T}(\zeta) $ associated with the eigenvalue $ \xi_{\pm}(\zeta) $ is given by $ \,^t(0,\dots,0,1,\frac{q_2(\zeta)}{\xi_{\pm}(\zeta)-\underline{\xi}_g},0,\dots,0) $.
	Note that when $ \zeta $ is glancing, namely when $ \xi_-(\zeta)=\xi_+(\zeta) $ and $ q_2(\zeta)=0 $, we have $ F_p(\zeta)=F_{p+1}(\zeta)=E_p(\zeta) $. 
	
	The change-of-basis matrix from the canonical basis of $ \C^N $ to the basis $ F_1,\dots,F_N $ is therefore given by the product of the matrix $ T(\zeta)  $ and the block diagonal matrix
	\begin{equation*}
		P(\zeta):=\diag\left(I_{p-1},\left(\begin{array}{cc}
			1 & 1 \\
			\dfrac{q_2(\zeta)}{\xi_-(\zeta)-\underline{\xi}_g} & \dfrac{q_2(\zeta)}{\xi_+(\zeta)-\underline{\xi}_g} 
		\end{array}\right),I_{N-p-1}\right).
	\end{equation*}
	Thus the projector $ \Pi_{\C^N}^- $ writes
	\begin{equation*}
		\Pi_{\C^N}^- =T(\zeta)\,P(\zeta)\left(\begin{array}{cc}
			I_p & 0 \\
			0 & 0 
		\end{array}\right) P(\zeta)^{-1}\,T(\zeta)^{-1}.
	\end{equation*}
	
	The matrix $ T(\zeta) $ is analytic and therefore bounded as well as its inverse in a neighborhood of $ \underline{\zeta} $. Since it has already been proven that $ \frac{q_2(\zeta)}{\xi_-(\zeta)-\underline{\xi}_g}  $ and $ \frac{q_2(\zeta)}{\xi_+(\zeta)-\underline{\xi}_g}  $ are bounded in a neighborhood of $ \underline{\zeta} $, the matrix $ P(\zeta) $ and $ ^t\!\com P(\zeta)$ are bounded. The determinant $ \det P(\zeta)  $ shall now be estimated.
	Since $ \xi_-(\zeta) $ and $ \xi_+(\zeta) $ are the two (possibly equal) roots of the polynomial $ \xi^2-\big(2\underline{\xi}_g+q_1(\zeta)\big)\,\xi+\underline{\xi}_g^2+ \underline{\xi}_gq_1(\zeta)- q_2(\zeta) $, we obtain
	\begin{equation*}
		\big(\xi_-(\zeta)-\underline{\xi}_g\big)\big(\xi_+(\zeta)-\underline{\xi}_g\big)=-q_2(\zeta),
	\end{equation*}
	so that
	\begin{equation*}
		\det P(\zeta) =\frac{q_2(\zeta)\big(\xi_-(\zeta)-\xi_+(\zeta)\big)}{\big(\xi_-(\zeta)-\underline{\xi}_g\big)\big(\xi_+(\zeta)-\underline{\xi}_g\big)}=-\big(\xi_-(\zeta)-\xi_+(\zeta)\big).
	\end{equation*}
	It yields to
	\begin{equation*}
		|\det P(\zeta)|=|\Delta_g(\zeta)|^{1/2}\geq C \dist(\zeta,\G)^{1/2},
	\end{equation*}
	according to estimate \eqref{eq controle preuve Delta dist}. The control \eqref{eq controle exp t A Pi C^N - t positif app} follows in the same way as the one of \eqref{eq controle exp t A Pi - - t positif app} using estimate \eqref{eq controle exp t A Pi - - t positif app} and Assumption \ref{hypothese petits diviseurs 1}.
	
	Finally, for estimate \eqref{eq controle exp t A Pi C^N - t negatif app}, taking back the notations and the results of the previous point, the matrix $ e^{t\mathcal{A}(\zeta)}\big(I-\Pi^{e}_{\C^N}(\zeta)\big) $ can be written as
	\begin{multline*}
		e^{t\mathcal{A}(\zeta)}\big(I-\Pi^{e}_{\C^N}(\zeta)\big)=\\
		\tilde{T}(\zeta)^{-1}\,P(\zeta)^{-1}\left(\begin{array}{ccccc}
			0 &&&&\\
			& e^{t\mathcal{A}_1(\zeta)} &&0& \\
			&& e^{tQ(\zeta)}\indicatrice_{\Delta_g(\zeta)>0} && \\
			&0&& e^{t\mathcal{A}_+(\zeta)} &\\
			&&&& e^{t\mathcal{A}_2(\zeta)}
		\end{array}\right)P(\zeta)\,\tilde{T}(\zeta).
	\end{multline*}
	Note that the eigenvalues of the matrices $ \mathcal{A}_1(\zeta) $, $ \mathcal{A}_2(\zeta) $ and of the matrix $ Q(\zeta) $ when $ \Delta_g(\zeta)>0 $, are imaginary, and that the matrix $ \mathcal{A}_+(\zeta) $ is of positive definite real part uniformly with respect to $ \zeta $. Thus, using the estimate on the change-of-basis matrix $ P(\zeta)^{-1} $ proved above, one may conclude as for estimate \eqref{eq controle exp t A Pi C^N - t negatif app}.
\end{proof}

\emph{Acknowledgments.} Warm thanks go to Jean-François Coulombel, for his many ideas on the subject, his proofreading, advice and corrections. His help has been crucial for this paper.

\bibliography{Bibliographie.bib}
\bibliographystyle{alpha}

%

\end{document}